\documentclass[11pt,reqno]{amsart}
\usepackage[english]{babel}
\usepackage[a4paper,twoside]{geometry}
\usepackage{microtype}
\usepackage{enumerate}
\usepackage{csquotes}
\usepackage{longtable}
\usepackage{bbm}

\usepackage{xspace}
\voffset=-1cm \textheight=23cm
\textwidth=16cm \evensidemargin=0.3cm \oddsidemargin=-0.3cm
\patchcmd{\subsection}{-.5em}{.5em}{}{}
\setlength{\parindent}{0pt}
\emergencystretch=1em

\usepackage[hyperfootnotes=false]{hyperref} 
\usepackage{color}
\hypersetup{
	colorlinks = true,
	linkcolor = {blue},
	urlcolor = {red},
	citecolor = {blue}
}

\makeatletter
\setcounter{tocdepth}{2}

\renewcommand{\tocsection}[3]{
  \indentlabel{\@ifnotempty{#2}{\ignorespaces#1 #2\quad}}\bfseries#3}
\renewcommand{\tocsubsection}[3]{
  \indentlabel{\@ifnotempty{#2}{\ignorespaces#1 #2\quad}}#3}

\newcommand\@dotsep{4.5}
\def\@tocline#1#2#3#4#5#6#7{\relax
  \ifnum #1>\c@tocdepth
  \else
    \par \addpenalty\@secpenalty\addvspace{#2}
    \begingroup \hyphenpenalty\@M
    \@ifempty{#4}{
      \@tempdima\csname r@tocindent\number#1\endcsname\relax
    }{
      \@tempdima#4\relax
    }
    \parindent\z@ \leftskip#3\relax \advance\leftskip\@tempdima\relax
    \rightskip\@pnumwidth plus1em \parfillskip-\@pnumwidth
    #5\leavevmode\hskip-\@tempdima{#6}\nobreak
    \leaders\hbox{$\m@th\mkern \@dotsep mu\hbox{.}\mkern \@dotsep mu$}\hfill
    \nobreak
    \hbox to\@pnumwidth{\@tocpagenum{\ifnum#1=1\bfseries\fi#7}}\par
    \nobreak
    \endgroup
  \fi}
\AtBeginDocument{
\expandafter\renewcommand\csname r@tocindent0\endcsname{0pt}
}
\def\l@subsection{\@tocline{2}{0pt}{2.5pc}{5pc}{}}
\makeatother

\usepackage{amsmath}
\usepackage{amsthm}
\usepackage{amssymb}
\usepackage{mathrsfs} 
\usepackage{eucal}
\usepackage{mathtools}

\usepackage{graphicx}
\usepackage{tikz}
\usetikzlibrary{cd}

\newcounter{results}[section]

\theoremstyle{plain}
\newtheorem{theorem}[results]{Theorem}
\newtheorem{lemma}[results]{Lemma}
\newtheorem{proposition}[results]{Proposition}
\newtheorem{corollary}[results]{Corollary}

\theoremstyle{remark}
\newtheorem{remark}[results]{Remark}

\theoremstyle{definition}
\newtheorem{definition}[results]{Definition}

\numberwithin{equation}{section}


\renewcommand{\H}{\mathbb{H}}
\newcommand{\M}{\mathbb{M}}
\newcommand{\N}{\mathbb{N}}
\newcommand{\Q}{\mathbb{Q}}
\newcommand{\R}{\mathbb{R}}

\renewcommand{\AA}{\mathscr{A}}

\newcommand{\FF}{\mathscr{F}}

\newcommand{\LL}{\mathscr{L}}


\newcommand{\cL}{{\ensuremath{\mathcal L}}}

\newcommand{\cN}{{\ensuremath{\mathcal N}}}


\newcommand{\ff}{{\boldsymbol f}}

\newcommand{\ii}{{\mbox{\boldmath$i$}}}

\newcommand{\mm}{{\mbox{\boldmath$m$}}}

\newcommand{\gG}{{\mbox{\boldmath$G$}}}

\newcommand{\vV}{{\mbox{\boldmath$V$}}}
\newcommand{\wW}{{\mbox{\boldmath$W$}}}



\newcommand{\ggamma}{{\mbox{\boldmath$\gamma$}}}

\newcommand{\eeta}{{\mbox{\boldmath$\eta$}}}
\newcommand{\iiota}{{\boldsymbol \iota}}
\newcommand{\jjmath}{{\boldsymbol \jmath}}

\newcommand{\mmu}{{\mbox{\boldmath$\mu$}}}

\newcommand{\pphi}{{\boldsymbol \phi}}




\newcommand{\sfd}{{\sf d}}
\newcommand{\sfe}{{\sf e}}

\newcommand{\sfm}{{\sf m}}

\newcommand{\sfx}{{\sf x}}

\newcommand{\sfD}{{\sf D}}

\newcommand{\sfQ}{{\mathsf Q}}

\newcommand{\rmC}{{\mathrm C}}

\newcommand{\rmB}{{\mathrm B}}
\newcommand{\rmD}{{\mathrm D}}

\newcommand{\rmG}{{\mathrm G}}
\newcommand{\brmG}{{\boldsymbol\rmG}}

\newcommand{\rmT}{{\mathrm T}}

\newcommand{\Kliminf}{K\kern-3pt-\kern-2pt\mathop{\rm lim\,inf}\limits}  
\newcommand{\supp}{\mathop{\rm supp}\nolimits}   
\newcommand{\Lip}{\mathop{\rm Lip}\nolimits}          
\newcommand{\Lipb}{\mathop{\rm Lip}_b\nolimits}          


\newcommand{\lip}{\mathop{\rm lip}\nolimits}          

  
\renewcommand{\d}{{\mathrm d}}

\newcommand{\restr}[1]{\lower3pt\hbox{$|_{#1}$}}

\newcommand{\Leb}[1]{{\mathscr L}^{#1}}      
\newcommand{\la}{{\langle}}                  
\newcommand{\ra}{{\rangle}}
\newcommand{\down}{\downarrow}              

\newcommand{\eps}{\varepsilon}  
\newcommand{\nchi}{{\raise.3ex\hbox{$\chi$}}}
\newcommand{\weakto}{\rightharpoonup}
%




\newcommand{\prob}{\mathcal P}

\renewcommand{\mm}{\mathfrak m}
\newcommand{\bmm}{\boldsymbol{\mathfrak m}}

\newcommand{\res}{\mathop{\hbox{\vrule height 7pt width .5pt depth 0pt
\vrule height .5pt width 6pt depth 0pt}}\nolimits}

\newcommand{\J}I
\setcounter{tocdepth}{2}


\newcommand{\CE}{\mathsf{C\kern-1pt E}}
\newcommand{\NE}{\mathsf{N\kern-2.5pt E}}
\newcommand{\wCE}{\mathsf{wC\kern-1pt E}}

\newcommand{\pCE}{\mathsf{pC\kern-1pt E}}
\newcommand{\Mod}{\operatorname{Mod}}

\newcommand{\kkappa}{{\boldsymbol \kappa}}

\newcommand{\relgrad}[1]{|\rmD #1|_\star}

\newcommand{\relgradA}[1]{|\rmD #1|_{\star,{\scriptscriptstyle \AA}}}

\newcommand{\cyl}[3]{\mathfrak C^{#1}_{#2}\big ( #3 \big
  )}
\newcommand{\ccyl}[3]{\operatorname{FC}^{#1}_{#2}\!\!\left ( #3 \right )}
\newcommand{\cnorm}[2]{ \left \| \rmD #1 \left [ #2 \right ] \right \|_{#2}}
\newcommand{\uphi}{\pphi}
\newcommand{\interval}{\mathcal{J}}
\DeclareMathOperator{\Tan}{Tan}

\newcommand{\sqm}[1]{\mathsf m_2^2(#1)} 
\newcommand{\rsqm}[1]{\mathsf m_2(#1)} 
\newcommand{\mres}{\mathbin{\vrule height 1.6ex depth 0pt width
0.13ex\vrule height 0.13ex depth 0pt width 1.3ex}}


\newcommand{\dY}{{\delta}}
\newcommand{\W}{\mathbb W}
\newcommand{\Wmms}[2]{\W_2(#1,#2)}

\newcommand{\domG}{\mathcal D}
\newcommand{\Residual}{\mathrm G_0}
\newcommand{\Tangent}{\mathrm T}

\newcommand{\ocR}{\mathrm O_0}
\newcommand{\Span}{\mathrm{span}}
\newcommand{\lin}[1]{\mathsf L_{#1}}
\newcommand{\prbt}{{\prob_2(\R^d)}}
\newcommand{\mgrad}{\boldsymbol {\mathrm D}_\mm}
\newcommand{\diff}{\mathsf{diff}}


\title[Wasserstein Sobolev spaces]{Density of subalgebras of Lipschitz functions\\
  in metric Sobolev spaces and
  \\
  applications to Wasserstein Sobolev spaces}

\author{Massimo Fornasier}
\address{Massimo Fornasier: TUM Fakult\"at f\"ur Mathematik, Boltzmannstrasse 3, 85748 Garching bei M\"unchen (Germany)}
\email{massimo.fornasier@ma.tum.de}

\author{Giuseppe Savar\'e}
\address{Giuseppe Savar\'e: Bocconi University,
  Department of Decision Sciences and BIDSA, Via Roentgen 1, 20136 Milano (Italy)}
\email{giuseppe.savare@unibocconi.it}

\author{Giacomo Enrico Sodini}
\address{Giacomo Enrico Sodini: TUM Fakult\"at f\"ur Mathematik, Boltzmannstrasse 3, 85748 Garching bei M\"unchen (Germany)}
\email{sodini@ma.tum.de}

\subjclass{Primary: 46E36, 31C25 ; Secondary: 49Q20, 28A33, 35F21, 58J65}
 \keywords{Metric Sobolev spaces, Dirichlet forms, Cheeger energy,
   Kantorovich-Wasserstein distance, Optimal transport, Moreau-Yosida
 regularization}

\begin{document}

\begin{abstract}
  We prove a general criterion for the density in energy of suitable
  subalgebras of Lipschitz functions in the  metric Sobolev   space
  $H^{1,p}(X,\sfd,\mm)$ associated with a  positive and finite Borel measure $\mm$ in a separable and complete metric space $(X,\sfd)$.

  We then provide a relevant application   to the case of the
  algebra of cylinder functions 
  in the Wasserstein Sobolev space
  $H^{1,2}(\prob_2(\M),W_{2},\mm)$
  arising from a positive and finite Borel  measure $\mm$ on the
  Kantorovich-Rubinstein-Wasserstein space $(\prob_2(\M),W_{2}
  )$ of probability measures 
in a finite dimensional Euclidean space,
  a complete Riemannian manifold,
  or a
  separable Hilbert space $\M$.
  We will show that such a Sobolev space is always Hilbertian,
  independently of the choice of the reference measure $\mm$ so that
  the resulting Cheeger energy is a Dirichlet form.

  We will eventually provide an explicit characterization for the
  corresponding notion of $\mm$-Wasserstein gradient, showing useful
  calculus rules and its consistency with the tangent bundle
  and the $\Gamma$-calculus inherited from the Dirichlet form.
\end{abstract}

\maketitle
\tableofcontents
\thispagestyle{empty}

\section{Introduction}
The theory of Sobolev spaces associated to a metric measure space
$(X,\sfd,\mm)$ has been much developed in recent years. One of the most important
approaches (we refer to
the monographs \cite{Bjorn-Bjorn11,HKST15} and to the lecture notes \cite{GP20,Savare22})
is based on the notion of upper gradient
\cite{Heinonen-Koskela98,Koskela-MacManus98} of a map $f:X\to \R$: 
it is a Borel map $g:X\to[0,+\infty]$ satisfying
\begin{equation}
  \label{eq:133}
  |f(\gamma(b))-f(\gamma(a))|\le \int_\gamma g
\end{equation}
along every $\sfd$-Lipschitz (or even rectifiable) curve
$\gamma:[a,b]\to X$.
The Dirichlet space $D^{1,p}(X,\sfd,\mm)$, $p\in (1,+\infty)$  can then be defined as the
class of measurable functions $f:X\to \R$ that possess a $p$-integrable upper
gradient, thus resulting in a finite 
Newtonian energy
\begin{equation}
  \label{eq:134}
  \NE_p(f):=\inf\Big\{\int_Xg^p\,\d\mm:\text{$g$ is an upper gradient
    of $f$}\Big\}.
\end{equation}
A crucial and nontrivial fact is that $\NE_p$ admits a local
representation
\begin{equation}
  \label{eq:136}
  \NE_p(f)=\int_X |\rmD f|_N^p\,\d\mm
\end{equation}
in terms of the \emph{minimal $p$-weak upper gradient} $|\rmD
f|_N^p$ of $f$, which can be characterized in terms of the upper gradient
property \eqref{eq:133} along $\mathrm{\Mod}_p$-a.e.~curve and enjoys
many nice metric-nonsmooth calculus rules.
When $f$ is Lipschitz, then the pointwise and asymptotic Lipschitz constants
\begin{equation}
  \label{eq:1intro}
  |\rmD f|(x):=\limsup_{y\to x}\frac{|f(y)-f(x)|}{\sfd(x,y)},
  \quad
  \lip f(x):=
   \limsup_{y,z\to x,\ y\neq z}\frac{|f(y)-f(z)|}{\sfd(y,z)}
\end{equation}
are upper gradients, so that
\begin{equation}
  \label{eq:36}
  \text{if $f\in \Lip_b(X)$ then}\quad
  |\rmD f|_N\le |\rmD f|\le \lip f\quad\text{$\mm$-a.e.~in $X$}.
\end{equation}
Functions in $L^p(X,\mm)$ which admit a good representative
(in the usual Lebesgue class defined up to $\mm$-negligible sets) in
$D^{1,p}(X,\sfd,\mm)$ give raise to the Newtonian spaces $\hat
N^{1,p}(X,\sfd,\mm)$ \cite{Shanmugalingam00}
\cite[Def.~1.19]{Bjorn-Bjorn11}, which 
is a Banach space with the norm $\|f\|_{\hat
  N^{1,p}}:=\big(\|f\|_{L^p}^p+\NE_p(f)\big)^{1/p}$. $\hat N^{1,p}(X,\sfd,\mm)$ can also be
identified with the domain of the $L^p$-relaxation of $\NE_p$
\cite{Cheeger99}. 

\medskip
\paragraph{\em\bfseries Density of Lipschitz
  functions: the case of doubling spaces supporting a Poincar\'e inequality}
It is a natural question if
$\NE_p$ can be recovered starting from the distinguished class of
upper gradients given by the pointwise or asymptotic
Lipschitz constants \eqref{eq:1intro}
of Lipschitz functions. 
When $(X,\sfd,\mm)$ satisfies a doubling condition and supports a $p$-Poincar\'e inequality,
then Lipschitz functions are dense in $\hat N^{1,p}(X,  \sfd,\mm)$ 
\cite[Theorem 4.1]{Shanmugalingam00}
and for Lipschitz functions the minimal $p$-weak upper gradient $|\rmD f|_N$
coincides with the pointwise Lipschitz constant $|\rmD f|$
\cite[Theorem 6.1]{Cheeger99}. In particular 
for every $f\in \hat N^{1,p}(X,\sfd,\mm)$ there exists a sequence
$f_n\in \Lip_b(X)$ such that 
\begin{equation}
  \label{eq:142}
  f_n\to f,\quad 
  |\rmD f_n|\to |\rmD f|_N\quad\text{strongly in }L^p(X,\mm).
\end{equation}
It is worth noticing that in this case $\hat N^{1,p}(X,\sfd,\mm)$ is a
reflexive space \cite[Theorem 4.48]{Cheeger99}.

\medskip
\paragraph{\em\bfseries Density in energy of subalgebras of Lipschitz
  functions}
The strong approximation property \eqref{eq:142}
holds in fact for arbitrary complete and separable
metric spaces, a result obtained in \cite{AGS14I} (when $p=2$)
and \cite{AGS13}, where also the approximation by
the asymptotic Lipschitz constant is considered.

The first aim of the present paper is to discuss the extension of this
result when the sequence $f_n$ in \eqref{eq:142} is chosen in a 
suitable unital subalgebra $\AA\subset \Lip_b(X)$
separating the points of $X$, i.e.
\begin{equation}
  \label{eq:116-intro}
  1\in \AA,\quad
  \text{for every $x_0,x_1\in X$ there exists $f\in \AA$:\quad $f(x_0)\neq f(x_1)$}.
\end{equation}
The use of a subalgebra is  not a formal exercise with no
implications. In fact, in relevant examples such as
Wasserstein Sobolev spaces, which we shall introduce and discuss below, and the related subalgebra of cylinder functions, it is possible to recover the Cheeger energy as suitable relaxation of an {\it explicitly computable Dirichlet form}. Namely, from the so-called pre-Cheeger energy
\begin{equation}\label{eq:prec-intro}
  \pCE_{p}(f) := \int_X (\lip f)^p \,\d \mm, \quad f \in \Lip_b(X),
\end{equation}
we recover the Cheeger energy as its relaxation starting from $\AA$: 
\begin{equation}\label{eq:relpre-intro}
  \CE_{p,\AA}(f) = \inf \left \{ \liminf_{n \to + \infty}
    \pCE_{p}(f_n) : f_n \in \AA, \, f_n \to f \text{ in } L^0(X,\mm)
  \right \}.
\end{equation}
Thanks to the algebraic properties of $\AA$ and
\eqref{eq:116-intro}
it is possible to prove \cite[Sec.~3]{Savare22} that $\CE_{p,\AA}$ admits a local
representation
of the form 
\begin{equation}
  \label{eq:143}
  \CE_{p,\AA}(f)=\int_X |\rmD f|_{\star,\AA}^p(x)\,\d\mm(x)\quad
  \text{whenever }\CE_{p,\AA}(f)<+\infty,
\end{equation}
in terms of a minimal $(p,\AA)$-relaxed gradient
$|\rmD f|_{\star,\AA}$, enjoying the same calculus rules as $|\rmD
f|_N$, see Theorem \ref{thm:omnibus} below.
We denote by $H^{1,p}(X,\sfd,\mm;\AA)$ the class of functions in $L^p(X,\mm)$ with finite $(p,\AA)$-Cheeger energy.

It is easy to check that $H^{1,p}(X,\sfd,\mm;\AA)\subset \hat N^{1,p}(X,\sfd,\mm)$ with
$|\rmD f|_N\le |\rmD f|_{\star,\AA}$ $\mm$-a.e.
It turns out that the strong approximation property
\begin{equation}
  \label{eq:144}
  \text{for every }f\in \hat N^{1,p}(X,\sfd,\mm)\ 
  \text{there exist }f_n\in\AA:\ \ 
  f_n\to f,\ \lip f_n\to |\rmD f|_N\text{ in }L^p(X,\mm)
\end{equation}
is equivalent to the identification
\begin{equation}
  \label{eq:145}
  H^{1,p}(X,\sfd,\mm;\AA)=\hat N^{1,p}(X,\sfd,\mm), 
  \quad |\rmD f|_N=|\rmD f|_{\star,\AA}\quad\text{for every }f\in \hat N^{1,p}(X,\sfd,\mm).
\end{equation}
The density results of \cite{AGS14I,AGS13}
show that \eqref{eq:145} always hold if $\AA=\Lip_b(X)$.
When $\AA$ is a proper subalgebra of $\Lip_b(X)$,
a first sufficient condition for the validity of \eqref{eq:145}, in the more general framework of
extended topological metric measure spaces, 
is provided by the
compatibility condition between $\sfd$ and $\AA$
\cite[Theorems 3.2.7, 5.3.1]{Savare22}
\begin{equation}
  \label{eq:11p}
  \sfd(x,y)=\sup\Big\{f(x)-f(y):
  f\in \AA,\ \Lip(f, X)\le 1\Big\}.
\end{equation}
We are able to improve \eqref{eq:11p} and to show (Theorem
\ref{theo:startingpoint}) that a necessary and
sufficient condition for \eqref{eq:145} is that
for every $y\in X$ (or in a dense subset of $X$) the distance function
$\sfd_y:x\mapsto \sfd(x,y)$ 
satisfies 
\begin{equation}
  \label{eq:146-intro}
  |\rmD \sfd_y|_{\star,\AA}(x)\le 1\quad
  \text{for $\mm$-a.e.~$x\in X$}.
\end{equation}
As mentioned above,  the density of distinguished subalgebras of
Lipschitz functions can provide valuable information on the structure
of the  metric Sobolev   space $\hat N^{1,p}(X,\sfd,\mm)$, in particular
when the asymptotic Lipschitz constant $\lip f$ exhibits a more
regular behaviour when restricted to $\AA$.
A relevant example arises when an algebra $\AA$ exists for which 
the pre-Cheeger energy \eqref{eq:prec-intro} is induced by a bilinear
form and one wants to study its closure
as it is typical in the theory of Dirichlet forms. In this case our
result shows that this construction is intrinsically linked to the
metric structure, so that it is independent of the particular choice
of  the algebra $\AA$ satisfying \eqref{eq:146-intro}  and it is invariant with respect to measure-preserving
isometries.
As a byproduct, we will recover in a simple way previous Hilbertianity
results of
\cite{DLP20,LP20,Savare22}.

\medskip
\paragraph{\em\bfseries
  The Wasserstein Sobolev space}
An important application, which has been one of the inspiring motivations of
our investigation, 
concerns functional analysis over spaces of probability measures. In fact, smooth functions do appear recently as solutions of new types of partial differential equations over spaces of probability measures defined by diverse forms of differentiation, namely  nonlinear transport equations \cite{AFMS21,AG08} for describing population evolutionary games, Kolmogorov equations \cite{M22-1,M22-2} in nonlinear filtering, and Hamilton-Jacobi-Bellman equations \cite{Bayraktar2016RandomizedDP,GANGBO2019119,PW18,CCP20} as appearing, e.g., in the theory of mean-field games and mean-field optimal control. 
In some of these instances, the solutions are considered in classical sense, because of the lack of  weak formulations and variational descriptions. Moreover, while the expression of these equations is in most cases of foundational interest, their relevance in terms of providing insights about solutions and their explicit computation remained so far rather unclear.
Hence, a proper definition of function spaces of regular functions, rules of calculus, and density properties are fundamental for developing a more systematic framework for the analysis of such novel forms of infinite dimensional PDEs and explaining their practical use and impact.

Moreover, thanks to significant advances in computational optimal transport that made its numerical realization feasible also for problems of relatively high dimension, in the past decade there has been an increasing and more accepted adoption of probability measures to model data points in image and shape processing and other machine learning applications. While the first applications were about discriminating data encoded as distributions available in the form of bags-of-features or descriptors, more recent developments explored geometric interpolation of data provided by optimal transport, for instance in the form of Wasserstein barycenters. In the meanwhile the literature on the subject has grown significantly to be really able to offer a complete account and we may more simply refer to the recent survey \cite{PC19} for insights and references. 

Building upon these advances, approximating or interpolating efficiently functions over data points modeled as (probability) measures can also provide a novel framework for machine learning tasks, such as classification and regression. Also for such developments a proper foundation of functional analysis is necessary.

These are relevant motivations for us to focus on the study of  Sobolev spaces generated by a finite measure $\mm$ on the
Wasserstein space $\prob_2(\M)$ of Borel probability measures in
a complete Riemannian manifold $(\M,\sfd_\M)$ with finite quadratic 
moment 
\begin{equation}
  \label{eq:147}
  \int_{\M} \sfd^2_{ \M}(x,x_o)\,\d\mu(x)<+\infty\quad
  \text{for some, and thus any, $x_o\in \M$},
\end{equation}
endowed with the
$L^2$-Kantorovich-Rubinstein-Wasserstein distance $W_2$
\begin{equation}
  \label{eq:148}
  W_{2,\sfd_\M}^2(\mu, \nu) := \min
  \left \{ \int_{\M\times \M} \sfd^2_\M(x,y)\,\d\mmu(x,y)\mid \mmu \in \Gamma(\mu, \nu) \right \};
\end{equation}
here $\Gamma(\mu, \nu)$ is the set of couplings between $\mu$ and
$\nu$, i.e.~probability measures $\mmu$ in $\M\times \M$ whose
marginals are $\mu$ and $\nu$.

The space of probability measures $(\prob_2(\M),W_{2,\sfd_\M})$  
may be considered a model class for the above mentioned applications and  it is an example of complete and separable metric
space, which exhibits a non-smooth, infinite dimensional
pseudo-Riemannian character
\cite{Otto01,AGS08,Villani09}. In particular, it is not isometric to a
finite dimensional Riemannian manifold or a $\mathrm{Cat}(\kappa)$
space \cite{DGPS21}; when $\M$ has nonnegative
sectional curvature as in the case of the Euclidean space
$\R^d$, then  $(\prob_2(\M),W_{2,\sfd_\M})$ has nonnegative curvature
in the sense of Aleksandrov \cite{Villani09}; for a general Riemannian
manifold $\M$,
$(\prob_2(\M),W_{2,\sfd_\M})$ is not an Aleksandrov space and lacks of
any lower or upper curvature bound.

When $\M$ is compact,
Sobolev spaces on $(\prob_2(\M),W_2)$ have been constructed in  \cite{DelloSchiavo20} starting
from measures $\mm$ which have full support and satisfy an integration-by-parts formula
(see Section \ref{subsec:examples} below)
on the unital algebra of cylinder maps $\ccyl\infty c{\prob_2(\M)}$, generated by linear
functionals of the form
\begin{equation}
  \label{eq:149}
  \lin\phi:\mu\mapsto \int_\M \phi\,\d\mu, \quad \phi\in\rmC^\infty_c(\M).
\end{equation}
It turns out that the restriction of the pre-Cheeger energy $\pCE_2$ \eqref{eq:prec-intro}
to $\ccyl\infty c{\prob_2(\M)}$ is induced by a
bilinear form
so that one can study the Dirichlet form arising by its closure.

Our main result is that for every separable and complete Riemannian manifold $\M$ and for every 

positive and finite Borel  measure $\mm$ on
$\prob_2(\M)$ (so, full support and integration-by-parts properties are not required)
the algebra $\ccyl\infty c{\prob_2(\M)}$ satisfies
property \eqref{eq:146-intro} and therefore it is dense in the metric
Sobolev space $H^{1,2}(\prob_2(\M),W_{2,\sfd_\M},\mm)$, which is
therefore a Hilbert space.
Due to the non-smooth character of $W_2$ such a Hilbertianity
property was far from obvious even in the flat case $\M=\R^d$.
We will also show that this result holds when $\M$ is an
infinite-dimensional, separable, Hilbert space.

Our metric analysis is also supplemented with a detailed discussion of
the structure of the Cheeger energy and of the minimal relaxed gradient,
in the case when $\M$ is the Euclidean space $\R^d$.
Introducing the measure $\displaystyle\bmm=\int\big(\delta_\mu\otimes
\mu\big)\,\d\mm(\mu)$ in $\prob\big(\prob_2(\R^d)\times \R^d)$, we
will show that there is a linear continuous Wasserstein-gradient
operator $\rmD_\mm:H^{1,2}(\prob_2(\R^d),W_2,\mm)\to
L^2(\prob_2(\R^d)\times \R^d,\bmm ; \R^d )$ representing the bilinear form
associated to the Cheeger energy as
\begin{equation}
  \label{eq:150}
  \CE_2(F,G)=\int \rmD_\mm F(\mu,x)\cdot \rmD_\mm G(\mu,x)\,\d\bmm(\mu,x),
\end{equation}
and satisfying useful calculus rules which are typical of
$\Gamma$-calculus for Dirichlet form. $\rmD_\mm$ also allows for an explicit
characterization of the
tangent bundle $L^2\big(\rmT \prob_2(\R^d)\big)$ in the sense of Gigli
\cite{Gigli18,GP20}.

We are also able to study the relaxation effect occurring in the
construction of the Cheeger energy starting from
\eqref{eq:prec-intro}.
We claim that our results
are sufficiently strong and provide useful tools to pave the way for
further studies on the structure and the promising applications of
Wasserstein Sobolev spaces.
In particular, the techniques developed in the present paper can also be
applied
to study the general class of Wasserstein Sobolev spaces
$H^{1,q}( \prob_p(\M), 
W_p, \mm)$, with $p,q \in (1,+\infty)$,
a topic that  has been addressed in \cite{S22}. 

Moreover,  as a direct consequence of our results, the recovery of the Cheeger energy in terms of relaxation of the explicitly computable
 pre-Cheeger energy $\pCE_2$ \eqref{eq:prec-intro}
on $\ccyl\infty c{\prob_2(\M)}$ does allow the equally explicit formulation of Euler-Lagrange equations of properly formulated variational problems defined on $H^{1,2}(\prob_2(\M),W_{2,\sfd_\M},\mm)$, which can be solved numerically over  finite dimensional suitably graduated approximations of $\ccyl\infty c{\prob_2(\M)}$, as a sort of (nonlinear) Galerkin approximation. Hence, as a concluding remark, perhaps surprisingly, the use of the subalgebra of cylindric functions $\ccyl\infty c{\prob_2(\M)}$ instead of $\Lip_b\big(\prob_2(\M)\big)$ as a fundamental nucleus to define Wasserstein Sobolev spaces allows to bring the theory from its foundational level to rather concrete applicability. In particular, we have in mind the above mentioned applications to the solutions of PDEs over $\prob_2(\M)$ and machine learning.

\medskip
\paragraph{\em\bfseries Plan of the paper}
After a quick review of the construction of the Cheeger energy
starting from a subalgebra $\AA$, \textbf{Section \ref{sec:main1}}
is devoted to prove our main density result under condition
\eqref{eq:146-intro}
(Section \ref{subsec:density}). The last part \ref{subsec:intrinsic1}
extends the applicability of the results to a larger class of distances:
one of its quite useful applications will concern the
extension of the results for the Wasserstein Sobolev
spaces modeled on $\prob_2(\R^d)$ to the general case of
$\prob_2(\M)$ for a complete Riemannian manifold $\M$, which will be
carried out in Sections \ref{subsec:intrinsic} and \ref{subsec:WSR}.

We will recap a few properties of the Wasserstein distance in \textbf{Section
\ref{sec:Wasserstein}}.
\textbf{Section \ref{sec:main2}} contains
a collection of some properties of cylinder functions, of their
asymptotic Lipschitz constants (Section \ref{subsec:cylindrical}), and
our main density and
Hilbertianity result for the Wasserstein Sobolev space
$H^{1,2}(\prob_2(\R^d),W_2,\mm)$ (Theorem \ref{thm:main}).

Calculus rules for the $\mm$-differential are presented in
\textbf{Section \ref{sec:calculus}}; the structure of the tangent
bundle, the properties of the residual differentials, and the study of the relaxation
effect are discussed in Section \ref{subsec:resdiff}, together with a
few examples in Section \ref{subsec:examples}.

The last \textbf{Section \ref{sec:extensions}} shows how to
extend the result of Section \ref{sec:main2} from $\R^d$ to an
arbitrary complete Riemannian manifold $\M$ and to a separable Hilbert
space $\H$.

\medskip
\paragraph{\em\bfseries Acknowledgments}
The authors warmly thank N.~Gigli, E.~Pasqualetto,
G.~Peyr\'e for their comments
and L.~Dello Schiavo for the careful reading and his valuable remarks
on a first draft of the present paper.

The authors
  gratefully acknowledge the support of the Institute for Advanced
  Study of the Technical University of Munich, funded by the German Excellence Initiative.\\
  G.S.~has also been supported by 
  IMATI-CNR, Pavia and by the MIUR-PRIN 2017
  project \emph{Gradient flows, Optimal Transport and Metric Measure
    Structures.}  The authors are grateful to the anonymous reviewers for their valuable comments.

\section{Metric Sobolev spaces and density of unital algebras}
\label{sec:main1}
In this section we will briefly recap the construction of metric
Sobolev spaces adapting the relaxation viewpoint of the Cheeger
energy to the presence of a distinguished algebra of Lipschitz
functions \cite{AGS14I,AGS13,Savare22}.

\subsection{Sobolev functions and minimal relaxed gradients}
Let $(X,\mathsf d)$ be a complete and separable metric space.
We will denote by $\Lip_b(X,\sfd)$
the space of bounded and Lipschitz real
functions $f:X\to \mathbb R$. The asymptotic Lipschitz constant of
$f\in \Lip_b(X, \sfd)$ is defined as
\begin{equation}
  \label{eq:1}
  \lip_\sfd f(x):=\lim_{r\down0}\Lip(f,\rmB(x,r),\sfd)=
  \limsup_{y,z\to x,\ y\neq z}\frac{|f(y)-f(z)|}{\sfd(y,z)},
\end{equation}
where $\rmB(x,r)$ denotes the open ball centered at $x$ with
radius $r$ and,
for $A \subset X$, the quantity $\Lip(f,A,\sfd)$ is defined as
\[ \Lip(f,A,\sfd):= \sup_{x,y\in A, \, x \neq
    y}\frac{|f(x)-f(y)|}{\sfd(x,y)}. \]
We will simply write $\Lipb(X), \lip f, \Lip (f,A)$, omitting to
explicitly mention $\sfd$, when the
choice of the metric $\sfd$ is clear from the context. 

We will also deal with
a unital algebra
$\AA\subset \Lip_b(X)$
separating the points of $X$, i.e.
\begin{equation}
  \label{eq:116}
  1\in \AA,\quad
  \text{for every $x_0,x_1\in X$ there exists $f\in \AA$:\quad $f(x_0)\neq f(x_1)$}.
\end{equation}
The initial Hausdorff topology $\tau_\AA$ induced on $X$ by $\AA$ is clearly
coarser than the metric topology of $X$.

Let $\mm$ be a  finite and positive  Borel measure on $X$
(being $X$ a Polish space, $\mm$ is also a Radon measure).
We will denote by $\cL^0(X,\mm)$
the set of $\mm$-measurable real
functions defined in $X$; $L^0(X,\mm)$ is the usual quotient of
$\cL^0(X,\mm)$
obtained by identifying two functions which coincide $\mm$-a.e.~in
$X$. In a similar way, 
$\cL^p(X,\mm)$ and $L^p(X,\mm)$ are the usual Lebesgue spaces of
$p$-summable $\mm$-measurable (equivalence classes of) real functions,
$p\in [1,+\infty]$. 
 It is worth noticing that by \cite[Lemma 2.1.27]{Savare22} we have that 
\begin{equation}\label{eq:isdense}
\begin{split}
    &\quad \quad \quad \text{ for every $p\in [1,\infty)$ and every $f \in \mathcal{L}^p(X, \mm)$ taking values in an interval $I\subset \R$} \\
    &\text{there exists a sequence $(f_n)_n \subset \AA$ with values in $I$ converging to $f$ in $L^p(X,\mm)$}.
\end{split}
\end{equation}
We will endow $L^0(X,\mm)$ with the topology of the convergence
in measure, which is induced by the metric
\begin{equation}
  \label{eq:84}
  \mathrm d_{L^0}(f_1,f_2):=\int_X \vartheta(|f_1-f_2|)\,\d\mm
  , \quad f_1, f_2 \in L^0(X,\mm), 
\end{equation}
where $\vartheta:[0,+\infty)\to [0,+\infty)$ is any increasing,
concave, bounded function with
$\vartheta(0)=\lim_{r\down0}\vartheta(r)=0$.
 In the following we fix an exponent $p\in
(1,+\infty)$.

\begin{definition}[$(p,\AA)$-relaxed gradient]
  \label{def:relgrad}
  We say that $G\in L^p(X,\mm)$ is a $(p,\AA)$-relaxed gradient of
  a $\mm$-measurable function $f\in L^0(X,\mm)$ if
  there exists a sequence $(f_n)_{n\in \N}\in \AA$ such that:
  \begin{enumerate}
  \item $f_n\to f$ in $\mm$-measure and
    $\lip f_n\to \tilde G$ weakly in $L^p(X,\mm)$;
  \item $\tilde G\le G$ $\mm$-a.e.~in $X$.  
  \end{enumerate}
  The minimal $(p,\AA)$-relaxed gradient of $f$ (denoted by
  $|\rmD f|_{\star,\AA}$) is the element of minimal $L^p$-norm among all
  the $(p,\AA)$-relaxed gradient of $f$.
  We will just write $|\rmD f|_\star$ if $\AA=\Lip_b(X)$.
\end{definition}
\begin{remark}
Notice that the minimal relaxed gradient $|\rmD f|_{\star,\AA}$ depends also on $p \in [1,+\infty)$, see e.g.~\cite{AGS11c, Bjorn-Bjorn11, HKST15}. Since it will be always clear from the context which value of $p$ we are considering (a general one or, in the second part of the paper, $p=2$), we omit to write explicitly this dependence.
\end{remark}

We collect in the following Theorem
the main properties of $|\rmD f|_{\star,\AA}$ we will extensively use.
\begin{theorem}
  \label{thm:omnibus}\ 
  \begin{enumerate}[\rm (1)]
  \item The set
    \begin{displaymath}
      S:=\Big\{(f,G)\in L^0(X,\mm)\times L^p(X,\mm):
      \text{$G$ is a $(p,\AA)$-relaxed gradient of $f$}\Big\}
    \end{displaymath}
    is convex and it is closed with respect to
    to the product topology of the convergence in $\mm$-measure and
    the weak convergence in $L^p(X,\mm)$.
    In particular, the restriction $S_q:=S\cap L^q(X,\mm)\times
    L^p(X,\mm)$ is weakly closed in
    $L^q(X,\mm)\times
    L^p(X,\mm)$ for every $q\in (1,+\infty)$.
  \item (Strong approximation) If
    $f \in L^0(X,\mm)$  has a $(p,\AA)$-relaxed gradient then
    $|\rmD
    f|_{\star,\AA}$ is well defined. If $f$ takes values in a closed
    (possibly unbounded) 
    interval $I\subset \R$ then
    there exists a sequence $f_n\in \AA$ with values in $I$
    such that
    \begin{equation}
      \label{eq:4}
      f_n \to f\text{ $\mm$-a.e.~in $X$},\quad
      \lip f_n\to |\rmD f|_{\star,\AA}\text{ strongly in }L^p(X,\mm).
    \end{equation}
    If moreover $f\in L^q(X,\mm)$ for some $q\in [1,+\infty)$ then 
    we can also find a sequence as in \eqref{eq:4} converging strongly
    to $f$ in $L^q(X,\mm)$.
  \item (Pointwise minimality)
    If $G$ is a $(p,\AA)$-relaxed gradient of
    $f \in L^0(X,\mm)$  then $|\rmD
    f|_{\star,\AA}\le G$ $\mm$-a.e.~in $X$.
  \item (Leibniz rule)
    If $f,g\in L^\infty(X,\mm)$ have $(p,\AA)$-relaxed gradient, then
    $h:=fg$ has $(p,\AA)$-relaxed gradient and
    \begin{equation}
      \label{eq:5}
      |\rmD (fg)|_{\star,\AA}\le |f|\,|\rmD g|_{\star,\AA}+
      |g|\,|\rmD f|_{\star,\AA} \quad\text{$\mm$-a.e.~in }X.
    \end{equation}
  \item (Sub-linearity)
    If $f,g \in L^0(X,\mm)$  have $(p,\AA)$-relaxed gradient  and $\alpha, \beta \in \R$,  then
    \begin{equation}
      \label{eq:6}
      |\rmD(\alpha f+\beta g)|_{\star,\AA}\le |\alpha|\,
      |\rmD f|_{\star,\AA}+
      |\beta|\,|\rmD g|_{\star,\AA} \quad\text{$\mm$-a.e.~in }X.
    \end{equation}
  \item (Locality)
    If $f \in L^0(X,\mm)$  has a $(p,\AA)$-relaxed gradient, then
    for any $\LL^1$-negligible Borel subset $N\subset \R$ we have
    \begin{equation}
      \label{eq:7}
      |\rmD f|_{\star,\AA}=0\quad\text{$\mm$-a.e.~on $f^{-1}(N)$}.
    \end{equation}
  \item (Chain rule)
    If $f \in L^0(X,\mm)$  has a $(p,\AA)$-relaxed gradient and $\phi\in \Lip(\R)$
    then $\phi\circ f$
    has $(p,\AA)$-relaxed gradient and
    \begin{equation}
      \label{eq:8}
      |\rmD (\phi\circ f)|_{\star,\AA}\le |\phi'(f)|\,|\rmD f|_{\star,\AA}  \quad\text{$\mm$-a.e.~in }X,
    \end{equation}
    and equality holds in \eqref{eq:8} if $\phi$ is monotone or
    $\rmC^1$.
    \item (Truncations) If $f_j \in L^0(X,\mm)$  has $(p,\AA)$-relaxed gradient, $j=1,\cdots,J$,
      then
      also the functions $f_+:=\max(f_1,\cdots,f_J)$ and
      $f_-:=\min(f_1,\cdots,f_J)$ have $(p,\AA)$-relaxed gradient
      and
      \begin{align}
        \label{eq:9}
        |\rmD f_+|_{\star,\AA}=|\rmD f_j|_{\star,\AA}&\quad\text{$\mm$-a.e.~on }
                                                       \{x\in
                                                       X:f_+=f_j\},\\
        |\rmD f_-|_{\star,\AA}=|\rmD f_j|_{\star,\AA}&\quad\text{$\mm$-a.e.~on }
                                                       \{x\in X:f_-=f_j\}.
      \end{align}
  \end{enumerate}
\end{theorem}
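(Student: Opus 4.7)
The plan is to establish the items in the stated order, viewing (1)--(3) as the analytic core and (4)--(8) as transfers of pointwise inequalities for $\lip$ on $\AA$ through the approximation procedure. For (1), convexity of $S$ follows at once from the pointwise sub-additivity $\lip(\lambda f+\mu g)\le|\lambda|\,\lip f+|\mu|\,\lip g$ on $\AA$ (which is itself a vector space as an algebra): convex combinations of approximating sequences lie in $\AA$, converge in measure to the convex combination of the limits, and their asymptotic Lipschitz constants are pointwise dominated by the corresponding convex combination, which converges weakly to the convex combination of the relaxed gradients. Closure under the product of $\mm$-measure convergence and weak $L^{p}$-convergence uses the metrizability of $L^0(X,\mm)$ via \eqref{eq:84}, together with a Mazur-type extraction: given $(f^{n},G^{n})\in S$ with $f^n\to f$ in measure and $G^{n}\rightharpoonup G$, pick approximants $(f^n_k)_k\subset\AA$ realizing the relaxed-gradient property for each $n$ and diagonalize into a single $\AA$-sequence $g_k\to f$ in measure with $\lip g_k$ weakly clustered below $G$.

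For (2), I would apply Mazur's lemma inside the uniformly convex space $L^{p}(X,\mm)$: starting from any $f_n\in\AA$ with $f_n\to f$ in measure and $\lip f_n\rightharpoonup|\rmD f|_{\star,\AA}$, choose convex combinations $g_n=\sum_k\alpha^n_k f_k\in\AA$ so that $\sum_k\alpha^n_k\lip f_k\to|\rmD f|_{\star,\AA}$ strongly in $L^p$. Pointwise sub-additivity of $\lip$ yields $\lip g_n\le\sum_k\alpha^n_k\lip f_k$; extracting a weakly convergent subsequence of $\lip g_n$ and invoking (3) forces the weak limit to equal $|\rmD f|_{\star,\AA}$, after which the squeeze between weak convergence and convergence of $L^p$-norms combined with uniform convexity upgrades to strong convergence. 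For values in a closed interval $I$, I would enforce the constraint by composing each $g_n$ with a $1$-Lipschitz retraction $T_I\colon\R\to I$, approximated uniformly on compacts by polynomials via Stone--Weierstrass so that the resulting functions lie in $\AA$ while their $\lip$-constants do not increase; a subsequence gives $\mm$-a.e.\ convergence, and $L^q$-convergence for $f\in L^q$ follows by dominated convergence after truncation.

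The crux is (3), pointwise minimality. I would first establish a lattice property: if $G_1,G_2$ are $(p,\AA)$-relaxed gradients of $f$, then for every Borel set $A\subset X$ the function $G_A:=\chi_A G_1+\chi_{X\setminus A}G_2$ is as well. To show this I would glue the two approximating sequences by cutting and pasting based on a mollified indicator of $A$, use (1) to close under the weak-$L^p$/measure topology, and conclude that $G_A\in S$. Specializing $A=\{G_1\le G_2\}$ yields $\min(G_1,G_2)\in S$. Since the set of relaxed gradients is convex and closed and $L^p$ is uniformly convex, the minimum-norm element $|\rmD f|_{\star,\AA}$ is unique, and the lattice property then forces $|\rmD f|_{\star,\AA}\le G$ $\mm$-a.e.\ for every relaxed gradient $G$, for otherwise replacing $|\rmD f|_{\star,\AA}$ by $\min(|\rmD f|_{\star,\AA},G)$ would strictly decrease the $L^p$-norm.

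The calculus rules (4)--(8) will then be routine propagation arguments. Sub-linearity (5) transports the pointwise sub-additivity of $\lip$ on $\AA$ through the strong approximation (2). Leibniz (4) transports the pointwise inequality $\lip(fg)\le|f|\,\lip g+|g|\,\lip f$ after truncating $f,g$ at $\|f\|_\infty,\|g\|_\infty$ to preserve $L^\infty$-bounds. The chain rule (7) is proved first for $\phi\in\rmC^1$ by approximating $\phi$ uniformly on compacts with polynomials $P_k$, so that $P_k\circ f_n\in\AA$ whenever $f_n\in\AA$ and $\lip(P_k\circ f_n)\le|P_k'(f_n)|\,\lip f_n$; a double limit then yields the bound, with equality in the $\rmC^1$ case coming from comparing both directions, and the general Lipschitz case follows by smoothing $\phi$. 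Locality (6) is a direct consequence of the chain rule applied to cutoffs vanishing near $N$, while truncations (8) are obtained from the chain rule via smooth approximations of $\max$ and $\min$. The main obstacle throughout is retaining membership in the proper subalgebra $\AA$ after nonlinear operations; this is systematically resolved by Stone--Weierstrass polynomial approximation of the outer function, exploiting that the inner function is bounded, combined with the closure of $\AA$ under sums and products.
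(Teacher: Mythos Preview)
Your overall architecture matches the standard development in the references the paper cites (the paper itself gives only pointers to \cite{AGS14I,Savare22} plus three technical remarks, so there is no independent argument to compare against). The genuine gap is in (3). ``Cutting and pasting based on a mollified indicator of $A$'' does not work as stated: there is no mollification on an abstract Polish space, and even a $\sfd$-Lipschitz replacement for $\chi_A$ has no reason to lie in the proper subalgebra $\AA$. This is exactly the step the paper singles out in its second technical remark: the fix is Stone--Weierstrass applied \emph{on $X$}, not on $\R$. Because $\AA$ is unital and separating, $\AA|_K$ is uniformly dense in $\rmC(K)$ for every compact $K\subset X$, and together with tightness of $\mm$ this produces $[0,1]$-valued functions $\chi\in\AA$ approximating $\chi_A$ in $L^p(X,\mm)$ (cf.~\eqref{eq:isdense}). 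Your Stone--Weierstrass invocations elsewhere are all polynomial approximation of an outer function on $\R$; that device does not manufacture cutoffs on $X$. Even once such $\chi$ are available, the naive gluing $g_n=\chi_n f_n^1+(1-\chi_n)f_n^2$ creates the Leibniz error $|f_n^1-f_n^2|\,\lip\chi_n$, which blows up if $\chi_n$ sharpens with $n$. The actual argument is a two-step limit: first fix $\chi\in\AA$ with $0\le\chi\le 1$ and send $n\to\infty$ (then $\lip\chi$ is bounded and $f_n^1-f_n^2\to 0$, so the error term vanishes and one obtains that $\chi G_1+(1-\chi)G_2$ is a relaxed gradient of $f$); only afterwards let $\chi\to\chi_A$ in $L^p$ and invoke the closure in (1).

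A smaller issue sits in (1): your diagonal extraction asserts that $\lip g_k$ is ``weakly clustered below $G$'', but from $\tilde G^n\le G^n$ and $G^n\rightharpoonup G$ you \emph{cannot} conclude that a weak cluster point of $(\tilde G^n)$ lies below $G$; pointwise order does not survive weak limits on both sides. The standard cure is to prove convexity of $S$ first, then strong closedness (there $G^n\to G$ strongly, so $\tilde G^n-G^n\le 0$ does pass to the weak limit), and deduce weak closedness from convexity. Finally, in (2) do not invoke (3) to identify the weak limit $H$: the pointwise bound $H\le|\rmD f|_{\star,\AA}$ together with norm-minimality of $|\rmD f|_{\star,\AA}$ already forces $H=|\rmD f|_{\star,\AA}$, so no circularity is needed.
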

\begin{remark} Notice that the product in \eqref{eq:8} is well defined since there
  exists a $\Leb 1$-negligible Borel set $N\subset \R$ such that
  $\phi$ is differentiable in $\R\setminus N$  and
  $|\rmD f|_{\star, \AA}$ vanishes $\mm$-a.e.~in $f^{-1}(N)$ thanks to
  the locality property \eqref{eq:7}.
\end{remark}
\begin{proof}
  We give a few references for the proofs.
  The case when $p=2$, $\AA=\Lip_b(X)$ and
  the local slope of $f$ is used to define relaxed gradients
  have been considered in \cite[Sec.~4]{AGS14I}, whose proof
  generalizes easily to the case $p\in (1,\infty)$ and 
  the asymptotic Lipschitz constant \eqref{eq:1}, see also \cite{AGS13}.

  The definition and the properties involving a general unital
  subalgebra $\AA$ have been discussed in \cite[Sec.~3]{Savare22}:
  points (1,2) correspond to Lemma 3.1.6 and Corollary 3.1.9,
  (3) has been stated in Lemma 3.1.11, (4) refers to Corollary 3.1.10,
  (5,6,7,8) are proved in Theorem 3.1.12 and its Corollary 3.1.13.

  Let us make three further technical comments:
  \begin{itemize}
  \item both \cite{AGS14I,Savare22} involve an auxiliary topology
    $\tau$:
    in the present case, being $X$ complete and separable
    and $\sfd$ a canonical metric (thus $\sfd$ only take finite
    values),
    we can select $\tau$ as the (Polish) topology induced by $\sfd$.
  \item In order to deal with \emph{extended} distances, in
    \cite{Savare22} has also been assumed that the
     unital algebra $\AA$ 
    satisfies the \emph{stronger} compatibility condition
    \begin{equation}
      \label{eq:11}
      \sfd(x,y)=\sup\Big\{f(x)-f(y):
      f\in \AA,\ \Lip(f, X)\le 1\Big\},
    \end{equation}
    which clearly implies that $\AA$ separates the points of $X$ as in \eqref{eq:116}.
    However, such a property is not needed in the construction and the
    proofs of
    Section 3.1.1 of \cite{Savare22}.
     The only point where \eqref{eq:11} explicitly occurs is in
    the proof of Locality \cite[Lemma 3.1.11]{Savare22}, to ensure
    that the restriction of $\AA$ to each compact set $K\subset X$ is
    uniformly dense in $\rmC(K)$, a property which is guaranteed 
    in the present setting  by
    \eqref{eq:116} thanks to Stone-Weierstrass Theorem.
  \item The standard approach of \cite{AGS14I,Savare22}
    considers first functions $f$ belonging to $L^p(X,\mm)$ instead of
    general $\mm$-measurable functions.
    However, the compatibility with truncations showing that
    for every $k>0$
    \begin{equation}
      \label{eq:10}
      |\rmD \,T_k(f)|_{\star,\AA}(x)=
      \begin{cases}
        |\rmD f|_{\star,\AA}(x)&\text{if }|f(x)|< k,\\
        0&\text{if }|f(x)|\ge k,
      \end{cases}
      \qquad
      T_k(f):=-k\lor f\land k,
    \end{equation}
    and the possibility to find strong approximations of $T_k(f)  \in L^p(X, \mm)$  (recall that $\mm$ is finite)  satisfying
    \eqref{eq:4} and taking values in $[-k,k]$
    (see  \cite[Cor 2.1.24, Cor.~3.1.9]{Savare22} where an approximation argument involving odd polynomials is implemented)
     allow for a standard extension of the theory from
     $L^p(X,\mm)$ to
    $L^0(X,\mm)$, see also the discussion related to (4.16) of
    \cite{AGS14I}.  Notice also that, from a metric point of view, there is no reason to couple the integrability of a function $f$ and the one of its minimal relaxed gradient $|\rmD f|_{\star, \AA}$. Also the choice of working in $L^0(X, \mm)$ gives more flexibility and in particular allows to treat the distance function $\sfd_y$ from one point $y \in X$ without imposing any integrability condition. This will be crucial in the rest of this paper (see e.g.~Theorem \ref{theo:startingpoint}).
    \qedhere
  \end{itemize}
\end{proof}
 Starting from Definition \ref{def:relgrad} and using the
properties of Theorem \ref{thm:omnibus} it is natural to introduce the
following notions. 
\begin{definition}[Cheeger energy and Sobolev space]
   We call $D^{1,p}(X,\sfd,\mm;\AA)$ the set of functions in  $L^0(X,\mm)$ with
  a $(p,\AA)$-relaxed gradient and we set
  \begin{equation}
    \label{eq:3}
    \CE_{p,\AA}(f):=\int_X |\rmD f|_{\star,\AA}^p(x)\,\d\mm(x)\quad
    \text{for every $f\in
  D^{1,p}(X,\sfd,\mm;\AA)$},
  \end{equation}
  with $\CE_{p,\AA}(f):=+\infty$ if $f\not\in D^{1,p}(X,\sfd,\mm;\AA).$
  The Sobolev space $H^{1,p}(X,\sfd,\mm;\AA)$ is defined as
  $L^p(X,\mm)\cap D^{1,p}(X,\sfd,\mm;\AA)$
  and 
  it is a Banach space with the norm
  $\|f\|_{H^{1,p}(X,d,\mm;\AA)}^p:=\|f\|_{L^p}^p+\CE_{p, \AA}(f)$.
  As usual, we will write $D^{1,p}(X,\sfd,\mm), \CE_p(f)$, $H^{1,p}(X,\sfd,\mm)$ and $\|f\|_{H^{1,p}}$ when $\AA=\Lip_b(X)$. 
\end{definition}
\begin{remark}[Cheeger energy as relaxation of the pre-Cheeger
  energy] \label{rem:relprec}
  We can equivalently define the Cheeger energy $\CE_{p,\AA}$ as  a sort of 
   $L^0$-lower semicontinuous relaxation of the restriction to
  $\AA$ of the \emph{pre-Cheeger energy} $\pCE_p$,
   the latter being defined as
\begin{equation}\label{eq:prec} \pCE_{p}(f) := \int_X (\lip f)^p \,\d \mm, \quad f \in \Lip_b(X).
\end{equation}
 In other words, for every $f \in L^0(X,\mm)$ it holds (\cite[Corollary 3.1.7]{Savare22})
\begin{equation}\label{eq:relpre}
  \CE_{p,\AA}(f) = \inf \left \{ \liminf_{n \to + \infty}
    \pCE_{p}(f_n) : f_n \in \AA, \, f_n \to f \text{ in } L^0(X,\mm)
  \right \}.
\end{equation}
In particular the functional $\CE_{p,\AA}$ is lower semicontinuous in
$L^0(X,\mm)$.
Here the choice of the $L^0$-topology
does not play a crucial role, since, by Theorem \ref{thm:omnibus}(2), the restriction of $\CE_{p,\AA}$ to $L^q(X,\mm)$, $q\in [1,\infty)$,
can be equivalently obtained as $L^q$-relaxation:
\begin{equation}\label{eq:relpreq}
  \CE_{p,\AA}(f) = \inf \left \{ \liminf_{n \to + \infty}
    \pCE_{p}(f_n) : f_n \in \AA, \, f_n \to f \text{ in } L^q(X,\mm)
  \right \},
    \quad f\in L^q(X,\mm).
\end{equation}
 Notice also that, when $\mm$ has not full support, two different elements $f_1 , f_2 \in \AA$ may give rise to the same equivalence class in $L^0(X,\mm)$. In this case, $\CE_{p,\AA}$ can be equivalently defined as the $L^0$-lower semicontinuous relaxation of the functional 
\[ \widetilde{\pCE}_p(f):= \inf \left \{ \pCE_p(g) : g \in \AA, \, g = f \text{ $\mm$-a.e.} \right \}, \quad f \in \AA_\mm,\]
where $\AA_\mm$ is the quotient of $\AA$ with respect to equality $\mm$-a.e.. 
\end{remark}
It is clear that we have the obvious implication for $f \in L^0(X, \mm)$:
\begin{equation}
  \label{eq:13}
  \text{$f$ has a $(p,\AA)$-relaxed gradient}\quad
  \Rightarrow
  \quad
  \left\{
  \begin{aligned}
    &\text{$f$ has a $(p,\Lip_b(X))$-relaxed gradient and}\\
    &|\rmD
    f|_\star\le |\rmD f|_{\star,\AA} \text{ $\mm$-a.e.~in $X$}.
  \end{aligned}
  \right.
\end{equation}
The converse implication
together with the identity $|\rmD
f|_\star= |\rmD f|_{\star,\AA} $
is an important density property for an algebra $\AA$:
by Theorem \ref{thm:omnibus}(2), it is equivalent to the following
property.
\begin{definition}[Density in energy of a subalgebra of Lipschitz functions]
  \label{def:density}
  We say that a subalgebra $\AA\subset \Lip_b(X)$ is \emph{dense in $p$-energy}
  if for every $f\in L^0(X,\mm)$ with a $p$-relaxed gradient 
  there exists a sequence $(f_n)_{n\in \N}$ satisfying
  \begin{equation}
    \label{eq:4bis}
    f_n\in \AA,\quad
    f_n \to f\text{ $\mm$-a.e.~in $X$},\quad
    \lip f_n\to |\rmD f|_{\star}\text{ strongly in }L^p(X,\mm).
  \end{equation}
  When $\AA$ is unital and separating, this is equivalent to the fact
  that $f$
  has a $(p,\AA)$-relaxed gradient and
  \begin{equation}
    \label{eq:15}
    |\rmD f|_{\star,\AA}=|\rmD f|_{\star}\quad\text{$\mm$-a.e.~in $X$}.
  \end{equation}
  In particular $D^{1,p}(X,\sfd,\mm;\AA)=D^{1,p}(X,\sfd,\mm)$.
\end{definition}
\begin{remark}[Comparison with the Newtonian approach]
  \label{rem:N}
  By the identification \eqref{eq:145} when $\AA=\Lip_b(X)$ we always
  have
  \begin{equation}
    \label{eq:72}
    H^{1,p}(X,\sfd,\mm)=\hat N^{1,p}(X,\sfd,\mm),\quad
    |\rmD f|_N=|\rmD f|_{ \star  
}\quad\text{for every }f\in \hat N^{1,p}(X,\sfd,\mm).
  \end{equation}
  If $\AA$ is dense in $p$-energy and $f\in \hat N^{1,p}(X,\sfd,\mm)$
  we thus obtain
  \begin{equation}
    \label{eq:135}
    |\rmD f|_{\star,\AA}=|\rmD f|_{\star}=|\rmD f|_N\quad\text{$\mm$-a.e.~in $X$}.
  \end{equation}
  Notice that \eqref{eq:135} and \eqref{eq:36} immediately yield the
  uniform upper bound in terms of the pointwise Lipschitz constant
  \begin{equation}
    \label{eq:43}
    \text{if $f$ is Lipschitz then}\quad
    |\rmD f|_{\star,\AA}\le |\rmD f|\quad \text{$\mm$-a.e.~in $X$}.
  \end{equation}
\end{remark}
\begin{remark}
  \label{rem:dense2}
  As we already mentioned in Remark \ref{rem:relprec}, the choice of
  arbitrary measurable maps $f\in L^0(X,\mm)$
  in Definition \ref{def:density}  and of the pointwise
  $\mm$-a.e.~convergence in \eqref{eq:4bis} is not restrictive:
  a simple truncation argument (which can be implemented by using odd
  polynomials, see \cite[Corollary 2.1.24]{Savare22}) shows that 
  $\AA$ is dense in $p$-energy if and only if
  for every $f\in L^p(X,\mm)$ with a $p$-relaxed gradient 
  there exists a sequence $(f_n)_{n\in \N}$ satisfying
  \begin{equation}
    \label{eq:4tris}
    f_n\in \AA,\quad
    f_n \to f\text{ in $L^p(X,\mm)$},\quad
    \lip f_n\to |\rmD f|_{\star}\text{ strongly in }L^p(X,\mm).
  \end{equation}
  If $\AA$ is unital and separating this is equivalent to
  $H^{1,p}(X,\sfd,\mm;\AA)=H^{1,p}(X,\sfd,\mm)$ with equal norms.  
\end{remark}
A first sufficient condition to obtain the density in energy of a subalgebra $\AA$,  in the more general framework of
extended topological metric measure spaces, 
is provided by the
compatibility condition \eqref{eq:11}
\cite[Theorems 3.2.7, 5.3.1]{Savare22}
(see also \cite{Ambrosio-Stra-Trevisan17}
for the algebra generated by truncated distance functions).

 In the present Polish setting,
 we notice that
 \eqref{eq:4bis} (and, a fortiori, \eqref{eq:11})
 implies the weaker condition
\begin{equation}
  \label{eq:14}
  \text{for every $y\in X$ the function
    $\sfd_y:x\mapsto \sfd(x,y)$
    has $(p,\AA)$-relaxed gradient $1$,}
\end{equation}
which is equivalent, thanks to Theorem \ref{thm:omnibus}( 3), to 
\begin{equation}
  \label{eq:2}
  |\rmD \sfd_y|_{\star,\AA}\le 1\quad\text{$\mm$-a.e.~in $X$}.
\end{equation}
In fact,
using the truncations \eqref{eq:10}, each function $\sfd_y$ can be approximated by the increasing sequence
$f_k:=T_k \sfd_y$ of  bounded  $1$-Lipschitz maps, so that
\begin{equation}
  \label{eq:90}
  |\rmD \sfd_y|_\star\le 1\quad\text{$\mm$-a.e.~in $X$ for every $y\in X$},
\end{equation}
and therefore \eqref{eq:4bis} yields \eqref{eq:2}.
  \begin{remark}[The effect of truncations]
    \label{rem:trunc}
    The $(p,\AA)$-relaxed gradient is not affected by truncations of
    the distance functions, in particular it is not restrictive
      to assume $\sfd$ bounded above by a constant, e.g.~$1$\emph{}.
      In fact, if we introduce  a parameter $a>0$ and the truncated distance
    \begin{equation}
      \label{eq:41}
      \sfd_{ a}(x_1,x_2):=\sfd(x_1,x_2)\land a\quad\text{for every
      }x_1,x_2\in X,
    \end{equation}
    $(X,\sfd_{ a} )$ is still a complete and
    separable metric space, the sets $\Lip_b(X,\sfd)$ and
    $\Lip_b(X,\sfd_{ a} )$ coincide, and it is easy to check that 
    \begin{equation}
      \lip_{\sfd} f=\lip_{\sfd_{ a}}f\quad
      \text{for every bounded and Lipschitz function $f$}. 
  \label{eq:42}  
  \end{equation}
  We deduce that $\sfd$ and $\sfd_{ a} $ induce the same $(p,\AA)$-relaxed
  gradient.
  Notice moreover that using  \eqref{eq:41} we can also easily cover the case of
  extended distances (i.e.~possibly assuming the value $+\infty$),
  \emph{provided $(X,\sfd_a)$ is a separable metric space}.
  The case when $(X,\sfd_a)$ is not separable requires a more refined
  setting involving an auxiliary topology $\tau$ \cite{Savare22}.
\end{remark}
It is possible to express \eqref{eq:2} in a more flexible way,
by using suitable nonlinear functions of $\sfd_y$.
We state a general result.
\begin{lemma}
  \label{le:truncations}
  Let $I=(a,b)$ be an interval (possibly unbounded) of $\R$ and
  let $\zeta:\R\to \R$ be a Lipschitz and nondecreasing map satisfying
\begin{equation}
  \label{eq:153}
    \text{the restriction of $\zeta$ to $I$ is
      of class $\rmC^1$ with $\zeta'(s)>0$ if $s\in I$.}
\end{equation}
If $f:X\to \overline I$ is a Borel function, then the condition
\begin{equation}
  f\in D^{1, p  }( X,\sfd,\mm;\AA),\quad
  |\rmD f|_{\star,\AA}\le 1
  \label{eq:188}
\end{equation}
is equivalent
to 
  \begin{equation}
      \label{eq:2zeta}
      \zeta\circ f\in D^{1, p }( X,\sfd,\mm;\AA),\quad
      \big|\rmD (\zeta\circ f)\big|_{\star,\AA}(x)\le \zeta'(f(x))\quad\text{for $\mm$-a.e.~$x\in X$}.
    \end{equation}
\end{lemma}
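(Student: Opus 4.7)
The plan is to split the equivalence into the two implications. The forward direction \eqref{eq:188}$\Rightarrow$\eqref{eq:2zeta} is essentially a one-line application of the chain rule of Theorem \ref{thm:omnibus}(7) to the Lipschitz function $\zeta$ and the map $f$: it gives $\zeta\circ f\in D^{1,p}(X,\sfd,\mm;\AA)$ with
\[
|\rmD(\zeta\circ f)|_{\star,\AA}\le |\zeta'(f)|\,|\rmD f|_{\star,\AA}\le \zeta'(f)\cdot 1\quad\text{$\mm$-a.e.,}
\]
where we used that $\zeta$ is nondecreasing (so $\zeta'\ge 0$ where defined) together with the bound $|\rmD f|_{\star,\AA}\le 1$. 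Note that $\zeta'(f)\le \Lip(\zeta)$ belongs to $L^p(X,\mm)$ since $\mm$ is finite; at the (at most two) endpoints of $\bar I$ where $\zeta$ might fail to be differentiable, locality (Theorem \ref{thm:omnibus}(6)) forces $|\rmD f|_{\star,\AA}=0$ on the preimage, so the right-hand side is unambiguous.

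For the converse \eqref{eq:2zeta}$\Rightarrow$\eqref{eq:188}, set $g:=\zeta\circ f$ and pick sequences $a_n\downarrow a$, $b_n\uparrow b$ with $[a_n,b_n]\subset I$. By \eqref{eq:153} the restriction $\zeta|_{[a_n,b_n]}$ is a $\rmC^1$-diffeomorphism with $\zeta'\ge c_n:=\min_{[a_n,b_n]}\zeta'>0$, so $\zeta^{-1}$ is $(1/c_n)$-Lipschitz on $[\zeta(a_n),\zeta(b_n)]$. Extend it to a globally Lipschitz map by truncating its argument:
\[
\eta_n(t):=\zeta^{-1}\bigl(\min\{\zeta(b_n),\max\{\zeta(a_n),t\}\}\bigr),\qquad t\in\R.
\]
Applying Theorem \ref{thm:omnibus}(7) to $g$ and $\eta_n\in\Lip(\R)$ yields $f_n:=\eta_n\circ g\in D^{1,p}(X,\sfd,\mm;\AA)$ with $|\rmD f_n|_{\star,\AA}\le |\eta_n'(g)|\,|\rmD g|_{\star,\AA}$ $\mm$-a.e. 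A direct computation, based on the strict monotonicity of $\zeta$ on $I$, shows on one hand that $f_n=(f\lor a_n)\land b_n$ (so $f_n$ is a pointwise truncation of $f$) and on the other that $\eta_n'(g)=\mathbf 1_{\{a_n<f<b_n\}}/\zeta'(f)$ $\mm$-a.e.; combining with the hypothesis $|\rmD g|_{\star,\AA}\le\zeta'(f)$ gives
\[
|\rmD f_n|_{\star,\AA}\le \mathbf 1_{\{a_n<f<b_n\}}\le 1\quad\text{$\mm$-a.e.}
\]

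To conclude, I would pass to the limit $n\to\infty$. The truncations $f_n$ converge pointwise everywhere to $f$ (hence in $\mm$-measure, i.e.~in $L^0(X,\mm)$) and the sequence $\{|\rmD f_n|_{\star,\AA}\}_n$ is bounded in $L^p(X,\mm)$ by the constant function $1$, using finiteness of $\mm$. By weak $L^p$-compactness, extract along a subsequence $|\rmD f_n|_{\star,\AA}\rightharpoonup G$ in $L^p(X,\mm)$; the convex set $\{h\in L^p(X,\mm):h\le 1\text{ $\mm$-a.e.}\}$ is strongly closed, hence weakly closed, so $G\le 1$ $\mm$-a.e. The closedness of the set of $(p,\AA)$-relaxed gradient pairs (Theorem \ref{thm:omnibus}(1)) then guarantees that $G$ is itself a $(p,\AA)$-relaxed gradient of $f$, and pointwise minimality (Theorem \ref{thm:omnibus}(3)) gives $|\rmD f|_{\star,\AA}\le G\le 1$. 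The main obstacle to a one-step chain-rule argument in the converse direction is precisely that $\zeta^{-1}$ need \emph{not} be globally Lipschitz on $\zeta(I)$: its derivative may blow up approaching the endpoints of $I$ whenever $\zeta'$ vanishes there, and the truncation scheme above is the natural device to bypass this.
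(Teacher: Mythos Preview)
Your proof is correct and follows essentially the same route as the paper's: both directions use the chain rule of Theorem~\ref{thm:omnibus}(7), and for the converse you construct the same Lipschitz local inverse $\eta_n$ (the paper calls it $\psi_n$) by truncating the argument of $\zeta^{-1}$ to $[\zeta(a_n),\zeta(b_n)]$, obtain $f_n=(f\lor a_n)\land b_n$ with $|\rmD f_n|_{\star,\AA}\le 1$, and pass to the limit via the closedness of the set of relaxed-gradient pairs in Theorem~\ref{thm:omnibus}(1). Your limit passage is spelled out more explicitly (weak $L^p$-compactness and weak closedness of $\{h\le 1\}$), while the paper is terser and additionally invokes locality and the truncation property~(8) to justify the pointwise bound on the level sets; your direct use of the chain-rule inequality together with locality on the two-point set $\{\zeta(a_n),\zeta(b_n)\}$ achieves the same end.
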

\begin{proof}
  It is clear that if $|\rmD f|_{\star,\AA}\le 1$ then
  \eqref{eq:2zeta} holds, thanks to \eqref{eq:8}.
  In order to prove the converse implication, we consider a strictly
  decreasing sequence $a_n\downarrow a$, a strictly increasing
  sequence $b_n\uparrow b$ 
  and nondecreasing and bounded Lipschitz functions $\psi_n:\R\to
  \R$ 
  such that
  \begin{displaymath}
    \psi_n(z)=a_n\ \text{if }z<\zeta(a_n),\quad
    \psi_n(\zeta(s))=s\ \text{for every }s\in [a_n,b_n],\quad
    \psi_n(z)=b_n\ \text{if }z>\zeta(b_n).
  \end{displaymath}
  The
  restriction of $\psi_n$ to the interval $[\zeta(a_n),\zeta(b_n)]$ is of
  class $\rmC^1$.

  Setting $
   h(x):=\zeta(f(x))$,
   the Chain rule \eqref{eq:8} yields
  \begin{displaymath}
    |\rmD (\psi_n\circ h)|_{\star,\AA}(x)\le (\psi_n'\circ h)\, |\rmD
    h|_{\star,\AA}(x)\le
    (\psi_n'\circ \zeta(f(x))) \zeta'(f(x)).
  \end{displaymath}
  Since $\psi_n(h(x))= a_n\lor f(x)\land b_n$, 
  the locality property \eqref{eq:7}, the truncation property
  \ref{thm:omnibus}(8), and the fact that
  $\psi_{ n  }'(\zeta(s))\zeta'(s)=1$ if $s\in
  [a_n,b_n]$ yield
    \begin{equation}
      \label{eq:17}
      |\rmD (\psi_n\circ h)|_{\star,\AA}\le1 \quad\text{$\mm$-a.e.}
    \end{equation}
    Since $\psi_n\circ h\to f$ pointwise in $X$  as $n\to\infty$, passing to the limit
    in
    \eqref{eq:17} we get $|\rmD f|_{\star,\AA}\le 1$.
  \end{proof}
  \begin{remark}
      \label{rem:powers}
      Thanks to Lemma \ref{le:truncations},
      if $\sfd$ is a bounded metric and $q>1$,
  \eqref{eq:2} is equivalent to
  \begin{equation}
      \label{eq:2q}
      |\rmD \sfd^q_y|_{\star,\AA}(x)\le q\, \sfd_y^{q-1}(x)\quad\text{for $\mm$-a.e.~$x\in X$}.
    \end{equation}
    In particular, if \eqref{eq:2q} holds  for  some $q \ge 1$, it
    holds for any $q \ge 1$.
  \end{remark}
\subsection{A density result}
\label{subsec:density}
We have seen that in the present setting of Polish spaces,
condition \eqref{eq:2}
 (or, equivalently, \eqref{eq:2zeta} for some admissible truncation
 satisfying \eqref{eq:153})
 is a necessary condition for the validity of
 the approximation property
\eqref{eq:4bis} and of the identification $|\rmD f|_\star=|\rmD f|_{\star,\AA}$.
We want to show that \eqref{eq:2} or \eqref{eq:2zeta} are also 
\emph{sufficient} conditions. 

\begin{theorem}\label{theo:startingpoint}
  Let $(X,\sfd,\mm)$ be a Polish metric measure space,
  let $Y\subset X$ be a dense subset,
  and let $\AA$ be a unital separating subalgebra of $\Lipb(X)$
  as in \eqref{eq:116}.
  If 
\begin{equation}\label{eq:214-15}
  \text{for every } y \in { Y}
  \text{ it holds}\quad
  \sfd_y\in D^{1,p  }( X,\sfd,\mm;\AA),\quad
  \big|\rmD \sfd_y\big|_{\star,\AA}\le 1
\end{equation}
then
$\AA$ is dense in $p$-energy according to Definition
\ref{def:density}.
  \end{theorem}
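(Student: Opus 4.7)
My plan is to reduce the theorem to the following pointwise estimate:
\begin{equation}\label{eq:starkey}
|\rmD f|_{\star,\AA}(x)\le \lip f(x)\quad \text{for $\mm$-a.e.~$x\in X$ and every } f\in \Lipb(X).
\tag{$\star$}
\end{equation}
Once \eqref{eq:starkey} is available, the general density property follows by a standard diagonal procedure: given $f\in L^0(X,\mm)$ with a $p$-relaxed gradient, the classical density result for $\AA=\Lipb(X)$ of \cite{AGS14I,AGS13} produces $f_m\in \Lipb(X)$ with $f_m\to f$ in $\mm$-measure and $\lip f_m\to |\rmD f|_\star$ strongly in $L^p(X,\mm)$; by \eqref{eq:starkey} each $f_m$ has a $(p,\AA)$-relaxed gradient bounded by $\lip f_m$, and passing to the limit in the graph $S$ using Theorem \ref{thm:omnibus}(1) yields $|\rmD f|_{\star,\AA}\le |\rmD f|_\star$, hence equality, and thus \eqref{eq:4bis} by Theorem \ref{thm:omnibus}(2).

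\textbf{Stage 1: extension of the hypothesis to every $y\in X$.} Given $y\in X$, choose $y_n\in Y$ with $\sfd(y_n,y)\to 0$; then $\sfd_{y_n}\to \sfd_y$ uniformly, in particular in $L^0(X,\mm)$. Each $\sfd_{y_n}$ has $(p,\AA)$-relaxed gradient equal to the constant function $1$, which belongs to $L^p(X,\mm)$ since $\mm$ is finite. The closure of the graph $S$ in the product of $L^0$-convergence and $L^p$-weak convergence (Theorem \ref{thm:omnibus}(1)) gives $|\rmD \sfd_y|_{\star,\AA}\le 1$ $\mm$-a.e. Applying Lemma \ref{le:truncations}/Remark \ref{rem:powers} with $\zeta(s)=s^2/2$ (truncated to a Lipschitz function on $\R$) yields the scaled bound $|\rmD \sfd_y^2|_{\star,\AA}(x)\le 2\sfd(x,y)$ $\mm$-a.e.

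\textbf{Stage 2: Moreau--Yosida / Hopf--Lax envelopes lie in the graph $S$.} Fix $f\in\Lipb(X)$, $t>0$, and a countable dense $D\subset X$. By sublinearity (Theorem \ref{thm:omnibus}(5)) and Stage 1, each affine-in-$\sfd^2$ summand $x\mapsto f(y)+\tfrac{1}{2t}\sfd^2(x,y)$ has $(p,\AA)$-relaxed gradient $\le \tfrac{1}{t}\sfd(x,y)$, so by the truncation rule (Theorem \ref{thm:omnibus}(8)) the finite infimum
\[
f_{t,N}(x):=\min_{y\in D_N}\Bigl(f(y)+\tfrac{1}{2t}\sfd^2(x,y)\Bigr),\qquad D_N\uparrow D,
\]
satisfies $|\rmD f_{t,N}|_{\star,\AA}(x)\le \tfrac{1}{t}\sfd(x,y^{\ast}_{t,N}(x))$ $\mm$-a.e., where $y^\ast_{t,N}(x)\in D_N$ realizes the minimum. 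As $N\to\infty$, $f_{t,N}$ decreases pointwise to the Hopf--Lax regularization $f_t(x):=\inf_{y\in X}(f(y)+\tfrac{1}{2t}\sfd^2(x,y))$, and the uniform bound $\sfd(x,y^{\ast}_{t,N}(x))\le 2t\Lip(f,X)$ combined with the graph closure of $S$ transfers the gradient estimate to $f_t$ itself.

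\textbf{Stage 3: passage $t\downarrow 0$.} Since $f\in\Lipb(X)$, standard Hopf--Lax estimates give $f_t\to f$ uniformly as $t\downarrow 0$. The asymmetric Hamilton--Jacobi analysis for Hopf--Lax in metric spaces developed in \cite[Sec.~3]{AGS14I} shows that the gradient bound $\tfrac{1}{t}\sfd(x,y_t^\ast(x))$ converges, in the weak $L^p$ sense required by the graph closure, to $\lip f(x)$. A diagonal extraction in $(t,N)$, together with the approximation \eqref{eq:isdense} of each $f_{t,N}$ by elements of $\AA$, produces $f_n\in\AA$ with $f_n\to f$ in $L^0(X,\mm)$ and $\lip f_n$ weakly $L^p$-converging to a limit $\le \lip f$, establishing \eqref{eq:starkey}.

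The main obstacle is the sharp passage in Stage 3: the elementary bound only gives $\tfrac{1}{t}\sfd(x,y^\ast_t(x))\le 2\Lip(f,X)$, and identifying the weak $L^p$-limit precisely with $\lip f(x)$ (rather than a constant multiple) rests on the Hamilton--Jacobi inequality satisfied by $f_t$ in the metric setting. This is the only substantive analytic ingredient of the proof; Stages 1 and 2 are purely algebraic consequences of the closure, sublinearity, and truncation properties from Theorem \ref{thm:omnibus}, coupled with the scaling Lemma \ref{le:truncations}.
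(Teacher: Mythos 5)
Your overall strategy --- Hopf--Lax regularization of a Lipschitz function, a relaxed-gradient bound via discrete infima of scaled distance functions and the truncation, sublinearity, and closure properties of Theorem~\ref{thm:omnibus}, then a Hamilton--Jacobi dissipation argument as $t\downarrow 0$ --- is exactly the paper's. There is, however, a genuine gap in the choice of exponent. The regularization you write, $f_t(x)=\inf_y\bigl(f(y)+\tfrac1{2t}\sfd^2(x,y)\bigr)$, is the quadratic ($q=2$) Hopf--Lax semigroup, and your relaxed-gradient bound is $|\rmD f_t|_{\star,\AA}\le t^{-1}\sfd(x,y_t^\ast(x))$. The metric Hamilton--Jacobi dissipation identity controls $\int_0^1\int_X \bigl(\tfrac{\sfD^+_{rt}f}{rt}\bigr)^2\,\d\mm\,\d r$ by $\int_X(\lip f)^2\,\d\mm$, i.e.\ an $L^2$-estimate on the bounding function; but to run graph closure, weak compactness, and minimality of the $(p,\AA)$-relaxed gradient you need an $L^p$-estimate on it. For $p>2$ the $L^2$-bound gives nothing, and for $p<2$ it is not sharp, so your argument closes only when $p=2$. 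The correct choice, as in the paper, is the $q$-Hopf--Lax $\sfQ_t f(x)=\inf_y\bigl(f(y)+\tfrac1{qt^{q-1}}\sfd^q(x,y)\bigr)$ with $q$ the conjugate exponent of $p$; the relaxed-gradient bound is then $\bigl(t^{-1}\sfD^+_t f\bigr)^{q-1}$, whose $p$-th power is precisely $\bigl(t^{-1}\sfD^+_t f\bigr)^q$, the quantity that the dissipation identity controls. Stage~1 should then be run with Remark~\ref{rem:powers} and this $q$.

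Stage~3 is also glossed in a way that does not stand as written: it is not true that $t^{-1}\sfd(x,y_t^\ast(x))$ (or its $(q-1)$-power) ``converges, in the weak $L^p$ sense, to $\lip f$''. What is actually available is the exact identity $\tfrac{f-\sfQ_tf}{t}=\tfrac1p\int_0^1\bigl(\tfrac{\sfD^+_{rt}f}{rt}\bigr)^q\,\d r$ together with the pointwise bound $\limsup_{t\downarrow 0}\tfrac{f-\sfQ_tf}{t}\le\tfrac1p(\lip f)^p$, and the conclusion requires two separate one-sided Fatou arguments, weak$^*$-compactness in $L^\infty$ of the bounding functions, lower semicontinuity of the $L^p$-norm under weak$^*$-convergence, and minimality of $|\rmD f|_{\star,\AA}$. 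In particular this machinery delivers only the \emph{integrated} inequality $\CE_{p,\AA}(f)\le\pCE_p(f)$, not your claimed pointwise estimate $(\star)$; the pointwise version \eqref{eq:43} is recovered afterwards, as a corollary of the established identity $|\rmD f|_{\star,\AA}=|\rmD f|_\star$. Your opening reduction still goes through with the integrated inequality, via $L^0$-lower semicontinuity of $\CE_{p,\AA}$ as in step~(2) of the paper's proof, so this second point is about being precise about what the Hopf--Lax argument proves rather than a fatal flaw; the exponent issue, by contrast, must be fixed.
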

  \begin{proof}
    We split the proof in various steps.
    Notice that by \eqref{eq:13}
    it is sufficient to prove that
      \begin{equation}
        \label{eq:15bis}
        |\rmD f|_{\star,\AA}
        \le |\rmD f|_{\star}\quad\text{$\mm$-a.e.~in $X$}.
      \end{equation}
      \smallskip \noindent
      (1) \emph{It is not restrictive
        to assume $\sfd$ bounded above by $1$ }: see Remark \ref{rem:trunc}.

      By Lemma \ref{le:truncations} and Remark \ref{rem:powers} we know that \eqref{eq:2q} holds
      for every $y\in Y$ and every $q \ge 1$ .

    \smallskip \noindent
    (2) \emph{It is sufficient to prove that}
      \begin{equation}
        \label{eq:92}
         \CE_{p,\AA}(f)\le \int_X (\lip f)^p\,\d\mm=\pCE_p(f)
        \quad \text{\em for every $f\in \Lip_b(X)$}.
      \end{equation}
    In fact, if $f$ has ($p,\Lip_b(X)$)-relaxed gradient, by
    \eqref{eq:4} we can find a sequence $f_n\in \Lip_b(X)$ such that
    $f_n\to f$ $\mm$-a.e.~and $\lip f_n\to |\rmD f|_\star$ strongly in
    $L^p(X,\mm)$
    as $n\to\infty.$
     By the $L^0$-lower semicontinuity of the
    $\CE_{p,\AA}$-energy, passing to the limit in \eqref{eq:92}
    written for $f_n$ we get
    \begin{displaymath}
      \CE_{p,\AA}(f)=\int_X |\rmD f|_{\star,\AA}^p\,\d\mm\le \int_X |\rmD f|_\star^p\,\d\mm=\CE_p(f)<\infty.
    \end{displaymath}
    We deduce that $f$ has a $(p,\AA)$-relaxed gradient and
    that \eqref{eq:15} holds, 
    since $|\rmD f|_{\star}\le |\rmD f|_{\star,\AA}$ $\mm$-a.e.

    \smallskip \noindent
    (3)
    For every $f\in \Lip_b(X)$ and $t>0$ we introduce the Hopf-Lax
    regularization $\sfQ_t f:X\to \R$ defined by 
    \begin{align}
        \label{eq:16}
      \sfQ_t f(x):={}&\inf_{y\in X}\frac{1}{ qt^{q-1}}\sfd^{ q }(x,y)+f(y),
                       \quad x\in X,
    \end{align}
     where $q \in(1,+\infty)$ is the conjugate exponent of $p$ i.e.~$1/q+1/p=1$.  
    It is clear that $\sfQ_t f$ is bounded (it takes values in the
    interval
    $[\inf_X f,\sup_X f]$) and Lipschitz, being the infimum of a family
    of  uniformly  Lipschitz functions.
    We consider the upper semicontinuous function
    \cite[(3.4) and Prop.~3.2]{AGS14I}
      \begin{align}
       \label{eq:18}
        \sfD_t^+ f(x):=&\sup_{(y_n)}\limsup_{n\to\infty} \sfd(x,y_n),
      \end{align}
      where the $(y_n)_n$'s vary among all the minimizing sequences 
      of \eqref{eq:16}.
      $\sfD_t^+ f$ is also uniformly bounded and satisfies
      (see e.g.~\cite[Lemma 3.2.1]{Savare22})
      \begin{equation}
        \label{eq:20}
         \left (  \frac{\sfD_t^+ f(x)}t  \right )^{q}  \le  \left ( q\Lip (f,X)  \right )^p.
      \end{equation}
      In fact, if $y_n$ is a minimizing sequence of \eqref{eq:16}, for
      every $\eps>0$ we eventually  have
      $$\frac{1}{ qt^{q-1}}\sfd^{ q }(x,y_n)+f(y_n)\le
      \sfQ_tf(x)+\eps\le f(x)+\eps$$
      i.e., setting $L:=\Lip(f,X)$,
      \begin{displaymath}
         \frac{1}{t^{ q}}\sfd^{ q }(x,y_n)\le \frac{\eps q}{t} + \frac{q}{t}(f(x)-f(y_n)) \le \frac{\eps q}{t} + qL \frac{\sfd(x,y_n)}{t} \le \frac{\eps q}{t} + (qL)^p + \frac{\sfd^{q}(x,y_n)}{qt^{q}p^{1/(p-1)}}.
      \end{displaymath}
      We thus get
      \begin{displaymath}
        \limsup_{n\to\infty}\frac{1}{ t^{q}}\sfd^{ q}(x,y_n)\le  \frac{\eps q}{t} + (qL)^p 
      \end{displaymath}
      which yields \eqref{eq:20} since $\eps>0$ is arbitrary.

      (4) \smallskip\noindent
      \emph{For every $f\in \Lip_b(X)$ and for every $t>0$
        \begin{equation}
          \label{eq:19}
          |\rmD \sfQ_t f|_{\star,\AA}(x)\le  \left (  t^{-1} \sfD_t^+ f(x)  \right )^{q-1}  \quad
          \text{for $\mm$-a.e.~$x\in X$}.
        \end{equation}
      }
       Let $Y'=\{y_n\}_{n\in \N}$ be a countable set dense in $Y$; 
      since $f\in \Lip_b(X)$ it is easy to check that
      \begin{equation}
        \label{eq:21}
        \sfQ_t f(x)=\inf_{y\in Y}\frac{1}{ q t^{ q-1}}\sfd^{ q}(x,y)+f(y)=
        \lim_{n\to\infty}\sfQ^n_tf(x),\quad
        \sfQ^n_t f(x):=\min_{1\le k\le n}\frac{1}{ q t^{ q-1}}\sfd^{ q}(x,y_k)+f(y_k).
      \end{equation}
       We consider now the upper semicontinuous   function
      \begin{equation}
        \label{eq:23}
        \sfD^n_t(x):= \max\Big\{\sfd(x,y_k):1\le k\le n,\ 
        \sfQ_t^n(x)=\frac{1}{ q t^{ q-1}}\sfd^{ q}(x,y_k)+f(y_k)\Big\}.
      \end{equation}
       By \eqref{eq:2q} and Theorem \ref{thm:omnibus}(8), we have that $ (  t^{-1}\sfD^n_t  )^{q-1} $ is  a $(p,\AA)$-relaxed gradient of $\sfQ^n_t f$.
      It is then clear that for every $x$ there exists a sequence
      $n\mapsto  y'(n,x)  $ with $ y'(n,x) \in \{y_1,\cdots,y_n\}$ such that 
      $\sfD^n_t(x)= \sfd(x, y'(n,x))$  and
      $\sfQ^n_tf(x)=\frac{1}{ q t^{ q-1}}\sfd^{ q}(x, y'(n,x) )+f( y'(n,x) )\to \sfQ_t f(x)$
      as $n\to\infty$, i.e.~$ y'(n,x) $ is a minimizing sequence of \eqref{eq:16}.
      We deduce that
      \begin{equation}
        \label{eq:24}
        \limsup_{n\to\infty}\sfD^n_t(x)=
         \limsup_{n\to\infty}\sfd(x, y'(n,x) )\le \sfD_t^+f(x)  \text{ for every } x \in X.
      \end{equation}
      Since $\sfD^n_t f $ are uniformly bounded, up to extracting a
      suitable subsequence we can suppose that
      $  (  t^{-1}  \sfD^n_t )^{ q-1} \weakto^* G$ weakly* $L^\infty(X,\mm)$  so that, by Theorem \ref{thm:omnibus}(1), $G$
      is a $(p,\AA)$-relaxed gradient of $\sfQ_t f$, hence $|\rmD \sfQ_t f|_{\star,\AA}\le G$ $\mm$-a.e.~by Theorem \ref{thm:omnibus}(3). Also notice that by Fatou's lemma and weak* $L^\infty(X,\mm)$ convergence, we have
      \[ \int_B G \, \d \mm = \lim_{n \to + \infty} \int_B (t^{-1}\sfD^n_t)^{q-1} \, \d \mm \le \int_B \limsup_{n \to + \infty} (t^{-1}\sfD^n_t(x))^{q-1} \, \d \mm(x) \le \int_B (t^{-1}\rmD_t^+ f(x))^{q-1} \, \d \mm (x),\]
      for every Borel set $B \subset X$. We conclude that $|\rmD \sfQ_t f|_{\star,\AA}\le (t^{-1}\rmD_t^+ f(x))^{q-1}$ for $\mm$-a.e.~$x \in X$.

      (5) \smallskip\noindent \emph{For every $x\in X,\ t>0,$ and $f\in
        \Lip_b(X)$
        we have
        \begin{align}
          \label{eq:25}
          \frac{f(x)-\sfQ_tf(x)}t&=\frac 1{ p}\int_0^1
                                   \Big(\frac{\rmD_{rt}^+f(x)}{rt}\Big)^{ q}
                                   \,\d r,\\
          \label{eq:27}
          \limsup_{t\downarrow0}\frac{f(x)-\sfQ_tf(x)}t&\le \frac1{ p}\big(\lip f(x)\big)^{ p}.
        \end{align}
      }
      This follows by \cite[Thm. 3.2.4]{Savare22}
      (see also \cite[Thm.~3.1.4, Lemma 3.1.5]{AGS08}).

      ( 6) \smallskip\noindent \emph{Conclusion.}
      We argue as  in  \cite[Theorem 3.2.7]{Savare22}:
      \eqref{eq:25} and \eqref{eq:20} yield the uniform bound
      \begin{equation}
        \label{eq:26}
        \frac{f(x)-\sfQ_tf(x)}t\le  \frac{1}{p}  \big(  q  \Lip(f,X)\big)^{ p}\quad
        \text{for every }x\in X,\ t>0.
      \end{equation}
      Integrating \eqref{eq:27} in $X$ 
      and applying Fatou's Lemma we get
      \begin{equation}
        \label{eq:28}
        \limsup_{t\downarrow0}
        \int_{X} \frac{f(x)-\sfQ_tf(x)}{t}\,\d\mm(x)\le
        \frac1{ p}\int_{ X} \big(\lip f(x)\big)^{ p} \,\d\mm(x).
      \end{equation}
      On the other hand, \eqref{eq:25} and Fubini's Theorem yield
      \begin{equation}
        \label{eq:29}
        \int_{ X}  \frac{f(x)-\sfQ_tf(x)}{t}\,\d\mm(x)
        =\frac 1{ p} \int_0^1 \int_{ X}  \Big(\frac{\sfD_{rt}^+f(x)}{rt}\Big)^{ q}
        \,\d \mm(x)\,\d r.
      \end{equation}
      A further application of Fatou's Lemma yields
      \begin{equation}
        \label{eq:29bis}
        \liminf_{t\downarrow0}\int_{ X}  \frac{f(x)-\sfQ_tf(x)}{t}\,\d\mm(x)
        \ge  \frac 1{ p}\liminf_{t\downarrow0} \int_{ X}  \Big(\frac{\sfD_{t}^+f(x)}{t}\Big)^{ q}
        \,\d \mm(x).
      \end{equation}
      Using the fact that
      $t^{-1}\sfD_t^+f$ is uniformly bounded by \eqref{eq:20}, we can
      find a decreasing and vanishing sequence
      $ n\mapsto t(n)$ and a limit function $G\in
      L^\infty(X,\mm)$
      such that
      \begin{gather}
          \left (  t(n)^{-1}\sfD_{t(n)}^+f  \right )^{q-1}  \weakto^* G\quad \text{weakly$^*$ in
           $L^\infty(X,\mm)$}\quad \text{as }n\to\infty,\notag\\
         \label{eq:117}
        \lim_{n\to\infty}
        \int_{ X}  \Big(\frac{\sfD_{t(n)}^+f(x)}{t(n)}\Big)^{ q}
        \,\d \mm(x)=\liminf_{t\downarrow0} \int_{ X}  \Big(\frac{\sfD_{t}^+f(x)}{t}\Big)^{ q}
        \,\d \mm(x).
      \end{gather}
      Since $ \left (  t^{-1}\sfD_t^+f  \right )^{q-1} $  is  a $(p,\AA)$-relaxed gradient of
      $\sfQ_t f$ by claim (4) and $\sfQ_t f\to f$ pointwise
      everywhere, using Theorem \ref{thm:omnibus}(1) we get that $G$ is
      a $(p,\AA)$-relaxed gradient of $f$. \\
      Using the lower semicontinuity of the
       $L^{ p}$-norm w.r.t.~the weak$^*$ $L^\infty(X,\mm)$
      convergence, we get that
      \begin{align}
        \label{eq:30}
        \lim_{n\to\infty} \int_{ X} \Bigg(\frac{\sfD_{t(n)}^+f(x)}{t(n)}\Bigg)^{ q}
        \,\d \mm(x) &=\lim_{n\to\infty} \int_{ X} \Bigg(\frac{\sfD_{t(n)}^+f(x)}{t(n)}\Bigg)^{ p(q-1)}
        \,\d \mm(x)\\
        &\ge \int_{ X} G^{ p} \,\d\mm(x) \\
        &\ge 
        \int_{ X} |\rmD f|_{\star,\AA}^{ p}(x)\,\d\mm(x),
      \end{align}
      where we also used the pointwise minimality of  $|\rmD f|_{\star,\AA}$  given by Theorem \ref{thm:omnibus}(3). 
      Combining \eqref{eq:30}, \eqref{eq:117}, \eqref{eq:29bis} and
      \eqref{eq:28} 
      we deduce that
      \begin{displaymath}
        \int_{ X}  |\rmD f|_{\star,\AA}^{ p}(x)\,\d\mm(x)\le \int_{ X}  \big(\lip
        f(x)\big)^{ p}\,\d\mm(x) 
      \end{displaymath}
      so that \eqref{eq:92} holds. 
  \end{proof}

\begin{corollary}[Density in energy of $\AA$ in $H^{1,p}$]
  \label{cor:density}
  If $\AA$ is a separating unital subalgebra of $\Lip_b(X)$ satisfying
   \eqref{eq:214-15} 
  then 
  \begin{equation}
    \label{eq:22}
    \CE_{p,\AA}(f)= \CE_{p}(f)=\NE_p(f)
    \quad
    \text{for every $\mm$-measurable function $f:X\to \R$.}
  \end{equation}
  In particular, $H^{1,p}(X,\sfd,\mm)=H^{1,p}(X,\sfd,\mm;\AA)$.
\end{corollary}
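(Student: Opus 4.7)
The corollary is essentially a repackaging of Theorem \ref{theo:startingpoint} combined with the known identification $\CE_p=\NE_p$ from \eqref{eq:72}. My plan is to deduce the three-way equality $\CE_{p,\AA}=\CE_p=\NE_p$ pointwise in $f$, and then obtain the identification of the Sobolev spaces as an immediate consequence.

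First I would observe that the hypothesis \eqref{eq:214-15} is exactly the assumption of Theorem \ref{theo:startingpoint}, so this theorem applies and yields that $\AA$ is dense in $p$-energy in the sense of Definition \ref{def:density}. By the equivalence stated there (together with \eqref{eq:15}), this means that for every $f\in L^0(X,\mm)$ possessing a $(p,\Lip_b(X))$-relaxed gradient, $f$ also possesses a $(p,\AA)$-relaxed gradient and
\begin{equation*}
|\rmD f|_{\star,\AA}=|\rmD f|_{\star}\quad\text{$\mm$-a.e.~in $X$},
\end{equation*}
so in particular $D^{1,p}(X,\sfd,\mm;\AA)=D^{1,p}(X,\sfd,\mm)$. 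Integrating the $p$-th power of both sides against $\mm$, we immediately get $\CE_{p,\AA}(f)=\CE_p(f)$ for every $f\in D^{1,p}(X,\sfd,\mm)$.

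Next I would handle the case $f\notin D^{1,p}(X,\sfd,\mm)$, where $\CE_p(f)=+\infty$ by convention. The implication \eqref{eq:13} says that possessing a $(p,\AA)$-relaxed gradient is strictly stronger than possessing a $(p,\Lip_b(X))$-relaxed gradient, so if $f\notin D^{1,p}(X,\sfd,\mm)$ then a fortiori $f\notin D^{1,p}(X,\sfd,\mm;\AA)$ and $\CE_{p,\AA}(f)=+\infty$ too. Hence $\CE_{p,\AA}(f)=\CE_p(f)$ for every $\mm$-measurable $f$.

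Finally, for the equality $\CE_p(f)=\NE_p(f)$ I would invoke Remark \ref{rem:N}, which records the identification \eqref{eq:72} from the density of $\Lip_b(X)$ proved in \cite{AGS14I,AGS13}: namely $H^{1,p}(X,\sfd,\mm)=\hat N^{1,p}(X,\sfd,\mm)$ with $|\rmD f|_N=|\rmD f|_{\star}$ $\mm$-a.e., and hence the two energies coincide (with the usual convention $+\infty$ when no $p$-integrable upper gradient exists). Chaining the two equalities gives \eqref{eq:22}, and the identification $H^{1,p}(X,\sfd,\mm)=H^{1,p}(X,\sfd,\mm;\AA)$ follows because both spaces are defined as $L^p(X,\mm)\cap D^{1,p}(X,\sfd,\mm;\AA)=L^p(X,\mm)\cap D^{1,p}(X,\sfd,\mm)$ with the norms induced by the (equal) Cheeger energies. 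Since Theorem \ref{theo:startingpoint} has already done all the real work, there is no genuine obstacle left here; the main point is only to keep track of the conventions for $f$ outside the Sobolev/Dirichlet domain.
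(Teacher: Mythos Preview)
Your proposal is correct and matches the paper's approach: the corollary is stated without proof as an immediate consequence of Theorem~\ref{theo:startingpoint} together with the known identification \eqref{eq:72}, and your argument is precisely the explicit unpacking of this. There is nothing to add.
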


As we have already said, \eqref{eq:22} can be interpreted as a density
result in $H^{1,p}(X,\sfd,\mm)$: for every $f\in H^{1,p}(X,\sfd,\mm)$
there exists a sequence $f_n\in \AA$, $n\in \N$, such that
\begin{equation}
  \label{eq:33}
  f_n\to f,\
  \lip f_n\to |\rmD f|_*\quad \text{strongly in }L^p(X,\mm),\quad
  \int_X |\lip f_n|^p\,\d\mm\to \CE_p(f)\quad\text{as }n\to\infty.
\end{equation}

\subsection{Applications}
\label{subsec:app1}
We first recall a useful result showing that it is possible to remove the assumption that
$\AA$ is unital, if $\AA$ satisfies a suitable tightness condition.
We will denote by $\mathbbm 1$ the unit constant function.
\begin{proposition}
  \label{prop:unital-not-necessary}
  Let $\AA\subset \Lip_b(X)$ be a separating subalgebra of Lipschitz
  functions
  and let 
  \begin{equation}
    \label{eq:155}
    \AA_1:=\AA\oplus\{c\mathbbm 1\}=\Big\{f+c\mathbbm 1:f\in \AA,\, c\in \R\Big\}
  \end{equation}
  be the minimal unital subalgebra containing $\AA$.
  If $\AA_1$ is dense in $p$-energy and
  there exist a sequences of compact sets $K_n\subset X$ and functions
  $f_n\in \AA$ such that
  \begin{equation}
    \label{eq:146}
    f_n(x)\ge 1\ \text{for every $x\in K_n$},\quad
    \lim_{n\to\infty}\int_{X\setminus K_n}\Big(1+|\lip f_n(x)|^p\Big)\,\d\mm(x)=0,
  \end{equation}
  then $\AA$ is dense in $p$-energy as well.
\end{proposition}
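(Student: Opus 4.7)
My strategy is to exploit the hypothesized density of $\AA_1$ by showing that the constant function $\mathbbm 1$ admits approximations by elements of $\AA$ with vanishing asymptotic Lipschitz constants in $L^p(X,\mm)$. Once this is established, any approximation $g_n + c_n\mathbbm 1\in\AA_1$ of a function $f$ can be converted into an approximation in $\AA$ by replacing the constant part $c_n\mathbbm 1$ with $c_n$ times a suitably chosen element of $\AA$. The key observation is that $\lip(g + c\mathbbm 1)=\lip g$, so the asymptotic Lipschitz constant of an element of $\AA_1$ is entirely carried by its $\AA$-part.

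The main step is to construct a sequence $\psi_n\in\AA$ with $\psi_n\to\mathbbm 1$ in $\mm$-measure and $\lip\psi_n\to 0$ strongly in $L^p(X,\mm)$. To do this, I would fix a $C^1$ nondecreasing Lipschitz cutoff $\zeta:\R\to\R$ with $\zeta(0)=0$ and $\zeta(t)=1$ for $t\ge 1$, so that $\zeta'\equiv 0$ on $[1,+\infty)$. For each $n$, let $M_n:=\|f_n\|_{L^\infty(X)}$, which is finite since $f_n\in\Lip_b(X)$, and invoke $C^1$-Weierstrass approximation to produce a polynomial $P_n$ with $P_n(0)=0$ and $\|P_n-\zeta\|_{C^1([-M_n,M_n])}\le\epsilon_n$, where $\epsilon_n\downarrow 0$ is chosen small enough that $\epsilon_n\bigl(1+\Lip(f_n,X)\bigr)\to 0$. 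Setting $\psi_n:=P_n(f_n)$, the absence of a constant term in $P_n$ and the fact that $f_n\in\AA$ force $\psi_n$ to be a polynomial in $f_n$ without constant term, hence $\psi_n\in\AA$. On $K_n$ one has $f_n\ge 1$, so $\zeta(f_n)\equiv 1$ and $|\psi_n-\mathbbm 1|\le\epsilon_n$; combined with $\mm(X\setminus K_n)\to 0$, a consequence of \eqref{eq:146}, this gives $\psi_n\to\mathbbm 1$ in $\mm$-measure. The chain rule for $\lip$ applied to $C^1$ functions yields the pointwise bound $\lip\psi_n(x)\le|P_n'(f_n(x))|\,\lip f_n(x)$; on $K_n$ the estimate $|P_n'(f_n(x))|\le\epsilon_n$ (using $\zeta'\equiv 0$ on $[1,+\infty)$) gives $\int_{K_n}|\lip\psi_n|^p\,\d\mm\le\epsilon_n^p\Lip(f_n,X)^p\mm(X)\to 0$, while on $X\setminus K_n$ the estimate $|P_n'|\le\Lip(\zeta)+\epsilon_n$ together with \eqref{eq:146} gives $\int_{X\setminus K_n}|\lip\psi_n|^p\,\d\mm\to 0$.

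Once $(\psi_n)$ is in hand, the rest is routine. For any $h=g+c\mathbbm 1\in\AA_1$ the sequence $g+c\psi_n\in\AA$ converges to $h$ in $\mm$-measure and $|\lip(g+c\psi_n)-\lip g|\le|c|\lip\psi_n\to 0$ in $L^p(X,\mm)$ by the sub-additivity \eqref{eq:6}. For a general $f\in L^0(X,\mm)$ with a $p$-relaxed gradient, Theorem \ref{thm:omnibus}(2) applied to the unital algebra $\AA_1$ produces $h_n=g_n+c_n\mathbbm 1\in\AA_1$ with $h_n\to f$ $\mm$-a.e.~and $\lip h_n=\lip g_n\to|\rmD f|_{\star}$ strongly in $L^p(X,\mm)$. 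A diagonal extraction $k=k(n)$, chosen so that $|c_n|\,\|\lip\psi_{k(n)}\|_{L^p}\le 1/n$ and $\mm(\{|c_n(\psi_{k(n)}-\mathbbm 1)|>1/n\})\le 1/n$, then yields $\tilde h_n:=g_n+c_n\psi_{k(n)}\in\AA$ with $\tilde h_n\to f$ in $\mm$-measure and $\lip\tilde h_n\to|\rmD f|_{\star}$ strongly in $L^p(X,\mm)$, which is exactly the density in $p$-energy of $\AA$ in the sense of Definition \ref{def:density}. The main technical obstacle is the construction of $\psi_n$: the effectiveness of the hypothesis \eqref{eq:146} rests on the simultaneous $C^1$-polynomial approximation of $\zeta$ at rate $\epsilon_n\ll 1/\Lip(f_n,X)$, which converts the vanishing of $\zeta'$ on $[1,+\infty)$ into smallness of $\lip\psi_n$ on $K_n$, where one has no a priori control on $\lip f_n$.
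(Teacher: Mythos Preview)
Your proof is correct and follows the natural route: the paper itself does not spell out a proof but refers to \cite[Proposition 5.3.2]{Savare22}, and the argument there proceeds along the same lines---approximate $\mathbbm 1$ by elements of $\AA$ obtained as polynomial truncations of the $f_n$ (exploiting that $\AA$ is an algebra and that polynomials with vanishing constant term preserve membership in $\AA$), then diagonalize against an $\AA_1$-approximating sequence. One small remark: the reference to ``sub-additivity \eqref{eq:6}'' is misplaced, since \eqref{eq:6} concerns relaxed gradients; the pointwise inequality $|\lip(u+v)-\lip u|\le\lip v$ that you actually use follows directly from the definition \eqref{eq:1} of the asymptotic Lipschitz constant.
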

The \emph{proof} is a simple adaptation of \cite[Proposition
5.3.2]{Savare22}.
The next result shows that the algebra
generated
by (suitable compositions/truncations of) distance functions
is always sufficient to generate the Sobolev space $H^{1,2}(X,\sfd,\mm)$.
\begin{theorem}
  \label{cor:Gigli-inspired}
  Let $Y$ be a dense subset of $X$ 
  and
  let $\zeta:[0,+\infty)\to [0,+\infty)$ be a Lipschitz nondecreasing
  function
  such that $\zeta'>0$ in an interval $I=(0,r)\subset (0,+\infty)$  and $\zeta \in \rmC^1(I)$. 
  Then the unital algebra $\AA$ generated by the functions $
  x\mapsto
  \zeta(\sfd(x,y))$ 
  is dense in $p$-energy.
\end{theorem}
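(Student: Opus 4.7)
The strategy is to apply Theorem~\ref{theo:startingpoint} to $\AA$: we need to check that $\AA$ is a unital separating subalgebra of $\Lip_b(X)$ and that $|\rmD\sfd_y|_{\star,\AA}\le 1$ for every $y$ in the dense set $Y$.

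We begin with a reduction to a bounded metric: by Remark~\ref{rem:trunc}, replacing $\sfd$ by $\sfd_a:=\sfd\wedge a$ does not change $(p,\AA)$-relaxed gradients of bounded Lipschitz functions. Choosing $a<r$ ensures simultaneously that $\sfd_y(X)\subset[0,a]\subset\overline I$ and that each generator $g_y:=\zeta\circ\sfd_y$ of $\AA$ is bounded, so $\AA\subset\Lip_b(X)$. Unitality holds by construction. For separation, given $x_0\neq x_1$ pick $y\in Y$ with $\sfd(x_0,y)<\sfd(x_0,x_1)/2$; then $0\le\sfd(x_0,y)<\sfd(x_1,y)$ with both values in $[0,a]\subset[0,r)$. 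Continuity of $\zeta$ combined with $\zeta'>0$ on $I$ forces $\zeta$ to be strictly increasing on $[0,r)$, so $g_y(x_0)<g_y(x_1)$.

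For the verification of \eqref{eq:214-15}, rather than estimating $|\rmD\sfd_y|_{\star,\AA}$ directly I apply Lemma~\ref{le:truncations}: it suffices to produce $g_y\in D^{1,p}(X,\sfd,\mm;\AA)$ satisfying $|\rmD g_y|_{\star,\AA}(x)\le\zeta'(\sfd(x,y))$ for $\mm$-a.e.~$x$. Membership is automatic because $g_y\in\AA\subset\Lip_b(X)$, which also gives the pointwise control $|\rmD g_y|_{\star,\AA}\le\lip g_y$ via Theorem~\ref{thm:omnibus}. To estimate $\lip g_y(x)$ at an $x$ with $s_0:=\sfd(x,y)\in I$, I pick $\eps>0$ small enough that $[s_0-\eps,s_0+\eps]\subset I$; since $\sfd(\cdot,y)$ is $1$-Lipschitz, it maps $\rmB(x,\eps)$ into this subinterval, on which $\zeta$ is $\rmC^1$ with derivative bounded by $M_\eps:=\sup_{s\in[s_0-\eps,s_0+\eps]}\zeta'(s)$. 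Hence $\Lip(g_y,\rmB(x,\eps))\le M_\eps$, and letting $\eps\downarrow 0$ together with continuity of $\zeta'$ on $I$ yields $\lip g_y(x)\le\zeta'(s_0)$. With $a<r$, the set $\{x:\sfd(x,y)\notin I\}$ reduces to the single point $\{y\}$, which is not a concern because the proof of Lemma~\ref{le:truncations} only needs the bound on sub-level sets $\{\sfd_y\in[a_n,b_n]\}\subset I$.

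The only substantive delicacy I anticipate is harmonizing the truncation of $\sfd$ (needed to keep $\AA\subset\Lip_b(X)$ when $\zeta$ or $\sfd$ is unbounded) with the invariance of the $(p,\AA)$-relaxed gradient under this truncation; both are supplied by Remark~\ref{rem:trunc}. Everything else is a direct combination of the local smoothness of $\zeta$ on $I$, Lemma~\ref{le:truncations}, and Theorem~\ref{theo:startingpoint}.
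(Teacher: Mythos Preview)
Your proof is correct and follows essentially the same route as the paper's: truncate the metric via Remark~\ref{rem:trunc}, estimate $\lip(\zeta\circ\sfd_y)\le\zeta'(\sfd_y)$ using the $\rmC^1$ regularity of $\zeta$ on $I$, then invoke Lemma~\ref{le:truncations} and Theorem~\ref{theo:startingpoint}. You are somewhat more careful than the paper in two places---you spell out the separation argument and you flag the endpoint $x=y$ where $\sfd_y(x)=0\notin I$, correctly observing that the proof of Lemma~\ref{le:truncations} only uses the bound on $\{\sfd_y\in[a_n,b_n]\}\subset I$---but these are refinements of the same argument, not a different approach.
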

\begin{proof} 
Thanks to Remark \ref{rem:trunc}, we can assume that $\sfd$ is bounded above by $r$.  
  It is not difficult to check that $\AA$ separates the points of
  $X$, so that 
  in order to apply Theorem \ref{theo:startingpoint}, it is enough to check
  that \eqref{eq:2zeta} with $f:= \sfd_y$ (recall the notation \eqref{eq:14}) holds. 
  
  Such a property follows immediately from the corresponding estimate
  on the asymptotic Lipschitz constant: for every $y\in Y$ and 
  $g(x):=\zeta(\sfd_y(x))$,
  a simple direct computation shows that
  \begin{displaymath}
    \lip g(x)\le
    \zeta'(\sfd_y(x))
     \quad\text{for every }x\in \rmB(y,r).
  \end{displaymath}
  Since $g\in \AA$ we have $|\rmD g|_{\star,\AA}\le \lip g
  \le \zeta'(f)$, so that \eqref{eq:2zeta} holds and we conclude by applying Lemma \ref{le:truncations}. 
\end{proof}
We now consider a simple application
of Theorem \ref{theo:startingpoint} to the
case when $p=2$ and $\lip f$ has good properties for functions of
$\AA$.
\begin{theorem}[A Hilbertianity condition]\label{thm:hilb}
  Let $p=2$ and let $\AA$ be a separating unital subalgebra of
  $\Lip_b(X)$ satisfying \eqref{eq:214-15}. 
  If for every $f,g\in \AA$
  \begin{equation}
    \label{eq:31}
    \int_X \Big(|\lip (f+g)|^2+|\lip (f-g)|^2\Big)\,\d\mm=2
    \int_X \Big(|\lip f|^2+|\lip g|^2\Big)\,\d\mm,
  \end{equation}
  then $H^{1,2}(X,\sfd,\mm)$ is a Hilbert space, $\CE_2$ is a Dirichlet (thus quadratic)
  form, and
  $\AA$ is strongly dense.
\end{theorem}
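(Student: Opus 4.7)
The plan is to bootstrap the hypothesis \eqref{eq:31} (a parallelogram identity for $\pCE_2$ on $\AA$) up to the full space $H^{1,2}(X,\sfd,\mm)$ via the strong approximation property granted by Corollary \ref{cor:density}. By that corollary, for every $f\in H^{1,2}(X,\sfd,\mm)$ there exists $(f_n)_{n\in\N}\subset\AA$ such that $f_n\to f$ in $L^2(X,\mm)$ and $\lip f_n\to|\rmD f|_\star$ strongly in $L^2(X,\mm)$, so in particular $\pCE_2(f_n)\to\CE_2(f)$. Picking analogous approximations $g_n\to g$ for a second $g\in H^{1,2}(X,\sfd,\mm)$ and evaluating \eqref{eq:31} along $(f_n,g_n)$ gives
\begin{equation*}
\pCE_2(f_n+g_n)+\pCE_2(f_n-g_n)=2\pCE_2(f_n)+2\pCE_2(g_n)\longrightarrow 2\CE_2(f)+2\CE_2(g).
\end{equation*}

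Taking the liminf on the left and invoking the $L^0$-lower semicontinuity of $\CE_2$ (Remark \ref{rem:relprec}) on each of the two summands separately, one obtains $\CE_2(f+g)+\CE_2(f-g)\le 2\CE_2(f)+2\CE_2(g)$. The reverse inequality is extracted by the standard substitution: apply what has just been proved to the pair $\bigl(\tfrac{f+g}{2},\tfrac{f-g}{2}\bigr)$ and multiply by $4$, exploiting the $2$-homogeneity $\CE_2(\alpha h)=\alpha^2\CE_2(h)$, which is immediate from the sublinearity in Theorem \ref{thm:omnibus}(5). This establishes the parallelogram identity for $\CE_2$ on $H^{1,2}(X,\sfd,\mm)$; polarization then yields a non-negative symmetric bilinear form $\CE_2(\cdot,\cdot)$, and combined with the $L^2$-inner product it turns $H^{1,2}(X,\sfd,\mm)$ into a Hilbert space.

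Strong density of $\AA$ in this Hilbert norm is in turn squeezed out of the parallelogram identity itself. With $f_n\in\AA$ approximating $f$ as above, write
\begin{equation*}
\CE_2(f_n-f)=2\CE_2(f_n)+2\CE_2(f)-\CE_2(f_n+f).
\end{equation*}
Since each $f_n$ is Lipschitz, $\lip f_n$ is a $2$-relaxed gradient of $f_n$ and hence $\CE_2(f_n)\le\pCE_2(f_n)\to\CE_2(f)$; combined with $\liminf_n\CE_2(f_n)\ge\CE_2(f)$ from lower semicontinuity, $\CE_2(f_n)\to\CE_2(f)$. For the cross term, lower semicontinuity and $2$-homogeneity give $\liminf_n\CE_2(f_n+f)\ge\CE_2(2f)=4\CE_2(f)$, whereas the parallelogram identity together with the non-negativity $\CE_2(f_n-f)\ge 0$ forces $\CE_2(f_n+f)\le 2\CE_2(f_n)+2\CE_2(f)\to 4\CE_2(f)$. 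Thus $\CE_2(f_n+f)\to 4\CE_2(f)$ and $\CE_2(f_n-f)\to 0$, so $f_n\to f$ in the $H^{1,2}$-norm.

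It remains to verify the Dirichlet form axioms for $\CE_2$. Closedness on $L^2(X,\mm)$ is the lower semicontinuity stated in Remark \ref{rem:relprec}; symmetry and bilinearity come from polarizing the parallelogram identity; the dense domain is the strong density just established. For the Markov/contraction property, fix the normal contraction $\phi(t):=(t\lor 0)\land 1$: the chain rule in Theorem \ref{thm:omnibus}(7) together with the truncation identities of Theorem \ref{thm:omnibus}(8) yield $|\rmD(\phi\circ f)|_\star\le|\rmD f|_\star$ $\mm$-a.e., hence $\CE_2(\phi\circ f)\le\CE_2(f)$. I expect the main technical subtlety to be the two-sided passage to the limit in the parallelogram identity: one direction is free from $L^0$-lower semicontinuity, but the reverse requires the substitution trick coupled with $2$-homogeneity, which in turn leans crucially on having a \emph{strong} (and not merely weak) $L^2$-approximation of $|\rmD f|_\star$ by $\lip f_n$ from Corollary \ref{cor:density}.
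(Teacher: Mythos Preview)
Your proof is correct and follows essentially the same route as the paper: approximate $f,g\in H^{1,2}$ by elements of $\AA$ via Corollary~\ref{cor:density}, pass the parallelogram identity \eqref{eq:31} to the limit using lower semicontinuity of $\CE_2$, and deduce that $\CE_2$ is quadratic. The only cosmetic differences are that the paper cites \cite[Prop.~11.9]{DalMaso93} to avoid writing out your substitution trick, and obtains strong density of $\AA$ in one line from reflexivity (Hilbert $\Rightarrow$ weak convergence plus norm convergence $\Rightarrow$ strong convergence) rather than via your explicit parallelogram computation for $\CE_2(f_n-f)$.
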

\begin{proof}
  It is sufficient to prove that
  the Cheeger energy is a quadratic form in its domain.
  Thanks to \cite[Prop.~11.9]{DalMaso93} and the $2$-homogeneity of
  $\CE_2$,
  this property is equivalent
  to 
  \begin{equation}
    \label{eq:32}
    \CE_2(f+g)+\CE_2(f-g)\le 2\CE_2(f)+2\CE_2(g) \quad
    \text{for every } f,g \in H^{1, 2 }(X,\sfd,\mm). 
  \end{equation}
  We can find two sequences $f_n,g_n\in \AA$ such that
  $f_n\to f,\ g_n\to g$ in $\mm$-measure as $n\to \infty$ and
  $\lip f_n\to |\rmD f|_\star$, $\lip g_n\to |\rmD g|_\star$ in
  $L^2(X,\mm)$.
  Clearly we have $f_n+g_n\to f+g$, $f_n-g_n\to f-g$ in $\mm$-measure
  and \eqref{eq:31} shows that $\lip(f_n+g_n)$ and $\lip(f_n-g_n)$ are
  uniformly bounded in $L^2(X,\mm)$.
  Up to extracting a suitable sequence, it is not restrictive to
  assume that $\lip(f_n+g_n)\weakto G_+\ge |\rmD (f+g)|_\star$  and
  $\lip(f_n-g_n)\weakto G_-\ge |\rmD (f-g)|_\star$ $\mm$-a.e.~in $X$. 
  \eqref{eq:31} then yields
  \begin{align*}
    \CE_2(f+g)+\CE_2(f-g)&=\int_X |\rmD (f+g)|_\star^2\,\d\mm
    +\int_X |\rmD (f-g)|_\star^2\,\d\mm
                           \\&\le 
      \liminf_{n\to\infty}
      \int_X |\lip (f_n+g_n)|^2\,\d\mm
      +\int_X |\lip (f_n-g_n)|^2\,\d\mm
    \\&=
    \liminf_{n\to\infty}
    2\int_X |\lip f_n|^2\,\d\mm
    +2\int_X |\lip g_n|^2\,\d\mm=
    2\CE_2(f)+\CE_2(g).
  \end{align*}
  Since $H^{1,2}(X,\sfd,\mm)$ is Banach space,
  we deduce that $H^{1,2}(X,\sfd,\mm)$ is a Hilbert space, so it is
  reflexive.
  This also shows that $\AA$ is strongly dense.
\end{proof}
\begin{remark} \label{rem:prec} In the framework of Theorem \ref{thm:hilb}, there exists a scalar product $\la \cdot, \cdot \ra_{H^{1,2}}$ on $H^{1,2}(X,\sfd,\mm)$ inducing the norm $\|\cdot\|_{H^{1,2}}$ and satisfying 
  \begin{equation}\label{eq:ce2sp}
    \la f, g \ra_{H^{1,2}} = \int_X fg \, \d \mm + \CE_2(f,g) \text{ for every } f,g \in H^{1, 2  }(X,\sfd,\mm),
  \end{equation}
  where $\CE_2(\cdot, \cdot)$ denotes the bilinear form associated to
  $\CE_2(\cdot)$.
\end{remark}
\begin{remark}
  \label{rem:closable}
  If \eqref{eq:31} holds then the restriction $(\pCE_2,\AA)$ of $\pCE_2$ to $\AA$
  is
  a quadratic form which is induced by a corresponding bilinear form
  $\pCE_2(\cdot,\cdot)$ defined by the parallelogram rule.
  We recall that such a form is \emph{closable} (see e.g.~\cite[\S
  1.3]{Bouleau-Hirsch91}, \cite[Chapter I, \S.3]{Ma-Rockner92}) if for any sequence $(f_n)_{n\in \N}$ in $\AA$
  \begin{equation}
    \label{eq:35}
    f_n\to 0\quad\text{in }L^2(X,\mm),\quad
    \limsup_{m,n\to\infty}\pCE_2(f_n-f_m)=0\quad
    \Rightarrow\quad
    \lim_{n\to\infty}\pCE_2(f_n)=0.
  \end{equation}
  Theorem \ref{thm:hilb} shows in particular that if
  $(\pCE_2,\AA)$ is quadratic and closable,
  then the Cheeger energy $(\CE_2,H^{1,2}(X,\sfd,\mm))$ coincides with the smallest closed
  extension of $(\pCE_2,\AA)$. In this case, trivially, the restriction of
  $\CE_2$ to $\AA$ coincides with $\pCE_2$.
  Since the Cheeger energy $\CE_2$ is quasi-regular (see 
  \cite[Lemma 6.7]{AGS11b}, \cite[Thm.~4.1]{Savare14},
  \cite[Prop.~3.21]{DelloSchiavo-Suzuki21},
  \cite{Savare23}), as a by-product we obtain the quasi-regularity of
  the closure of $(\pCE_2,\AA)$.
\end{remark}
An immediate consequence is the Hilbertianity of
$H^{1,2}(\H,\sfd_\H,\mm)$ in the case when $(\H,\sfd_\H)$ is a
separable Hilbert space (in particular $\R^d$) endowed with the
distance induced by its Hilbertian norm  \cite{DGPS21, DLP20,Savare22}.
\begin{corollary}
  \label{cor:Hilbert}
  Let $(\H,\sfd_\H)$ be a separable Hilbert space and let $\mm$ be a
  finite and positive Borel measure on $\H$.
  Then $H^{1,2}(\H,\sfd_\H,\mm)$ is a Hilbert space.
\end{corollary}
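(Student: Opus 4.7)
The plan is to apply Theorem \ref{thm:hilb} with a carefully chosen separating unital subalgebra $\AA \subset \Lip_b(\H)$. The natural candidate is the algebra of smooth cylinder functions, i.e., the maps $f : \H \to \R$ of the form $f(x) = \phi(\la x, v_1 \ra, \ldots, \la x, v_n \ra)$ for some $n \in \N$, vectors $v_i \in \H$, and a profile $\phi$ that is smooth and bounded on $\R^n$ with bounded derivatives of every order. This is manifestly a unital subalgebra of $\Lip_b(\H)$ (closed under products via the Leibniz rule on the profiles) and it separates the points of $\H$ since already the functions $x \mapsto \arctan(\la x, v\ra)$ do.

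The parallelogram identity \eqref{eq:31} will follow pointwise. Given $f, g \in \AA$, I enlarge the list of test vectors so that $f = \phi \circ P_V$ and $g = \psi \circ P_V$ for a common finite-dimensional subspace $V \subset \H$ with orthogonal projection $P_V$. A direct chain-rule computation gives $\lip f(x) = |\nabla \phi(P_V x)|_V$, and similarly for $g$, $f+g$, $f-g$. The Euclidean parallelogram identity in $V$ applied to $\nabla \phi(P_V x)$ and $\nabla \psi(P_V x)$ then propagates pointwise to $\H$, and integrates against $\mm$.

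The substantive step is verifying condition \eqref{eq:214-15}: each distance function $\sfd_y$, $y \in \H$, admits a $(2,\AA)$-relaxed gradient bounded by $1$. By Lemma \ref{le:truncations} applied with $\zeta = \arctan$ on $I = (0,+\infty)$, it is enough to show that $\arctan \circ \sfd_y$ admits a $(2,\AA)$-relaxed gradient dominated by $\arctan'(\sfd_y) = 1/(1+\sfd_y^2)$. Fixing an orthonormal basis $(e_k)_{k\in\N}$ of $\H$ with projections $P_n$ onto $\Span(e_1,\ldots,e_n)$, I take
\[ F_n(x) := \arctan\!\left(\sqrt{\tfrac{1}{n} + \|P_n(x-y)\|^2}\right). \]
Since the argument of $\arctan$ is smooth and uniformly bounded below by $1/\sqrt n > 0$, the associated profile lies in $\rmC^\infty_b(\R^n)$, so $F_n \in \AA$. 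The inner function is $1$-Lipschitz, hence $\lip F_n$ is dominated pointwise by $\arctan'$ evaluated at that inner function; this sequence is uniformly bounded by $1$ and converges pointwise to $\arctan'(\sfd_y)$ as $n \to \infty$. Combined with the pointwise convergence $F_n \to \arctan \circ \sfd_y$, the finiteness of $\mm$ and dominated convergence supply the required convergence (strong in $L^2$, hence weak), and Lemma \ref{le:truncations} delivers $|\rmD \sfd_y|_{\star,\AA} \le 1$.

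The hard part will be essentially bookkeeping: ensuring that $\AA$ genuinely is an algebra under the $\rmC^\infty_b$ discipline on profiles, that the approximants $F_n$ truly sit inside it, and that Lemma \ref{le:truncations} applies cleanly on the unbounded interval $I = (0,+\infty)$ with $\zeta = \arctan$. Once these routine verifications are in place, both hypotheses of Theorem \ref{thm:hilb} are in force and the Hilbert structure of $H^{1,2}(\H,\sfd_\H,\mm)$ follows immediately.
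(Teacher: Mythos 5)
Your proof is correct and follows the general strategy of Theorem \ref{thm:hilb}, but it takes a genuinely different route from the paper's. The paper chooses the algebra $\AA = \rmC^1_b(\H)$, notes that $\lip\phi = \|\nabla\phi\|_\H$ gives the parallelogram identity immediately, and dispenses with the density hypothesis in one line by observing that $\tanh(\sfd^2_y)$ already lies in $\rmC^1_b(\H)$, so Theorem \ref{cor:Gigli-inspired} applies to the unital algebra generated by these functions (a subalgebra of $\rmC^1_b(\H)$, and density propagates upward). You instead work with the smaller algebra of cylinder functions $\phi(\la\cdot,v_1\ra,\dots,\la\cdot,v_n\ra)$ with $\phi\in\rmC^\infty_b$, and you have to earn the density hypothesis directly: you build finite-dimensional approximants $F_n = \arctan(\sqrt{1/n + \|P_n(\cdot-y)\|^2})$, bound $\lip F_n$ by $G_n := (1 + 1/n + \|P_n(\cdot-y)\|^2)^{-1}$, pass $G_n \to (1+\sfd_y^2)^{-1}$ strongly by dominated convergence, and invoke the closedness and pointwise-minimality clauses of Theorem \ref{thm:omnibus} together with Lemma \ref{le:truncations} for $\zeta=\arctan$. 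This is more work, but it proves a slightly stronger density statement (density of the cylinder algebra, not just of $\rmC^1_b(\H)$), and it is structurally the same finite-dimensional projection scheme the paper later deploys in Theorem \ref{thm:main3} for $\prob_2(\H)$. One small imprecision: you say $\lip F_n \to \arctan'(\sfd_y)$ pointwise and claim strong $L^2$ convergence of $\lip F_n$; in fact only the upper envelope $G_n$ converges pointwise and strongly, while $\lip F_n$ itself satisfies only $\limsup_n \lip F_n \le \arctan'(\sfd_y)$ (it vanishes at $x=y$, for instance). This does not affect the conclusion, because it is $G_n$, not $\lip F_n$, that one feeds into the closedness of the set $S$ of pairs (function, relaxed gradient), and $G_n$ is a legitimate relaxed gradient of $F_n\in\AA$.
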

\begin{proof}
  Let $\AA$ be the algebra $\rmC^1_b(\H)$ of bounded $\rmC^1$
  functions with bounded continuous gradient. It is
  immediate to check that for every $\phi\in \rmC^1_b(\H)$ we have 
  $\lip \phi(x)=\|\nabla \phi(x)\|_\H$ so that $\pCE_2$ is a quadratic
  form on $\AA$, thus satisfying \eqref{eq:31}.

  On the other hand $\AA$ contains the functions
  $x\mapsto \tanh(\sfd^2(x,y))$, $y\in \H$, so that we can apply
  Theorem \ref{cor:Gigli-inspired}.
\end{proof}
\begin{remark}[Density of $\rmC^\infty_c(\R^d)$ in $H^{1,2}(\R^d,\sfd,\mm)$]
  \label{rem:serve-tutto}
  When $\H=\R^d$ is finite dimensional,
  we can also prove that the algebra $\AA=\rmC^\infty_c(\R^d)$
  is strongly dense in $H^{1,2}(\R^d,\sfd,\mm)$.
  In fact, if $\zeta$ is the restriction to $[0,\infty)$ of a smooth
  nondecreasing
  transition function $\tilde \zeta\in \rmC^\infty(\R)$ satisfying $\tilde \zeta(s)=0$ if $s\le 0$,
  $\tilde \zeta(s)=1$ is $s>1$ and $\tilde \zeta'(s)>0$ if $s\in
  (0,1)$,
  it is immediate to check that for every $y\in \R^d$ the functions
  $\tilde \zeta(\sfd_y)$ belong to $\AA_1$, so that $\AA_1$ is dense in
  $2$-energy by Theorem \ref{cor:Gigli-inspired}.
  
  On the other hand, being $\mm$ tight, it is easy to check that $\AA$
  satisfies \eqref{eq:146}, so that we can apply Proposition \ref{prop:unital-not-necessary}.
\end{remark}
\subsection{Intrinsic distances}
\label{subsec:intrinsic1}
By using the general properties of metric Sobolev spaces
 and the equivalence with the Newtonian viewpoint based on the
notion of upper gradient \cite{Bjorn-Bjorn11,HKST15} it is
possible to improve considerably the density result of Corollary
\ref{cor:density}.
Let us first recall the notion of \emph{metric velocity}
\begin{equation}
  \label{eq:118}
        |\dot\gamma|_\sfd(t):=\limsup_{h\to0}\frac{\sfd(\gamma(t+h),\gamma(t))}{|h|}
\end{equation}
and \emph{length} 
  \begin{equation}
    \label{eq:34}
    \begin{aligned}
      \ell_{\sfd}(\gamma,[\alpha,\beta]):={}&
      \sup\Big\{\sum_{n=1}^N\sfd(\gamma(t_{n-1}),\gamma(t_n)):
      t_0=\alpha<t_1<\cdots<t_{N-1}<t_N=\beta\Big\}
      \\&=\int_\alpha^\beta|\dot\gamma|_\sfd(t)\,\d t
    \end{aligned}
  \end{equation}
  of a
  $\sfd$-Lipschitz curve $\gamma:[a,b]\to X$; here $[\alpha,\beta]\subset
  [a,b]$ and we just write $\ell_\sfd(\gamma)$ for
  $\ell_\sfd(\gamma,[a,b])$.
 
   If $Y\subset X$ is a given set, we can introduce the \emph{length (or intrinsic) extended
    distance} $\sfd_{Y,\ell}$ induced by $\sfd$ on $Y$, as the infimum of the
  length of $Y$-valued Lipschitz curves connecting two given points
  $y_0,y_1\in Y$:
    \begin{align}
    \label{eq:40}
    \sfd_{Y,\ell}(y_0,y_1):={}&
    \inf\Big\{\ell_\sfd(\gamma):\gamma\in \Lip([0,1];(Y,\sfd)),\
                                \gamma(0)=y_0,\ \gamma(1)=y_1\Big\}\\
      \label{eq:40bis}={}&
       \inf\Big\{\ell>0:\gamma\in \Lip([0,\ell];(Y,\sfd)),\
                                \gamma(0)=y_0,\ \gamma(\ell)=y_1,\
      |\dot\gamma|_\sfd\le 1\text{ a.e.}\Big\}.
  \end{align}
  Clearly we have
  \begin{equation}
    \label{eq:138}
    \sfd(y_0,y_1)\le \sfd_{X,\ell}(y_0,y_1)\le
    \sfd_{Y,\ell}(y_0,y_1)\quad
    \text{for every }y_0,y_1\in Y.
  \end{equation}
If $g:X\to [0,+\infty]$
  is a Borel function, the integral of $g$ along $\gamma$ is defined by
  \begin{equation}
    \label{eq:119}
    \int_\gamma g:=\int_a^b g(\gamma(t))|\dot \gamma|_\sfd(t)\,\d t.
  \end{equation}
  It is well known that length and integral are invariant with respect
  to arc-length reparametrization of $\gamma$ and it is always possible to find
  a $1$-Lipschitz curve $R_\gamma:[0,\ell_\sfd(\gamma)]\to X$ such
  that
  \begin{equation}
    \label{eq:121}
    R_\gamma(\ell_\sfd(\gamma,[a,t]) )=\gamma(t) \text{ for every }t\in [a,b],\quad
    |\dot R_\gamma|(s)=1\text{ a.e.~in $[0,\ell_\sfd(\gamma)]$},\quad
    \int_{R_\gamma}g=\int_\gamma g
  \end{equation}
  for every nonnegative Borel function $g$ (see e.g.~\cite[Section 3.3]{Savare22}).
  A Borel function $g:X\to [0,+\infty]$ is an upper gradient of 
  $f:X\to \R$ if 
  \begin{equation}
    \label{eq:120}
    |f(\gamma(b))-f(\gamma(a))|\le \int_\gamma g
    \quad \text{for every $\gamma\in
  \Lip([a,b];(X,\sfd))$.}
\end{equation}
Functions in $\mathcal L^p(X,\mm)$ which  admit  an upper gradient in $\mathcal L^p(X,\mm)$
characterize the Newtonian Sobolev space $N^{1,p}(X,\sfd,\mm)$
\cite{Bjorn-Bjorn11,HKST15}.
We state here a useful consequence of the main equivalence results
\cite[Theorem 6.2]{AGS14I} \cite[Theorem 7.4]{AGS13}.
\begin{theorem}
  \label{thm:usefulN}
  Let $Y$ be a
   Borel subset of $X$ of full $\mm$-measure (i.e.~$\mm(X\setminus
  Y)=0$) satisfying
  \begin{equation}
    \label{eq:139}
    \gamma\in \Lip([a,b];(X,\sfd)),\quad
    R_\gamma(s)\in Y\ \text{for $\Leb 1$-a.e.~$s\in [0,\ell_\sfd(\gamma)]$}\quad
    \Rightarrow
    \quad
    \gamma([a,b])\subset Y,
  \end{equation}
  let $f:X\to \R$ be a $\mm$-measurable function and let
  $g:Y\to[0,+\infty]$ be a Borel function satisfying
    \begin{equation}
    \label{eq:120bis}
    |f(\gamma(b))-f(\gamma(a))|\le \int_\gamma g
    \quad \text{for every $\gamma\in
      \Lip([a,b];(Y,\sfd))$}.
\end{equation}
If $\displaystyle\int_Y |g|^p\,\d\mm<\infty$ then $f$ has a $p$-relaxed gradient and
  \begin{equation}
    \label{eq:122}
    |\rmD f|_\star\le g\quad\text{$\mm$-a.e.~in }Y.
  \end{equation}
\end{theorem}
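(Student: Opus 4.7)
The plan is to extend $g$ from $Y$ to all of $X$ as a strong upper gradient of $f$, use truncations to invoke the Newtonian/Cheeger identification \eqref{eq:72} based on \cite[Thm.~6.2]{AGS14I} and \cite[Thm.~7.4]{AGS13}, and then conclude via the closure property of relaxed-gradient pairs in Theorem \ref{thm:omnibus}(1) together with the pointwise minimality of Theorem \ref{thm:omnibus}(3).

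First I would define a Borel function $\tilde g : X \to [0,+\infty]$ by setting $\tilde g := g$ on $Y$ and $\tilde g := +\infty$ on $X \setminus Y$; since $\mm(X\setminus Y)=0$, we still have $\int_X \tilde g^p \, \d\mm = \int_Y g^p\, \d\mm < \infty$. The key technical step is to verify that $\tilde g$ is a strong upper gradient of $f$ in the sense of \eqref{eq:120} along every Lipschitz curve $\gamma:[a,b]\to X$. There are two cases. If $\gamma([a,b]) \subset Y$, then $\int_\gamma \tilde g = \int_\gamma g$ and the desired inequality is just the hypothesis \eqref{eq:120bis}. Otherwise $\gamma$ exits $Y$, and the contrapositive of the structural condition \eqref{eq:139} forces the arc-length reparametrization $R_\gamma$ to satisfy $R_\gamma(s)\notin Y$ on a subset of $[0,\ell_\sfd(\gamma)]$ of positive $\Leb 1$-measure; since $|\dot R_\gamma|\equiv 1$ a.e.~and $\tilde g\equiv+\infty$ on $X\setminus Y$, the invariance \eqref{eq:121} yields $\int_\gamma \tilde g = \int_{R_\gamma} \tilde g = +\infty$, so \eqref{eq:120} holds trivially.

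Next, for each $k\in\N$ the truncation $T_k f := -k \lor f \land k$ lies in $L^\infty(X,\mm)\subset L^p(X,\mm)$ since $\mm$ is finite, and the fact that $r\mapsto T_k r$ is $1$-Lipschitz shows that $\tilde g$ is also a strong upper gradient of $T_k f$. Hence $T_k f$ belongs to the Newtonian space $\hat N^{1,p}(X,\sfd,\mm)$ with minimal $p$-weak upper gradient satisfying $|\rmD T_k f|_N \le \tilde g$ $\mm$-a.e. The identification \eqref{eq:72} then gives $T_k f \in H^{1,p}(X,\sfd,\mm)$ and $|\rmD T_k f|_\star \le g$ $\mm$-a.e.~on $Y$.

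Finally I would let $k\to\infty$. Since $T_k f\to f$ pointwise and $\mm$ is finite, the convergence holds in $\mm$-measure, and the uniform bound $|\rmD T_k f|_\star \le g$ in $L^p(X,\mm)$ yields, up to a subsequence, weak convergence $|\rmD T_k f|_\star \weakto G$ in $L^p(X,\mm)$ for some $G$ with $0 \le G \le g$ $\mm$-a.e. The closure statement in Theorem \ref{thm:omnibus}(1) then produces $(f,G)\in S$, so $G$ is a $p$-relaxed gradient of $f$, and Theorem \ref{thm:omnibus}(3) yields $|\rmD f|_\star \le G \le g$ $\mm$-a.e.~on $Y$, which is the desired conclusion. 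The main subtlety is the upper-gradient verification in the second paragraph: condition \eqref{eq:139} is precisely calibrated so that any curve escaping $Y$ is detected with positive $\Leb^1$-measure by its arc-length reparametrization, which is what makes the $+\infty$-extension of $g$ harmless while preserving the inequality for curves that remain inside $Y$.
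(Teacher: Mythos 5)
Your proposal is correct and takes essentially the same route the paper sketches in the remarks following the theorem: extend $g$ by $+\infty$ on $X\setminus Y$, use condition \eqref{eq:139} (you via its contrapositive, the paper via the forward implication on curves with finite integral) to verify that $\tilde g$ is a genuine upper gradient along every $X$-valued Lipschitz curve, and then invoke the Newtonian/Cheeger identification \eqref{eq:72}. The only difference is that you spell out the truncation-and-weak-limit step to pass from $L^p$ to general $\mm$-measurable $f$, which the paper leaves implicit when citing \cite[Theorem 6.2]{AGS14I} and \cite[Theorem 7.4]{AGS13}.
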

Notice that condition \eqref{eq:120bis} is weaker than \eqref{eq:120},
since the upper gradient condition is imposed only along curves taking
values in $Y$; however, starting from any
function $g\in \mathcal L^p(Y,\mm)$ satisfying \eqref{eq:120bis} we
can define a new Borel function $\tilde g:X\to[0,+\infty]$ whose
restriction to $Y$ coincides with $g$ such that $\tilde
g\restr{X\setminus Y}\equiv +\infty$. Clearly
$$\int_X \tilde
g^p\,\d\mm=
\int_Y g^p\,\d\mm<+\infty\quad\text{since }\mm(X\setminus Y)=0.$$
Moreover $\tilde g$ is an upper gradient for $f$
according to \eqref{eq:120}: in fact it is sufficient to check
\eqref{eq:120} for those curves $\gamma$ with $\gamma=R_\gamma$ and $\displaystyle
\int_\gamma \tilde g<+\infty$; since $\tilde g(\gamma(s))=+\infty$
if $\gamma(s)\not\in Y$, we deduce that $\gamma(s)\in Y$ for
$\Leb{1}$-a.e.~$s\in [0,\ell_\sfd(\gamma)]$ so that $\gamma\in
\Lip([0,\ell_\sfd(\gamma)];(Y,\sfd))$ by \eqref{eq:139},
and \eqref{eq:120} then follows by \eqref{eq:120bis}.

It is also immediate to check that \eqref{eq:139} holds if $Y$ is closed.

We consider the situation where
\begin{enumerate}[(A)]
\item  $Y\subset X$ is a Borel set with full $\mm$-measure
  satisfying \eqref{eq:139};
\item
  a metric 
  $\dY:Y\times Y\to [0,+\infty)$
  is given on $Y$ such that $(Y,\dY)$ is complete and separable
  and 
  (recall Remark \ref{rem:trunc})
    \begin{equation}
    \label{eq:39}
    \sfd_1(y_1,y_2)\le \dY(y_1,y_2)\le
    \sfd_{Y,\ell}(y_1,y_2)\quad\text{for every }y_1,y_2\in Y.
  \end{equation}
\end{enumerate}

\begin{remark}[$Y$-intrinsic distance]\label{rem:equivcond}
$\delta$ is intrinsically equivalent to $\sfd$ on $Y$, i.e.~every
  $\sfd$-Lipschitz curve $\gamma:[0,1]\to Y$ is also
  $\dY$-Lipschitz, its $\dY$-length
  coincides with the corresponding $\sfd$-length, and integration
  along $\gamma$ does not depend on the choice of the distance.  In particular condition \eqref{eq:120bis} can be equivalently stated
  in terms of $\dY$.\\
   To see that these conditions are implied by \eqref{eq:39}, let us fix a $\sfd$-Lipschitz curve $\gamma:[0,1] \to Y$ with Lipschitz constant bounded by $L\ge0$; then
  \[ \sfd_{Y, \ell}(\gamma(s), \gamma(t)) \le \ell_\sfd \left (\gamma|_{[s,t]} \right ) = \int_s^t |\dot{\gamma}|_\sfd(r) \, \d r \le L|t-s| \quad 0 \le s \le t \le 1,\]
  so that $\gamma$ is $\sfd_{Y, \ell}$-Lipschitz continuous and thus, by \eqref{eq:39}, also $\dY$-Lipschitz continuous. To see that the $\dY$ and the $\sfd$-lengths of $\gamma$ coincide, it is enough to show that $\ell_\dY(\gamma) \le \ell_\sfd(\gamma)$, since \eqref{eq:39} and the trivial equality $\ell_{\sfd_1}(\gamma)= \ell_{\sfd}(\gamma)$ already give the other inequality; by $\eqref{eq:39}$ we immediately have $\ell_{\dY}(\gamma) \le \ell_{\sfd_{Y,\ell}}(\gamma)$ and by the very definition of $\sfd_{Y, \ell}$ we see that $\ell_{\sfd_{Y,\ell}}(\gamma) \le \ell_{\sfd}(\gamma)$. Finally, to see that the integral along $\gamma$ does not depend on the choice of the distance, it is enough to see that $|\dot{\gamma}|_\sfd = |\dot{\gamma}|_\delta$ a.e.~in $[0,1]$. The $\le$ inequality is an immediate consequence of \eqref{eq:39} and \eqref{eq:118}, while the $\ge$ follows by
  \[ \frac{\delta(\gamma(s), \gamma(t))}{t-s} \le \frac{\ell_\delta(\gamma|_{[s,t]})}{t-s} = \frac{\ell_\sfd(\gamma|_{[s,t]})}{t-s} = \frac{1}{t-s}\int_s^{t} |\dot{\gamma}|_\sfd(r) \, \d r \quad 0 \le s < t \le 1, \]
  and passing to the limit as $s \to t$ for every Lebesgue point $t$ of $|\dot{\gamma}|_\sfd$.
\end{remark}
 Since $\mm(X\setminus Y)=0$
we can identify $L^p(Y,\mm)$ with $L^p(X,\mm)$.
In general, the topology induced by $\dY$ is finer than the $\sfd$
topology on $Y$, and they coincide if $\dY$ is continuous
w.r.t.~$\sfd$.
It is also clear from property  (B) that the restriction to $Y$ of every
bounded $\sfd$-Lipschitz function
$f:X\to \R$ is also $\dY$-Lipschitz.
Thanks to \eqref{eq:39}
 (which in particular implies that
$\delta$-balls of radius $r<1$  centered at some point $y \in Y$  are included in $\sfd$-balls of the
same radius and with the same center)
it is also clear that
\begin{equation}
  \label{eq:44}
   \lip_{\dY}f(y)\le \lip_\sfd f(y)\quad\text{for every }y\in Y,\ f\in \Lip_b(X,\sfd).
\end{equation}
Since $\lip_{\dY}f$ is bounded and $\delta$-u.s.c.~in $Y$,
it is $\mm$-measurable and
we can define the $\dY$ pre-Cheeger energy
\begin{equation}
  \label{eq:37}
  \pCE_{p,\dY}(f):=\int_Y |\lip_{\dY}f(y)|^p\,\d\mm(y)
\end{equation}
and we can still consider its l.s.c.~envelope in $L^{ 0}(Y,\mm)$
\begin{equation}
  \label{eq:45}
  \CE_{p,\dY,\AA}(f):=\inf\Big\{\liminf_{n\to\infty}\pCE_{p,\dY}(f_n):
  f_n\in \AA,\ f_n\to f\ \text{in }L^{ 0}(X,\mm)\Big\}.
\end{equation}
\begin{theorem}
  \label{thm:identification2}
   Let $\AA(X, \sfd):=\Lip_b(X, \sfd)$, let $\AA$ be  a separating unital subalgebra of
  $\Lip_b(X,\sfd)$ satisfying \eqref{eq:214-15}
  and  assume that  $(Y,\delta)$ satisfies the conditions {\upshape (A), (B)} above.  Then 
   we have
  \begin{equation}
    \label{eq:46}
    \CE_{p,\dY, \AA(X, \sfd)}(f)=\CE_{p,\dY,\AA}(f)=\CE_{p,\AA}(f)=\CE_p(f)  \quad \text{ for every } f \in L^0(X,\mm).
  \end{equation}
  In particular, the minimal $p$-relaxed gradients of $f \in
  L^0(X, \mm)$ computed w.r.t.~$(\delta, \AA)$,
  $(\delta, \Lipb(Y))$,
  $(\sfd, \AA)$ or $(\sfd, \Lipb(X))$ coincide and we have
  $  D^{1,p}(Y,\dY,\mm) =D^{1,p}(Y,\dY,\mm;\AA)=
  D^{1,p}(X,\sfd,\mm)=D^{1,p}(X,\sfd,\mm;\AA).$ 
\end{theorem}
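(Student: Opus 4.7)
The plan is to establish the chain of equalities in \eqref{eq:46} by combining Corollary \ref{cor:density} with an argument based on the Newtonian characterisation of the metric Sobolev space encoded in Theorem \ref{thm:usefulN}. Since $\AA$ is a separating unital subalgebra of $\Lip_b(X,\sfd)$ satisfying \eqref{eq:214-15}, Corollary \ref{cor:density} immediately yields $\CE_{p,\AA}(f)=\CE_p(f)$ for every $\mm$-measurable $f$, which disposes of the rightmost equality.

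The easy half of the remaining statement is a chain of upper bounds. From the pointwise inequality $\lip_\delta g\le \lip_\sfd g$ on $Y$ for every $g\in\Lip_b(X,\sfd)$ (equation \eqref{eq:44}), together with $\mm(X\setminus Y)=0$, we have $\pCE_{p,\delta}(g)\le\pCE_p(g)$, and since $\AA\subset\Lip_b(X,\sfd)$, the usual monotonicity of the relaxation with respect to both the approximating class and the pre-Cheeger integrand yields
\[
\CE_{p,\delta,\Lip_b(X,\sfd)}(f)\le \CE_{p,\delta,\AA}(f)\le \CE_{p,\AA}(f)=\CE_p(f).
\]

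The real work is the reverse inequality $\CE_p(f)\le \CE_{p,\delta,\Lip_b(X,\sfd)}(f)$. I would fix a competitor $(f_n)\subset \Lip_b(X,\sfd)$ with $f_n\to f$ in $L^0(X,\mm)$ and $\liminf_n \pCE_{p,\delta}(f_n)<+\infty$, and argue that each $\lip_\delta f_n$ plays the role of $g$ in Theorem \ref{thm:usefulN}. By the standard upper gradient property of the asymptotic Lipschitz constant, $\lip_\delta f_n$ is an upper gradient of $f_n$ along every $\delta$-Lipschitz curve in $Y$, the line integral being computed with $\delta$-velocity. By Remark \ref{rem:equivcond}, the classes of $\sfd$- and $\delta$-Lipschitz $Y$-valued curves coincide and have equal metric velocities, so $\lip_\delta f_n$ is also an upper gradient of $f_n$ along every $\sfd$-Lipschitz $Y$-valued curve, precisely in the sense of \eqref{eq:120bis}. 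Assumption (A) provides the invariance property \eqref{eq:139}, so Theorem \ref{thm:usefulN} applies and delivers $|\rmD f_n|_\star\le \lip_\delta f_n$ $\mm$-a.e., whence $\CE_p(f_n)\le \pCE_{p,\delta}(f_n)$. The $L^0$-lower semicontinuity of $\CE_p$ (Remark \ref{rem:relprec}) together with passage to the infimum over admissible sequences then closes the chain.

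Equality of the four energies immediately gives the equality of the corresponding domains $D^{1,p}$. For the pointwise coincidence of the four minimal relaxed gradients, monotonicity of the minimum with respect to enlarging either the approximating class or replacing $\lip_\sfd$ by the pointwise smaller $\lip_\delta$ produces a chain of pointwise inequalities among the four minimal gradients dual to the one among the energies, and equality of the $L^p$-norms (which is exactly what the energy identities encode) forces all four to agree $\mm$-a.e. The main technical nuisance I anticipate is ensuring that $\lip_\delta f_n$, which is a priori only $\delta$-upper semicontinuous on $Y$, is a legitimate Borel choice for Theorem \ref{thm:usefulN}; this is the exact measurability point that the authors already touch on in the discussion preceding \eqref{eq:37}, and one can alternatively work with a Borel representative having no larger $L^p$ norm.
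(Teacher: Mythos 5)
Your proposal is correct and follows essentially the same route as the paper: upper bounds via the pointwise inequality $\lip_\delta\le\lip_\sfd$ and Corollary~\ref{cor:density}, then the reverse inequality by feeding $g=\lip_\delta f$ (for $f\in\Lip_b(X,\sfd)$) into Theorem~\ref{thm:usefulN}, using Remark~\ref{rem:equivcond} to pass between $\sfd$- and $\delta$-Lipschitz $Y$-valued curves and condition~(A) to verify \eqref{eq:139}, and closing with the $L^0$-lower semicontinuity of $\CE_p$. Your closing remark on the measurability of $\lip_\delta f_n$ is exactly the point the paper addresses by noting that $\lip_\delta f$ is $\delta$-u.s.c.\ on $Y$ and hence $\mm$-measurable.
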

\begin{proof}
    Since $\pCE_{p,\dY}(f)\le \int_{X}\big(\lip_\sfd
    f(x)\big)^p\,\d\mm$ for every $f\in \Lip_b(X,\sfd)$, we clearly have
  \begin{displaymath}
    \CE_{p,\dY, \AA(X, \sfd)}(f)\le\CE_{p,\dY,\AA}(f)\le\CE_{p,\AA}(f)=\CE_p(f)\quad\text{for
      every $f\in L^{ 0}(X,\mm)$},
  \end{displaymath}
  where the last equality follows from Corollary \ref{cor:density}. It is then sufficient to prove that  $\CE_{p,\dY, \AA(X, \sfd)}(f)\ge \CE_p(f)$ in
  order to get \eqref{eq:46}. Using \eqref{eq:45} and the $L^{ 0}(X,\mm)$-lower
  semicontinuity of $\CE_p$
  (see Remark \ref{rem:relprec}), the latter inequality will be  a consequence of
  \begin{equation}
    \label{eq:47}
    \int_Y |\lip_{\dY}f(y)|^p\,\d\mm(y)\ge \CE_p(f)\quad\text{for
      every }f\in \Lip_b(X,\sfd).
  \end{equation}
   In order to prove \eqref{eq:47} it is sufficient to apply
  Theorem \ref{thm:usefulN} and prove that the Borel function
  $g:=\lip_{\dY} f$
  satisfies \eqref{eq:120bis}.
  Now we use the fact that the restriction to $Y$ of a function $f\in\Lip_b(X, \sfd)  $ belongs to $\Lip_b(Y,\dY)$
  and every $\sfd$-Lipschitz curve $\gamma$ with values in $Y$ is also
  $\dY$-Lipschitz, the respective  lengths  coincide and therefore
  also the arc-length reparametrizations are the same.
  Since $\lip_\delta$ is an upper gradient we thus obtain

  \begin{displaymath}
    |f(\gamma(b))-f(\gamma(a))|\le \int_\gamma  \lip_\dY f \quad  \text{ for every } \gamma \in \Lip([a,b]; (Y, \delta)).
    \qedhere
  \end{displaymath}
  By Theorem \ref{thm:usefulN} and also using that $\mm(X \setminus Y)=0$, we conclude.  
\end{proof}
Combining Theorem \ref{thm:identification2} with Corollary
\ref{cor:Hilbert} we recover the following result of \cite{LP20}.
\begin{corollary}
  \label{cor:Riemann}
  Let $(\M,\sfd_\M)$ be a complete Riemannian manifold endowed with
  the canonical Riemannian distance and let $\mm$ be a
  finite and positive Borel measure on $\M$.
  Then $H^{1,2}(\M,\sfd_\M,\mm)$ is a Hilbert space and
  $\rmC^\infty_c(\M)$ is dense in $H^{1,2}(\M,\sfd_\M,\mm)$.
\end{corollary}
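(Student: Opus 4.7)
The plan is to reduce the claim to the Euclidean case already handled in Corollary~\ref{cor:Hilbert} and Remark~\ref{rem:serve-tutto}, by combining Nash's isometric embedding theorem with Theorem~\ref{thm:identification2}. First I would use Nash to obtain a smooth isometric embedding $\iota:\M\hookrightarrow \R^N$ for some finite $N$; completeness of $(\M,\sfd_\M)$ guarantees that $\iota(\M)$ is closed in $\R^N$. Then I identify $\M$ with its image and regard $\mm$ as a finite Borel measure on $\R^N$ concentrated on $\M$, so that $\mm(\R^N\setminus\M)=0$.

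Next, I would apply Theorem~\ref{thm:identification2} with $X=\R^N$, $\sfd$ the Euclidean distance, $Y=\M$, $\delta=\sfd_\M$, and $\AA=\Lip_b(\R^N)$ (condition \eqref{eq:214-15} is satisfied since each $\sfd_y$ on $\R^N$ is $1$-Lipschitz, hence has $(2,\Lip_b(\R^N))$-relaxed gradient bounded by $1$ by truncation, cf.~\eqref{eq:90}). Condition~(A), namely \eqref{eq:139}, holds because $\M$ is closed in $\R^N$. The bound $\sfd\le\sfd_\M$ on $\M\times\M$ required for (B) is immediate, since a straight chord is shorter than any $\M$-path. For the reverse bound $\sfd_\M\le\sfd_{\M,\ell}$, the isometry of $\iota$ yields $|\dot\gamma|_\sfd(t)=|\dot\gamma|_{\sfd_\M}(t)$ for a.e.~$t$ and every Lipschitz curve $\gamma:[a,b]\to\M$, so $\ell_\sfd(\gamma)=\ell_{\sfd_\M}(\gamma)$; passing to the infimum over such curves gives $\sfd_{\M,\ell}=\sfd_\M$. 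Theorem~\ref{thm:identification2} then yields
\[ H^{1,2}(\M,\sfd_\M,\mm)=H^{1,2}(\R^N,\sfd,\mm) \]
with identical Cheeger energies and minimal relaxed gradients, and Corollary~\ref{cor:Hilbert} makes the right-hand side Hilbertian, proving the first assertion.

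For the density of $\rmC^\infty_c(\M)$, Remark~\ref{rem:serve-tutto} provides, for any $f\in H^{1,2}(\R^N,\sfd,\mm)$, a sequence $(\tilde f_n)\subset \rmC^\infty_c(\R^N)$ with $\tilde f_n\to f$ and $\lip_\sfd\tilde f_n\to|\rmD f|_\star$ strongly in $L^2(\R^N,\mm)$. Setting $f_n:=\tilde f_n|_\M$, closedness of the embedding together with compactness of $\supp\tilde f_n$ gives $f_n\in\rmC^\infty_c(\M)$. By \eqref{eq:44} one has the pointwise bound $\lip_{\sfd_\M}f_n(x)\le\lip_\sfd\tilde f_n(x)$ for every $x\in\M$; combined with $L^2$-convergence of the right-hand side, the identification of Cheeger energies from the previous paragraph, and the $L^0$-lower semicontinuity of $\CE_2$, this forces $\lip_{\sfd_\M}f_n\to|\rmD f|_\star$ strongly in $L^2(\M,\mm)$, yielding the desired density. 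The main technical point is the length-distance identification $\sfd_{\M,\ell}=\sfd_\M$ entering condition~(B): although geometrically natural, it requires care with metric velocities of merely Lipschitz (not $\rmC^1$) curves in the submanifold, and it is precisely this equality that unlocks Theorem~\ref{thm:identification2} for the transfer of the Euclidean results to the curved setting.
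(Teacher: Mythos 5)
Your proof is correct and follows essentially the same route as the paper: Nash embedding to view $\M$ as a closed subset of $\R^N$, Theorem~\ref{thm:identification2} (with conditions (A) and (B) verified exactly as you do) to transfer the Euclidean Cheeger energy, and Remark~\ref{rem:serve-tutto} for the density of $\rmC^\infty_c(\R^N)$. Your final step, deducing strong $L^2$-convergence of $\lip_{\sfd_\M} f_n$ via the pointwise bound \eqref{eq:44}, lower semicontinuity, and the energy identification, is spelled out in more detail than the paper's terse "so that $\jmath^*(\rmC^\infty_c(\R^d))\subset\rmC^\infty_c(\M)$ is dense", but it is the same argument.
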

\begin{proof}
  By Nash isometric embedding Theorem \cite{Nash54}
  we can find a dimension $d$,
  and an isometric embedding
  $\jmath:\M\to
  \jmath(\M)\subset \R^d$.
  
   Since $\M$ is complete and $\jmath$ is an
  imbedding, $M:=\jmath(\M)$ is a closed subset of $\R^d$ and the (Riemannian) metric $\sfd_M$ inherited by $\sfd_\M$ given by $\sfd_M(\jmath(x),\jmath(y)):=\sfd_\M(x,y)$ is an isometry. In particular $\sfd_M$ induces on $M$ the relative topology of $\R^d$ and $(M, \sfd_M)$ is a complete and separable metric space. 
  Setting $\tilde \mm:=\jmath_\sharp \mm$, 
  it is clear that the map $\jmath^*:f\to f\circ \jmath$
  is a linear isometric isomorphism between
  $H^{1,2}(M,\sfd_M,\tilde\mm)$
  and $H^{1,2}(\M,\sfd_\M,\mm)$.
  It is then sufficient to prove the statement for $H^{1,2}(M,\sfd_M,\tilde\mm)$.

  We can now apply Theorem \ref{thm:identification2} with
  the choices $(Y,\dY):=(M,\sfd_M)$ and $X=\R^d$ endowed with the
  Euclidean distance $\sfd$.
  Condition (A) clearly holds since $M$ is closed in
  $\R^d$  and $\tilde{\mm}$ is supported on $M$.  Similarly, also (B) holds since $\jmath$ is an isometric
  immersion.

  Remark \ref{rem:serve-tutto} shows that $\rmC^\infty_c(\R^d)$ is
  dense in $H^{1,2}(\R^d,\sfd_{\R^d},\tilde\mm)$   so that
  $\jmath^*\big(\rmC^\infty_c(\R^d)\big)\subset \rmC_c^\infty(\M)$
  is dense in $H^{1,2}(\M,\sfd_\M,\mm).$  
\end{proof}

\newcommand{\pd}{{2}}
\section{Wasserstein spaces}
\label{sec:Wasserstein}
In this section we list some properties of Wasserstein spaces we will use in the sequel. A complete account of this matter can be found e.g.~in \cite{Villani09,AGS08}.\\
If $(X, \sfd)$ is a complete and separable metric space, we denote by
$\prob(X)$ the space of Borel probability measures on $X$ and by
$\prob_\pd(X)$,
the set
\[ \prob_\pd(X):= \left \{ \mu \in \prob(X) \mid \int_{X} \sfd^\pd(x,x_0) \d \mu(x) < + \infty \text{ for some } x_0 \in X \right \}.\]
Given $\mu, \nu \in \prob(X)$ the set of transport plans between $\mu$ and $\nu$ is denoted by $\Gamma(\mu, \nu)$ and defined as
\[ \Gamma(\mu, \nu): = \left \{ \mmu \in \prob(X \times X) \mid \pi^1_\sharp \mmu=\mu, \, \pi^2_\sharp \mmu=\nu \right \},\]
where $\pi^i(x_1,x_2)=x_i$ for every $(x_1,x_2) \in X \times X$ and $\sharp$ denotes the push forward operator.
The $L^\pd$-Wasserstein distance $W_\pd$ between $\mu, \nu \in \prob_\pd(X)$ is defined as
\[ W_\pd^\pd(\mu, \nu) := \inf \left \{ \int_{X \times X} \sfd^\pd \,\d \mmu \mid \mmu \in \Gamma(\mu, \nu) \right \}.\]
It is well known that the infimum above is attained in a non-empty and
convex set $\Gamma_o(\mu, \nu) \subset \Gamma(\mu, \nu)$; elements of
$\Gamma_o(\mu, \nu)$ are called optimal transport plans. 

The space $(\prob_\pd(X), W_\pd)$ is complete and separable and its topology is stronger than the narrow topology, the latter being defined as the coarsest topology on $\prob(X)$ making the maps
\[ \mu \mapsto \int_{X} \varphi \, \d \mu\]
continuous for every $\varphi \in \rmC_b(X)$, the space of continuous and bounded functions on $X$. In particular, for a sequence $(\mu_n)_n \subset \prob_\pd(X)$ and a point $\mu \in \prob_\pd(X)$, we have
\begin{equation}
    W_\pd(\mu_n, \mu) \to 0 \Leftrightarrow \begin{cases}
      &\displaystyle\int_{X} \sfd^\pd(x,x_0) \d \mu_n(x) \to
      \int_{\R^d} \sfd^\pd(x,x_0) \d \mu(x) \quad \text{ for some }
      x_0\in X,\\ &\mu_n \to \mu\quad\text{narrowly in }\prob(X).
    \end{cases}
\end{equation}
 Moreover,  the Wasserstein distance is narrowly lower semicontinuous,
meaning that, if $(\mu_n)_n$ and $(\mu'_n)_n$ are two sequences in $\prob_\pd(X)$, $\mu,
,\mu' \in \prob_\pd(X)$ and $\mu_n \to \mu$, $\mu'_n \to \mu'$
narrowly in $\prob(X)$, then we have
\[ \liminf_{n\to\infty} W_\pd(\mu_n, \mu'_n) \ge W_\pd(\mu, \mu').\]
The following Theorem is \cite[Theorem 8.3.1, Proposition 8.4.5 and
Proposition 8.4.6]{AGS08} in case $X=\R^d$.
 Recall that for every $\mu\in \prbt$
\begin{equation}
  \label{eq:129}
  \Tan_{\mu}\prob_2(\R^d) :={} \overline{\{ \nabla \varphi \mid
    \varphi \in \rmC^\infty_c(\R^d) \}}^{L^2(\R^d, \mu; \R^d)}.
\end{equation}
\begin{theorem}[Wasserstein velocity field]
  \label{thm:tangentv}
Let  $(\mu_t)_{t \in \interval} \subset \prob_2(\R^d)$  be a locally absolutely
continuous curve defined in an open interval $\interval\subset \R$.
There exists a Borel vector field
$v:\interval\times \R^d\to \R^d$
and
a set $A( (\mu_t)_{t \in \interval}) \subset \interval$ with
$ \Leb{1}(\interval \setminus A( (\mu_t)_{t \in \interval}))=0$ such that
for every $t\in A( (\mu_t)_{t \in \interval})$ 
\begin{displaymath}
  \begin{gathered}
    v_t\in\Tan_{\mu_t}\prob_2(\R^d) ,\quad
  \int_{\R^d} |v_t|^2\,\d\mu_t=|\dot \mu_t|^2=\lim_{h\to
    0}\frac{W_2^2(\mu_{t+h},\mu_t)}{h^2},
\end{gathered}
\end{displaymath}
and the continuity equation
\[\partial_t\mu_t+\nabla\cdot(v_t\mu_t)=0\]
holds in the sense of distributions in $\interval\times \R^d$.
Moreover, $v_t$ is uniquely determined
in $L^2(\R^d, \mu_t; \R^d)$ for $t\in A( (\mu_t)_{t \in \interval})$ and 
\begin{equation}
 \lim_{h \to 0} \frac{W_2((\ii_{\R^d}+h v_t)_{\sharp}\mu_t,
   \mu_{t+h})}{|h|} =0 \quad \text{for every }t \in A( (\mu_t)_{t \in \interval}),\label{eq:74}
\end{equation}
 where $\ii_{\R^d}$ is the identity map on $\R^d$.
\end{theorem}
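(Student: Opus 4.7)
The plan is to realize $v_t$ as the element of $\Tan_{\mu_t}\prob_2(\R^d)$ dual to the distributional time derivative of $(\mu_t)$ via a Riesz-representation argument in the tangent Hilbert space. First, the local absolute continuity of $(\mu_t)$ in $(\prob_2(\R^d),W_2)$ yields by general metric theory the existence of the metric speed $|\dot\mu_t|=\lim_{h\to 0}W_2(\mu_{t+h},\mu_t)/|h|$ at $\Leb1$-a.e.\ $t\in \interval$, with $|\dot\mu|\in L^2_{\mathrm{loc}}(\interval)$; call $A_0$ this full-measure set.

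Second, for every $\varphi\in \rmC^\infty_c(\R^d)$ the map $t\mapsto \int \varphi\, \d\mu_t$ is locally absolutely continuous with the sharp estimate
\begin{equation*}
\Bigl|\frac{\d}{\d t}\int \varphi\,\d\mu_t\Bigr|\le |\dot\mu_t|\,\|\nabla\varphi\|_{L^2(\mu_t)}\qquad\text{at every }t\in A_0.
\end{equation*}
This is obtained by picking $\pi_h\in\Gamma_o(\mu_t,\mu_{t+h})$, using the bound $|\varphi(y)-\varphi(x)|\le \int_0^1|\nabla\varphi((1-s)x+sy)|\,\d s\,|y-x|$, applying Cauchy--Schwarz with respect to $\pi_h$ and passing to the limit exploiting the narrow convergence $\pi_h\to (\ii_{\R^d},\ii_{\R^d})_{\sharp}\mu_t$ and the continuity of $\nabla\varphi$. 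Defining then $L_t(\nabla\varphi):=\frac{\d}{\d t}\int\varphi\,\d\mu_t$ on the linear subspace $V_t:=\{\nabla\varphi:\varphi\in \rmC^\infty_c(\R^d)\}$ of $L^2(\R^d,\mu_t;\R^d)$, the estimate shows $L_t$ depends only on the $\mu_t$-equivalence class of $\nabla\varphi$ and has operator norm at most $|\dot\mu_t|$. Extending $L_t$ by continuity to the closure $\Tan_{\mu_t}\prob_2(\R^d)$ and applying Riesz produces a unique $v_t\in\Tan_{\mu_t}\prob_2(\R^d)$ satisfying the continuity equation in the distributional sense with $\|v_t\|_{L^2(\mu_t)}\le|\dot\mu_t|$. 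A jointly Borel version $(t,x)\mapsto v_t(x)$ is obtained by selecting representatives along a countable dense family of test functions. Uniqueness of $v_t$ in $\Tan_{\mu_t}\prob_2(\R^d)$ is then immediate, since the difference of two candidates would be tangent and $L^2(\mu_t)$-orthogonal to every $\nabla\varphi$, hence zero.

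The hard part is the sharpness $\|v_t\|_{L^2(\mu_t)}=|\dot\mu_t|$ together with the infinitesimal identity \eqref{eq:74}. I would approximate $v_t$ by $\nabla\varphi_n\to v_t$ in $L^2(\mu_t;\R^d)$, use the plan $(\ii_{\R^d},\ii_{\R^d}+h\nabla\varphi_n)_{\sharp}\mu_t$ as admissible coupling between $\mu_t$ and $(\ii_{\R^d}+h\nabla\varphi_n)_{\sharp}\mu_t$, and compare with $\mu_{t+h}$ via a direct expansion of the form
\begin{equation*}
W_2^2(\mu_{t+h},(\ii_{\R^d}+h\nabla\varphi_n)_{\sharp}\mu_t)\le W_2^2(\mu_t,\mu_{t+h})-2h\int\nabla\varphi_n\cdot v_t\,\d\mu_t+h^2\|\nabla\varphi_n\|_{L^2(\mu_t)}^2+o(h^2),
\end{equation*}
derived from testing $|\varphi_n(y)-\varphi_n(x)-h\nabla\varphi_n(x)\cdot v_t(x)|^2$ against an optimal plan and using the first-order expansion of $\int \varphi_n\,\d\mu_{t+h}$. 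Dividing by $h^2$, sending $h\to 0$ and then $n\to\infty$, and using the already-established upper bound $\|v_t\|_{L^2(\mu_t)}\le |\dot\mu_t|$ forces the equality of norms and produces \eqref{eq:74}. The main obstacle is the selection of a $\Leb{1}$-full-measure refinement $A\subset A_0$ on which the pointwise derivatives $\frac{\d}{\d t}\int\varphi_n\,\d\mu_t$ exist simultaneously for a countable dense family $\{\varphi_n\}$, on which $W_2(\mu_{t+h},\mu_t)/|h|\to |\dot\mu_t|$, and on which the three Wasserstein distances involved admit compatible first-order expansions: this requires combining the classical Lebesgue differentiation theorem with the continuity of $t\mapsto\mu_t$ and the semiconcavity properties of $W_2^2$ along geodesics.
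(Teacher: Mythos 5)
This statement is not proved in the paper: it is quoted directly as \cite[Theorem 8.3.1, Propositions 8.4.5 and 8.4.6]{AGS08}. Your construction of $v_t$ by Riesz representation on the pre-Hilbert space $\{\nabla\varphi:\varphi\in \rmC^\infty_c(\R^d)\}$ equipped with the $L^2(\mu_t)$ inner product, including the estimate
\begin{displaymath}
\Bigl|\tfrac{\d}{\d t}\smallint \varphi\,\d\mu_t\Bigr|\le |\dot\mu_t|\,\|\nabla\varphi\|_{L^2(\mu_t)},
\end{displaymath}
the deduction that $v_t\in\Tan_{\mu_t}\prob_2(\R^d)$, the bound $\|v_t\|_{L^2(\mu_t)}\le|\dot\mu_t|$, and the uniqueness argument, are all correct and are indeed the route taken in the reference. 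The step you flag as ``the hard part,'' however, contains a genuine gap.

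The quadratic expansion you propose (once corrected: the cross term should be $-2h^2\int\nabla\varphi_n\cdot v_t\,\d\mu_t+o(h^2)$, not $-2h\int\nabla\varphi_n\cdot v_t\,\d\mu_t$, because $\int\nabla\varphi_n(x)\cdot(y-x)\,\d\pi_h = h\int\nabla\varphi_n\cdot v_t\,\d\mu_t+o(h)$) gives, after dividing by $h^2$, letting $h\to 0$ and then $n\to\infty$, only the \emph{upper} bound
\begin{displaymath}
\limsup_{h\to 0}\frac{W_2^2\bigl(\mu_{t+h},(\ii_{\R^d}+h v_t)_\sharp\mu_t\bigr)}{h^2}\;\le\;|\dot\mu_t|^2-\|v_t\|^2_{L^2(\mu_t)}.
\end{displaymath}
This is entirely consistent with a strict inequality $\|v_t\|_{L^2(\mu_t)}<|\dot\mu_t|$: it does not produce a contradiction if the right-hand side is strictly positive, and no combination with the triangle inequality (say, $W_2(\mu_t,\mu_{t+h})\le W_2(\mu_t,(\ii_{\R^d}+hv_t)_\sharp\mu_t)+W_2((\ii_{\R^d}+hv_t)_\sharp\mu_t,\mu_{t+h})$) yields the missing inequality $|\dot\mu_t|\le\|v_t\|_{L^2(\mu_t)}$. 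What is actually needed is the ``benchmark'' implication: a narrowly continuous solution of the continuity equation with $\int_a^b\|v_s\|_{L^2(\mu_s)}\,\d s<\infty$ satisfies $W_2(\mu_a,\mu_b)\le\int_a^b\|v_s\|_{L^2(\mu_s)}\,\d s$, whence $|\dot\mu_t|\le\|v_t\|_{L^2(\mu_t)}$ a.e. This is a nontrivial lemma whose proof for merely $L^2$-integrable velocity fields relies either on Ambrosio's superposition principle (the same result quoted in the paper's proof of Theorem \ref{thm:identification}) or on a smoothing/mollification argument; it is not a consequence of the pointwise Taylor expansion of $W_2^2$ and must be supplied separately. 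Once that reverse inequality is in hand, the norm equality is immediate and your expansion does give \eqref{eq:74}, since the right-hand side above collapses to zero.
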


\subsection{Kantorovich duality and estimates for Kantorovich
  potentials}
\label{subsec:Kuseful}
The Kantorovich duality for the Wasserstein distance states that
\begin{equation}\label{eq:duality}
    W_\pd^\pd(\mu, \nu) = \sup \left \{ \int_{X}u\, \d \mu + \int_{X} v\, \d \nu \mid  (u, v)  \in \text{Adm}_\pd(X) \right \} \quad \text{ for every } \mu, \nu \in \prob_\pd(X),
  \end{equation}
where $\text{Adm}_\pd(X)$ is the set of pairs $(u,v) \in \rmC_b(X) \times \rmC_b(X)$ such that 
\[ u(x) + v(y) \le \sfd^\pd(x,y) \quad \text{ for every } x,y \in X.\]
 It is easy to check that for every $f\in \Lip(X,\sfd)$
\begin{equation}
  \label{eq:123}
  \int_X f\,\d(\mu-\nu)\le \Lip(f,X)W_2(\mu,\nu),
\end{equation}
since choosing $\mmu\in \Gamma_o(\mu,\nu)$ and setting $L:=\Lip(f,X)$,
\begin{align*}
  \int_X f\,\d(\mu-\nu)=
  \int (f(x)-f(y))\,\d\mmu(x,y)\le
  L\int \sfd\,\d\mmu\le
  L\Big(\int \sfd^2\,\d\mmu\Big)^{1/2}=LW_2(\mu,\nu).
\end{align*}

When $X=\R^d$, we denote by $\prob_2^r(\R^d)$ the subset of
$\prob_2(\R^d)$ of probability measures that are absolutely continuous
w.r.t.~the $d$-dimensional Lebesgue measure.
We also set
\begin{equation}
  \label{eq:183}
  \sqm\mu:=\int_{\R^d}|x|^2\,\d\mu(x)=W_2^2(\mu,\delta_0).
\end{equation}

The next result uses the celebrated Brenier-Knott-Smith Theorem
\cite[Section 3]{Villani03} to collect various useful properties of the optimal
potentials realizing the supremum in  \eqref{eq:duality} in a particular geometric
situation. We will use the elementary property
that
\begin{equation}
  \label{eq:diseq}
  \text{if $u:\overline{B(0,R)}\to [-\infty,+\infty)$ is concave with $u(0)>-\infty$
  then}\quad
  \sup_{ \overline{\rmB(0,R)}} u  = \sup_{\rmB(0,R)} u,
  \end{equation}
  which follows by the fact that for every $y_0 \in \partial
  \rmB(0,R)$
  the concavity of $t\mapsto u(ty_0)$ in $[0,1]$ 
  yields $u(y_0)\le \sup_{0\le t<1} u(ty_0)$.
\begin{theorem} \label{thm:ot} Let $\mu, \nu \in \prob_2^r(\R^d)$
  with $\supp{\nu} =  \overline{\rmB(0,R)}$ for some $R>0$. Then there
  exists a unique pair of
  continuous
  and convex functions 
  \begin{equation}
 \varphi=\Phi(\nu,\mu) : \rmB(0,R) \to \R,
    \quad \varphi^*=\Phi^*(\nu,\mu): \R^d \to \R
    \label{eq:157}
  \end{equation}
such that 
\begin{enumerate}
\item[(i)]
  $\varphi^*$ is $R$-Lipschitz and
  \begin{align}
    \label{eq:160}
    \displaystyle \varphi^*(y) &= \sup_{x \in \rmB(0,R)}  \la x
  , y \ra - \varphi(x) && \text{for every $y \in \R^d$},\\
  \varphi(x)&=\sup_{y\in \R^d} \la
  y,x\ra-\varphi^*(y)&& \text{for every }x\in \rmB(0,R),
  \label{eq:159}
\end{align}
\item[(ii)] $\displaystyle \varphi^*(0)=\inf_{\rmB(0,R)} \varphi=0$,
\item[(iii)] $\displaystyle \int_{\rmB(0,R)} \varphi\, \d \nu + \int_{\R^d} \varphi^*\, \d \mu = \frac{1}{2} \sqm{\nu} + \frac{1}{2} \sqm{\mu} - \frac{1}{2}W_2^2(\nu, \mu)$.
\end{enumerate}
Moreover the pair $(\varphi, \varphi^*)$ satisfies 
\begin{equation}
\displaystyle W_2^2(\mu, \nu) =\int_{\rmB(0,R)} \left |x-\nabla
  \varphi(x) \right |^2 \d \nu(x) =\int_{\R^d} \left |y-\nabla
  \varphi^*(y) \right |^2 \d \mu(y).\label{eq:158}
\end{equation}
\end{theorem}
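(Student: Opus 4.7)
The plan is to extract the pair $(\varphi,\varphi^*)$ from the Brenier--Knott--Smith representation of the optimal plan between $\nu$ and $\mu$, restrict the Brenier potential to the open ball $\rmB(0,R)$, and then fix the additive constant to obtain the normalization in (ii). First I would apply Brenier's theorem: since $\nu\ll\Leb d$, there exists a convex lower semicontinuous function $\hat\varphi:\R^d\to\R\cup\{+\infty\}$ whose gradient $\nabla\hat\varphi$ is the unique optimal transport map from $\nu$ to $\mu$ and whose Legendre conjugate $\hat\varphi^*$ satisfies that $\nabla\hat\varphi^*$ is the optimal map from $\mu$ to $\nu$. Since $\supp\nu=\overline{\rmB(0,R)}$ has nonempty interior $\rmB(0,R)$ on which $\hat\varphi$ is finite $\nu$-a.e., convexity forces $\hat\varphi$ to be finite and continuous on all of $\rmB(0,R)$; call this restriction $\varphi_0$. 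Being a continuous convex function on the bounded open convex set $\rmB(0,R)$, $\varphi_0$ is automatically bounded below. I then define $\varphi_0^*(y):=\sup_{x\in\rmB(0,R)}\langle x,y\rangle-\varphi_0(x)$ and set $\varphi:=\varphi_0-\inf_{\rmB(0,R)}\varphi_0$, with $\varphi^*$ given by formula \eqref{eq:160}.

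The verification of (i)--(iii) then proceeds as follows. The function $\varphi^*$ is convex and $R$-Lipschitz on $\R^d$ because in \eqref{eq:160} the supremum ranges over points of norm less than $R$. The involution $\varphi=\varphi^{**}$ on $\rmB(0,R)$, i.e., formula \eqref{eq:159}, follows from Fenchel--Young in one direction and, in the other, by choosing a subgradient $y\in\partial\varphi_0(x)$ at each $x\in\rmB(0,R)$, which exists because $\varphi_0$ is convex and finite on the open convex set $\rmB(0,R)$. Property (ii) is immediate from the normalization, since $\varphi^*(0)=-\inf\varphi=0$. For (iii) and \eqref{eq:158}, I would substitute $u(y)=\tfrac12|y|^2-\varphi(y)$, $v(x)=\tfrac12|x|^2-\varphi^*(x)$: the Fenchel--Young inequality becomes the admissibility $u(y)+v(x)\le\tfrac12|x-y|^2$ for the Kantorovich dual of $\tfrac12 W_2^2$, and Brenier's equality on the support of the optimal plan rearranges into (iii) via $\tfrac12|x-y|^2=\tfrac12|x|^2+\tfrac12|y|^2-\langle x,y\rangle$. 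The identities in \eqref{eq:158} follow because $\nabla\varphi$ and $\nabla\varphi^*$ are mutually inverse optimal transport maps.

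For uniqueness, suppose $(\varphi_1,\varphi_1^*)$ and $(\varphi_2,\varphi_2^*)$ both satisfy (i)--(iii). The associated pairs $(u_i,v_i)$ above are both optimal Kantorovich potentials for $\tfrac12 W_2^2$; by Brenier's uniqueness theorem for $\nu\ll\Leb d$, $\nabla\varphi_1=\nabla\varphi_2$ $\nu$-a.e., hence Lebesgue-a.e.\ on $\rmB(0,R)$ (since $\supp\nu=\overline{\rmB(0,R)}$ implies every open subset of $\rmB(0,R)$ has positive $\nu$-measure, so every $\nu$-full-measure set is dense in $\rmB(0,R)$). The continuous locally Lipschitz function $\varphi_1-\varphi_2$ has vanishing classical gradient a.e.\ on the connected open set $\rmB(0,R)$, hence is constant; the normalization $\inf\varphi_i=0$ then forces $\varphi_1=\varphi_2$, and $\varphi_1^*=\varphi_2^*$ follows from \eqref{eq:160}.

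The main obstacle I anticipate is the correct handling of the restricted Legendre transform on the open ball, in particular verifying the involution $\varphi^{**}=\varphi$ when the supremum is taken only over $\rmB(0,R)$ rather than all of $\R^d$; this requires subgradient existence for $\varphi_0$ at every interior point, rather than only $\nu$-a.e., and is where the finiteness and convexity of $\varphi_0$ on the open convex set $\rmB(0,R)$ enter crucially. All other steps are routine consequences of Brenier's theorem once this restriction-to-the-interior step is justified.
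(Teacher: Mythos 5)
Your proposal is correct and follows essentially the same route as the paper: obtain the Brenier/Kantorovich potential pair, observe that the Brenier potential is finite and continuous on the open ball $\rmB(0,R)$, pass to the Legendre transform restricted to that ball, and fix the additive constant to enforce the normalization (ii). Your verification of the involution \eqref{eq:159} via subgradient existence on the open ball is a slightly more hands-on version of the paper's appeal to Fenchel--Moreau together with the concavity trick \eqref{eq:diseq}, and your normalization ($\inf_{\rmB(0,R)}\varphi=0$) is equivalent to theirs ($\varphi^*(0)=0$) via \eqref{eq:160}, so these are cosmetic differences only.
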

\begin{proof}
  Let us set $D:=\rmB(0,R)$.
  We know (see e.g.~\cite[Theorem 2.9, Lemma 2.10]{Villani03})
  that there exists a  pair $(\phi, \phi^*)$ of lower semicontinuous proper conjugate functions such that $\phi \in L^1(\overline{D},\nu;(-\infty, +\infty])$, $\phi^* \in L^1(\R^d,\mu;(-\infty, +\infty])$ and it holds 
  \begin{equation}
    \label{eq:151}
     \int_{\overline{D}} \phi\, \d \nu + \int_{\R^d} \phi^*\,
     \d \mu =
     \frac{1}{2} \sqm{\nu} + \frac{1}{2} \sqm{\mu} - \frac{1}{2}W_2^2(\nu, \mu),
  \end{equation}
  where
  \begin{equation}
  \phi^*(y):=\sup_{x\in \overline{D}}\la
  x,y\ra-\phi(x)=
  \max_{x\in \overline{D}}\la
  x,y\ra-\phi(x).
  \label{eq:156}
\end{equation}
Recalling that $\phi$ is bounded from below by an affine mapping
(and thus it is uniformly bounded from below in $\overline{D}$)
we immediately see that $\phi^*$ takes values in $\R$ and it is
$R$-Lipschitz. Up to adding a suitable constant to $\phi$ we can
also suppose that 
$\phi^*(0)=0$.

We want to show that the restriction $\varphi$ of $\phi$ to $\rmB(0,R)$
combined with $\phi^*$ 
satisfies conditions (i), (ii), and (iii).

\textbf{(i)}:
Since $\int_{\overline{D}}\phi\,\d\nu<+\infty$ and
$\nu$ has full support, we deduce that the proper
domain of $\phi$ $\{x\in \overline{D}:\phi(x)<+\infty\}$ is dense in
$\overline{D}$;
since the proper domain of a l.s.c.~and convex function is convex
and contains the interior of its closure, we deduce that 
$\phi(x)<+\infty$ for every $x\in
D$ and $\varphi:=\phi\restr D$ is continuous in $D$.

\eqref{eq:diseq} shows that the supremum defining $\phi^*$ in \eqref{eq:156}
can be restricted
to $D$
\begin{equation}
  \label{eq:152}
  \phi^*(y)=\sup_{x\in D}\la
  x,y\ra-\varphi(x),
\end{equation}
so that \eqref{eq:160} holds.
  \eqref{eq:159} is just an application of Fenchel-Moreau Theorem $\phi=\phi^{**}$.

  \textbf{(ii)}: simply follows by \eqref{eq:160} and the fact that $\phi^*(0)=0$.

  \textbf{(iii)}
  It is sufficient to notice that the first integral in \eqref{eq:151} can be restricted to $D$
  since $\nu(\partial D)=0$.
The equality \eqref{eq:158} follows by \cite[Theorem 2.12]{Villani03}.

Let us show that points (i)-(iii) are also sufficient to get
uniqueness. If $(\varphi_0, \varphi_0^*)$ is another pair as in the
statement satisfying points (i)-(iii), then \cite[Theorem
2.12]{Villani03} yields
that both $\nabla \varphi$ and $\nabla \varphi_0$ are optimal
transport maps from $\nu$ to $\mu$, implying that $\nabla \varphi_0 =
\nabla \varphi$ $ \Leb{d}$-a.e.~in $\rmB(0,R)$ by the
a.e.~uniqueness of the optimal transport map.
Since $\inf_{\rmB(0,R)} \varphi = \inf_{\rmB(0,R)} \varphi_0=0$ by
$(ii)$, we get that $\varphi=\varphi_0$ in $\rmB(0,R)$
and therefore $\varphi^*=\varphi^*_0$ in $\R^d$ by \eqref{eq:160}.
\end{proof}
The next Lemma collects useful estimates on convex functions; we set
$\omega_d:=\Leb d\big(\rmB(0,1)\big)$.
\begin{lemma}
  \label{le:estimate} Let $R,I>0$ and let
  $\varphi:\rmB(0,R)\to \R$, $\psi:\R^d\to \R$, be two 
  (continuous and) convex functions satisfying
\begin{equation}
  \begin{gathered}
   |\psi(y)|\le R|y| \quad \text{for every }y\in \R^d,\\
  \varphi(x) = \sup_{y \in \R^d}  \la x , y \ra -
  \psi(y) \ \text{for every } x \in
  \rmB(0,R),\quad
  \int_{\rmB(0,R)}\varphi(x)\,\d x\le I.
  \end{gathered}
  \label{eq:161}
\end{equation}
Then
$\varphi$ is nonnegative and satisfy the uniform bounds
\begin{equation}
  \label{eq:168}
  \sup_{|x|\le r} \varphi(x)\le \frac I{\omega_d
    (R-r)^d},\quad
  \Lip\big(\varphi,\overline{\rmB(0,r)}\big)\le
  \frac{2^{d+1}I}{\omega_d(R-r)^{d+1}}\quad
  0<r<R,
\end{equation}
and
\begin{equation}
  \label{eq:169}
  \text{$\psi$ is $R$-Lipschitz,}\quad
  \psi(y)=\sup_{x\in \rmB(0,R)}\la y,x\ra-\varphi(x)\quad
  \text{for every }y\in \R^d.
\end{equation}
\end{lemma}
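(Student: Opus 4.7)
The plan is to prove the five assertions of the lemma in their stated order, using only basic convex analysis. First, plugging $y=0$ into the sup representation and observing that $|\psi(0)|\le R\cdot 0$ gives $\psi(0)=0$ and hence $\varphi\ge 0$. Once nonnegativity is in hand, for the pointwise bound I would invoke Jensen's inequality: for every $x_0$ with $|x_0|\le r$ the ball $\rmB(x_0,R-r)$ is contained in $\rmB(0,R)$, and since $\varphi$ is convex with the barycenter of that ball equal to $x_0$,
\[ \varphi(x_0)\le \frac{1}{\omega_d(R-r)^d}\int_{\rmB(x_0,R-r)}\varphi\,\d x\le \frac{I}{\omega_d(R-r)^d},\]
the last inequality using $\varphi\ge 0$ and $\rmB(x_0,R-r)\subset \rmB(0,R)$.

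For the Lipschitz bound on $\varphi$, I would invoke the classical estimate for nonnegative convex functions: for any $r<R'<R$ and $x,y\in\overline{\rmB(0,r)}$, extending the segment from $x$ through $y$ to a point $z\in\partial \rmB(0,R')$ realises $y$ as a convex combination of $x$ and $z$ with $|z-y|\ge R'-r$, so convexity together with $\varphi\ge 0$ yields
\[\Lip\bigl(\varphi,\overline{\rmB(0,r)}\bigr)\le \frac{\sup_{\rmB(0,R')}\varphi}{R'-r}.\]
Choosing $R':=(R+r)/2$, so that $R-R'=R'-r=(R-r)/2$, and inserting the pointwise bound from the previous step gives the claimed constant $2^{d+1}I/(\omega_d(R-r)^{d+1})$.

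For the $R$-Lipschitz property of $\psi$, since $\psi$ is real-valued continuous and convex on $\R^d$ the subdifferential $\partial\psi(y_0)$ is nonempty at every $y_0$. Given any $p\in\partial\psi(y_0)$ with $p\neq 0$, choosing $y_t:=y_0+t p/|p|$ for $t>0$ and combining the subgradient lower bound $\psi(y_t)\ge\psi(y_0)+t|p|$ with the upper bound $\psi(y_t)\le R(|y_0|+t)$ and the companion lower bound $\psi(y_0)\ge -R|y_0|$ produces $t(|p|-R)\le 2R|y_0|$ for every $t>0$, which forces $|p|\le R$.

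Finally, for the representation formula I would use Fenchel-Moreau: $\psi$ is continuous, convex and finite-valued, so $\psi=\psi^{**}$. The hypothesis on $\varphi$ says that the Legendre conjugate $\psi^*$ coincides with $\varphi$ on $\rmB(0,R)$, and the $R$-Lipschitz bound from the previous step forces $\psi^*(x)=+\infty$ for every $|x|>R$ (take $y=tx/|x|$ in the sup defining $\psi^*$ and let $t\to+\infty$). Hence the sup in $\psi^{**}(y)=\sup_x(\la y,x\ra-\psi^*(x))$ is effectively restricted to $\overline{\rmB(0,R)}$, and an application of \eqref{eq:diseq} to the concave map $x\mapsto \la y,x\ra-\psi^*(x)$ (which is finite at $x=0$, since $\psi^*(0)=\varphi(0)<\infty$) reduces it further to $\sup_{\rmB(0,R)}(\la y,x\ra-\varphi(x))$, as claimed. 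I expect this last step to be the main obstacle: unlike the first three, which follow routinely from Jensen's inequality, the standard local-Lipschitz estimate for nonnegative convex functions, and subdifferential calculus, it requires careful bookkeeping of the effective domain of $\psi^*$ together with the role of possible boundary jumps on $\partial\rmB(0,R)$, which is precisely where the elementary \eqref{eq:diseq} has to be invoked to clean things up.
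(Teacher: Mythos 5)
Your proof is correct and follows essentially the same route as the paper's: nonnegativity from $\psi(0)=0$, Jensen for the pointwise bound, a local Lipschitz estimate for nonnegative convex functions, and then the Fenchel--Moreau argument (with \eqref{eq:diseq} to shrink $\overline{\rmB(0,R)}$ to $\rmB(0,R)$) for \eqref{eq:169}. The only departures are cosmetic: you derive the interior Lipschitz bound by hand where the paper cites \cite[Corollary~2.4]{Ekeland-Temam74}, and you prove $R$-Lipschitzness of $\psi$ directly from the subdifferential bound, whereas the paper reads it off from the representation \eqref{eq:169} as a supremum of $R$-Lipschitz affine maps; both choices are sound, and your self-contained version is a minor gain in readability.
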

\begin{proof}
  Notice that $\psi(0)=0$ yields $\varphi\ge 0$;
the integral estimate of \eqref{eq:161} and Jensen inequality yield
for every $x\in \rmB(0,R)$ with $\varrho:=R-|x|$
\begin{equation}
  \label{eq:165}
  \varphi(x)\le \frac1{\omega_d \varrho^d}
  \int_{\rmB(x,\varrho)}\varphi(z)\,\d z\le \frac I{\omega_d
    \varrho^d} ,\quad\text{so that}\quad
  \max_{|x|\le r_0}\varphi(x)\le \frac I{\omega_d
    (R-r_0)^d} \,
\end{equation}
for every $0<r_0<R$. Still using the fact that $\varphi$ is nonnegative, \eqref{eq:165}
with $r_0:=\frac 12 R+\frac 12 r$ and
the estimate of 
\cite[Corollary 2.4]{Ekeland-Temam74}
yield the Lipschitz bound of \eqref{eq:168}.

The Legendre transform of $\psi$ defined by
$\psi^*(x):=\sup_{y\in \R^d} \la x,y\ra-\psi(y)$
coincides with $\varphi$ in $\rmB(0,R)$ 
(in particular it is finite in $\rmB(0,R)$) and
takes the value $+\infty$ for every $x\in \R^d$
with $|x|>R$, since
\begin{displaymath}
    \psi^*(x)\ge 
    \sup_{y\in \R^d}\la x,y\ra-R|y|=+\infty
    \quad\text{if $|x|>R$}.
\end{displaymath}
Fenchel-Moreau Theorem 
and \eqref{eq:diseq}
then yield
\begin{displaymath}
    \psi(y)=\sup_{x\in \R^d}\la x,y\ra-\psi^*(x)
    =\sup_{x\in \overline{\rmB(0,R)}}\la x,y\ra-\psi^*(x)
    =\sup_{x\in \rmB(0,R)}\la x,y\ra-\psi^*(x)
    =\sup_{x\in \rmB(0,R)}\la x,y\ra-\varphi(x)
\end{displaymath}
thus showing \eqref{eq:169}; 
in particular we get that 
that $\psi$ is $R$-Lipschitz.
\end{proof}
We conclude this part with the study of the stability properties of pairs of potentials.
\begin{lemma}\label{le:convan} Let $R,I>0$ and let
  $\varphi_n:\rmB(0,R)\to \R$, $\psi_n:\R^d\to \R$, $n\in \N$, be two sequences of
  (continuous and) convex functions satisfying
for every $n \in \N$:
\begin{equation}
  \begin{gathered}
  |\psi_n(y)|\le R|y| \quad\text{for every }y\in \R^d, \\
    \varphi_n(x) = \sup_{y \in \R^d}  \la x , y \ra -
    \psi_n(y) \ \text{for every } x \in
    \rmB(0,R),\quad
    \int_{\rmB(0,R)}\varphi_n(x)\,\d x\le I.
    \end{gathered}
    \label{eq:161bis}
 \end{equation}
Then there exist a subsequence $j \mapsto n(j)$ and two convex and
continuous functions $\varphi: \rmB(0,R) \to \R$ and $\psi: \R^d \to \R$ such that
\begin{itemize}
    \item [(i)] $\varphi_{n(j)} \to \varphi$ locally uniformly on $\rmB(0,R)$;
    \item[(ii)] $\psi_{n(j)}\to \psi$ locally uniformly in $\R^d$;
    \item[(iii)] $\psi$ is $R$-Lipschitz and
      $\nabla \psi_{n(j)} \to \nabla \psi$ $ \Leb{d}  $-a.e.~on $\R^d$.
    \end{itemize}
    Moreover the pair $(\varphi,\psi)$ satisfies \eqref{eq:161},
    \eqref{eq:168}, and \eqref{eq:169}.
\end{lemma}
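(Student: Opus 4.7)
The plan is to combine the uniform estimates provided by Lemma \ref{le:estimate} with Arzelà–Ascoli and a classical convergence result for gradients of convex functions. First, I would apply Lemma \ref{le:estimate} to every pair $(\varphi_n,\psi_n)$: this yields that each $\varphi_n$ is nonnegative and, for every $0<r<R$, satisfies the uniform bounds
\[ \sup_{\overline{\rmB(0,r)}}\varphi_n\le \frac{I}{\omega_d(R-r)^d}, \qquad \Lip\bigl(\varphi_n,\overline{\rmB(0,r)}\bigr)\le \frac{2^{d+1}I}{\omega_d(R-r)^{d+1}}, \]
independent of $n$. Similarly each $\psi_n$ is $R$-Lipschitz, and the growth hypothesis forces $\psi_n(0)=0$, so the family $(\psi_n)$ is also uniformly bounded on compact subsets of $\R^d$. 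Consequently $(\varphi_n)$ is equibounded and equicontinuous on every compact subset of $\rmB(0,R)$ and $(\psi_n)$ is equi-Lipschitz and equibounded on every compact subset of $\R^d$.

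Next, via a diagonal extraction along exhausting sequences of balls and the Arzelà–Ascoli theorem, I would select a subsequence $j\mapsto n(j)$ with $\varphi_{n(j)}\to\varphi$ locally uniformly on $\rmB(0,R)$ and $\psi_{n(j)}\to\psi$ locally uniformly on $\R^d$. Both limits are convex (as pointwise limits of convex functions) and continuous, and $\psi$ inherits the $R$-Lipschitz and growth bounds. For (iii) I would invoke the classical convex-analytic fact (cf.\ Rockafellar, \emph{Convex Analysis}, Thm.~25.7) that locally uniform convergence of finite convex functions on an open set forces convergence of gradients at every point of differentiability of the limit; since $\psi$ is a finite convex function it is differentiable $\Leb{d}$-a.e., so $\nabla\psi_{n(j)}\to\nabla\psi$ $\Leb{d}$-a.e.~in $\R^d$.

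Finally, to see that $(\varphi,\psi)$ still satisfies \eqref{eq:161bis} (from which \eqref{eq:168} and \eqref{eq:169} then follow by a direct application of Lemma \ref{le:estimate} to the limit pair), the integral bound passes to the limit by Fatou and the growth bound on $\psi$ is preserved by pointwise convergence. The delicate point, and the main obstacle, is the passage to the limit in the Legendre duality $\varphi_n(x)=\sup_{y\in\R^d}\la x,y\ra-\psi_n(y)$, since the supremum ranges over the unbounded set $\R^d$ and only locally uniform convergence of $\psi_n$ is available. I would handle this by observing that Lemma \ref{le:estimate} bounds the Lipschitz constant of $\varphi_n$ on $\overline{\rmB(0,r)}$ by a constant $L_r$ independent of $n$; therefore for $x\in\rmB(0,r)$ any maximizer $y_n$ in the dual problem satisfies $y_n\in\partial\varphi_n(x)\subset\overline{\rmB(0,L_r)}$, so the supremum may be restricted to the compact set $\overline{\rmB(0,L_r)}$, on which $\psi_{n(j)}\to\psi$ uniformly. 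Passing to the limit, the conjugacy relation survives for $(\varphi,\psi)$, which closes the argument.
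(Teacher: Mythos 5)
Your proof is correct and mirrors the paper's overall skeleton (equicontinuity from Lemma~\ref{le:estimate}, Arzelà--Ascoli with diagonal extraction, Rockafellar's convergence of gradients of convex functions, Fatou for the integral bound). The one genuine point of divergence is the passage to the limit in the conjugacy relation $\varphi_n(x)=\sup_{y\in\R^d}\la x,y\ra-\psi_n(y)$. The paper dispatches this via Attouch's theorem: locally uniform convergence of $\psi_{n(j)}$ gives Mosco convergence, Mosco convergence is stable under Legendre transforms, and $\psi_n^*=\varphi_n$ on $\rmB(0,R)$, so $\varphi=\psi^*$ there. You instead give a hands-on compactness argument: for $x\in\rmB(0,r)$ with $r<r'<R$, any maximizer $y$ lies in $\partial\varphi_n(x)\subset\overline{\rmB(0,L_{r'})}$ by the uniform Lipschitz bound from Lemma~\ref{le:estimate}, so the supremum may be taken over a fixed compact ball on which $\psi_{n(j)}\to\psi$ uniformly, and the conjugacy survives. (One cosmetic remark: you should take the subgradient bound from a slightly larger radius $r'>r$, since $\Lip(\varphi_n,\overline{\rmB(0,r)})$ only controls subgradients in the interior of $\overline{\rmB(0,r)}$; this is a standard adjustment.) Your route avoids invoking Mosco convergence machinery at the cost of a slightly more explicit duality argument; the paper's route is shorter once Attouch's theorem is cited. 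Both are valid, and once the conjugacy \eqref{eq:161} is established for the limit pair, \eqref{eq:168} and \eqref{eq:169} follow by a direct application of Lemma~\ref{le:estimate}, exactly as you note.
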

\begin{proof}
  Thanks to \eqref{eq:161bis} and Lemma \ref{le:estimate},
  the sequence of pairs $(\varphi_n,\psi_n)$ satisfies
  the equicontinuity estimates \eqref{eq:168} and \eqref{eq:169}
  with constants $R,I$ independent of $n$.
  
  By Arzel\`a-Ascoli Theorem, we can find a subsequence $j \mapsto n(j)$ and
convex and continuous functions $\varphi:\rmB(0,R)\to \R$ and
$\psi: \R^d \to \R$ such that $\varphi_{n(j)} \to \varphi$ and
$\psi_{n(j)}\to\psi$ locally uniformly in their respective domains.

In particular, $\psi_{n(j)}$ Mosco converges (see
e.g.~\cite[Definition 3.17, Proposition 3.19]{Attouch84})
to $\psi$ and therefore the sequence of its Legendre transforms $\psi_{n(j)}^*$ Mosco
converges to $\psi^*$ (\cite[Theorem 3.18]{Attouch84}).
Since $\psi_n^*$ coincides with $\varphi_n$ in $\rmB(0,R)$
and $\varphi_{n(j)}$ converge locally uniformly to $\varphi$,
we deduce that $\varphi$ coincides with $\psi^*$ in $\rmB(0,R)$:
\begin{displaymath}
  \varphi(x)=\sup_{y\in \R^d}\la x,y\ra-\psi(y)\quad\text{for every
  }x\in \rmB(0,R).
\end{displaymath}
By Fatou's Lemma $\varphi$ also satisfies the integral bound of
\eqref{eq:161}.
A further application of Lemma \ref{le:estimate} yields \eqref{eq:168}
and \eqref{eq:169}.

Finally, the local uniform convergence of $\psi_{n(j)}$ to $\psi$ gives \cite[Theorem 24.5]{Rockafellar70} the pointwise convergence of $\nabla \psi_{n(j)}(x)$ to $\nabla \psi(x)$ at every point $x \in \R^d$ where all the $\psi_{n(j)}$ and $\psi$ are differentiable. This proves (iii) and concludes the proof of the Lemma.
\end{proof}

\section{The Wasserstein Sobolev space
  \texorpdfstring{$H^{1,2}(\prob_2(\R^d ),W_2,\mm)$}{H}}
\label{sec:main2}
In this section we consider the metric space $\prbt$, 
endowed with the $L^2$-Wasserstein distance $\sfd=W_2$ and a finite
positive Borel measure $\mm$.
 We will denote by
$\W_2=\Wmms{\R^d}\mm$ the metric-measure space $(\prob_2(\R^d),W_2,\mm)$
and we want to study the Wasserstein Sobolev space $H^{1,2}(\W_2)$.

We will show that $H^{1,2}(\W_2)$ is Hilbertian
(and therefore the metric space $(\prob_2(\R^d),W_2)$ is
infinitesimally Hilbertian) and its functions admit a nice
approximation in terms of the distinguished algebra of cylinder
functions.

\subsection{The algebra of \texorpdfstring{$\rmC^1$}{C}-cylinder functions}
\label{subsec:cylindrical}
 We denote by $\rmC^1_b(\R^d)$ the space of bounded and Lipschitz $\rmC^1$
functions $\phi:\R^d\to\R$.  This in particular implies that $\sup_{x \in \R^d} |\phi(x)|+|\nabla \phi(x)| < + \infty$ if $\phi \in \rmC^1_b(\R^d)$.  Every $\phi\in \rmC^1_b(\R^d)$ induces the 
function $\lin\phi$ on $\prob(\R^d)$
\begin{equation}\label{eq:monocyl}
  \lin\phi:\mu \to \int_{\R^d} \phi \,\d \mu
\end{equation}
which clearly belongs to $\Lip_b(\prob_2(\R^d),W_2)$ thanks to
\eqref{eq:123}.
More generally, if $\pphi=(\phi_1,\cdots,\phi_N)\in
\big(\rmC^1_b(\R^d)\big)^N$, we denote by 
$\lin\pphi:=(\lin{\phi_1},\cdots,\lin{\phi_N})$ the corresponding map
from $\prob_2(\R^d)$ to $\R^N$.

Our construction is based on the algebra of $\rmC^1$- cylinder
functions generated by \eqref{eq:monocyl} via composition with $\rmC^1$ functions
and it is quite similar to the one of \cite[Section
2]{DelloSchiavo20} (see also \cite{vRS09}). Working in the flat space $\R^d$ allows for a
further simplification in the structure of the tangent bundle and of
corresponding vector fields.

\begin{definition}[$\rmC^1$-Cylinder functions] We say that a function $F: \prob_2(\R^d) \to \R$ is a $\rmC^1$-\emph{cylinder function} if there exist $N \in \N$, $\psi \in \rmC_b^1(\R^N)$ and $\uphi= (\phi_1, \dots, \phi_N) \in (\rmC_b^1(\R^d))^N$ such that 
\begin{equation}\label{eq:cyl}
F(\mu) = \psi(\lin\pphi(\mu))=\psi \big( \lin{\phi_1}(\mu),\cdots,\lin{\phi_N}(\mu)\big)
\quad \text{for every } \mu \in \prob_2(\R^d).
\end{equation}
We denote the set of such functions by $\cyl1b{\prob_2(\R^d)}$.
\end{definition}

\begin{remark}
Notice that $\cyl1b{\prbt}$ is a unital subalgebra of
$\Lipb(\prob_2(\R^d),W_2)$.
One could also consider the smaller algebra $\ccyl1b{\prbt}$
(resp.~$\ccyl\infty c\prbt$)
generated by functions as in \eqref{eq:monocyl} (resp.~by  functions  as in \eqref{eq:monocyl} where $\phi\in \rmC^\infty_c(\R^d)$),
thus restricting $\psi$ to be a polynomial in \eqref{eq:cyl}.  This means that every element $F \in \ccyl1b{\prbt}$ (resp.~$\ccyl\infty c\prbt$) can be written as
\[ F = \psi \circ \lin \pphi \]
for some $\psi$ polynomial in $\R^N$, $\pphi \in (\rmC_b^1(\R^d))^N$ (resp.~$(\rmC^\infty_c(\R^d)^N$) and $N \in \N$, $N \ge 1$.  
We prefer at this stage the choice of $\cyl1b\prbt$, since it 
simplifies some technical points. However,  Proposition 
\ref{prop:density} shows that
using 
$\ccyl1b{\prbt}$ or $\ccyl\infty c\prbt$ will lead to the same
conclusions.
\end{remark}
\begin{remark}\label{rem:unrem}
  Since for every $\uphi\in \big(\rmC^1_b(\R^d)\big)^N$
  the range of $\lin\uphi$ is always contained in the
  bounded set
  $[-M,
  M]^N$ where $M:= \max_{i=1, \dots, d} \|\phi_i\|_\infty$,
  also functions $F=\psi\circ \lin\uphi$ 
  with $\psi \in \rmC^1(\R^N)$ belong
  to $\cyl1b{\prbt}$. Indeed it is enough to consider a function
  $\tilde{\psi} \in \rmC_b^1(\R^N)$ coinciding with $\psi$ on
   $[-M,
  M]^N$ and equal to $0$ outside $[-M-1, M+1]^N$
  so that $F=\tilde\psi\circ \lin\uphi$.
In particular
every function of the form $\lin\phi$, $\phi\in \rmC^1_b(\R^d)$,
belongs to $\cyl1b\prbt$.
\end{remark}
Let us consider the set
\begin{equation}
  \label{eq:51}
  \domG:=\Big\{(\mu,x)\in \prbt\times \R^d:x\in \supp(\mu)\Big\}.
\end{equation}
The set $\domG$ is a Borel set (in fact it is a $G_\delta$): if $(r_n)_n = \Q \cap (0, + \infty)$ we have that $\domG= \cap_n \domG_n$, where
\[ \domG_n := \left \{ (\mu,x)\in \prbt\times \R^d : \mu(\rmB(x,r_n)) > 0 \right \},\]
and each $\domG_n$ is open in $\prbt \times \R^d$, being the inverse image of $(0,+\infty)$ through the lower semicontinuous map $(\mu,x) \mapsto \mu(\rmB(x,r))$. 

\begin{definition}
   If $F=\psi\circ\lin\uphi \in \cyl1b{\prbt}$ as in
  \eqref{eq:cyl}
for some $N \in \N$, $\psi \in \rmC_b^1(\R^N)$ and $\uphi \in
(\rmC_b^1(\R^d))^N$, then the Wasserstein differential of $F$,  
$\rmD F: \overline{\domG}\to \R^d$, is defined by
\begin{align}\label{eq:Fdiff}
\rmD F(\mu,x) &:= \sum_{n=1}^N \partial_n \psi \left ( \lin \uphi(\mu)
                 \right ) \nabla \phi_n(x), \quad (\mu,x)\in \overline{\domG}.
                \intertext{We will also denote by $\rmD F[\mu]$ the
                function $x\mapsto \rmD F(\mu,x)$ and we will set}
\cnorm{F}{\mu}^2 &:= \int_{\R^d} |\rmD F[\mu](x)|^2 \d \mu(x), \quad \mu \in \prbt.
\end{align}
\end{definition}

\begin{remark}
  \label{rem:propDF}
   It is not difficult to check that
  \begin{equation}
    \label{eq:124}
    \rmD F\text{ is continuous in }\overline{\domG}
  \end{equation}
  with respect to the natural product (narrow and euclidean) topology
  of $\prob(\R^d)\times \R^d$.
  
  In principle
  $\rmD F$  (and thus $\cnorm{F}{\mu}$)  may depend on the choice of $N \in
  \N$, $\psi \in \rmC_b^1(\R^N)$ and $\uphi \in (\rmC_b^1(\R^d))^N$
  used to represent $F$. In Proposition \ref{prop:equality} we show
  that for every $\mu\in \prob_2(\R^d)$ 
  the function $\rmD F[\mu]$ is uniquely characterized in $\supp(\mu)$
  and $\cnorm{F}{\mu}$ is well defined, so that $\rmD F$ is
  uniquely characterized by $F$ in $\domG$. By \eqref{eq:124}, $\rmD F$ is
  also uniquely
  characterized by $F$ on $\overline \domG$.
\end{remark}
We have seen that the  Wasserstein differential $\rmD F$ can
be considered as a map from $\overline \domG$ with values in
$\R^d$. It is natural to introduce the
measure $\bmm=\int \delta_\mu\otimes \mu \,\d\mm(\mu)\in \prob(\prob_2(\R^d)\times \R^d)$ obtained integrating
the measures $\mu$ w.r.t.~$\mm$: for every bounded Borel function
$H: \prob_2(\R^d)\times \R^d\to \R$ we have
\begin{equation}
  \label{eq:49}
  \int H(\mu,x)\,\d\bmm(\mu,x)=
  \int_{\prob_2(\R^d)}\Big(\int_{\R^d} H(\mu,x)\,\d\mu(x)\Big)\,\d\mm(\mu).
\end{equation}
Since $\supp(\bmm)\subset \overline\domG$, 
it is then clear that $\rmD F$ belongs to $L^2(\prob_2(\R^d)\times
\R^d,\bmm;\R^d)$
and
\begin{equation}
  \label{eq:50}
   \|\rmD F\|^2_{L^2(\prob_2(\R^d)\times
\R^d,\bmm;\R^d)} =  \int_{\prob_2(\R^d)}\cnorm{F}{\mu}^2 \,\d\mm(\mu)=
  \int_{\overline \domG} |\rmD F(\mu,x)|^2\,\d\bmm(\mu,x).
\end{equation}

\begin{lemma}\label{lem:limit}
   Let $Y$ be a Polish space
  and let $G:\prob(Y) \times Y \to [0, + \infty)$ be a bounded and
  continuous function. If
  $(\mu_n)_{n\in \N}$ is a sequence in $\prob(Y)$ narrowly converging
  to $\mu$ as $n \to + \infty$, then
\[ \lim_{n\to\infty} \int_Y G(\mu_n, y) \d \mu_n(y) = \int_Y G(\mu, y) \d \mu(y).\]
\end{lemma}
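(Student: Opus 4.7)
The plan is to reduce the assertion to a single application of the definition of narrow convergence on the product space $\prob(Y)\times Y$. Concretely, I would rewrite the integrals as
\[
\int_Y G(\mu_n,y)\,\d\mu_n(y)=\int_{\prob(Y)\times Y} G\,\d\bnu_n,\qquad
\bnu_n:=\delta_{\mu_n}\otimes \mu_n,
\]
and similarly for the limit with $\bnu:=\delta_\mu\otimes \mu$. Since $G\in \rmC_b(\prob(Y)\times Y)$, the conclusion reduces to showing that $\bnu_n\to\bnu$ narrowly in $\prob(\prob(Y)\times Y)$.

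First I would observe that $\delta_{\mu_n}\to\delta_\mu$ narrowly in $\prob(\prob(Y))$: indeed, for every $\psi\in \rmC_b(\prob(Y))$, the definition of narrow convergence of $\mu_n$ to $\mu$ and the continuity of $\psi$ in the narrow topology yield $\int \psi\,\d\delta_{\mu_n}=\psi(\mu_n)\to\psi(\mu)=\int \psi\,\d\delta_\mu$. Then I would invoke the standard stability of narrow convergence under tensor products: if $\alpha_n\to\alpha$ narrowly in $\prob(Z_1)$ and $\beta_n\to\beta$ narrowly in $\prob(Z_2)$ for Polish spaces $Z_1,Z_2$, then $\alpha_n\otimes\beta_n\to\alpha\otimes\beta$ narrowly in $\prob(Z_1\times Z_2)$. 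Applied with $\alpha_n=\delta_{\mu_n}$ and $\beta_n=\mu_n$, this gives $\bnu_n\to\bnu$ narrowly, and since $G$ is bounded and continuous we conclude.

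As a self-contained alternative, which is what I would actually write out in order to avoid citing the tensor-product fact, I would argue directly via tightness. Since $(\mu_n)_n$ is narrowly convergent, by Prokhorov's theorem the family $\{\mu_n\}_n\cup\{\mu\}$ is tight: for every $\varepsilon>0$ there is a compact $K\subset Y$ with $\mu_n(Y\setminus K)<\varepsilon$ for all $n$ and $\mu(Y\setminus K)<\varepsilon$. The set $\KK:=\{\mu_n\}_n\cup\{\mu\}$ is compact in $\prob(Y)$, hence $G$ is uniformly continuous on the compact product $\KK\times K$. Then splitting
\[
\left|\int G(\mu_n,y)\,\d\mu_n-\int G(\mu,y)\,\d\mu\right|
\le \int|G(\mu_n,y)-G(\mu,y)|\,\d\mu_n(y)+\left|\int G(\mu,y)\,\d(\mu_n-\mu)(y)\right|,
\]
the second term vanishes as $n\to\infty$ because $G(\mu,\cdot)\in \rmC_b(Y)$; the first term is controlled by $4\|G\|_\infty \varepsilon$ from the tails $Y\setminus K$ plus an $\varepsilon$ contribution on $K$ coming from uniform continuity for $n$ large, and $\varepsilon>0$ is arbitrary.

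The main (minor) obstacle is managing the double dependence of $G$ on $\mu_n$ and $y$, which is exactly what the product-measure formulation, or equivalently the compactness of $\KK\times K$ combined with tightness, takes care of. No deeper machinery is needed.
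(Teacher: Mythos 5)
Both of your arguments are correct, and both differ from the paper's. The paper sets $g_n:=G(\mu_n,\cdot)$, observes that $g_n\to g:=G(\mu,\cdot)$ uniformly on compact subsets of $Y$, and invokes \cite[Lemma 5.2.1]{AGS08} to conclude that the pushforwards $(g_n)_\sharp\mu_n$ converge narrowly to $g_\sharp\mu$ in $\prob(\R)$; since $G$ is bounded and nonnegative these pushforwards are all supported in $[0,\|G\|_\infty]$, so integrating the identity function yields the result. Your first argument, via the product measures $\delta_{\mu_n}\otimes\mu_n$ and the stability of narrow convergence under tensor products, is more structural: it rewrites the lemma as the single assertion that $\delta_{\mu_n}\otimes\mu_n\to\delta_\mu\otimes\mu$ narrowly in $\prob(\prob(Y)\times Y)$, after which the conclusion is immediate from $G\in\rmC_b(\prob(Y)\times Y)$. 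Your second argument (tightness, compactness of $\KK:=\{\mu_n\}_n\cup\{\mu\}$ in the metrizable narrow topology, uniform continuity of $G$ on $\KK\times K$, and the two-term splitting) is fully self-contained and requires no citation beyond Prokhorov's theorem; the constant $4\|G\|_\infty\varepsilon$ you quote for the tail contribution is a harmless overestimate where $2\|G\|_\infty\varepsilon$ would suffice. All three proofs handle the double dependence of $G$ on $\mu_n$ and $y$ differently: the paper's packs it into a one-dimensional pushforward and a cited lemma, your first into a single narrow-convergence statement on a product space, and your second unwinds it into an explicit $\varepsilon$-estimate.
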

\begin{proof}
   We set $g_n(x):=G(\mu_n,x),\ g(x):=G(\mu,x)$.
  Since $G$ is continuous, $g_n$ converge uniformly  to $g$ on compact
  subsets of $Y$ as $n\to\infty$. Thanks to
  \cite[Lemma 5.2.1]{AGS08} $(g_n)_\sharp \mu_n$ converge narrowly to
  $g_\sharp\mu$ in $\prob(\R)$. On the other hand, the support of
  $(g_n)_\sharp \mu_{ n}$ is uniformly bounded  because $G$ is bounded  so that
  \begin{displaymath}
    \lim_{n\to\infty}\int_Y G(\mu_n, y) \d \mu_n(y)=
    \lim_{n\to\infty}\int_{\R}r\,\d((g_n)_\sharp\mu_n)(r)=
    \int_\R r\,\d(g_\sharp\mu)(r)=
    \int_Y G(\mu,y)\,\d\mu(y).
    \qedhere
  \end{displaymath}
\end{proof}

\begin{lemma}\label{lem:derivative} Let $F =\psi\circ\lin\uphi\in
  \cyl1b{\prbt}$
  as in \eqref{eq:cyl}
  and let $(\mu_t)_{t \in [0,1]}$ be an absolutely continuous curve in $\prob_2(\R^d)$. Then
\begin{equation}\label{eq:oscill}
F(\mu_1)- F(\mu_0) = \int_0^1 \int_{\R^d} \la \rmD F[\mu_t](x), v_t(x) \ra \d \mu_t(x)\, \d t,
\end{equation} 
where $v_t \in L^2(\R^d, \mu_t; \R^d)$ is the Wasserstein velocity field (cf.~Theorem \ref{thm:tangentv}) of $(\mu_t)_{t \in [0,1]}$ at time $t$ and $\rmD F$ is as in \eqref{eq:Fdiff}.

In case the curve $(\mu_t)_{t \in [0,1]}$ admits the parametrization 
\[\mu_t:=\big(\boldsymbol x_t\big)_\sharp \mu,\quad t\in [0,1],
\]
for some Borel probability measure $\mu$ in a Polish space $\Omega$ and some map $\boldsymbol x\in \rmC^1([0,1];L^2(\Omega, \mu; \R^d))$,
then $F\circ \mu\in \rmC^1([0,1])$ and
\begin{equation}
\label{eq:chain}
    \frac{\d}{\d t} F(\mu_t) = \int_{\Omega} \la \rmD F(\mu_t,\boldsymbol x_t(\omega)), \dot {\boldsymbol x}_t(\omega)) \ra\, \d \mu(\omega)
    \quad\text{for every } t\in [0,1].
\end{equation}
\end{lemma}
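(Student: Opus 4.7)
The plan is to reduce both statements to the differentiation of the scalar maps $t\mapsto \lin{\phi_n}(\mu_t)=\int_{\R^d}\phi_n\,\d\mu_t$ and then combine them with the $\rmC^1$ chain rule afforded by $\psi\in \rmC^1_b(\R^N)$, so that
\[
\frac{\d}{\d t} F(\mu_t) \;=\; \sum_{n=1}^N \partial_n\psi(\lin\uphi(\mu_t))\,\frac{\d}{\d t}\lin{\phi_n}(\mu_t).
\]

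For the first assertion, I would first observe that $t\mapsto \lin{\phi_n}(\mu_t)$ is absolutely continuous on $[0,1]$: indeed \eqref{eq:123} gives $|\lin{\phi_n}(\mu_s)-\lin{\phi_n}(\mu_t)|\le \Lip(\phi_n)\,W_2(\mu_s,\mu_t)$ and $(\mu_t)$ is absolutely continuous. Then I would invoke Theorem \ref{thm:tangentv}: testing the continuity equation $\partial_t\mu_t+\nabla\!\cdot(v_t\mu_t)=0$ against $\phi_n\in \rmC^1_b(\R^d)$ yields, for every $t\in A((\mu_t))$,
\[
\frac{\d}{\d t}\lin{\phi_n}(\mu_t)=\int_{\R^d}\nabla\phi_n(x)\cdot v_t(x)\,\d\mu_t(x).
\]
Combining this with the chain rule for $\psi$ composed with an absolutely continuous curve (which applies because $\psi\in \rmC^1_b$ and $\lin\uphi(\mu_t)$ stays in a bounded set), one recognises the definition \eqref{eq:Fdiff} of $\rmD F[\mu_t]$ and obtains $\frac{\d}{\d t}F(\mu_t)=\int\la \rmD F[\mu_t],v_t\ra\,\d\mu_t$ for almost every $t$. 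Integration over $[0,1]$ then gives \eqref{eq:oscill}.

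For the parametrised version $\mu_t=(\boldsymbol x_t)_\sharp\mu$, I would rewrite $\lin{\phi_n}(\mu_t)=\int_\Omega \phi_n(\boldsymbol x_t(\omega))\,\d\mu(\omega)$ and show it is $\rmC^1$ in $t$ with derivative $\int_\Omega \nabla\phi_n(\boldsymbol x_t)\cdot \dot{\boldsymbol x}_t\,\d\mu$. This is obtained from the identity
\[
\frac{\phi_n(\boldsymbol x_{t+h})-\phi_n(\boldsymbol x_t)}{h}
=\int_0^1 \nabla\phi_n\bigl(\boldsymbol x_t+s(\boldsymbol x_{t+h}-\boldsymbol x_t)\bigr)\,\d s\;\cdot\;\frac{\boldsymbol x_{t+h}-\boldsymbol x_t}{h},
\]
using that $\nabla\phi_n$ is bounded and continuous, while $\frac{\boldsymbol x_{t+h}-\boldsymbol x_t}{h}\to\dot{\boldsymbol x}_t$ and $\boldsymbol x_{t+h}\to \boldsymbol x_t$ in $L^2(\Omega,\mu;\R^d)$ by the $\rmC^1$-hypothesis. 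Continuity of $t\mapsto \int \nabla\phi_n(\boldsymbol x_t)\cdot \dot{\boldsymbol x}_t\,\d\mu$ in $t$ follows from the same ingredients (continuity of $\nabla\phi_n$ together with continuity in $L^2$ of both $\boldsymbol x_t$ and $\dot{\boldsymbol x}_t$), giving the $\rmC^1$ regularity. A final application of the chain rule to $F=\psi\circ \lin\uphi$ then yields \eqref{eq:chain}.

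The main obstacle I anticipate is the justification of the derivative formula in the second assertion: the naive pointwise differentiation under the integral does not immediately apply, and the argument requires the uniform $L^\infty$-bound on $\nabla\phi_n$ combined with the $L^2$-convergence of the difference quotient of $\boldsymbol x$ to pass to the limit (and, a fortiori, to obtain continuity of the derivative in $t$). Once this is in place, everything else is a straightforward application of the $\rmC^1$ chain rule and the continuity-equation formula from Theorem \ref{thm:tangentv}.
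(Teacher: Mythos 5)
Your proposal is correct and follows essentially the same route as the paper: establish absolute continuity, differentiate the scalar maps $t\mapsto\lin{\phi_n}(\mu_t)$ via Theorem \ref{thm:tangentv} (resp.\ via the difference-quotient identity in the parametrised case), and then compose with the $\rmC^1$ chain rule for $\psi$. You supply a fuller justification of the parametrised part \eqref{eq:chain}, which the paper dispatches as ``a completely analogous argument''; the only step both you and the paper pass over quickly is the extension of the distributional continuity equation to the non-compactly-supported test functions $\phi_n\in\rmC^1_b(\R^d)$, which requires a short truncation (or, alternatively, using \eqref{eq:74} and the Lipschitz bound on $\phi_n$) to make fully rigorous.
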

\begin{proof}
Observe that, since $F$ is Lipschitz continuous and $t \mapsto \mu_t$ is absolutely continuous, the map $t \mapsto F(\mu_t)$ is absolutely continuous and thus it holds
\[ F(\mu_1)-F(\mu_0)= \int_0^1 \frac{\d }{\d t} F(\mu_t) \d t.\]
It is then enough to prove that 
\begin{equation}\label{eq:derivative}
\frac{\d}{\d t} F(\mu_t) = \int_{\R^d} \la \rmD F(\mu_t,x), v_t(x) \ra
\,\d \mu_t(x) \quad \text{for a.e. } t \in (0,1).
\end{equation}
We have, for every $t \in A((\mu_t)_{t \in [0,1]}) \subset (0,1)$ (cf.~Theorem \ref{thm:tangentv}), that
\begin{align*}
\frac{ \d}{ \d t} F(\mu_t) &= \sum_{i=1}^N \partial_i \psi( \lin \uphi(\mu_t)) \frac{\d }{ \d t} \int_{\R^d} \phi_i\, \d \mu_t \\
&=\sum_{i=1}^N \partial_i \psi( \lin\uphi(\mu_t)) \int_{\R^d} \la \nabla \phi_i, v_t(x) \ra \,\d \mu_t(x) \\
&= \int_{\R^d} \la \rmD F(\mu_t,x), v_t(x) \ra \,\d \mu_t(x),
\end{align*}
where we used Theorem \ref{thm:tangentv}.
A completely analogous argument 
yields \eqref{eq:chain}.
\end{proof}

\begin{remark}  Consider the case in which  the curve $(\mu_t)_{t \in [0,1]}$ has the simple form
\[ \mu_t := (\ii_{\R^d} + t u)_\sharp \mu, \quad t \in [0,1]\]
for some map $u \in L^2(\R^d, \mu; \R^d)$,  where $\ii_{\R^d}$ denotes the identity map on $\R^d$. Then  
 \eqref{eq:chain} 
yields
\[ \frac{\d}{\d t} F(\mu_t) = \int_{\R^d} \la \rmD F(\mu_t,x), u(x) \ra\, \d \mu_t(x)
\quad\text{for every }t\in [0,1],\]
and, in particular, we get
\begin{equation}\label{eq:simpler} \lim_{t \downarrow 0} \frac{F(\mu_t)-F(\mu)}{t} = \int_{\R^d} \la \rmD F(\mu,x), u(x)) \ra \,\d \mu(x).
\end{equation}

\end{remark}

\begin{proposition} \label{prop:equality} Let $F =\psi\circ\lin\uphi\in
  \cyl1b{\prbt}$
  as in \eqref{eq:cyl}. Then
\[ \cnorm{F}{\mu} = \lip F (\mu) \quad \text{ for every } \mu \in \prbt.\]
In particular $\cnorm{F}{\mu}$ does not depend on the choice of the
representation of $F$
and $\rmD F$ just depends on $F$ on $\overline \domG$.
\end{proposition}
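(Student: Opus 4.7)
The approach is to prove the equality $\lip F(\mu)=\cnorm{F}{\mu}$ via two one-sided estimates, and then deduce the representation-independence statements at the end. A preliminary observation used in both directions is that $\mu\mapsto\cnorm{F}{\mu}^2$ is continuous on $(\prbt,W_2)$: indeed the map $(\mu,x)\mapsto|\rmD F(\mu,x)|^2$ is continuous on $\overline\domG$ (Remark \ref{rem:propDF}) and uniformly bounded, since each $\partial_n\psi$ and each $\nabla\phi_n$ is bounded, so Lemma \ref{lem:limit} applies along every narrowly (hence $W_2$-)convergent sequence.

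For the upper bound $\lip F(\mu)\le\cnorm{F}{\mu}$, given $\nu_1,\nu_2$ in a small $W_2$-neighborhood of $\mu$, connect them by the constant-speed Wasserstein geodesic $(\mu_t)_{t\in[0,1]}$, which stays in a slightly enlarged neighborhood of $\mu$. By Lemma \ref{lem:derivative}, Cauchy--Schwarz, and the identity $\|v_t\|_{L^2(\mu_t)}=|\dot\mu_t|=W_2(\nu_1,\nu_2)$,
\begin{equation*}
  |F(\nu_1)-F(\nu_2)|\le\int_0^1\cnorm{F}{\mu_t}\,\|v_t\|_{L^2(\mu_t)}\,\d t\le W_2(\nu_1,\nu_2)\sup_{t\in[0,1]}\cnorm{F}{\mu_t}.
\end{equation*}
Dividing by $W_2(\nu_1,\nu_2)$ and letting $\nu_1,\nu_2\to\mu$, the continuity of $\mu\mapsto\cnorm{F}{\mu}$ forces $\lip F(\mu)\le\cnorm{F}{\mu}$.

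For the reverse inequality, I may assume $\cnorm{F}{\mu}>0$ (otherwise the statement is trivial). Setting $u:=\rmD F[\mu]\in L^2(\R^d,\mu;\R^d)$ and $\mu_t:=(\ii_{\R^d}+tu)_\sharp\mu$, the map $\ii_{\R^d}+tu$ supplies an admissible transport plan, hence $W_2(\mu_t,\mu)\le t\,\cnorm{F}{\mu}$. The affine curve $\boldsymbol x_t:=\ii_{\R^d}+tu$ is of class $\rmC^1([0,1];L^2(\R^d,\mu;\R^d))$ with $\dot{\boldsymbol x}_t=u$, so \eqref{eq:chain} (equivalently \eqref{eq:simpler}) yields
\begin{equation*}
  \lim_{t\downarrow 0}\frac{F(\mu_t)-F(\mu)}{t}=\int_{\R^d}\la\rmD F[\mu],u\ra\,\d\mu=\cnorm{F}{\mu}^2>0,
\end{equation*}
which in particular forces $\mu_t\neq\mu$ for all small $t>0$. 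Combining these,
\begin{equation*}
  \liminf_{t\downarrow 0}\frac{|F(\mu_t)-F(\mu)|}{W_2(\mu_t,\mu)}\ge\frac{\cnorm{F}{\mu}^2}{\cnorm{F}{\mu}}=\cnorm{F}{\mu},
\end{equation*}
so taking $(\nu,\nu'):=(\mu_t,\mu)$ in \eqref{eq:1} gives $\lip F(\mu)\ge\cnorm{F}{\mu}$.

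For the final assertions, representation-independence of $\cnorm{F}{\mu}$ is immediate from the identity $\cnorm{F}{\mu}=\lip F(\mu)$. If two representations of $F$ give rise to $\rmD_1 F$ and $\rmD_2 F$, applying \eqref{eq:simpler} with an arbitrary $u\in L^2(\R^d,\mu;\R^d)$ yields $\int\la\rmD_1 F[\mu]-\rmD_2 F[\mu],u\ra\,\d\mu=0$; choosing $u:=\rmD_1 F[\mu]-\rmD_2 F[\mu]$ forces the two to coincide $\mu$-a.e., and continuity propagates the equality to $\supp\mu$, hence to $\overline\domG$ by Remark \ref{rem:propDF}. The main obstacle is the lower bound: one must exhibit a perturbation of $\mu$ along which the infinitesimal ratio of $F$-increment to $W_2$-displacement attains the value $\cnorm{F}{\mu}$, and the natural aligned choice $u=\rmD F[\mu]$ works precisely because it already lies in $L^2(\R^d,\mu;\R^d)$ and is in the direction that saturates the Cauchy--Schwarz bound used in the upper estimate.
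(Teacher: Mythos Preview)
Your proof is correct and follows essentially the same two-inequality strategy as the paper: the upper bound via Cauchy--Schwarz along a Wasserstein geodesic (which in $\prbt$ is exactly the McCann interpolation $\sfx^t_\sharp\mmu$ the paper uses) together with continuity of $\mu\mapsto\cnorm{F}{\mu}$, and the lower bound via the perturbation $\mu_t=(\ii_{\R^d}+t\,\rmD F[\mu])_\sharp\mu$ combined with \eqref{eq:simpler}. Two small differences are worth noting: for the lower bound you only use the elementary inequality $W_2(\mu_t,\mu)\le t\,\cnorm{F}{\mu}$, whereas the paper invokes \cite[Prop.~8.5.6]{AGS08} (using that $\rmD F[\mu]\in\Tan_\mu\prbt$) to get the exact limit of the ratio---your route is slightly more economical here; and you make explicit the argument for representation-independence of $\rmD F$ via \eqref{eq:simpler} with arbitrary $u$, which the paper leaves implicit.
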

\begin{proof} Let $\mu \in \prbt$ and let $(\mu'_n, \mu''_n) \in \prbt^2$ with $\mu'_n \ne \mu''_n$ be such that $(\mu'_n, \mu''_n) \to (\mu, \mu)$ in $W_2$ and
\[ \lim_n \frac{ \left |F(\mu'_n) - F(\mu''_n) \right |}{W_2(\mu'_n, \mu''_n)} = \lip F (\mu).\]
Let us define, for every $t \in [0,1]$, the map $\sfx^t: \R^d \times \R^d \to \R^d$ as
\[ \sfx^t(x_0,x_1) := (1-t)x_0 + tx_1, \quad (x_0, x_1) \in \R^d \times \R^d.\]
Using \eqref{eq:chain}
along $\mu_n^t := \sfx^t_\sharp \mmu_n$
 for plans $\mmu_n \in
\Gamma_o(\mu'_n, \mu''_n)$
(it is easy to check that $( \mu_n^t)_{t \in [0,1]}$ is Lipschitz
continuous), we get
\begin{align*}
&\left | F(\mu'_n) - F(\mu''_n) \right |= \left |\int_0^1 \int_{\R^d\times \R^d} \la \rmD F(\mu_n^t,\sfx^t(x_0,x_1)), x_1-x_0\ra  \,\d \mmu_n(x_0,x_1) \,\d t \right | \\
&\le \left ( \int_0^1 \int_{\R^d\times \R^d} \left | \rmD F(\mu_n^t,\sfx^t(x_0,x_1))) \right |^2 \,\d \mmu_n \,\d t \right )^{\frac{1}{2}} \left ( \int_0^1 \int_{\R^d\times \R^d} \left |x_1-x_0 \right |^2 \,\d \mmu_n \,\d t \right)^{\frac{1}{2}} \\    &= W_2(\mu'_n, \mu''_n) \left ( \int_0^1 \int_{\R^d} \left |  \rmD F(\mu_n^t,x ) \right |^2 \,\d   \mu^t_n(x) \,\d t \right )^{\frac{1}{2}}, 
\end{align*}
where we used Theorem \ref{thm:tangentv}. Dividing both sides by $W_2(\mu'_n, \mu''_n)$, we obtain
\[ \frac{\left |F(\mu'_n) - F(\mu''_n) \right |}{W_2(\mu'_n, \mu''_n)} \le \left ( \int_0^1 \int_{\R^d} \left | \rmD F(\mu_n^t,x) \right |^2 \,\d  \mu^t_n \,\d t \right )^{\frac{1}{2}}.\]
Observe that $\mmu_n \to \mmu:=(\ii_{\R^d}, \ii_{\R^d})_\sharp
\mu$  narrowly  in $\prob(\R^d\times\R^d)$  as $n\to +\infty$  so that $\mu_n^t\to \mu$  narrowly  in $\prob(\R^d)$  as $n\to +\infty$  for every $t\in [0,1]$. 
We can pass to the limit as $n \to + \infty$ the above inequality using the dominated convergence Theorem and Lemma \ref{lem:limit} with
\[ G(\mu, x):=  \left | \sum_{n=1}^N \partial_n \psi \left ( \lin \uphi(\mu)
                 \right ) \nabla \phi_n(x) \right |^2, \quad \mu \in \prob(\R^d), \, x\in \R^d,\]
 which provides a continuous and bounded extension (depending on the particular choice of $\psi$ and $\pphi$) of $|\rmD F|^2$ to $\prob(\R^d) \times \R^d$.  
We hence get 
\begin{align*}
    \lip F (\mu) &\le 
     \left ( \int_0^1 \int_{\R^d } \left | \rmD F(\mu,x) \right |^2 \,\d \mu (x) \,\d t \right )^{\frac{1}{2}}=
                 \cnorm{F}{\mu}.
\end{align*}

This proves one inequality. In order to prove the opposite one, it is not restrictive to assume 
$\cnorm{F}{\mu}>0$. Let us now consider the map $T:  \supp(\mu)  \to \R^d$ defined as
\[ T(x) := \rmD F[\mu](x), \quad x \in  \supp(\mu).\]
By definition of $\Tan_\mu(\prob_2(\R^d))$, we have that $T \in \Tan_\mu(\prob_2(\R^d))$ so that, by \cite[ Proposition  8.5.6]{AGS08}, we have
\[ \lim_{\eps \downarrow 0} \frac{W_2(\mu, (\ii_{\R^d}+ \eps T)_\sharp \mu)}{\eps} = \|T\|_{L^2(\R^d, \mu; \R^d)} = \cnorm{F}{\mu}.\]
Moreover, if we apply \eqref{eq:simpler} to the curve $\mu_\eps:=
(\ii_{\R^d}+ \eps T)_\sharp \mu$, $\eps \in [0,1]$, we get
\[ \lim_{\eps \downarrow 0} \frac{F(\mu_\eps)-F(\mu) }{\eps} = \int_{\R^d} \la \rmD F(\mu,x), T(x)) \ra \,\d \mu(x) = \cnorm{F}{\mu}^2, \]
thus
\[ \lip F (\mu) \ge \lim_{\eps \downarrow 0} \frac{F(\mu_\eps)-F(\mu) }{W_2(\mu_\eps, \mu)} = \cnorm{F}{\mu}.\]
This shows the other inequality and concludes the proof.
\end{proof}
\subsection{The density result}
\label{subsec:wsspace}
Recall that for a bounded Lipschitz function
$F:\prob_2(\R^d)\to\R$ the pre-Cheeger energy (cf.~\eqref{eq:prec})  associated to $\mm$ is defined by
\begin{equation}
  \label{eq:52}  \pCE_2(F)=\int_{\prob_2(\R^d)}\big(\lip F(\mu)\big)^2\,\d\mm(\mu).
\end{equation}
Thanks to Proposition \ref{prop:equality}, if $F$ is a cylinder
function in $\cyl1b\prbt$, we have a nice equivalent expression
\begin{equation}
  \label{eq:53}
  \pCE_2(F)=\int_{\prob_2(\R^d)}\cnorm{F}{\mu} ^2\,\d\mm(\mu)=
  \int |\rmD F(\mu,x)|^2\,\d\bmm(\mu,x),
\end{equation}
which shows that the restriction of $\pCE_2$ to $\cyl1b \prbt$ is a
quadratic form (thus satisfying \eqref{eq:31}) induced by the bilinear
form
\begin{equation}
  \label{eq:54}
  \pCE_2(F,G):=\int \rmD F(\mu,x)\cdot \rmD G(\mu,x)\,\d\bmm(\mu,x), \quad F,G \in \cyl1b \prbt
\end{equation} and coincides with the typical bilinear forms on cylinder
functions used in
\cite{vRS09,Sturm11,DelloSchiavo20,DelloSchiavo22}.
It is therefore important to prove that $\cyl1b \prbt$ is dense in energy
and therefore $H^{1,2}(\W_2)$ is a Hilbert space:
this is precisely the object of our main result.
\begin{theorem}
  \label{thm:main}
  The algebra $\cyl1b \prbt$ is
  dense in $2$-energy: 
  for every $F\in D^{1,2}(\W_2)$
  there exists a sequence $F_n\in \cyl1b \prbt$, $n\in \N,$ such that
  \begin{equation}
    \label{eq:55}
    F_n\to F\ \text{ $\mm$-a.e.,}\quad \lip(F_n)\to \relgrad{F} \text{ in }L^2( \prob_2(\R^d),\mm);
  \end{equation}
  if moreover $F\in L^p( \prob_2(\R^d),\mm)$, $p\in [1,+\infty)$, then
  we can find a sequence $F_n\in \cyl1b \prbt$ as in \eqref{eq:55}
  and converging to $F$ in $L^p( \prob_2(\R^d),\mm)$.
\end{theorem}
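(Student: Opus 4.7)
\textbf{Reduction to Theorem \ref{theo:startingpoint}.} The algebra $\AA := \cyl1b\prbt$ is a unital separating subalgebra of $\Lipb(\prbt, W_2)$: for any $\mu\ne\nu$ one finds $\phi\in\rmC^\infty_c(\R^d)\subset\rmC^1_b(\R^d)$ with $\lin\phi(\mu)\ne\lin\phi(\nu)$. To apply Theorem \ref{theo:startingpoint} I take
\[ Y := \{\nu \in \prob_2^r(\R^d) : \supp\nu = \overline{\rmB(0,R_\nu)}\text{ for some }R_\nu > 0\}, \]
which is $W_2$-dense in $\prbt$ (mollify and truncate an approximating sequence). After passing to the truncated distance $W_2 \wedge 1$ via Remark \ref{rem:trunc}, Lemma \ref{le:truncations} together with Remark \ref{rem:powers} turn the condition \eqref{eq:214-15} for $W_2(\cdot,\nu)$ into the equivalent statement that, for every $\nu\in Y$,
\[ \bigl|\rmD\bigl(\tfrac12 W_2^2(\cdot,\nu)\bigr)\bigr|_{\star,\AA}(\mu) \le W_2(\mu,\nu) \quad \text{for $\mm$-a.e.~}\mu. \]

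\textbf{Cylinder lower bounds and soft-max aggregation.} Fix $\nu \in Y$ with $\supp\nu = \overline{\rmB(0,R)}$ and a countable $W_2$-dense family $\{\mu_n\}_{n\in\N}\subset\prob_2^r(\R^d)$ of compactly supported measures. Theorem \ref{thm:ot} produces, for each $n$, convex Kantorovich potentials $\varphi_n = \Phi(\nu,\mu_n)$ and the $R$-Lipschitz conjugate $\varphi_n^* = \Phi^*(\nu,\mu_n)$, yielding the duality lower bound
\[ \tfrac12 W_2^2(\nu,\mu) \;\ge\; \int_{\R^d}\bigl(\tfrac12|x|^2-\varphi_n^*(x)\bigr)\,\d\mu(x) + c_n,\qquad c_n := \tfrac12\sqm\nu - \int\varphi_n\,\d\nu, \]
which is sharp at $\mu=\mu_n$. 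Since $h_n(x):=\tfrac12|x|^2-\varphi_n^*(x)$ does not lie in $\rmC^1_b(\R^d)$, I replace $\varphi_n^*$ by a convex $\rmC^1$ mollification $\varphi_n^{*,\eps}$ (still $R$-Lipschitz) and multiply by a smooth cutoff $\chi_k$ supported in $\rmB(0,k)$, obtaining $h_n^{(k,\eps)} \in \rmC^1_b(\R^d)$ and the cylinder function $F_n^{(k,\eps)} := \lin{h_n^{(k,\eps)}}+c_n\in\AA$. Then I aggregate these via the logarithmic soft-max
\[ \widetilde F_{N,\alpha} := \frac1\alpha\log\sum_{n=1}^N\exp\!\bigl(\alpha F_n^{(k(n),\eps(n))}\bigr), \]
which is again a cylinder function because the vector $\lin\pphi$, with $\pphi=(h_1^{(k(1),\eps(1))},\ldots,h_N^{(k(N),\eps(N))})$, has bounded range in $\R^N$ (so the soft-max can be modified off that range to a genuine $\rmC^1_b(\R^N)$ function). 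A diagonal choice of $\alpha,N,k(n)\to\infty$ and $\eps(n)\to 0$ yields $\widetilde F_{N,\alpha} \to \tfrac12 W_2^2(\nu,\cdot)$ pointwise on $\prbt$, using density of $\{\mu_n\}$, continuity of $W_2^2$, and sharpness at each $\mu_n$.

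\textbf{Control of the gradient via Kantorovich stability.} The soft-max is asymptotically the true max, and using Theorem \ref{thm:omnibus}(8) together with Proposition \ref{prop:equality} I obtain
\[ \lip \widetilde F_{N,\alpha}(\mu) \;\le\; \max_{1 \le n \le N}\cnorm{F_n^{(k(n),\eps(n))}}{\mu}, \]
where on measures supported in $\rmB(0,k(n)-1)$ one has $\rmD F_n^{(k(n),\eps(n))}[\mu](x) = x-\nabla\varphi_n^{*,\eps(n)}(x)$. The stability Lemma \ref{le:convan}, applied along subsequences $\mu_{n(j)}\to\mu$ in $W_2$, provides locally uniform convergence $\varphi_{n(j)}^*\to\varphi_\mu^*$ and $\Leb d$-a.e.~convergence $\nabla\varphi_{n(j)}^*\to\nabla\varphi_\mu^*$; combined with the Brenier identity \eqref{eq:158} this shows that, at the active index, the right-hand side converges to $W_2(\mu,\nu)$. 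Since $|x-\nabla\varphi_n^{*,\eps}(x)| \le |x|+R$, the asymptotic Lipschitz constants are uniformly $L^2(\mm)$-bounded, so by Theorem \ref{thm:omnibus}(1) we can extract a weak $L^2(\mm)$ limit $G$, which is a $(2,\AA)$-relaxed gradient of $\tfrac12W_2^2(\cdot,\nu)$ satisfying $G \le W_2(\cdot,\nu)$ $\mm$-a.e. Once this is proved for every $\nu \in Y$, Theorem \ref{theo:startingpoint} supplies the approximation \eqref{eq:55}; the upgrade to $L^p(\prbt,\mm)$-convergence when $F\in L^p(\prbt,\mm)$ follows from Theorem \ref{thm:omnibus}(2) and \eqref{eq:isdense}.

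\textbf{Main obstacle.} The delicate step is to coordinate the four limit processes—soft-max parameter $\alpha$, mollification $\eps$, truncation radius $k$, and the density index $n$—so that the weak-$L^2(\mm)$ limit of the Lipschitz constants is dominated by $W_2(\cdot,\nu)$ without assuming $\mm$ to be supported on absolutely continuous measures. The key leverage is the uniform Lipschitz bound $|\nabla\varphi_n^*|\le R$ from Theorem \ref{thm:ot} (which gives the needed equi-integrability), paired with the $\Leb d$-a.e.~convergence of $\nabla\varphi_n^*$ supplied by Lemma \ref{le:convan}, so that averaging against $\mm$ produces the required domination even when singular $\mu$'s are present.
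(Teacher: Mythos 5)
The proposal identifies the right entry point — Theorem \ref{theo:startingpoint}, the reduction to $|\rmD W_\nu|_{\star,\AA}\le1$ via Lemma \ref{le:truncations}/Remark \ref{rem:powers}, the use of Kantorovich duality and Brenier potentials, the aggregation over a dense family with Theorem \ref{thm:omnibus}(8)/soft-max, and the stability Lemma \ref{le:convan} — and these are indeed the same ingredients the paper uses. But there are two genuine gaps in the middle section.

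First, the truncation is done in the wrong place. You replace $h_n(x)=\tfrac12|x|^2-\varphi_n^*(x)$ by $\chi_k\cdot\varphi_n^{*,\eps}$ type objects to land in $\rmC^1_b(\R^d)$ and hence in $\AA$, but then the duality lower bound is lost, the gradient picks up an extra term $h_n^{*,\eps}\nabla\chi_k$, and more seriously, for each fixed soft-max $\widetilde F_{N,\alpha}$ the cylinder function is bounded, so the claimed pointwise convergence $\widetilde F_{N,\alpha}\to\tfrac12 W_2^2(\nu,\cdot)$ requires a delicate four-parameter diagonalization that you do not actually carry out. The paper circumvents this completely: it keeps the quadratic-growth function $u_{\eps,h}$ uncut, observes that $\lin{u_{\eps,h}\ast\kappa_\eps}$ has a well-behaved $(2,\AA)$-relaxed gradient by Lemma \ref{le:speriamo-che-basti} (which exploits the coercivity to pull a cutoff $\zeta$ \emph{outside} the linear functional, via the chain rule), and then only at the very end caps the value with $\vartheta(s)=\arctan(s^2/2)\nchi_{(0,+\infty)}(s)$ and Lemma \ref{le:truncations}. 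Truncating the range instead of the base function is what makes the argument go through.

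Second, and more fundamentally, the stability step needs $\Phi^*(\nu,\mu)$ and the Brenier identity \eqref{eq:158} for $\mm$-a.e.~$\mu$, but Theorem \ref{thm:ot} only applies when $\mu\in\prob_2^r(\R^d)$; when $\mu$ is singular the map $\nabla\varphi_\mu^*$ does not encode an optimal transport plan and \eqref{eq:158} fails. You explicitly flag this as ``the main obstacle'' and say that ``averaging against $\mm$'' supplies the domination, but that is not a proof. The paper's resolution is structural, not a passing-to-the-average trick: it regularizes \emph{both} measures, working with $\hat\nu_\eps$ (to have full support on a ball) and with $\mu_\eps=\mu\ast\kappa_\eps$, so that $\Phi^*(\hat\nu_\eps,\mu_\eps)$ is defined for every $\mu\in\prbt$. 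The mollification then does double duty: by symmetry of $\kappa_\eps$, $\int u\,\d\mu_\eps=\int(u\ast\kappa_\eps)\,\d\mu$, which turns the potential-based functional into a genuine cylinder function, and Jensen's inequality gives
\[ \int_{\R^d}\bigl|x-\nabla(\varphi_\eps^*\ast\kappa_\eps)(x)\bigr|^2\,\d\mu(x)\le\int_{\R^d}\bigl|x-\nabla\varphi_\eps^*(x)\bigr|^2\,\d\mu_\eps(x)=W_2^2(\mu_\eps,\hat\nu_\eps), \]
which transfers the Brenier identity on the regularized pair back to the original singular $\mu$. Without this $\eps$-regularization of $\mu$ — which is absent from your construction — the weak limit $G$ you extract cannot be shown to satisfy $G\le W_2(\cdot,\nu)$, and the proof does not close.
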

\begin{corollary}
  \label{cor:WHilbert}
  $H^{1,2}(\W_2)$ is a separable Hilbert space and $\cyl1b \prbt$ is
  strongly dense in $H^{1,2}(\W_2)$.
  If $\big(\pCE_2,\cyl1b\prbt\big)$ is closable (recall Remark
  \ref{rem:closable})
  then
  its smallest closed extension coincides with $(\CE_2,
  H^{1,2}(\W_2))$.
\end{corollary}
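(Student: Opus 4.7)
The plan is to deduce Corollary \ref{cor:WHilbert} from Theorem \ref{thm:main} via the Hilbertianity criterion of Theorem \ref{thm:hilb}. The essential observation is that, by Proposition \ref{prop:equality} together with formulas \eqref{eq:53}-\eqref{eq:54}, the restriction of the pre-Cheeger energy to $\cyl1b\prbt$ is already a quadratic form induced by the symmetric bilinear form
\[
\pCE_2(F,G) = \int \rmD F(\mu,x) \cdot \rmD G(\mu,x) \, \d\bmm(\mu,x),
\]
so the parallelogram identity \eqref{eq:31} holds on $\cyl1b\prbt$. Since Theorem \ref{thm:main} has verified that $\cyl1b\prbt$ is dense in $2$-energy---which via the proof of Theorem \ref{theo:startingpoint} supplies exactly the hypothesis \eqref{eq:214-15} required by Theorem \ref{thm:hilb} for the algebra $\AA = \cyl1b\prbt$---Theorem \ref{thm:hilb} applies directly and yields in one stroke that $H^{1,2}(\W_2)$ is a Hilbert space, $\CE_2$ is a Dirichlet form, and $\cyl1b\prbt$ is strongly dense in $H^{1,2}(\W_2)$.

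For separability I would construct a countable $H^{1,2}$-dense subfamily of $\cyl1b\prbt$. Fix a countable family $\{\phi_i\}_{i\in\N} \subset \rmC^1_b(\R^d)$ dense in the $\rmC^1_b$-norm (for instance, mollifications of polynomials with rational coefficients multiplied by smooth cutoffs of growing support) and, for each $N\in\N$, a countable $\rmC^1_b$-dense family $\{\psi_{N,j}\}_{j\in\N} \subset \rmC^1_b(\R^N)$. Given any $F = \psi\circ \lin\pphi \in \cyl1b\prbt$ with $\pphi = (\phi^1,\ldots,\phi^N)$, approximating $\psi$ and each $\phi^k$ from these families yields a cylinder sequence $F_n$ with $F_n \to F$ uniformly on $\prbt$ and $\rmD F_n \to \rmD F$ uniformly on $\prbt\times\R^d$ (the latter because $\lin{\pphi_n}(\mu)$ stays in a fixed compact subset of $\R^N$ on which $\partial_n\psi$ is uniformly continuous). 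Since $\mm$ and $\bmm$ are finite, this entails convergence in $L^2(\mm)$ and in $L^2(\bmm;\R^d)$, hence in the $H^{1,2}$-norm via the identification $|\rmD(F_n-F)|_{\star,\AA}(\mu) = \cnorm{F_n-F}\mu$ from Proposition \ref{prop:equality}. Combined with the strong density established in the first paragraph, this exhibits a countable $H^{1,2}$-dense set and hence the separability of $H^{1,2}(\W_2)$.

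Finally, the closability assertion is an immediate application of Remark \ref{rem:closable}: whenever $(\pCE_2, \cyl1b\prbt)$ is closable, its minimal closed extension is a closed quadratic form on a subspace of $L^2(\W_2)$ agreeing with $\pCE_2$ on $\cyl1b\prbt$. On the other hand $(\CE_2, H^{1,2}(\W_2))$ is itself a closed quadratic form whose restriction to $\cyl1b\prbt$ coincides with $\pCE_2$ (by Proposition \ref{prop:equality} and the defining relaxation \eqref{eq:relpre}, taking the trivial approximating sequence $F_n \equiv F$), and the strong density from the first paragraph forces the two closed extensions to coincide. The genuinely substantial work underlying this corollary lies in Theorem \ref{thm:main}; within the corollary itself, the only mildly delicate step is the separability argument, where care is required so that the approximating coefficients control the $L^2(\mm)$ and $L^2(\bmm;\R^d)$ parts of the $H^{1,2}$-norm simultaneously.
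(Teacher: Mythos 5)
Your overall strategy is the paper's own: combine Theorem \ref{thm:main} (density of $\cyl1b\prbt$ in $2$-energy) with the Hilbertianity criterion of Theorem \ref{thm:hilb}, using Proposition \ref{prop:equality} to verify that $\pCE_2$ is a quadratic form on $\cyl1b\prbt$. That part of your argument is fine. However, there are two genuine flaws in the remaining steps.

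First, the separability argument is broken. The space $\rmC^1_b(\R^d)$ is not separable with respect to the $\rmC^1_b$-norm (uniform norm of the function plus uniform norm of the gradient): for instance $\{x\mapsto\sin(\alpha x):\alpha>0\}$ is an uncountable family which is uniformly bounded in $\rmC^1_b$ and satisfies $\sup_x|\sin(\alpha x)-\sin(\beta x)|=2$ for distinct $\alpha,\beta$. So no countable family can be $\rmC^1_b$-norm dense, and your claim of uniform convergence $F_n\to F$ and $\rmD F_n\to \rmD F$ cannot be achieved from a countable reservoir. The conclusion is saved but needs a different route: either replace $\rmC^1_b(\R^d)$ by $\rmC^\infty_c(\R^d)$ (which \emph{is} separable in the $\rmC^1$-norm) and invoke the last part of Proposition \ref{prop:density} showing that $\ccyl\infty b\prbt$ is strongly dense; or, more economically, once $H^{1,2}(\W_2)$ is known to be a Hilbert space, separability is automatic from the continuous injective embedding $J:H^{1,2}(\W_2)\hookrightarrow L^2(\prbt,\mm)$, because $J^*$ is continuous with dense range and $L^2(\prbt,\mm)$ is a separable Hilbert space.

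Second, your justification for the closability assertion is circular. You assert that $\CE_2$ restricted to $\cyl1b\prbt$ \emph{coincides} with $\pCE_2$ by ``taking the trivial approximating sequence $F_n\equiv F$'' in \eqref{eq:relpre}; but the trivial sequence only gives the inequality $\CE_2(F)\le\pCE_2(F)$, never equality. In fact, the equality $\CE_2|_{\cyl1b\prbt}=\pCE_2$ is precisely the conclusion of the closability statement, not a tool to prove it: in the non-closable case (cf.~the discussion of $\Residual$ in Section \ref{subsec:resdiff} and, e.g., the Gaussian example) one has strict inequality $\CE_2(F)<\pCE_2(F)$ for some cylinder functions. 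The correct non-circular argument, which Remark \ref{rem:closable} points to, is: for a closable quadratic form the smallest closed extension coincides with its $L^2$-lower semicontinuous envelope, and by Theorem \ref{thm:main} (density) together with \eqref{eq:relpreq} the $L^2$-l.s.c.~envelope of $\pCE_2|_{\cyl1b\prbt}$ is exactly $(\CE_2,H^{1,2}(\W_2))$; the equality $\CE_2|_{\cyl1b\prbt}=\pCE_2$ then follows \emph{a posteriori}. In short, you should cite Remark \ref{rem:closable} for the conclusion but drop the ``trivial sequence'' reasoning, which proves something weaker.
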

According to the terminology introduced in \cite{Gigli15-new} (see
also \cite{AGS14I})
we can say that $(\prbt,W_2,\mm)$ is infinitesimally Hilbertian for
every positive  Borel measure $\mm$. \\
 We devote the remaining part of this subsection to the proof this
 result, using Theorem \ref{theo:startingpoint}.
 
 We adopt the notation $\AA:=\cyl1b \prbt$.

 We start with a preliminary lemma, which provides a simple
 gradient estimate for the distance from
 the Dirac mass centered at 0, i.e.~the quadratic moment of a measure.
 \begin{lemma}
   \label{le:square-lip}
   Let $\vartheta\in \Lip(\R^d)$ be a $L$-Lipschitz function which is
   continuously differentiable in the open set $\Omega_\vartheta:=\big\{x\in \R^d:
   \vartheta(x)\neq 0\big\}$.
   Then the
   map
   \begin{equation}
     \label{eq:162}
     F:\mu\to \big(\lin{\vartheta^2} (\mu) \big)^{1/2} =\Big(\int_{\R^d}\vartheta^2(x)\,\d\mu(x)\Big)^{1/2}
   \end{equation}
   is $L$-Lipschitz and belongs to $D^{1,2}(\W_2,\AA)$, in particular
   its $(2,\AA)$-relaxed gradient is bounded above by $L$ and satisfies
   \begin{equation}
     \label{eq:189}
     |\rmD F|_{\star,\AA}^2(\mu)\le
     \frac{1}{F^2(\mu)}\int_{\R^d}\vartheta^2|\nabla\vartheta|^2\,\d\mu\quad
     \text{for $\mm$-a.e.~$\mu\in \prob_2(\R^d)$ with $F(\mu)>0$}.
   \end{equation}
 \end{lemma}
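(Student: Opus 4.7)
The plan is to first prove the global $L$-Lipschitz bound by a direct optimal-transport argument and then to produce a sequence of cylinder functions in $\cyl1b{\prbt}$ approximating $F$ in such a way that both the uniform bound $|\rmD F|_{\star,\AA}\le L$ and the refined estimate \eqref{eq:189} follow from the explicit formula of Proposition \ref{prop:equality}. For the Lipschitzianity, I fix $\mu,\nu\in \prbt$ and an optimal plan $\mmu\in \Gamma_o(\mu,\nu)$; writing $F(\mu)=\|\vartheta\circ\pi^1\|_{L^2(\mmu)}$ and $F(\nu)=\|\vartheta\circ\pi^2\|_{L^2(\mmu)}$, the triangle inequality in $L^2(\mmu)$ combined with $|\vartheta(x)-\vartheta(y)|\le L|x-y|$ yields
\[ |F(\mu)-F(\nu)| \le \left(\int|\vartheta(x)-\vartheta(y)|^2\,\d\mmu\right)^{1/2} \le L\left(\int |x-y|^2\,\d\mmu\right)^{1/2} = L\,W_2(\mu,\nu). \]
In particular $\lip F\le L$ everywhere, but this alone does not provide a cylinder approximation, which is needed to conclude $F\in D^{1,2}(\W_2,\AA)$ and to identify the minimal $(2,\AA)$-relaxed gradient.

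Next, I fix an $L$-Lipschitz, odd, nondecreasing function $\eta_R\in \rmC^1(\R)$ with $\eta_R(t)=t$ for $|t|\le R$ and $\eta_R$ bounded, and I set $\vartheta_R:=\eta_R\circ\vartheta$. Since $\eta_R(0)=0$, the zero set of $\vartheta_R$ coincides with that of $\vartheta$, while on $\Omega_\vartheta$ one has $\nabla(\vartheta_R^2)=2\vartheta_R\,\nabla\vartheta_R$; the bound $|\vartheta_R\,\nabla\vartheta_R|\le L|\vartheta|$ ensures this gradient extends continuously by $0$ on the zero set of $\vartheta$, so that $\vartheta_R^2\in \rmC^1_b(\R^d)$. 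For $\varepsilon>0$ I set
\[ F_{R,\varepsilon}(\mu):=\psi_\varepsilon\bigl(\lin{\vartheta_R^2}(\mu)\bigr), \qquad \psi_\varepsilon(t):=\sqrt{\varepsilon+t}, \]
which belongs to $\cyl1b{\prbt}$ by Remark \ref{rem:unrem}, since $\lin{\vartheta_R^2}$ takes values in a bounded interval on which $\psi_\varepsilon$ is $\rmC^1$. Proposition \ref{prop:equality} together with the chain rule for $\rmD$ then yields the exact identity
\[ \lip F_{R,\varepsilon}(\mu)^2 = \cnorm{F_{R,\varepsilon}}{\mu}^2 = \frac{1}{\varepsilon+\int\vartheta_R^2\,\d\mu}\int \vartheta_R^2\,|\nabla\vartheta_R|^2\,\d\mu \le L^2. \]

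To conclude, I let $R\to\infty$ (dominated convergence, using that $\vartheta^2\in L^1(\mu)$ for $\mu\in\prbt$) and then $\varepsilon\downarrow 0$; along a diagonal subsequence $F_n$ converges to $F$ pointwise and $\lip F_n$ remains uniformly bounded by $L$. Extracting a weak $L^2(\mm)$ limit $G$ of $\lip F_n$, the closure property in Theorem \ref{thm:omnibus}(1) says that $G$ is a $(2,\AA)$-relaxed gradient of $F$, and Theorem \ref{thm:omnibus}(3) gives $|\rmD F|_{\star,\AA}\le G\le L$ $\mm$-a.e. On $\{F>0\}$ the explicit ratio in the display above converges pointwise in $\mu$ to $F(\mu)^{-2}\int\vartheta^2|\nabla\vartheta|^2\,\d\mu$ by dominated convergence; combined with the uniform $L^\infty$-bound and $\mm(\prbt)<\infty$ this promotes the convergence of $\lip F_n$ to strong $L^2$-convergence on $\{F>0\}$, identifying $G$ there and delivering \eqref{eq:189}. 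The main obstacle is the regularity check $\vartheta_R^2\in \rmC^1_b(\R^d)$ across the zero set of $\vartheta$, where $\vartheta$ is not assumed $\rmC^1$; the squaring precisely cancels the possible failure of continuity of $\nabla\vartheta$ through the vanishing factor $\vartheta_R$. A secondary point is that \eqref{eq:189} is claimed only on $\{F>0\}$, which is crucial because the denominator $\varepsilon+\int\vartheta_R^2\,\d\mu$ degenerates on the zero level set of $F$, where the uniform bound $L$ suffices.
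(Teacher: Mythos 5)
Your proof is correct and follows essentially the same route as the paper's: truncate $\vartheta$ by a smooth odd cut-off so that $\vartheta_R^2\in\rmC^1_b(\R^d)$, regularize the square root by $\psi_\varepsilon(t)=\sqrt{\varepsilon+t}$ to obtain a genuine element of $\cyl1b{\prbt}$, compute the asymptotic Lipschitz constant exactly via Proposition~\ref{prop:equality}, bound it by $L$, and pass to the limit using Theorem~\ref{thm:omnibus}(1)--(3) together with the pointwise identification of the ratio on $\{F>0\}$. Two small remarks. First, you establish the $L$-Lipschitzianity of $F$ directly via the triangle inequality in $L^2(\mmu)$ for an optimal coupling $\mmu$, whereas the paper deduces it from the length-space property of $(\prob_2(\R^d),W_2)$ together with the uniform bound $\lip F_n\le L$ and the pointwise convergence $F_n\to F$; your version is shorter and self-contained, and the two are interchangeable here. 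Second, there is a slip in the description of the cut-off: $\eta_R$ should be declared $1$-Lipschitz (i.e.\ $|\eta_R'|\le 1$) rather than ``$L$-Lipschitz''. The estimates you then use, namely $|\nabla\vartheta_R|\le L$ and $|\vartheta_R\,\nabla\vartheta_R|\le L|\vartheta|$, require precisely $|\eta_R'|\le 1$, and indeed if $L<1$ no $L$-Lipschitz map fixing $[-R,R]$ exists; this is the same normalization the paper imposes on its truncations $T_n$. With that correction the argument is complete.
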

 \begin{proof}
   Let $T\in \rmC^\infty(\R)$ be an odd, nondecreasing truncation function
   satisfying
   \begin{equation}
     \label{eq:163}
     T(x)=x\quad\text{if }|x|\le 1/2,\quad
     |T(x)| =1\quad\text{if }|x|\ge 2,\quad
     |T'(x)|\le 1,
   \end{equation}
   and let us set $T_n(x):=nT(x/n)$, $\vartheta_n:=T_n\circ\vartheta$, so that
   $\vartheta_n$ is $L$-Lipschitz and continuously differentiable in
   $\Omega_\vartheta$, so that 
   $\vartheta_n^2\in \rmC^1_b(\R^d)$.
   
   We define $\psi_n(r):=(r+1/n)^{1/2}$ and
   $F_n:=\psi_n\circ\lin{\vartheta_n^2}$.
   By construction $F_n\in \AA$ with
   \begin{align}
     \notag
     \rmD F_{n}(\mu,x)
     &=
       \frac 1{F_{n}(\mu)}\vartheta_n(x)\nabla\vartheta_n(x),\\
     \label{eq:190}
     \big(\lip F_n(\mu)\big)^2=\|\rmD F_n[\mu]\|^2
     &= 
     \frac1{F^2_{n}(\mu)}\int_{\R^d}\vartheta_n^2(x)|\nabla
       \vartheta_n(x)|^2\,\d\mu(x)
       \le
     L^2. 
   \end{align}
   Since $(\prob_2(\R^d),W_2)$ is a length space we deduce that $F_n$
   is $L$-Lipschitz. On the other hand $\lim_{n\to\infty} F_n(\mu)=F(\mu)$ pointwise
   everywhere, so that $F$ is $L$-Lipschitz as well, it belongs to
   $D^{1,2}(\W_2,\AA)$ and $|\rmD F|_{\star,\AA}\le L$.
   Passing eventually to the limit as $n\to\infty$ in \eqref{eq:190}
   for $\mu$ in the open set $\{\mu\in \prob_2(\R^d):F(\mu)>0\}$ we
   get \eqref{eq:189}.
 \end{proof}
 Selecting $\vartheta(x):=|x|$  and applying the first part of Lemma \ref{le:square-lip}  we immediately get  the following corollary.  
 \begin{corollary}
   \label{cor:preliminary1}
   The function $\rsqm\cdot$ as in \eqref{eq:183}  belongs to
   $D^{1,2}(\W_2,\AA)$ with
   \begin{equation}
     \label{eq:175}
     |\rmD \mathsf m_2|_{\star,\AA}(\mu)\le 1\quad\text{for $\mm$-a.e.~$\mu\in \prob_2(\R^d)$}.
   \end{equation}
 \end{corollary}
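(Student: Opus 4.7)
The plan is to verify that the function $\vartheta(x) := |x|$ satisfies all the hypotheses of Lemma \ref{le:square-lip} with Lipschitz constant $L=1$, and then to read off the conclusion of the corollary directly from the statement of that lemma.

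First I would observe that $\vartheta(x)=|x|$ is globally $1$-Lipschitz on $\R^d$ (by the reverse triangle inequality). Next, the open set $\Omega_\vartheta=\{x\in\R^d:\vartheta(x)\neq 0\}$ is exactly $\R^d\setminus\{0\}$, and on this set $\vartheta$ is continuously differentiable with $\nabla\vartheta(x)=x/|x|$. Hence $\vartheta$ meets the standing assumption of Lemma \ref{le:square-lip} with $L=1$.

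Applying that lemma, the map
\begin{equation*}
  F(\mu) = \Big(\int_{\R^d}\vartheta^2(x)\,\d\mu(x)\Big)^{1/2}
  = \Big(\int_{\R^d}|x|^2\,\d\mu(x)\Big)^{1/2}
  = \rsqm{\mu}
\end{equation*}
belongs to $D^{1,2}(\W_2,\AA)$ and its $(2,\AA)$-relaxed gradient satisfies $|\rmD F|_{\star,\AA}\le 1$ $\mm$-a.e., which is exactly \eqref{eq:175}. No further argument is needed, and I do not expect any obstacle: the corollary is simply the specialization of the first (easy) half of Lemma \ref{le:square-lip} to the canonical choice $\vartheta(x)=|x|$. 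The refined bound \eqref{eq:189} is not invoked here, so there is no issue with the integrability of $|\nabla\vartheta|^2=1$ against a general $\mu\in\prob_2(\R^d)$.
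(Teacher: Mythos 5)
Your proposal is correct and takes precisely the approach the paper uses: the paper states explicitly that the corollary follows by "selecting $\vartheta(x):=|x|$ and applying the first part of Lemma \ref{le:square-lip}." You correctly verify that $\vartheta(x)=|x|$ is $1$-Lipschitz and $\rmC^1$ on $\Omega_\vartheta=\R^d\setminus\{0\}$, so the lemma applies with $L=1$ and the conclusion $|\rmD \mathsf m_2|_{\star,\AA}\le 1$ drops out directly.
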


 We now use $\sfm_2$ for localizing gradient estimates in $\prob_2(\R^d)$.
 \begin{lemma}
   \label{le:limsup-approximation}
   Let $F_n$ be a sequence of functions in $D^{1,2}(\W_2,\AA) \cap L^\infty(\prob_2(\R^d), \mm)$ such
   that $F_n$ and 
   $|\rmD F_n|_{\star,\AA}$ are uniformly bounded in every bounded set
   of $\prob_2(\R^d)$ and let $F,G$ be Borel functions in
   $L^2(\prob_2(\R^d),\mm)$, $G$ nonnegative.
   If
   \begin{equation}
     \label{eq:170}
     \lim_{n\to\infty}F_n(\mu)= F(\mu),\quad
     \limsup_{n\to\infty}|\rmD F_n|_{\star,\AA}(\mu)\le G(\mu)
     \quad\text{$\mm$-a.e.~in $\prob_2(\R^d)$},
   \end{equation}
   then $F\in H^{1,2}(\W_2,\AA)$ and $|\rmD F|_{\star,\AA}\le G$.
 \end{lemma}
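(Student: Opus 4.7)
The plan is a two-step localization via the quadratic moment $\sfm_2$ (Corollary \ref{cor:preliminary1}). I first multiply $F_n$ by a cutoff $\chi_R$ that truncates far out in $\prbt$, pass to the limit in $n$ using the closure of the set of (function, relaxed gradient) pairs (Theorem \ref{thm:omnibus}(1)), and finally remove the cutoff via a second closure argument; at each stage I use Mazur's lemma to push pointwise upper bounds through weak $L^2$ limits.

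For the cutoff, pick a $1$-Lipschitz $\eta_R:[0,+\infty)\to[0,1]$ with $\eta_R\equiv 1$ on $[0,R]$ and $\eta_R\equiv 0$ on $[R+1,+\infty)$, and set $\chi_R:=\eta_R\circ\sfm_2$. The chain rule (Theorem \ref{thm:omnibus}(7)) combined with \eqref{eq:175} gives $\chi_R\in D^{1,2}(\W_2,\AA)\cap L^\infty(\mm)$ with $|\rmD\chi_R|_{\star,\AA}\le \mathbf 1_{\{R\le\sfm_2\le R+1\}}$ $\mm$-a.e. Since $F_n,\chi_R\in L^\infty(\mm)$, Leibniz (Theorem \ref{thm:omnibus}(4)) gives $F_n\chi_R\in D^{1,2}(\W_2,\AA)$ with
\[|\rmD(F_n\chi_R)|_{\star,\AA}\le |F_n|\,\mathbf 1_{\{R\le\sfm_2\le R+1\}}+\chi_R\,|\rmD F_n|_{\star,\AA}\quad\mm\text{-a.e.}\]
The right-hand side is supported on the bounded set $\{\sfm_2\le R+1\}$ and is therefore uniformly bounded in $L^\infty(\mm)$ by hypothesis, hence also in $L^2(\mm)$ because $\mm$ is finite.

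Dominated convergence now yields $F_n\chi_R\to F\chi_R$ in $L^2(\mm)$, and weak $L^2$ compactness extracts a subsequence along which $|\rmD(F_n\chi_R)|_{\star,\AA}\rightharpoonup H_R$. By the closedness from Theorem \ref{thm:omnibus}(1), $H_R$ is a $(2,\AA)$-relaxed gradient of $F\chi_R$, and minimality (Theorem \ref{thm:omnibus}(3)) gives $|\rmD(F\chi_R)|_{\star,\AA}\le H_R$ $\mm$-a.e. To dominate $H_R$ by an expression in $G$, I invoke Mazur's lemma: convex combinations of $|\rmD(F_n\chi_R)|_{\star,\AA}$ converge to $H_R$ strongly in $L^2(\mm)$ and hence pointwise $\mm$-a.e.~along a further subsequence; since each combination is bounded above by the supremum of the corresponding tail, the hypotheses $F_n\to F$ and $\limsup_n|\rmD F_n|_{\star,\AA}\le G$ $\mm$-a.e.~yield
\[H_R\le |F|\,\mathbf 1_{\{R\le\sfm_2\le R+1\}}+\chi_R\,G\quad\mm\text{-a.e.}\]

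Finally I send $R\to+\infty$: since $\sfm_2$ is finite on $\prbt$, $\chi_R\uparrow 1$ pointwise, so $F\chi_R\to F$ $\mm$-a.e., and by dominated convergence also in $L^2(\mm)$. The bound on $|\rmD(F\chi_R)|_{\star,\AA}$ is dominated by $|F|+G\in L^2(\mm)$, so a second weak $L^2$ extraction together with Theorem \ref{thm:omnibus}(1) produces a relaxed gradient $\tilde G$ of $F$, and the same Mazur argument combined with the pointwise convergence of the dominating bound to $G$ gives $\tilde G\le G$ $\mm$-a.e. Hence $|\rmD F|_{\star,\AA}\le G$ by minimality, and together with $F\in L^2(\mm)$ this is $F\in H^{1,2}(\W_2,\AA)$. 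The main obstacle is transferring a \emph{pointwise} $\limsup$ bound on the gradients through a \emph{weak} $L^2$ limit, since weak $L^2$ convergence does not imply pointwise control; the Mazur-plus-subsequential-pointwise-convergence argument, enabled by the uniform local $L^\infty$ bound provided by the $\sfm_2$-cutoff, is precisely what overcomes this.
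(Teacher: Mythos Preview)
Your proof is correct and follows essentially the same strategy as the paper: localize via a cutoff built from $\sfm_2$ (Corollary~\ref{cor:preliminary1}), apply the Leibniz rule, and pass to the limit twice using the closedness property of Theorem~\ref{thm:omnibus}(1). The only differences are cosmetic: the paper uses the cutoff $\theta(\sfm_2/m)$ whose relaxed gradient is $O(1/m)$ (so the error term vanishes by the prefactor rather than by its support drifting to infinity), and it transfers the pointwise $\limsup$ bound through the weak limit via reverse Fatou (testing against Borel sets, using the uniform $L^\infty$ bound) instead of your Mazur/strong-subsequence argument---both devices achieve the same inequality $\tilde G\le G$ $\mm$-a.e.
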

 \begin{proof}
   Let us consider a smooth nonincreasing function $\theta\in \rmC^\infty[0,+\infty)$ such
   that
   \begin{equation}
     \label{eq:176}
     \theta(r)=1\quad\text{if }0\le r\le 1,\quad
     \theta(r)=0\quad\text{if }r\ge 2,\quad
     |\theta'(r)|\le 2
   \end{equation}
   and set
   \begin{equation}
     \label{eq:177}
     \nchi_n(\mu):=\theta\big(\sfm_2(\mu)/n\big).
   \end{equation}
   By Corollary \ref{cor:preliminary1} we have
   \begin{equation}
     \label{eq:178}
     \nchi_n\in H^{1,2}(\W_2,\AA),\quad
     |\rmD \nchi_n|_{\star,\AA}\le 2/n,\quad
     |\rmD \nchi_n|_{\star,\AA}(\mu)=0  \text{ if }\rsqm\mu \le
     n\text{ or } \rsqm\mu\ge 2n.
   \end{equation}
   Thanks to the Leibniz rule, setting $
   F_{n,m}(\mu):=F_n(\mu)\nchi^2_m(\mu)$ and $G_n:=|\rmD
   F_n|_{\star,\AA}$, we have
   \begin{equation}
     \label{eq:179}
     F_{n,m}\in \rmD^{1,2}(\W_2,\AA),\quad
     |\rmD F_{n,m}|_{\star,\AA}(\mu)\le G_n(\mu)\nchi^2_{m}(\mu)+
     4/m F_{n}(\mu)\nchi_{m}(\mu).
   \end{equation}
   Since for every $m\in \N$ the sequence
   $n\mapsto G_n\nchi_m^2$ is uniformly bounded, we can find an
   increasing subsequence $k\mapsto n(k)$ such that
   $k\mapsto G_{n(k)}\nchi_m^2$ is
   weakly$^*$ convergent in $L^\infty(\prob_2(\R^d),\mm)$ and we
   denote by $\tilde G_m$ is
   weak$^*$ limit. By Fatou's lemma, for every Borel set
   $B\subset \prob_2(\R^d)$ we get
   \begin{align*}
     \int_B \tilde G_m\,\d\mm&=
     \lim_{k\to\infty}\int_B G_{n(k)}(\mu)\nchi^2_m(\mu)\,\d\mm(\mu)
                               \\&\le
     \int_B
     \limsup_{k\to\infty}\Big(G_{n(k)}(\mu)\nchi^2_m(\mu)\Big)\,\d\mm(\mu)\\
     &\le \int_B G^2\nchi_m^2\,\d\mm
   \end{align*}
   so that we deduce
   \begin{equation}
     \label{eq:181}
     \tilde G_m\le G^2\nchi_m^2\quad\text{$\mm$-a.e.~in
       $\prob_2(\R^d)$, for every $m\in \N$.}
   \end{equation}
   On the other hand, passing to the limit in \eqref{eq:179} along the
   subsequence $n(k)$ and recalling that
   $\lim_{k\to\infty}F_{n(k),m}=F\nchi_m^2$ $\mm$-a.e.~we get
   \begin{equation}
     \label{eq:182}
     |\rmD (F\nchi_m^2)|_{\star,\AA}(\mu)\le \tilde G_m(\mu)+\frac 4m F(\mu)\nchi_m(\mu)\le
     G(\mu)\nchi_m^2(\mu)+\frac 4m F(\mu)\nchi_m(\mu)
     \quad\text{for $\mm$-a.e.~$\mu\in \prob_2(\R^d)$.}
   \end{equation}
   We eventually pass to the limit as $m\to\infty$ concluding the proof
   of the Lemma.
 \end{proof}
 We now derive a natural estimate, extending \eqref{eq:Fdiff} to the
 case of quadratically coercive functions whose gradient has a linear growth.
 \begin{lemma}
   \label{le:speriamo-che-basti}
   Let $\phi\in \rmC^1(\R^d)$ be satisfying the growth conditions
      \begin{equation}
     \label{eq:186}
     \phi(x)\ge A|x|^2-B,\quad |\nabla\phi(x)|\le C(|x|+1)\quad\text{for
       every }x\in \R^d
   \end{equation}
   for given positive constants $A,B,C>0$
   and let $\zeta:\R\to\R$ be a $\rmC^1$ nondecreasing
   function whose derivative has compact support.
   Then
     the function
     $F(\mu):=\zeta\circ\lin\phi$
     is Lipschitz in $\prob_2(\R^d)$,
     it
     belongs to
   $H^{1,2}(\W_2,\AA)$, and
   \begin{equation}
     \label{eq:180}
     |\rmD F|_{\star,\AA}(\mu)\le \zeta'(\lin\phi(\mu))\Big(\int_{\R^d}|\nabla\phi(x)|^2\,\d\mu(x)\Big)^{1/2}.
   \end{equation}
 \end{lemma}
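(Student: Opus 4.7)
The plan is to approximate $\phi$ by suitable truncations $\phi_n\in \rmC^1_b(\R^d)$ and then invoke Lemma \ref{le:limsup-approximation} for the cylinder sequence $F_n:=\zeta\circ\lin{\phi_n}\in\cyl1b\prbt=\AA$; the global Lipschitz estimate for $F$ will be obtained separately by a direct interpolation argument along displacement geodesics.

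For the truncation I would pick $\sigma_n\in \rmC^1(\R)$ a nondecreasing $1$-Lipschitz cutoff with $\sigma_n(t)=t$ for $t\le n$ and $\sigma_n$ constant for $t\ge n+1$, and set $\phi_n:=\sigma_n\circ\phi$. The crucial point is that the coercivity $\phi\ge A|x|^2-B$ forces $\phi(x)\ge n+1$ outside a ball, so $\nabla\phi_n=(\sigma_n'\circ\phi)\,\nabla\phi$ has compact support; combined with the pointwise domination $|\nabla\phi_n|\le|\nabla\phi|\le C(|x|+1)$ and the boundedness of $\sigma_n$, this shows $\phi_n\in\rmC^1_b(\R^d)$, so by Remark \ref{rem:unrem} indeed $F_n\in\AA$.

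Next, since $|\sigma_n(\phi)|\le|\phi|+\mathrm{const}$ and $|\phi(x)|\le C'(|x|^2+1)$ (by integrating the gradient bound), dominated convergence yields $\lin{\phi_n}(\mu)\to\lin{\phi}(\mu)$ and $\int|\nabla\phi_n|^2\,\d\mu\to\int|\nabla\phi|^2\,\d\mu$ for each $\mu\in\prbt$. By Proposition \ref{prop:equality}, $\lip F_n(\mu)=\zeta'(\lin{\phi_n}(\mu))\bigl(\int|\nabla\phi_n|^2\,\d\mu\bigr)^{1/2}$ then converges pointwise to the right-hand side $G(\mu)$ of \eqref{eq:180}; moreover $\zeta$ bounded and $\lip F_n(\mu)\le\|\zeta'\|_\infty C\sqrt{2(\rsqm\mu^2+1)}$ make both $F_n$ and $\lip F_n$ uniformly bounded on $W_2$-bounded subsets of $\prbt$. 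Since $\supp(\zeta')\subset[\alpha,\beta]$ and $\lin\phi(\mu)\ge A\rsqm\mu^2-B$, we have $G(\mu)\ne 0$ only where $\rsqm\mu^2\le(\beta+B)/A=:R^2$; hence $G$ is globally bounded and in $L^2(\prbt,\mm)$, so Lemma \ref{le:limsup-approximation} gives $F\in H^{1,2}(\W_2,\AA)$ together with the estimate \eqref{eq:180}.

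For global Lipschitzness, for $\mu_0,\mu_1\in\prbt$ I would interpolate along $\mu^t:=(\sfx^t)_\sharp\gamma$ for $\gamma\in\Gamma_o(\mu_0,\mu_1)$; differentiation under the integral, justified by the growth of $\nabla\phi$ and $\gamma\in\prob_2(\R^d\times\R^d)$, shows that $t\mapsto\lin{\phi}(\mu^t)$ is of class $\rmC^1$ with
\[
\left|\tfrac{\d}{\d t}\lin\phi(\mu^t)\right|\le\left(\int|\nabla\phi|^2\,\d\mu^t\right)^{1/2}W_2(\mu_0,\mu_1).
\]
Writing $F(\mu_1)-F(\mu_0)=\int_0^1\zeta'(\lin\phi(\mu^t))\,\tfrac{\d}{\d t}\lin\phi(\mu^t)\,\d t$ and observing that the integrand vanishes unless $\lin\phi(\mu^t)\in[\alpha,\beta]$, forcing $\rsqm{\mu^t}\le R$ and hence $\int|\nabla\phi|^2\,\d\mu^t\le 2C^2(R^2+1)$, yields $|F(\mu_1)-F(\mu_0)|\le \|\zeta'\|_\infty\sqrt{2C^2(R^2+1)}\,W_2(\mu_0,\mu_1)$. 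The main technical hurdle is reconciling the unboundedness of $\phi$ and $\nabla\phi$ with the $\rmC^1_b$-based structure of $\cyl1b\prbt$: the quadratic coercivity is precisely what makes $\sigma_n\circ\phi$ land in $\rmC^1_b(\R^d)$, while the compact support of $\zeta'$ is what confines the gradient bound to a $W_2$-bounded portion of $\prbt$, guaranteeing that the limit function belongs to $L^2(\prbt,\mm)$ and that Lemma \ref{le:limsup-approximation} is applicable.
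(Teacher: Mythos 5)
Your proof is correct, and it follows a genuinely different route from the paper's. The paper exploits the coercivity in a more algebraic way: it applies Lemma~\ref{le:square-lip} to the \emph{globally Lipschitz} function $\vartheta_a := (\phi+a)^{1/2}$ (with $a:=A+B$, so that $|\nabla\vartheta_a|\le A^{-1/2}C$), obtains that $F_a := (\lin\phi+a)^{1/2}$ is Lipschitz with a $(2,\AA)$-relaxed gradient controlled by $\frac{1}{2F_a}\big(\int|\nabla\phi|^2\,\d\mu\big)^{1/2}$, and then recovers $F=\psi_a\circ F_a$ with $\psi_a(z)=\zeta(z^2-a)$ via the chain rule of Theorem~\ref{thm:omnibus}(7). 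Both the Lipschitzness and the gradient bound drop out of the two composed Lipschitz maps. You instead truncate $\phi$ directly into $\rmC^1_b(\R^d)$, compute $\lip F_n$ exactly via Proposition~\ref{prop:equality}, and pass to the limit with Lemma~\ref{le:limsup-approximation}, proving Lipschitzness of $F$ by a separate displacement-interpolation argument. In your proof the quadratic coercivity is used twice in a different way than the paper: once to make the truncated gradient compactly supported (so that $\phi_n\in\rmC^1_b$), and once to confine the support of $G$ to a $W_2$-bounded set via $\lin\phi(\mu)\ge A\sqm\mu-B$. Your argument is more pedestrian but self-contained with respect to Lemma~\ref{le:square-lip}, whereas the paper's is shorter once that lemma is at hand; your direct Lipschitz estimate along geodesics is essentially the same computation as in Proposition~\ref{prop:equality}, applied here to a non-cylinder function whose gradient has linear growth, and is a nice standalone observation.
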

 \begin{proof}
   We set $\zeta_a(z) := (z+a)^{1/2}$ and $\vartheta_a:= \zeta_a \circ \phi$,  with $a:=A+B$,  so that 
   \begin{displaymath}
     \vartheta_a  \in \rmC^1(\R^d),\quad
     \vartheta_a  \ge \big(A(|x|^2+1)\big)^{1/2},\quad
     |\nabla \vartheta_a(x)  |=\frac{|\nabla\phi(x)|}{2(\phi(x)+a)^{1/2}}\le
     L,\quad
     L:=A^{-1/2}C
   \end{displaymath}
   for every $x\in \R^d$.

   We can then apply Lemma \ref{le:square-lip}, observing that
   \begin{displaymath}
     \big(\lin{  \vartheta_a^2  }(\mu)\big)^{1/2}= \zeta_a   \big(\lin\phi(\mu)\big);
   \end{displaymath}
   we deduce that $F_a=\zeta_a\circ \lin\phi$ is $L$-Lipschitz, it belongs to
    $D^{1,2}(\W_2,\AA)$   and satisfies (recall \eqref{eq:189})
   \begin{equation}\label{eq:191}
     |\rmD F_a|_{\star,\AA}(\mu)\le
     \frac{1}{2F_a(\mu)}\Big(\int_{\R^d} |\nabla(\vartheta_a^2)|^2\,\d\mu\Big)^{1/2}
     = \frac{1}{2F_a(\mu)}  \Big(\int_{\R^d}|\nabla\phi|^2\,\d\mu\Big)^{1/2}.
   \end{equation}
   We eventually observe that
   $ F =\psi_a\circ  F_a$, where $\psi_a(z)=\zeta(z^2-a)$  is a $\rmC^1$ Lipschitz function since $\zeta'$ has compact support.
   By the chain rule in Theorem \ref{thm:omnibus}(7) we get that \[|\rmD F|_{\star,\AA} = |\psi'_a \circ F_a| |\rmD F_a|_{\star,\AA}=2F_a \zeta'(\lin \phi)|\rmD F_a|_{\star,\AA}.\]
    Then \eqref{eq:191} yields \eqref{eq:180}.
 \end{proof}
We collect in the following definition
some useful tools and notation we will
extensively use.
\begin{definition}
\label{def:short}
We denote by 
$\kappa \in \rmC_c^{\infty}(\R^d)$ 
a smooth function satisfying $\supp{\kappa}=\overline{\rmB(0,1)}$, $\kappa(x) \ge 0$ for every $x \in \R^d$ and $\kappa(x)>0$ for every $x \in \rmB(0,1)$, $\int_{\R^d} \kappa \d  \Leb{d}  =1$ and $\kappa(-x)=\kappa(x)$ for every $x \in \R^d$. 

For every $0<\eps<1$ we define the family of associated mollifiers
\begin{equation}
    \kappa_\eps (x) := \frac{1}{\eps^d} \kappa (x/\eps) \quad x \in \R^d,
\end{equation}
and for every $\sigma \in \prbt$ we define
\begin{align} 
\sigma_\eps &:= \sigma \ast \kappa_\eps,\\
  \hat{\sigma}_\eps &:= \frac{\sigma_\eps \mres \rmB(0,1/\eps) +
                      \eps^{d+3}  \Leb{d} \mres
                      \rmB(0,1/\eps)}{\sigma_\eps(\rmB(0,1/\eps)) +
                      \eps^{d+3}  \Leb{d}(\rmB(0,1/\eps))}.
\end{align}
For every $\nu \in \prbt$ we eventually define the continuous functions $W_\nu, W_\nu^\eps, F_\nu^\eps : \prbt \to \R$ as
\begin{equation}
\label{eq:collect}
    W_\nu(\mu):= W_2(\mu, \nu), \quad W_\nu^{\eps}(\mu) := W_{\hat{\nu}_{\eps}}(\mu_\eps), \quad F^\eps_\nu (\mu) := \frac{1}{2}(W_\nu^{\eps}(\mu))^2, \quad \mu \in \prbt.
 \end{equation}
\end{definition}

Notice that $\sigma_\eps, \hat{\sigma}_\eps \in \prob_2^r(\R^d)$, $\supp{\hat{\sigma}_\eps} = \overline{\rmB(0,1/\eps)}$ and $W_2(\sigma_\eps, \sigma) \to 0$, $W_2(\hat{\sigma}_\eps, \sigma) \to 0$ as $\eps \downarrow 0$. Moreover, if $\sigma, \sigma' \in \prbt$, we have
\begin{equation}\label{eq:ineqconv}
    W_2(\sigma_\eps, \sigma'_\eps) \le W_2(\sigma, \sigma') \quad \text{ for every } 0<\eps<1
\end{equation}
and it is easy to check that, 
if we set
\begin{equation}\label{eq:kappamom}
    C_\eps := \rsqm {\kappa_\eps  \Leb{d}},
\end{equation}
then we have
\begin{equation}\label{eq:contrmom}
\rsqm{\mu_\eps} \le \rsqm{\mu} + C_\eps \quad \text{ for every } 0<\eps<1.
\end{equation}
\begin{proposition}\label{prop:fund} Let $\nu \in \prbt$, $\eps \in (0,1)$ and let $\zeta:\R\to\R$ be a $\rmC^1$ nondecreasing function whose derivative has compact support. With the notation of Definition 
\ref{def:short}
we have
  \begin{equation}
    \label{eq:174}
  \relgradA{(\zeta \circ F_\nu^\eps)}(\mu) \le
  \zeta'(F^\eps_\nu(\mu))\Big(\int_{\R^d} \left | x- \nabla
      (\varphi_\eps^* \ast \kappa_\eps)(x) \right |^2 \d \mu(x) \Big)^{1/2}\quad
    \text{ for } \mm \text{-a.e. }\mu \in \prbt,
  \end{equation}
where $\varphi_\eps^* = \Phi^*(\hat{\nu}_\eps, \mu_\eps)$ as in Theorem \ref{thm:ot}.
\end{proposition}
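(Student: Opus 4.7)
The plan is to use Kantorovich duality to construct, for each $\bar\mu\in\prbt$, a cylinder-type function $G_{\bar\mu}$ satisfying $G_{\bar\mu}\le F^\eps_\nu$ with equality at $\bar\mu$, to apply Lemma \ref{le:speriamo-che-basti} to the composition $\zeta\circ G_{\bar\mu}$, and to recover $\zeta\circ F^\eps_\nu$ as the pointwise supremum along a countable dense family of such lower bounds. The stability Lemma \ref{le:convan} will then identify the limiting gradient bound with the right-hand side of \eqref{eq:174}.

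\textbf{Tangent lower bounds.} Fix $\bar\mu\in\prbt$. Since $\hat\nu_\eps,\bar\mu_\eps\in\prob_2^r(\R^d)$ with $\supp\hat\nu_\eps=\overline{\rmB(0,1/\eps)}$, Theorem \ref{thm:ot} applied to $(\hat\nu_\eps,\bar\mu_\eps)$ produces a conjugate pair $(\bar\varphi_\eps,\bar\varphi^*_\eps):=(\Phi(\hat\nu_\eps,\bar\mu_\eps),\Phi^*(\hat\nu_\eps,\bar\mu_\eps))$, and I set
\[
G_{\bar\mu}(\sigma):=\tfrac12\sqm{\sigma_\eps}+\tfrac12\sqm{\hat\nu_\eps}-\int\bar\varphi_\eps\,\d\hat\nu_\eps-\int\bar\varphi^*_\eps\,\d\sigma_\eps,\qquad\sigma\in\prbt.
\]
Kantorovich duality \eqref{eq:duality} forces $G_{\bar\mu}\le F^\eps_\nu$, and Theorem \ref{thm:ot}(iii) enforces equality at $\sigma=\bar\mu$. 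Fubini together with the symmetry of $\kappa_\eps$ (giving $\int\bar\varphi^*_\eps\,\d\sigma_\eps=\int(\bar\varphi^*_\eps\ast\kappa_\eps)\,\d\sigma$ and $\sqm{\sigma_\eps}=\sqm\sigma+C_\eps$) rewrites $G_{\bar\mu}(\sigma)=\int\phi_{\bar\mu}\,\d\sigma+D_{\bar\mu}$ with $\phi_{\bar\mu}(x)=\tfrac12|x|^2-(\bar\varphi^*_\eps\ast\kappa_\eps)(x)$ and a constant $D_{\bar\mu}$. Since $\bar\varphi^*_\eps$ is $(1/\eps)$-Lipschitz with $\bar\varphi^*_\eps(0)=0$, the convolution is of class $\rmC^\infty$ with $\|\nabla(\bar\varphi^*_\eps\ast\kappa_\eps)\|_\infty\le 1/\eps$; hence $\phi_{\bar\mu}\in\rmC^1(\R^d)$ satisfies $\phi_{\bar\mu}(x)\ge\tfrac14|x|^2-B$ and $|\nabla\phi_{\bar\mu}(x)|\le|x|+1/\eps$, i.e.~the coercivity and growth assumptions of Lemma \ref{le:speriamo-che-basti}. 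Writing $\zeta\circ G_{\bar\mu}=\tilde\zeta\circ\lin{\phi_{\bar\mu}}$ with $\tilde\zeta(z):=\zeta(z+D_{\bar\mu})$ (still $\rmC^1$, nondecreasing, with compactly supported derivative), that lemma yields $\zeta\circ G_{\bar\mu}\in H^{1,2}(\W_2,\AA)$ together with
\[
|\rmD(\zeta\circ G_{\bar\mu})|_{\star,\AA}(\sigma)\le\zeta'(G_{\bar\mu}(\sigma))\Big(\int|x-\nabla(\bar\varphi^*_\eps\ast\kappa_\eps)(x)|^2\,\d\sigma(x)\Big)^{1/2}.
\]

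\textbf{Lattice supremum.} Pick $(\bar\mu_n)_n$ countable and dense in $\prbt$; set $G_n:=G_{\bar\mu_n}$ and $\varphi^{*(n)}_\eps:=\Phi^*(\hat\nu_\eps,\bar\mu_{n,\eps})$. Density, together with Lemma \ref{le:convan} and the dominated convergence of $\int\varphi^{*(n)}_\eps\,\d\sigma_\eps$ (made possible by the uniform bound $|\varphi^{*(n)}_\eps(y)|\le|y|/\eps$), gives $\sup_n G_n=F^\eps_\nu$ pointwise on $\prbt$, hence $F^{(N)}:=\max_{1\le k\le N}\zeta\circ G_k$ increases pointwise to $\zeta\circ F^\eps_\nu$. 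By Theorem \ref{thm:omnibus}(8), $F^{(N)}\in H^{1,2}(\W_2,\AA)$ with $|\rmD F^{(N)}|_{\star,\AA}=|\rmD(\zeta\circ G_{k_N(\sigma)})|_{\star,\AA}$ on the set where the $k_N(\sigma)$-th function realizes the maximum. The uniform bound $1/\eps$ on the convolutions and the boundedness of $\zeta$ provide the uniform-on-bounded-sets estimates required to invoke Lemma \ref{le:limsup-approximation}, so the proof reduces to controlling $\limsup_N|\rmD F^{(N)}|_{\star,\AA}(\mu)$ at $\mm$-a.e.~$\mu$.

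\textbf{Main obstacle: the stability passage.} The delicate step is the limit identification for $\mm$-a.e.~$\mu$. For the scalar factor, $\zeta(G_{k_N(\mu)}(\mu))\to\zeta(F^\eps_\nu(\mu))$ and the continuity of $\zeta'$ together with the fact that $\zeta$ must be constant on $[L,F^\eps_\nu(\mu)]$ (where $L$ is any subsequential limit of $G_{k_N(\mu)}(\mu)$) yield $\zeta'(G_{k_N(\mu)}(\mu))\to\zeta'(F^\eps_\nu(\mu))$. For the integral factor, the asymptotic optimality $G_{\bar\mu_{k_N(\mu)}}(\mu)\to F^\eps_\nu(\mu)$, combined with the uniform bound $|\bar\varphi^{*(n)}_\eps|\le|y|/\eps$, forces $\int\bar\varphi^{(k_N(\mu))}_\eps\,\d\hat\nu_\eps$ to stay bounded; since $\hat\nu_\eps$ has a density bounded below on $\rmB(0,1/\eps)$, this gives the uniform integrability required by Lemma \ref{le:convan}. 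Any subsequential locally uniform limit of $(\bar\varphi^{(k_N(\mu))}_\eps,\bar\varphi^{*(k_N(\mu))}_\eps)$ is admissible and attains the dual value $F^\eps_\nu(\mu)$ for $(\mu_\eps,\hat\nu_\eps)$, so by the uniqueness in Theorem \ref{thm:ot} it must coincide with $(\Phi(\hat\nu_\eps,\mu_\eps),\Phi^*(\hat\nu_\eps,\mu_\eps))$. Lemma \ref{le:convan}(iii) and dominated convergence then yield $\nabla(\bar\varphi^{*(k_N(\mu))}_\eps\ast\kappa_\eps)\to\nabla(\varphi^*_\eps\ast\kappa_\eps)$ in $L^2(\R^d,\mu)$ along the subsequence, and since every subsequence admits a sub-subsequence with the same limit, the full sequence converges to the right-hand side of \eqref{eq:174}, completing the proof via Lemma \ref{le:limsup-approximation}.
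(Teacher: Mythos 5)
Your proof follows the same route as the paper's: Kantorovich lower bounds given by linear functionals $\sigma\mapsto\int\phi_{\bar\mu}\,\d\sigma+D_{\bar\mu}$ (the paper's $\ell_{\eps,h}$ up to a constant), a countable dense family and the max construction, Lemma \ref{le:speriamo-che-basti} for the gradient estimate of each piece, Theorem \ref{thm:omnibus}(8) for locality of the max, Lemma \ref{le:convan} together with the uniqueness in Theorem \ref{thm:ot} for the stability, and Lemma \ref{le:limsup-approximation} to pass to the limit. The constructions are identical modulo notation.

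There is, however, a gap in the stability passage, and it stems from the choice of maximizer. You define $F^{(N)}=\max_{k\le N}\zeta\circ G_k$ and, when invoking Theorem \ref{thm:omnibus}(8), take $k_N(\sigma)$ to be a maximizer of $\zeta\circ G_k$. But when $\zeta$ has flat regions (which it must, since $\zeta'$ has compact support), a maximizer of $\zeta\circ G_k$ need not be a near-maximizer of $G_k$, so the ``asymptotic optimality'' $G_{\bar\mu_{k_N(\mu)}}(\mu)\to F^\eps_\nu(\mu)$ you assert for the integral factor can fail: you may only get $G_{k_N(\mu)}(\mu)\to L$ with $L<F^\eps_\nu(\mu)$ and $\zeta$ constant on $[L,F^\eps_\nu(\mu)]$. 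Your own ``scalar factor'' paragraph already concedes this possibility, yet the ``integral factor'' argument presupposes optimality in order to invoke the uniqueness of Theorem \ref{thm:ot} (the limiting pair must attain the dual value for $(\mu_\eps,\hat\nu_\eps)$). Without optimality, the subsequential limit of potentials attains only the value $L$, and uniqueness gives you nothing. The paper sidesteps this by choosing $h_k$ to maximize the inner function $G_{\eps,k}(\mu)=\max_h\int u_{\eps,h}\,\d\mu_\eps$, not the composition with $\zeta$; since $\zeta$ is nondecreasing, such an $h_k$ automatically maximizes the composed function too, so the locality from Theorem \ref{thm:omnibus}(8) still applies on the set $B^k_{\eps,h}$ while asymptotic optimality $G_{\eps,k}(\mu)\to F^\eps_\nu(\mu)$ is now guaranteed. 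You should make the same choice (it is the cheapest repair and renders your ``scalar factor'' case distinction unnecessary), or, alternatively, observe that in the degenerate case $L<F^\eps_\nu(\mu)$ one has $\zeta'(F^\eps_\nu(\mu))=0$ by continuity of $\zeta'$ and flatness of $\zeta$ on the closed interval, so the inequality \eqref{eq:174} at that $\mu$ is trivially $0\le 0$ and the stability analysis is not needed. As written, neither repair is spelled out, and the assertion of asymptotic optimality is unjustified.
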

\begin{proof}
  Let $\mathcal{G}:=\{\mu^h\}_{h \in \N}$ be a dense and countable set
  in $\prbt$ and let us set, for every $h \in \N$, $\varphi_{ \eps,  h} :=
  \Phi(\hat{\nu}_\eps, \mu^h_\eps)$, $\varphi_{ \eps,  h}^* :=
  \Phi^*
  (\hat{\nu}_\eps, \mu^h_\eps)$ (see Theorem \ref{thm:ot}), 
\[ a_{ \eps,  h}:= \int_{\rmB(0,1/\eps)} \left ( \frac{1}{2}|y|^2 - \varphi_{ \eps,  h}(y) \right ) \d \hat{\nu}_\eps(y), \quad u_{ \eps,  h}(x):= \frac{1}{2} |x|^2 - \varphi_{ \eps,  h}^*(x) +a_{ \eps,  h}, \quad x \in \R^d \]
and
\[ G_{ \eps,  k}(\mu):= \max_{1 \le h \le k} \int_{\R^d} u_{ \eps,  h} \d \mu_\eps, \quad
  \mu \in \prbt.\]

We first observe that $ \varphi^*_{ \eps,  h}(x)$ is $1/\eps$-Lipschitz  (cf.~Theorem \ref{thm:ot}), so that
$|\varphi^*_{ \eps,  h}(x)|\le |x|/\eps$ and 
\begin{align}
  \label{eq:184}
  u_{ \eps,  h}(x)
  &\ge
  \frac 12 |x|^2-\frac 1\eps |x|+a_{ \eps,  h}\ge
    \frac 14|x|^2-\frac 1{\eps^2}+a_{ \eps,  h},\\
  u_{ \eps,  h}(x)
  &\le |x|^2+\frac 1{\eps^2}+a_{ \eps,  h}.
\end{align}

\emph{Claim 1}. It holds
\[ \lim_{k \to + \infty} G_{ \eps,  k}(\mu) = 
  F_\nu^{\eps}(\mu) \quad \text{ for every } \mu \in \prbt.\]
\emph{Proof of claim 1}. Since $G_{ \eps,  k+1}(\mu) \ge G_{ \eps,  k}(\mu)$ for every $\mu \in \prbt$, we have that 
\[ \lim_{k \to + \infty} G_{ \eps,  k}(\mu) = \sup_k G_{ \eps,  k}(\mu) = \sup_h \int_{\R^d} u_{ \eps,  h} \d \mu_\eps \quad \text{ for every } \mu \in \prbt.\]
By the definition of $\varphi_{  \eps, h}$ and $\varphi_{ \eps,  h}^*$  (see \eqref{eq:160}) we have that 
\[ \frac{1}{2} |x|^2 -\varphi^*_{ \eps, h}(x) + \frac{1}{2}|y|^2 -\varphi_{ \eps, h}(y) \le \frac{1}{2}|x-y|^2 \quad \text{ for every } x \in \R^d, \, y \in \rmB(0,1/\eps),\]
so that for every $\mu \in \prbt$ and $h \in \N$, we get
\begin{align*}
    \int_{\R^d} u_{ \eps, h} \d \mu_\eps &= \int_{\R^d} \left ( \frac{1}{2}|x|^2-\varphi_{ \eps, h}^*(x) \right ) \d \mu_\eps + \int_{\rmB(0,1/\eps)} \left ( \frac{1}{2}|y|^2 - \varphi_{ \eps, h}(y) \right ) \d \hat{\nu}_\eps(y)\\
                                & \le \frac{1}{2}\int_{\R^d \times \R^d} |x-y|^2 \, \d \ggamma(x,y) \\
                                &= \frac{1}{2} W_2^2(\mu_\eps, \hat{\nu}_\eps)\\
                &= 
                                  F_\nu^{\eps}(\mu),
\end{align*}
where $\ggamma \in \Gamma_o(\mu_{\eps},\hat{\nu}_\eps)$. 
This proves that $\sup_k G_{ \eps,  k}(\mu) \le
F_\nu^{\eps}(\mu)$ for every $\mu \in \prbt$.  If $\mu \in \mathcal{G}$, then we can find $h \in \N$ such that $\mu=\mu^h$ so that, by definition of $\varphi_{ \eps, h}$ and $\varphi_{ \eps, h}^*$, we obtain that 
\[ \int_{\R^d} u_{ \eps,  h} \d \mu_\eps = \int_{\R^d} \left ( \frac{1}{2}|x|^2-\varphi_{ \eps, h}^*(x) \right ) \d \mu_\eps + \int_{\rmB(0,1/\eps)} \left ( \frac{1}{2}|y|^2 - \varphi_{ \eps,  h}(y) \right ) \d \hat{\nu}_\eps(y) = \frac{1}{2}W_2^2(\mu_\eps^h,\hat{\nu}_\eps).\]
Hence, if $\mu \in \mathcal{G}$, then $\sup_k G_{ \eps,  k}(\mu) =
F_\nu^{\eps}(\mu)$.

Let now $\mu, \mu' \in \prbt$ and $h \in \N$ and observe that
\begin{align*}
    \int_{\R^d}u_{ \eps,  h} \d \mu_\eps - \int_{\R^d} u_{ \eps,  h} \d \mu'_\eps &= \frac{1}{2} \sqm{\mu_\eps} - \frac{1}{2} \sqm{\mu'_\eps} - \int_{\R^d} \varphi_{ \eps,  h}^* \d (\mu_\eps-\mu'_\eps) \\
    & \le \frac{1}{2}
    \Big(\rsqm{\mu_\eps} + \rsqm{\mu'_\eps} \Big)W_2(\mu_\eps, \mu'_\eps) + \frac{1}{\eps}W_2(\mu_\eps, \mu'_\eps) \\
     &\le \frac{1}{2}\Big(\rsqm{\mu} + \rsqm{\mu'}+2C_\eps\Big) W_2(\mu, \mu') + \frac{1}{\eps}W_2(\mu, \mu') \\
    &  \le \Big ( \rsqm{\mu} +  \rsqm{\mu'} +C_\eps + \frac{1}{\eps} \Big ) W_2(\mu, \mu'),
\end{align*}
 where we used \eqref{eq:ineqconv}, \eqref{eq:contrmom},  the fact that $\varphi^*_{ \eps,  h}$ is $1/\eps$-Lipschitz continuous and \eqref{eq:123}. We hence deduce that for every $k \in \N$
\begin{equation}\label{eq:12}
    \left |G_{ \eps,  k}(\mu)-G_{ \eps,  k}(\mu') \right | \le \left (\rsqm{\mu} + \rsqm{\mu'} + \frac{1}{\eps}+
    C_\eps  \right ) W_2(\mu, \mu') \quad \text{ for every } \mu, \mu' \in \prbt.
\end{equation}
Choosing $\mu' \in \mathcal{G}$ and passing to the limit as $k \to + \infty$ we get from \eqref{eq:12}
\[ \left | \lim_{k \to + \infty} G_{ \eps,  k}(\mu)-F_\nu^\eps(\mu') \right | \le \left (\rsqm{\mu} + \rsqm{\mu'}  +C_\eps  + \frac{1}{\eps} \right ) W_2(\mu, \mu') \quad \text{ for every } \mu \in \prbt, \, \mu' \in \mathcal{G}.\]
Using the density of $\mathcal{G}$ and the continuity of $\mu' \mapsto F_\nu^{\eps}(\mu')$ we deduce that 
\[ \lim_{k \to + \infty} G_{ \eps,  k}(\mu) =
  F_\nu^\eps(\mu) \quad \text{ for every } \mu \in \prbt\]
proving the first claim.\\ 
\emph{Claim 2}.
If $H_{ \eps,  k}:= \zeta\circ G_{ \eps,  k}$ and $ \tilde{u}_{\eps, h} :=u_{ \eps,  h}\ast \kappa_\eps  \in \rmC^1(\R^d) $ it holds
\[ \relgradA{H_{ \eps,  k}}^2(\mu) \le
  \big(\zeta'(G_{ \eps,  k}(\mu))\big)^2\int_{\R^d}\left | \nabla  \tilde{u}_{\eps, h}  \right |^2  \d \mu(x) =
  \big(\zeta'(G_{ \eps,  k}(\mu))\big)^2\int_{\R^d}\left | x-\nabla (\varphi_{ \eps,  h}^* \ast \kappa_\eps)(x) \right |^2 \d \mu(x), \]
for $\mm$-a.e.~$\mu \in B_{ \eps,  h}^k$, where $B_{ \eps,  h}^k := \{ \mu \in \prbt \mid G_{ \eps,  k}(\mu) = \int_{\R^d} u_{ \eps,  h} \d \mu_\eps \}$, $h \in \{1, \dots, k\}$.\\
\emph{Proof of claim 2}. For every $h \in \N$,
\eqref{eq:184} yields
\begin{equation}
  \label{eq:193}
   \tilde{u}_{\eps, h} (x)\ge \frac 14|x|^2
  -\frac 1{\eps^2}+a_{ \eps,  h},\quad
  |\nabla  \tilde{u}_{\eps, h} (x)|\le 
  |x|+\frac 1\eps 
  ;
\end{equation}
where we used that
\[|x |^2\ast \kappa_\eps\ge |x\ast \kappa_\eps|^2=
|x|^2,\quad
\nabla u_{ \eps,  h}(x)=x-\nabla\varphi_{ \eps,  h}^*(x),\quad
\nabla u_{ \eps,  h}\ast \kappa_\eps=x-\nabla\varphi_{ \eps,  h}^*\ast\kappa_\eps.\]
Since  the map $\ell_{ \eps,  h}: \prbt \to \R$ defined as $\ell_{ \eps,  h}(\mu):=
\int_{\R^d} u_{ \eps,  h} \d \mu_\eps $ satisfies
\[ \ell_{ \eps,  h}(\mu)= \int_{\R^d} (u_{ \eps,  h} \ast \kappa_\eps) \d \mu=\lin{ \tilde{u}_{\eps, h} }(\mu), \quad \mu \in \prbt,\]
Lemma \ref{le:speriamo-che-basti} and the
above estimates yield
\[ \relgrad{(\zeta\circ \ell_{ \eps,  h})}(\mu) \le
  \zeta'(\ell_{ \eps,  h}(\mu))\Big(\int_{\R^d} \left | \nabla
     \tilde{u}_{\eps, h} 
  \right |^2 \d \mu \Big)^{1/2}\quad
  \text{for $\mm$-a.e. } \mu \in \prbt.\]
 Since $\zeta$ is nondecreasing,  $H_{ \eps,  k}$ can be written as
\[ H_{ \eps,  k}(\mu) = \max_{1 \le h \le k} (\zeta\circ \ell_{ \eps,  h})(\mu), \quad \mu \in \prbt,\]
 so that  we can apply Theorem \ref{thm:omnibus} (8) and conclude the proof of
the second claim.\\

\emph{Claim  3  }. Let $(h_n)_n \subset \N$ be a non-decreasing 
sequence and
let $\mu \in \prbt$. If $\lim_n \int_{\R^d} u_{ \eps,  h_n} \d \mu_\eps  = 
F_\nu^\eps(\mu)$, then 
\[ \lim_n \int_{\R^d} \left | x- \nabla ( \varphi_{ \eps,  h_n}^* \ast \kappa_\eps )(x) \right |^2 \d \mu(x) = \int_{\R^d} \left | x- \nabla ( \varphi_\eps^* \ast \kappa_\eps )(x) \right |^2 \d \mu(x),\]
where $\varphi^*_\eps= \Phi^*(\hat{\nu}_\eps, \mu_\eps)$.
\quad \\
\emph{Proof of claim  3 }.
Let us set for every $n \in \N$
\[ \phi_{ \eps,  n} := \varphi_{ \eps,  h_n}, \quad \psi_{ \eps,  n}=\phi^*_{ \eps,  n}:= \varphi^*_{ \eps,  h_n}.\]

We will show that from any (non relabeled) increasing  subsequence  it is
possible to extract a further subsequence $j\mapsto {n(j)}$ such
that 
\begin{displaymath}
  \lim_j \int_{\R^d} \left | x- \nabla ( \phi^*_{ \eps,  {n(j)}} \ast
      \kappa_\eps )(x) \right |^2 \d \mu(x) = 
    \int_{\R^d} \left | x- \nabla ( \varphi_\eps^* \ast \kappa_\eps )(x) \right |^2 \d \mu(x).
\end{displaymath}
 By Theorem \ref{thm:ot}, we have that, for every $n \in \N$,
 $\phi^*_{ \eps,  n}: \R^d \to \R$ is convex and $1/\eps$-Lipschitz continuous
 with $\phi^*_{ \eps,  n}(0)=0$, $\phi_{ \eps,  n}: \rmB(0, 1/\eps) \to \R$ is convex and continuous and 
 \[
   \phi_{ \eps,  n}(x)= \sup_{y \in \R^d }  \la x, y \ra - \phi^*_{ \eps,  n}(y) 
 \quad \text{ for every } x \in \R^d.\]
Moreover, since  $F^\eps_\nu \ge0$ and $|\phi^*_{ \eps,  n}(x)|\le \eps^{-1}|x|$ ,
for $n$ sufficiently large we have 
\begin{equation}
  \label{eq:171}
      a_{ \eps,  h_n}=\int_{\R^d}\Big(u_{ \eps,  h_n}(x)
      +\phi^*_{ \eps,  n}(x)-\frac12
  |x|^2\Big)\,\d\mu_\eps(x)\ge -C(\eps,\mu) 
\end{equation}
 with $C(\eps,\mu):= 1+\frac{1}{2} \sqm{\mu_\eps} +\frac1\eps {\rsqm{\mu_\eps}}$. 
It follows that 
\begin{align*}
  \int_{\rmB(0,1/\eps)}\phi_{ \eps,  n}\,\d\hat\nu_\eps
  &=\frac12\rsqm{\hat\nu^\eps}-a_{ \eps,  h_n}\le C'(\eps,\mu,\nu)
\end{align*}
where $ C'(\eps,\mu,\nu):=C(\eps,\mu)+\frac 12\sqm{\hat{\nu}_\eps}$ .
Since $\hat \nu^\eps\ge  \frac{\eps^{d+3}}{1+\eps^3 \omega_d}\Leb d\mres\rmB(0,1/\eps)  $
we can find a constant $I=I(\eps,\mu,\nu)$ such that
$\int_{\rmB(0,1/\eps)}\phi_{ \eps,  n}\,\d x\le I$ for sufficiently large $n$.

Thus by Lemma \ref{le:convan}, we get the existence of a subsequence
$j \mapsto n(j)$ and two  convex  continuous functions $\phi^*_{ \eps }: \R^d \to \R$ and $\phi_{ \eps}: \rmB(0, 1/\eps)
\to \R$
such that points (i), (ii), (iii)  and conclusions of Lemma \ref{le:convan} hold.
By points (i) and (ii) we can use Fatou Lemma and the dominated convergence Theorem to conclude that 
\[ \liminf_j \int_{\rmB(0,1/\eps)} \phi_{ \eps,  n(j)} \d \hat{\nu}_\eps \ge \int_{\rmB(0,1/\eps)} \phi_{ \eps} \d \hat{\nu}_{\eps}, \quad 
\lim_j \int_{\R^d} \phi^*_{ \eps,  n(j)} \d \mu_\eps = \int_{\R^d} \phi^*_{ \eps} \d \mu_{\eps}.\]
We thus deduce that 
\[ \int_{\R^d} \left ( \frac{1}{2} |x|^2 - \phi^*_{ \eps}(x) \right )\d
  \mu_\eps(x) + \int_{\rmB(0,1/\eps)} \left ( \frac{1}{2} |y|^2 -
    \phi_{ \eps}(y) \right )\d \hat{\nu}_\eps(y) \ge \limsup_j \int_{\R^d}
  u_{ \eps,  h_{n(j)}} \d \mu_\eps =
  F_\nu^\eps(\mu) \]
proving that
\[\int_{\rmB(0,1/\eps)} \phi_{ \eps}\, \d \hat{\nu}_\eps + \int_{\R^d} \phi^*_{ \eps}
  \,
  \d \mu_\eps = \frac{1}{2} \sqm{\hat{\nu}_\eps} + \frac{1}{2} \sqm{\mu_\eps} - \frac{1}{2}W_2^2(\hat{\nu}_\eps, \mu_\eps).\]
By the uniqueness part of Theorem \ref{thm:ot}
we deduce that $\phi_{ \eps}  = \varphi_{\eps} =\Phi(\hat{\nu}_\eps, \mu_\eps)$ and $\phi^*_{ \eps}  = \varphi^*_\eps  =
\Phi^*(\hat{\nu}_\eps, \mu_\eps)$. Finally, the a.e.~convergence of
the gradient of $\phi^*_{ \eps,  n}$ to the gradient of $\phi^*_{ \eps}$ given by point
(iii) in Lemma \ref{le:convan} gives that $\nabla (\phi^*_{ \eps,  n(j)} \ast
\kappa_\eps) \to \nabla (\phi^*_{ \eps} \ast \kappa_\eps)$ pointwise
everywhere. Moreover, since for every $x \in \R^d$ we have $x\ast
\kappa_\eps=x$ and 
\[ \left |x-\nabla (\varphi^*_{ \eps,  n(j)} \ast \kappa_\eps)(x) \right |^2 \le 
 \big(|x| + 1/\eps\big)^2 \in L^1(\R^d, \mu), \]
we can use the dominated convergence Theorem to conclude that 
\[ \lim_j \int_{\R^d} \left |x-\nabla (\phi^*_{ \eps,  n(j)} \ast \kappa_\eps)(x) \right |^2 \d \mu(x) = \int_{\R^d} \left |x-\nabla (\phi^*_{ \eps} \ast \kappa_\eps)(x) \right |^2 \d \mu(x).\]
This concludes the proof of the  third  claim.

\emph{Claim  4 }. It holds
\[ \limsup_k \relgradA{H_{ \eps,  k}}(\mu) \le \zeta'(F^\eps_\nu(\mu))
  \Big(\int_{\R^d}\left | x-\nabla (\varphi_\eps^* \ast \kappa_\eps)(x) \right |^2 \d \mu(x)\Big)^{1/2} \quad \text{ for $\mm$-a.e. } \mu \in \prbt,\]
where $\varphi^*_\eps= \Phi^*(\hat{\nu}_\eps, \mu_\eps)$.\\
\emph{Proof of claim 4}. Let $B_{ \eps} \subset \prbt$ be defined as
\[ B_{ \eps}:= \bigcap_k \bigcup_{h=1}^k A^k_{ \eps,  h},\]
where $A^k_{ \eps,  h}$ is the full $\mm$-measure subset of $B_{ \eps,  h}^k$ where claim 2 holds. Notice that $B_{ \eps}$ has full $\mm$-measure. Let $\mu \in B_{ \eps}$ be fixed and let us pick a  non-decreasing  sequence $k \mapsto h_k$ such that 
\[ G_{ \eps,  k}(\mu) = \int_{\R^d} u_{ \eps,  h_k} \d \mu_\eps.\] 
By claim 1 we know that $G_{ \eps, k}(\mu) \to F_\nu^\eps(\mu)$ so that we can apply claim 4 and conclude that 
\begin{equation}\label{eq:nuova} \zeta'(F^\eps_\nu(\mu))\int_{\R^d} \left | x- \nabla (
    \varphi_\eps^* \ast \kappa_\eps )(x) \right |^2 \d \mu(x) =
  \lim_{k} \zeta'(G_{ \eps,  k}(\mu))\int_{\R^d} \left | x- \nabla ( \varphi^*_{ \eps,  h_k} \ast \kappa_\eps )(x) \right |^2 \d \mu(x).
  \end{equation}
By claim 2, the right hand side is greater than $\limsup_k
\relgradA{H_{ \eps,  k}}^2(\mu)$; this concludes the proof of the  fourth  claim.

\emph{Conclusion.}
 We conclude the proof applying Lemma \ref{le:limsup-approximation} with $(H_{ \eps, k})_k$ in the role of $(F_n)_n$, $F:=\zeta \circ F_\nu^\eps$ and $G$ given by 
\[ G(\mu):= \zeta'(F^\eps_\nu(\mu))\Big(\int_{\R^d} \left | x- \nabla
      (\varphi_\eps^* \ast \kappa_\eps)(x) \right |^2 \d \mu(x) \Big)^{1/2}, \quad \mu \in \prob_2(\R^d).\]
We check that the hypotheses of Lemma \ref{le:limsup-approximation} are satisfied: by Claim 2 we have that $H_{ \eps, k} \in D^{1,2}(\W_2,\AA)$ and it is also in $L^\infty(\prob_2(\R^d), \mm)$ since $\zeta$ is uniformly bounded. 
Notice that, by \eqref{eq:193}, for every $R>0$ it holds that
\begin{equation}
  \label{eq:192}
  \Big(\int_{\R^d} \left | \nabla \tilde{u}_{\eps, h} (x)
  \right |^2 \d \mu(x)\Big)^{1/2} \le R+1/\eps \quad\text{whenever }\rsqm\mu\le R.
\end{equation}
This gives, also using Claim 2, that $|\rmD H_{ \eps, k}|_{\star, \AA}$ is uniformly bounded on bounded subsets of $\prob_2(\R^d)$ (recall that $\zeta'$ is uniformly bounded). It is also clear that $H_{ \eps, k}$ is uniformly bounded on bounded subsets of $\prob_2(\R^d)$ since it is uniformly bounded by the infinity norm of $\zeta$.

The function $F$, being bounded again by the infinity norm of $\zeta$, belongs to $L^2(\prob_2(\R^d), \mm)$. The same holds for $G$: using \eqref{eq:nuova} and passing to the limit the estimate in \eqref{eq:192} we see that $G$ is uniformly bounded, having $\zeta'$ compact support.

By Claim 1 and Claim 4 we have
\[ \lim_{k \to + \infty} H_{ \eps, k}(\mu) = F(\mu), \quad \limsup_{k \to +\infty} |\rmD H_{ \eps, k}|_{\star, \AA}(\mu) \le G(\mu) \quad \text{ for $\mm$-a.e.~$\mu \in \prob_2(\R^d)$}.\]

By Lemma \ref{le:limsup-approximation} we get \eqref{eq:174}.
\end{proof}
 We still keep the notation of 
Definition \ref{def:short}.
\begin{corollary}
  \label{cor:bound}
  Let $\nu \in \prbt$. Then 
  \begin{equation}
\relgradA{W_\nu}(\mu) \le 1\quad \text{for $\mm$-a.e. } \mu \in
  \prbt.\label{eq:196}
\end{equation}
\end{corollary}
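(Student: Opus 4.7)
The strategy is to bootstrap Proposition \ref{prop:fund} into a bound for $W_\nu$ in three steps, chaining the stability Lemma \ref{le:limsup-approximation} to pass to the limit in successive parameters.

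\textbf{Step 1 (Jensen on the convolution).} The first (and essentially only calculational) task is to observe that, because $\kappa_\eps$ is a symmetric probability kernel on $\R^d$, the identity $x = \int y\,\kappa_\eps(x-y)\,\d y$ holds for every $x$ and hence
\[
x - \nabla(\varphi_\eps^*\ast\kappa_\eps)(x) = \int_{\R^d} \big(y-\nabla\varphi_\eps^*(y)\big)\,\kappa_\eps(x-y)\,\d y.
\]
Jensen's inequality and Fubini then give
\[
\int_{\R^d}\!\big|x-\nabla(\varphi_\eps^*\ast\kappa_\eps)(x)\big|^2\,\d\mu(x)
\le \int_{\R^d}\!|y-\nabla\varphi_\eps^*(y)|^2\,\d\mu_\eps(y) = W_2^2(\mu_\eps,\hat\nu_\eps) = 2F_\nu^\eps(\mu),
\]
where the middle equality uses \eqref{eq:158} from Theorem \ref{thm:ot}. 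Combined with Proposition \ref{prop:fund}, this yields for every admissible $\zeta$
\begin{equation}\label{eq:plan1}
\relgradA{(\zeta\circ F_\nu^\eps)}(\mu) \le \zeta'\big(F_\nu^\eps(\mu)\big)\,W_\nu^\eps(\mu) \qquad \text{for $\mm$-a.e.~}\mu\in\prbt.
\end{equation}

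\textbf{Step 2 (from $F_\nu^\eps$ to $W_\nu^\eps$).} For $a>0$ and $R>0$, I choose a nondecreasing $\zeta=\zeta_{a,R}\in\rmC^1(\R)$ with $\zeta'$ compactly supported and such that $\zeta_{a,R}(t) = \sqrt{2(t+a)}-\sqrt{2a}$ on $[0,R]$ and $0\le \zeta_{a,R}'(t) \le 1/\sqrt{2(t+a)}$ everywhere. Because $W_\nu^\eps(\mu)=\sqrt{2F_\nu^\eps(\mu)}$, the bound \eqref{eq:plan1} gives
\[
\relgradA{(\zeta_{a,R}\circ F_\nu^\eps)}(\mu) \le \frac{\sqrt{2F_\nu^\eps(\mu)}}{\sqrt{2(F_\nu^\eps(\mu)+a)}}\le 1 \qquad\text{$\mm$-a.e.}
\]
Letting $R\to\infty$ (with $a>0,\eps>0$ fixed) and using Lemma \ref{le:limsup-approximation}, applied on the bounded sets $\{\sfm_2\le M\}$ where $F_\nu^\eps$ is bounded and the truncation becomes inactive, I obtain
\[
\sqrt{(W_\nu^\eps)^2+2a}-\sqrt{2a}\in D^{1,2}(\W_2;\AA),\qquad \relgradA{\sqrt{(W_\nu^\eps)^2+2a}-\sqrt{2a}}\le 1.
\]
A second application of Lemma \ref{le:limsup-approximation} as $a\downarrow 0$ (again localized on $\{\sfm_2\le M\}$, where the limiting function $W_\nu^\eps$ is bounded) yields $W_\nu^\eps\in D^{1,2}(\W_2;\AA)$ with $\relgradA{W_\nu^\eps}\le 1$ $\mm$-a.e.

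\textbf{Step 3 ($\eps\downarrow 0$).} Since $\mu_\eps\to\mu$ and $\hat\nu_\eps\to\nu$ in $W_2$, narrow lower semicontinuity and the triangle inequality give $W_\nu^\eps(\mu)\to W_\nu(\mu)$ for every $\mu\in\prbt$, and $W_\nu^\eps$ is uniformly bounded on every $W_2$-bounded set (uniformly in $\eps$) thanks to \eqref{eq:contrmom}. A third invocation of Lemma \ref{le:limsup-approximation} along $\eps_n\downarrow 0$ delivers $W_\nu\in D^{1,2}(\W_2;\AA)$ and the desired bound $\relgradA{W_\nu}(\mu)\le 1$ $\mm$-a.e.

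The main obstacle is entirely in Step 2: one must design the truncated $\zeta_{a,R}$ and verify the local-boundedness hypotheses of Lemma \ref{le:limsup-approximation} at each stage (first in $R$, then in $a$), since $\sqrt{\cdot}$ is not Lipschitz at $0$ and the natural chain rule of Theorem \ref{thm:omnibus}(7) does not apply directly. The other two steps are routine once the Jensen identity of Step 1 is observed.
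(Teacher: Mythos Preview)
Your Step 1 is exactly the paper's argument (the Jensen estimate on the convolution yielding $|\rmD(\zeta\circ F_\nu^\eps)|_{\star,\AA}\le \zeta'(F_\nu^\eps)\sqrt{2F_\nu^\eps}$), so the starting point is fine.

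The gap is in the tool you invoke for the limits in Steps 2 and 3. Lemma \ref{le:limsup-approximation} explicitly requires the limiting function $F$ to lie in $L^2(\prob_2(\R^d),\mm)$, and its conclusion is $F\in H^{1,2}$, not merely $F\in D^{1,2}$. The functions $\sqrt{(W_\nu^\eps)^2+2a}-\sqrt{2a}$, $W_\nu^\eps$ and $W_\nu$ are only locally bounded; without a moment assumption on $\mm$ you do not know they are in $L^2$. Your phrase ``applied on the bounded sets $\{\sfm_2\le M\}$'' does not rescue this: the lemma is a global statement, and in its proof the passage $m\to\infty$ uses $\tfrac{4}{m}\|F\chi_m\|_{L^2}\to 0$, which needs $F\in L^2$. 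The easy fix is to bypass Lemma \ref{le:limsup-approximation} entirely and appeal to Theorem \ref{thm:omnibus}(1)--(3): at every stage your sequence converges pointwise (hence in $\mm$-measure), the relaxed gradients are uniformly bounded by $1$ in $L^\infty\subset L^2$, so a weak limit $H\le 1$ exists and $(F,H)\in S$ by closedness, giving $|\rmD F|_{\star,\AA}\le 1$. This works in $L^0$ and never needs $F\in L^2$.

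For comparison, the paper handles Step 2 differently and more economically: instead of approximating $t\mapsto\sqrt{2t}$ through the two-parameter family $\zeta_{a,R}$, it chooses $\zeta_n\uparrow\arctan$. The limit function $\vartheta\circ W_\nu^\eps$ with $\vartheta(s)=\arctan(s^2/2)\chi_{(0,\infty)}$ is then \emph{bounded}, so Theorem \ref{thm:omnibus}(1)--(3) applies with no integrability worry, yielding $|\rmD(\vartheta\circ W_\nu^\eps)|_{\star,\AA}\le\vartheta'(W_\nu^\eps)$. A single invocation of Lemma \ref{le:truncations} (which is designed precisely to undo such a $\rmC^1$ monotone truncation) then gives $|\rmD W_\nu^\eps|_{\star,\AA}\le 1$ directly. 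This avoids the two nested limits in $R$ and $a$ and the attendant bookkeeping on the extension of $\zeta_{a,R}$ outside $[0,R]$.
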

\begin{proof}
  First of all we prove that for every $0< \eps <1$, it holds 
\begin{equation}\label{eq:witheps}
  \int_{\R^d} \left | x- \nabla
    (\varphi_\eps^* \ast \kappa_\eps)(x) \right |^2 \d \mu(x)
  \le W^2_2(\mu_\eps, \hat{\nu}_\eps) \quad \text{ for every } \mu \in \prbt,
\end{equation}
 where $\varphi_\eps^* = \Phi^*(\hat{\nu}_\eps, \mu_\eps)$ as in Theorem \ref{thm:ot}. 
Since
\[ \left |x - \nabla (\varphi_\eps^* \ast \kappa_\eps)(x) \right |^2 \le \left |x-\nabla \varphi_\eps^*(x) \right |^2 \ast \kappa_\eps(x) \quad \text{ for every } x \in \R^d,\]
we get that
\begin{align*}
                                  \int_{\R^d} \left | x- \nabla
                                  (\varphi_\eps^* \ast \kappa_\eps)(x)
                                  \right |^2 \d \mu(x)
                                   & \le \int_{\R^d} \left ( \left |x-\nabla \varphi_\eps^*(x) \right |^2 \ast \kappa_\eps(x) \right ) \d \mu(x) \\
 & = \int_{\R^d} \left |x-\nabla \varphi_\eps^*(x) \right |^2 \d \mu_{\eps}(x) \\
                                &=
                                  W^2_2(\mu_\eps, \hat{\nu}_\eps),
\end{align*}
for every $\mu \in \prbt$, where the last equality comes from
Theorem \ref{thm:ot}. This proves \eqref{eq:witheps}.
 It  follows from Proposition \ref{prop:fund}  that, for every 
 nondecreasing function $\zeta\in \rmC^1(\R)$ whose derivative has compact support, it holds  
\begin{equation}\label{eq:withzetano}
  |\rmD \,(\zeta\circ F^\eps_\nu)|_{\star,\AA}(\mu)
  \le
    \zeta'(F^\eps_\nu (\mu)) \sqrt{2F^\eps_\nu(\mu)} \quad \text{ for $\mm$-a.e.~$\mu \in \prbt$}.
\end{equation}
Let us now consider a sequence of continuous and compactly supported functions $\alpha_n: \R \to \R$ such that 
\[ 0 \le \alpha_n(s) \uparrow \frac{\nchi_{(0,+\infty)}(s)}{1+s^2} \le 1,  \quad \text{ for every } s \in \R\]
and let us define $\zeta_n(s): \R \to \R$ as
\[ \zeta_n(s) = \int_0^s \alpha_n(r) \d r, \quad s \in \R.\]
Then, for every $n \in \N$, $\zeta_n: \R \to \R$ is a $\rmC^1$ nondecreasing  function  whose derivative has compact support so that we can plug it into \eqref{eq:withzetano} in place of $\zeta$ and we see that 
\begin{equation}\label{eq:withzetano1}
  |\rmD \,(\zeta_n\circ F^\eps_\nu)|_{\star,\AA}(\mu)
  \le
    \zeta_n'(F^\eps_\nu (\mu)) \sqrt{2F^\eps_\nu(\mu)} \quad \text{ for $\mm$-a.e.~$\mu \in \prbt$}.
\end{equation}
Observe that $\zeta_n(s) \to \arctan(s) \nchi_{(0,+\infty)}$ and $\zeta'_{ n}(s) \to \frac{\nchi_{(0,+\infty)}(s)}{1+s^2}$ for every $s \in \R$ and the r.h.s.~of \eqref{eq:withzetano1} is uniformly bounded. Using Theorem \ref{thm:omnibus}(1)-(3) we can thus pass to the limit as $n \to + \infty$ and we obtain
\[\relgradA{(\vartheta \circ W_\nu^{\eps})}(\mu) 
\le \vartheta'(W_\nu^{\eps}( \mu )) \quad \text{ for $\mm$-a.e.~$\mu \in \prbt$},\]
where $\vartheta:\R \to \R$ is defined as $\vartheta(s):=\arctan(s^2/2) \nchi_{(0,+\infty)}(s)$, $s \in \R$. We can thus apply Lemma \ref{le:truncations} and conclude that 
\begin{equation}
  \label{eq:195}
  |\rmD W^\eps_\nu|_{\star,\AA}\le 1 \quad \text{  $\mm$-a.e.~and for every $0<\eps<1$ }.
\end{equation}
Choosing $\eps=1/k$, we have $\lim_{k \to + \infty} W_\nu^{1/k}(\mu) = W_\nu (\mu)$
for every $\mu \in \prbt$;
using Theorem \ref{thm:omnibus} (1)-(3), we obtain \eqref{eq:196}.
\end{proof}

The \textbf{proof of Theorem \ref{thm:main}}
then easily follows by Corollary \ref{cor:bound} and Theorem
\ref{theo:startingpoint}.

\medskip\noindent
We conclude this section with a simple but useful density property,
which shows the possibility to use smaller algebra of cylinder
functions to operate in $H^{1,2}(\W_2)$.
\begin{proposition}
  \label{prop:density}
  Let $\mathscr F$ be a subset of $\rmC^1_b(\R^d)$
  satisfying
  the following property: for every $f\in \rmC^1_b(\R^d)$ there
  exists
  a sequence $f_n\in \mathscr F$, $n\in \N$, such that
  \begin{equation}
    \label{eq:83}
     \sup_{n} \|f_n\|_{\infty}+\|\nabla f_n\|_\infty  <\infty,\quad
    \lim_{n\to\infty}\int_{\R^d}|f_n-f|+|\nabla
    (f_n-f)|\,\d\mu=0
    \quad\text{for $\mm$-a.e.~$\mu\in \prbt$}.
  \end{equation}
  Then the algebra $\AA\subset\cyl1b \prbt$ generated by the set
  of cylinder functions $\big\{\lin f: f\in
  \mathscr F\big\}$ is dense in $H^{1,2}(\W_2)$ and satisfies the strong
  approximation property of Theorem \ref{thm:main}.

   In particular the algebra $\ccyl \infty b{\prbt}$
  generated by $\big\{\lin f:f\in \rmC^\infty_c(\R^d)\big\}$ is
  strongly dense in $H^{1,2}(\W_2)$ and satisfies the approximation
  property of Theorem \ref{thm:main}.
\end{proposition}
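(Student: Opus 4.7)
The plan is to combine Theorem \ref{thm:main} with a two-level approximation of an arbitrary cylinder function $F = \psi\circ\lin{\pphi}$, with $\psi\in\rmC^1_b(\R^N)$ and $\pphi\in (\rmC^1_b(\R^d))^N$, by elements of $\AA$: first replace each component $\phi_i$ of $\pphi$ by an element of $\mathscr F$ using the hypothesis \eqref{eq:83}, then replace the outer function $\psi$ by a polynomial so as to land in the (polynomially generated) unital algebra $\AA$. A diagonal argument combined with Theorem \ref{thm:main} will then yield the full density statement.

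Concretely, I would apply \eqref{eq:83} coordinatewise to obtain $\phi_i^n \in \mathscr F$ with a uniform bound $\sup_n(\|\phi_i^n\|_\infty + \|\nabla \phi_i^n\|_\infty) \le M'$. On the compact cube $B := [-M',M']^N$, which contains the range of both $\lin{\pphi^n}$ and $\lin{\pphi}$, a standard $\rmC^1$-dense polynomial approximation (mollification composed with Stone--Weierstrass) supplies polynomials $P_m$ with $P_m \to \psi$ and $\nabla P_m \to \nabla \psi$ uniformly on $B$. The candidates $F_{n,m} := P_m\circ\lin{\pphi^n}$ lie in $\AA$. The two points to verify are that $F_{n,m}\to F$ $\mm$-a.e.\ (which follows from $\lin{\phi_i^n}(\mu) \to \lin{\phi_i}(\mu)$, itself a consequence of the $L^1(\mu)$-part of \eqref{eq:83}, together with the continuity of $\psi$ and the uniform convergence $P_m\to\psi$) and that the Wasserstein differentials $\rmD F_{n,m}[\mu]\to \rmD F[\mu]$ in $L^2(\R^d,\mu;\R^d)$ for $\mm$-a.e.\ $\mu$. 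The latter uses formula \eqref{eq:Fdiff}, the continuity of $\nabla\psi$ and uniform convergence of $\nabla P_m$ on $B$ for the outer factor, and the fact that the $L^1(\mu)$-convergence $\nabla\phi_i^n\to\nabla\phi_i$ upgrades to $L^2(\mu)$-convergence thanks to the uniform $L^\infty$-bound (via $|g_n-g|^2 \le 2M'|g_n-g|$). Proposition \ref{prop:equality} then gives $\lip F_{n,m} = \cnorm{F_{n,m}}{\cdot} \to \cnorm{F}{\cdot} = \lip F$ pointwise $\mm$-a.e., and a uniform $L^\infty$-bound on $\cnorm{F_{n,m}}{\cdot}$ together with the finiteness of $\mm$ upgrades this to strong $L^2(\prbt,\mm)$-convergence via dominated convergence. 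A diagonal selection in $(n,m)$ produces a single approximating sequence in $\AA$; combining it with the one supplied by Theorem \ref{thm:main} for any $G\in H^{1,2}(\W_2)$ (and one further diagonal extraction) proves the strong approximation property for $\AA$.

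For the last assertion on $\ccyl\infty b\prbt$, I would verify \eqref{eq:83} for $\mathscr F = \rmC^\infty_c(\R^d)$: given $f\in\rmC^1_b(\R^d)$, mollification followed by a smooth cutoff yields $f_n\in\rmC^\infty_c(\R^d)$ with uniformly bounded $\|f_n\|_\infty + \|\nabla f_n\|_\infty$ and with $f_n\to f$, $\nabla f_n\to\nabla f$ uniformly on compact sets; the tightness of each $\mu\in\prbt$ then promotes these locally uniform convergences to $L^1(\mu)$-convergences for every (hence $\mm$-a.e.) $\mu$. I do not anticipate a genuine obstacle: the only step demanding mild care is the simultaneous $\rmC^0$- and $\rmC^1$-approximation of $\psi$ by polynomials on $B$, which is classical; once Theorem \ref{thm:main} is in hand the proof is essentially a matter of organising two approximation limits and applying a diagonal argument.
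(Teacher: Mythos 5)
Your proof follows the same route as the paper's: approximate each $\phi_i$ by elements of $\mathscr F$ via \eqref{eq:83}, approximate $\psi$ by polynomials $\rmC^1$-uniformly on the compact cube containing the ranges, pass to the limit in the Wasserstein differentials $\rmD F_{n,m}[\mu]$ pointwise in $L^2(\mu)$, upgrade to $L^2(\mm)$ by a uniform bound and dominated convergence, and diagonalize against the sequence supplied by Theorem~\ref{thm:main}. The convergence analysis is sound, and your pointwise convergence of $\rmD F_{n,m}[\mu]$ in fact yields $\pCE_2(F_{n,m}-F)\to 0$, which is the criterion the paper reduces to.

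There is one genuine slip: you call $\AA$ ``unital'' and assert without checking that $F_{n,m}=P_m\circ\lin{\pphi^n}$ lies in $\AA$. The algebra generated by $\big\{\lin f : f\in\mathscr F\big\}$ does not in general contain the constant $1$ (in the motivating case $\mathscr F=\rmC^\infty_c(\R^d)$ it does not), so if $P_m$ has a nonzero constant term then $F_{n,m}$ falls outside $\AA$. The paper's remedy is to first normalize: write $F=\tilde\psi\circ\lin{\tilde\ff}$ with $\tilde\psi(0)=0$ and with the constant function $1$ prepended as an extra component of $\tilde\ff$ (admissible since $1\in\rmC^1_b(\R^d)$, so \eqref{eq:83} produces approximants of it from $\mathscr F$), and then choose the polynomial approximants with $P_m(0)=0$. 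With this normalization $P_m\circ\lin{\ff_n}$ has no constant term, hence lies in $\AA$, and the rest of your argument goes through unchanged. You should incorporate this step to make the claim $F_{n,m}\in\AA$ correct.
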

\begin{proof}
  Thanks to Theorem \ref{thm:main} and a simple diagonal argument, it
  is sufficient to prove that for every cylinder function $F\in
  \cyl1b \prbt$ there exists a sequence $F_n\in \AA$ such that
  \begin{equation}
    \label{eq:85}
    F_n \to F \text{ in } L^2(\prbt, \mm)\quad\text{and}\quad  \pCE_2(F_n-F)\to0\quad\text{as }n\to\infty.
  \end{equation}
  In the case $F=\lin f$
  with $f\in \rmC^1_b(\R^d)$,
  \eqref{eq:83} and Lebesgue Dominated Convergence Theorem
  show that we can
  find a sequence $f_n\in \FF$ such that, setting  $F_n:=\lin{f_n}$, we have 
  \begin{align*}
    \int_\prbt|F_n-F|^2\,\d\mm
    &=
    \int_\prbt
      \Big|\int_{\R^d}(f_n(x)-f(x))\,\d\mu(x)\Big|^2\,\d\mm(\mu)\to0
      \quad\text{as }n\to\infty,\\
    \pCE_2(F_n-F)
    &=
    \int_\prbt \int |\nabla
      f_n(x)-\nabla f(x)|^2\,\d\mu(x)\,\d\mm(x)\to0\quad\text{as }n\to\infty.               
  \end{align*}
  Let us now consider a general $F=\psi\circ\lin{\boldsymbol f}$
  as in \eqref{eq:cyl}, where $\boldsymbol f=(f_1,\cdots, f_N)$ is a
  vector of functions in $\rmC^1_b(\R^d)$ and
  $\psi\in \rmC^1_b(\R^N)$. If we consider $\tilde \ff:=(1,f_1, \dots, f_N)$ and $\tilde{\psi} \in \rmC^1_b(\R^{N+1})$ defined as
  \[ \tilde{\psi}(x_0, x_1, \dots, x_n) := \psi(0)x_0-\psi(0)+\psi(x_1, x_2, \dots, x_N), \quad (x_0,x_1, \dots, x_N) \in \R^{N+1},\]
  we have that $\tilde{\psi}(0)=0$ and $\tilde{\psi}\circ\lin{\tilde\ff} = F$. For this reason we can  always suppose that $f_1\equiv 1$ and $\psi(0)=0$.
  It is also not restrictive to assume that $\psi$ is a polynomial with $\psi(0)=0$:
  in fact, setting $R:=\sup_{\R^d, \, 1 \le k \le N}\Big(|f_{k}|+|\nabla f_{k}|\Big)$,
  we can find a sequence of polynomials $(P_h)_h$ in $\R^N$ such
  that
  \begin{equation}
    \label{eq:86}
    P_h(0)=0,\quad
    \sup_{|z|\le R}|P_h(z)-\psi(z)|+|\nabla
    P_h(z)-\nabla \psi(z)|\to0\quad\text{as }h\to\infty.
  \end{equation}
  It follows that
  $F_h:=P_h\circ\lin\ff$ satisfies
  \begin{equation}
    \label{eq:87}
    \lim_{h\to\infty}\sup_\prbt \Big(|F_h(\mu)-F(\mu)|+\|\rmD
    F_h[\mu]-\rmD F[\mu]\|_\mu\Big)=0.
  \end{equation}
   Let us consider  sequences $(f_{k,n})_{n\in \N}$,
  $k=1,\cdots, N$, approximating $f_k$ as in \eqref{eq:83}.
  In particular, there exists $R>0$ such that
  $\sup_{\R^d}\Big(|f_{k,n}|+| \nabla  f_{k,n}|+|f_k|+| \nabla  f_k|\Big)\le
  R$
  for every $n\in \N,\ k\in \{1,\cdots,N\}$.
  If $\psi$ is a polynomial in $\R^N$ with $\psi(0)=0$ then
  the function $F_n:=\psi\circ \lin{\ff_n}$ belongs
  to $\AA$ (cf.~Remark \ref{rem:unrem}), where $\ff_n = (f_{1,n}, f_{2,n}, \dots, f_{N,n})$.  Denoting by $L$ the maximum of the Lipschitz constants of
  $\psi$ and $\partial_k\psi$
  in the cube $[-R,R]^N$ with respect to the $\infty$-norm, it is easy to see that
  \begin{align*}
    |F_n(\mu)-F(\mu)|
    &=
      \Big|\psi(\lin{\ff_n}(\mu)) -  \psi(\lin{\ff_n}(\mu)))\Big|\le
      L \sup_{k}|\lin{f_{k,n}}(\mu)-\lin{f_k}(\mu)|\to0%
      ,\\
    \|\rmD
    F_n[\mu]-\rmD F[\mu]\|_\mu
    &=\Big\|\sum_k \Big(\partial_k\psi(
      \lin{\ff_n}(\mu))\nabla f_{k,n}-
      \partial_k\psi(
      \lin\ff(\mu))\nabla f_{k}\Big)\Big\|_\mu
    \\
    &\le
      \sum_k \Big\|\partial_k\psi(
      \lin{\ff_n}(\mu))\nabla f_{k,n}-
      \partial_k\psi(
      \lin{\ff_n}(\mu))\nabla f_{k}\Big\|_\mu\\&\quad+
      \sum_k \Big\|\Big(\partial_k\psi(
    \lin{\ff_n}(\mu))-
      \partial_k\psi(
    \lin\ff(\mu))\Big)\nabla f_{k}\Big\|_\mu
    \\&\le L \sum_k \left ( \Big\|\nabla f_{k,n}-
    \nabla f_{k}\Big\|_\mu+R \Big|\langle f_{k,n}-f_k,\mu\rangle\Big| \right ).
  \end{align*}
  Both terms are uniformly bounded w.r.t.~$\mu$ and $n$, and
  converge to $0$ as $n\to\infty$. We deduce that \eqref{eq:85} holds.
\end{proof}
\begin{remark}[Polynomials]
  If there exists a  radius  $R>0$ such that 
  $\supp(\mu)\subset \overline{\rmB(0,R)}$ for   $\mm$-a.e.~$\mu$  
  then we can also choose subsets $\FF$ of $\rmC^1(\R^d)$ in
  Proposition \ref{prop:density}.
  An interesting example is provided by the  collection  $\FF$ of all
  the polynomials.
  In this case the algebra $\AA$ is the set of functionals
  \begin{displaymath}
    \mu\mapsto
    \int_{(\R^d)^k}P(x_1,\cdots,x_k)\,\d\mu^{\otimes
      k}(x_1,\cdots,x_k),\quad
    P\text{ polynomial in }(\R^d)^k,\quad
    k\in \N.
  \end{displaymath}
\end{remark}

\section{Calculus rules}
\label{sec:calculus}
We now show that the Cheeger energy can be expressed in terms of
an appropriate notion of (relaxed) Wasserstein gradient, also depending on
$\mm$, which enjoys useful calculus rules.
\begin{theorem}[$\mm$-Wasserstein differential]
  \label{prop:rel-diff}
  For every $F\in D^{1,2}(\W_2)$ 
    there exists a unique vector field $\rmD_\mm F\in L^2(\prbt\times
    \R^d,\bmm;\R^d)$ (the $\mm$-Wasserstein differential of $F$)
    such that for every sequence
    $F_n\in \cyl1b \prbt$, $n\in \N$, satisfying 
  \eqref{eq:55}
  we have
  \begin{equation}
    \label{eq:56}
    \rmD F_n\to \rmD_\mm F\quad\text{strongly in } L^2(\prbt\times
  \R^d,\bmm;\R^d).
\end{equation}
Moreover:
\begin{enumerate}[\rm (a)]
\item
  The map $F\mapsto \rmD_\mm F$ from $D^{1,2}(\W_2)$ to
  $L^2(\prbt\times \R^d,\bmm;\R^d) $ is linear and for every
  $F,G\in D^{1,2}(\W_2)$ we have
  \begin{equation}
    \label{eq:57}
    \begin{aligned}
      \CE_2(F,G)=\int \rmD_\mm F(\mu,x)\cdot \rmD_\mm
      G(\mu,x)\,\d\bmm(\mu,x),\quad
      \CE_2(F)=\int |\rmD_\mm
      F(\mu,x)|^2\,\d\bmm(\mu,x),
    \end{aligned}
  \end{equation}
  where $\CE_2(\cdot, \cdot)$ denotes the quadratic form associated to $\CE_2(\cdot)$ as in Remark \ref{rem:prec}.
\item
  The map $F\mapsto (F,\rmD_\mm F)$ is a linear isometric (thus
  continuous) immersion of
  $H^{1,2}(\W_2)$ into $L^2(\prbt,\mm)\times L^2(\prbt\times \R^d,\bmm;\R^d)
  $. 
\item
  The graph of $\rmD_\mm$ in $L^2(\prbt,\mm)\times
  L^2(\prbt\times\R^d,\bmm;\R^d)$ is (weakly) closed:
  for every sequence $F_n\in H^{1,2}(\W_2)$
  \begin{equation}
    \label{eq:80}
    \left.
      \begin{aligned}
        F_n\weakto F&\text{ in }L^2(\prbt,\mm)\\
        \rmD_\mm F_n\weakto
        \gG&\text{ in }L^2(\prbt\times \R^d,\bmm;\R^d)
      \end{aligned}
      \right\}
    \quad\Rightarrow\quad
    F\in H^{1,2}(\W_2),\ \gG=\rmD_\mm F.
  \end{equation}
\end{enumerate}
\end{theorem}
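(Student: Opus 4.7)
The plan is to define $\rmD_\mm F$ as the strong $L^2(\prbt\times\R^d,\bmm;\R^d)$-limit of the pointwise cylinder differentials $\rmD F_n$ along any sequence $F_n\in\cyl1b\prbt$ satisfying \eqref{eq:55}, and then derive all the stated properties from the Hilbertian structure of $H^{1,2}(\W_2)$ provided by Corollary~\ref{cor:WHilbert}.

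For the existence step, by Proposition~\ref{prop:equality} one has
\begin{equation*}
  \int |\rmD F_n|^2\,\d\bmm\;=\;\int(\lip F_n)^2\,\d\mm\;=\;\pCE_2(F_n)\;\longrightarrow\;\CE_2(F),
\end{equation*}
so $(\rmD F_n)$ is $L^2(\bmm)$-bounded. To upgrade this to a Cauchy property I would exploit that $\pCE_2$ restricted to $\cyl1b\prbt$ is a quadratic form (again by Proposition~\ref{prop:equality}, together with the linearity of the pointwise differential on cylinder functions): the parallelogram identity
\begin{equation*}
  \pCE_2(F_n-F_m)\;=\;2\pCE_2(F_n)+2\pCE_2(F_m)-\pCE_2(F_n+F_m),
\end{equation*}
combined with the upper bound $\pCE_2(F_n+F_m)\le 2\pCE_2(F_n)+2\pCE_2(F_m)$ (from $(a+b)^2\le 2a^2+2b^2$) and the matching lower bound $\liminf_{n,m}\pCE_2(F_n+F_m)\ge\CE_2(2F)=4\CE_2(F)$ (which follows from the $L^0$-lower semicontinuity of $\CE_2$ recalled in Remark~\ref{rem:relprec}, applied to $F_n+F_m\to 2F$, together with $\CE_2\le\pCE_2$ on cylinder functions), forces $\pCE_2(F_n-F_m)=\|\rmD F_n-\rmD F_m\|^2_{L^2(\bmm)}\to 0$. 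Uniqueness with respect to the chosen sequence is obtained by interlacing any two admissible ones into a third sequence still satisfying \eqref{eq:55}.

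For the linearity of $\rmD_\mm$, given approximations $F_n\to F$ and $G_n\to G$ as above, I would set $H_n:=F_n+G_n$; linearity of $\rmD$ on cylinder functions then yields $\rmD H_n\to\rmD_\mm F+\rmD_\mm G$ strongly in $L^2(\bmm)$, whence $\pCE_2(H_n)\to\|\rmD_\mm F+\rmD_\mm G\|^2_{L^2(\bmm)}$. The $L^0$-lower semicontinuity gives $\CE_2(F+G)\le\|\rmD_\mm F+\rmD_\mm G\|^2$, and adding the analogous inequality for $F-G$ together with the parallelogram identity
\begin{equation*}
  \CE_2(F+G)+\CE_2(F-G)=2\CE_2(F)+2\CE_2(G)=\|\rmD_\mm F+\rmD_\mm G\|^2+\|\rmD_\mm F-\rmD_\mm G\|^2
\end{equation*}
satisfied by the quadratic form $\CE_2$ forces equalities throughout. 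Hence $\pCE_2(H_n)\to\CE_2(F+G)$; combined with weak $L^2(\mm)$-compactness of $\lip H_n$ and minimality of $|\rmD(F+G)|_\star$, this norm convergence upgrades to strong convergence $\lip H_n\to|\rmD(F+G)|_\star$ in $L^2(\mm)$. So $H_n$ itself satisfies \eqref{eq:55} for $F+G$, giving $\rmD_\mm(F+G)=\rmD_\mm F+\rmD_\mm G$; scalar homogeneity is immediate. The identity $\|\rmD_\mm F\|^2_{L^2(\bmm)}=\CE_2(F)$ is built into the construction, so the bilinear formula in (a) follows by polarization, and (b) is a direct restatement.

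For the weak closedness (c), the graph of $\rmD_\mm$ is a linear subspace of $L^2(\prbt,\mm)\times L^2(\prbt\times\R^d,\bmm;\R^d)$, so by Mazur's theorem it is weakly closed iff strongly closed; the latter is immediate from (b), since strong convergence of both components forces $(F_n)$ to be Cauchy in $H^{1,2}(\W_2)$ by the isometry, and the limit must coincide with $F$ and satisfy $\rmD_\mm F=\gG$. The main technical hurdle is the Cauchy property in the existence step, where the two-sided sandwich of $\pCE_2(F_n+F_m)$ around $4\CE_2(F)$ relies essentially on both the quadratic nature of $\pCE_2$ on $\cyl1b\prbt$ and the $L^0$-lower semicontinuity of the relaxed Cheeger energy.
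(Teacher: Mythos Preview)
Your argument is correct and follows essentially the same route as the paper's proof: both exploit the parallelogram identity for $\pCE_2$ on cylinder functions together with the $L^0$-lower semicontinuity of $\CE_2$ to obtain the Cauchy property of $(\rmD F_n)$, and both deduce (b) and (c) from the resulting isometry and the completeness of $H^{1,2}(\W_2)$. The only notable difference is that you are more explicit about linearity: the paper simply asserts that it ``follows immediately from the linearity of $\rmD$'' and then proves the bilinear identity $\CE_2(F,G)=\int \rmD_\mm F\cdot\rmD_\mm G\,\d\bmm$ directly via a $\liminf/\limsup$ sandwich (from which linearity can in fact be recovered a posteriori), whereas you take the extra step of verifying that $F_n+G_n$ itself satisfies \eqref{eq:55} for $F+G$, so that the definition of $\rmD_\mm$ applies to it. Both approaches are valid; yours is slightly more self-contained, while the paper's presentation is terser. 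One harmless redundancy: the trivial upper bound $\pCE_2(F_n+F_m)\le 2\pCE_2(F_n)+2\pCE_2(F_m)$ you mention plays no role in the Cauchy estimate---only the lower bound on the $\liminf$ is needed.
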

\begin{proof}
  The proof uses well known arguments of the theory of quadratic
  forms. If $F_n$, $n\in \N$, is a sequence in $\cyl1b \prbt$, then  
  for every $m,n\in \N$ we have
  \begin{equation}
    \label{eq:58}
    \frac14\pCE_2(F_m-F_n)=
    \frac12\Big(
    \pCE_2(F_m)+\pCE(F_n)\Big)
    - \pCE_2\Big(\frac12(F_m+F_n)\Big).
  \end{equation}
  If \eqref{eq:55} holds, we can pass to the limit as $m,n\to\infty$,
  observing that
  $\lim_{m,n\to\infty}\frac12(F_m+F_n)=F$, 
  and therefore by \eqref{eq:relpre}  
  $\liminf_{m,n\to\infty}\pCE_2\Big(\frac12(F_m+F_n)\Big)\ge
  \CE_2(F)$;
  we thus obtain
  \begin{equation}
    \label{eq:59}
    \limsup_{m,n\to\infty} \frac14\pCE_2(F_m-F_n)=
    \limsup_{m,n\to\infty}
    \frac14\int |\rmD F_m(\mu,x)-\rmD F_n(\mu,x)|^2\,\d\bmm(\mu,x)\le 0
  \end{equation}
  which shows that $ m \mapsto \rmD F_m$ is a Cauchy sequence in
  $L^2(\prbt\times\R^d,\bmm;\R^d)$ and therefore converges to some element $\vV$.

  If $\tilde F_n$ is another sequence satisfying \eqref{eq:55}, we can
  use the identity
  \begin{equation}
    \label{eq:60}
    \frac14\pCE_2(F_n-\tilde F_n)=
    \frac12\Big(
    \pCE_2(F_n)+\pCE(\tilde F_n)\Big)
    - \pCE_2\Big(\frac12(F_n+\tilde F_n)\Big)
  \end{equation}
  and the same argument to conclude that
  $\lim_{n\to\infty}\pCE_2(F_n-\tilde F_n)=0$, so that the limit $\vV$
  is independent of the approximating sequence and we are authorized
  to call it $\rmD_\mm F$.

  Concerning claim (a),
  the linearity of $\rmD_\mm$ follows immediately from the linearity
  of $\rmD$  as a map  from $\cyl1b \prbt$ to $L^2(\prbt\times\R^d,\bmm;\R^d)$.
  
  If $F,G \in D^{1,2}(\W_2)$ and $(F_n)_n,(G_n)_n \subset \cyl1b \prbt$ are sequences satisfying \eqref{eq:55} for $F$ and $G$ respectively, we can see that $\pCE_2(F_n, G_n) \to \CE_2(F,G)$; indeed
  \begin{align*}
      \pCE_2(F_n, G_n) &= \frac{1}{2} \pCE_2(F_n+G_n) - \frac{1}{2}\pCE_2(F_n) - \frac{1}{2} \pCE_2(G_n),\\
    &= -\frac{1}{2} \pCE_2(F_n-G_n) + \frac{1}{2}\pCE_2(F_n) + \frac{1}{2} \pCE_2(G_n).
  \end{align*}
  Passing the first equality to the $\liminf_n$, the second one to the $\limsup_n$ and using \eqref{eq:relpre}, we get that $\pCE_2(F_n, G_n) \to \CE_2(F,G)$. Passing then  to the limit in  \eqref{eq:54}  
  we immediately see that
  \begin{equation}
    \label{eq:82}
    \CE_2(F,G)=\int \rmD_\mm F(\mu,x)\cdot \rmD_\mm
      G(\mu,x)\,\d\bmm(\mu,x)
    \end{equation}
    which, together with \eqref{eq:ce2sp},  shows that $F\mapsto (F,\rmD_\mm F)$ is an isometry
    from
    $H^{1,2}(\W_2)$ into $L^2(\prbt,\mm)\times
    L^2(\prbt\times\R^d,\bmm;\R^d)$ (claim (b)).

     Claim (c) then follows by claim (b) and the fact that
    $H^{1,2}(\W_2)$ is a Hilbert space.
\end{proof}
Let us now collect a few properties of $\rmD_\mm F$, which follow by
the corresponding
metric versions of Theorem \ref{thm:omnibus} and
the approximation property of Theorem \ref{prop:rel-diff}.
\begin{proposition}[Calculus properties of $\rmD_\mm F$]
  \label{prop:calculus}
  The $\mm$-Wasserstein differential satisfies the following properties:
    \begin{enumerate}[\rm (a)]
    \item (Minimal relaxed gradient and pointwise Lipschitz constant)
      For every
       $F\in D^{1,2}(\W_2)$ 
      we have
      \begin{equation}
        \label{eq:63}
        \|\rmD_\mm F[\mu]\|_\mu^2=
        \int |\rmD_\mm F (\mu,x)|^2\,\d\mu(x)=
        |\rmD F|_\star^2(\mu)\quad
        \text{for $\mm$-a.e.~$\mu\in \prob_2(\R^d)$}.
      \end{equation}
      In particular we have the
       \emph{pointwise Rademacher property}:
      for every $F\in \Lip_b(\prbt)$
    \begin{equation}
      \label{eq:61}
      \|\rmD_\mm F[\mu]\|_\mu^2=
      \int |\rmD_\mm F (\mu,x)|^2\,\d\mu(x)\le
        \big|\rmD F\big|^2 (\mu) 
      \quad\text{for $\mm$-a.e.~$\mu\in \prob_2(\R^d)$},
      \end{equation}
      and if $F\in \cyl1b \prbt$
      \begin{equation}
        \label{eq:62}
        \int |\rmD_\mm F (\mu,x)|^2\,\d\mu(x)
        \le
        \int |\rmD F (\mu,x)|^2\,\d\mu(x)\quad\text{for $\mm$-a.e.~$\mu\in \prob_2(\R^d)$}.
      \end{equation}
    \item (Leibniz rule)
      If $F,G\in L^\infty(\prbt,\mm)\cap D^{1,2}(\W_2)$, then
    $H:=FG\in   D^{1,2}(\W_2)$ and
    \begin{equation}
      \label{eq:5ee}
      \rmD_\mm H(\mu,x)= F(\mu)\rmD_\mm G(\mu,x)+G(\mu)\rmD_\mm
      F(\mu,x)
      \quad\text{for $\bmm$-a.e.~$(\mu,x)\in \prbt \times \R^d$}.
    \end{equation}
  \item (Locality)
    If $F\in  D^{1,2}(\W_2)$ then
    for any $\LL^1$-negligible Borel subset $N\subset \R$ we have
    \begin{equation}
      \label{eq:7ee}
      \rmD_\mm F[\mu]=0\quad\text{in }L^2(\R^d,\mu;\R^d)\quad\text{$\mm$-a.e.~on $F^{-1}(N)$}.
    \end{equation}
  \item (Truncations) If $F_j\in  D^{1,2}(\W_2)$, $j=1,\cdots,J$,
      then
      also the functions \[ F_+:=\max(F_1,\cdots,F_J) \text{ and }
      F_-:=\min(F_1,\cdots,F_J)\] belong to $ D^{1,2}(\W_2)$
      and
      \begin{align}
        \label{eq:9ee}
        \rmD_\mm F_+=\rmD_\mm F_j&\quad\text{$\bmm$-a.e.~on }
                                                       \{(\mu,x)\in
                                                       \prbt\times\R^d:F_+(\mu)=F_j(\mu)\},\\
        \rmD_\mm F_-=\rmD_\mm F_j&\quad\text{$\bmm$-a.e.~on }
                                   \{(\mu,x)\in \prbt\times\R^d:F_-(\mu)=F_j(\mu)\}.
      \end{align}
    \item (Chain rule)
    If $F\in D^{1,2}(\W_2)$ and $\phi\in \Lip(\R)$
    then $\phi\circ F\in  D^{1,2}(\W_2)$ and
    \begin{equation}
      \label{eq:8ee}
      \rmD_\mm (\phi\circ F)= \phi'(F)\,\rmD_\mm F  \quad\text{$\bmm$-a.e.~on }\prbt \times \R^d.
    \end{equation}
  \end{enumerate}
\end{proposition}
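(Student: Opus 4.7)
The strategy is to derive all five properties from the strong convergence $\rmD F_n\to \rmD_\mm F$ in $L^2(\bmm;\R^d)$ provided by Theorem \ref{prop:rel-diff}, combined with the pointwise identity $\lip F_n(\mu)=\cnorm{F_n}{\mu}$ from Proposition \ref{prop:equality}, the metric calculus from Theorem \ref{thm:omnibus}, and the closure property (c) of Theorem \ref{prop:rel-diff}.

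For (a), I fix $F\in D^{1,2}(\W_2)$ and pick an approximating sequence $F_n\in\cyl1b\prbt$ as in Theorem \ref{thm:main}, so that $\lip F_n\to |\rmD F|_\star$ strongly in $L^2(\prbt,\mm)$. By Proposition \ref{prop:equality}, $(\lip F_n(\mu))^2=\cnorm{F_n}{\mu}^2=\int|\rmD F_n(\mu,x)|^2\,\d\mu(x)=\|\rmD F_n[\mu]\|_\mu^2$. Since $\rmD F_n\to\rmD_\mm F$ strongly in $L^2(\bmm;\R^d)$, after extracting a subsequence we get $\|\rmD F_{n_k}[\mu]-\rmD_\mm F[\mu]\|_\mu\to 0$ for $\mm$-a.e.\ $\mu$; a further subsequence makes $\lip F_{n_{k_j}}(\mu)\to |\rmD F|_\star(\mu)$ pointwise $\mm$-a.e. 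Combining the two convergences yields $\|\rmD_\mm F[\mu]\|_\mu=|\rmD F|_\star(\mu)$ for $\mm$-a.e.\ $\mu$, i.e.\ \eqref{eq:63}. The estimates \eqref{eq:61} and \eqref{eq:62} then follow immediately from \eqref{eq:36} together with $|\rmD F|_\star\le |\rmD F|$ and, in the cylinder case, from $\lip F(\mu)^2=\cnorm{F}{\mu}^2$.

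For (b) and (e), the idea is to pass to the limit the pointwise calculus rules valid on $\cyl1b\prbt$, invoking the closure property of $\rmD_\mm$ as stated in Theorem \ref{prop:rel-diff}(c). For the Leibniz rule I select sequences $F_n,G_n\in\cyl1b\prbt$ approximating $F,G$ with $|F_n|\le\|F\|_\infty$ and $|G_n|\le\|G\|_\infty$ (possible by Theorem \ref{thm:omnibus}(2), since $F,G\in L^\infty$); for cylinder functions $\rmD(F_nG_n)=F_n\rmD G_n+G_n\rmD F_n$ holds by the product rule for ordinary gradients. Uniform boundedness and $\mm$-a.e.\ convergence of $F_n,G_n$ together with strong $L^2(\bmm)$ convergence of $\rmD F_n,\rmD G_n$ give $F_nG_n\to FG$ in $L^2(\mm)$ and $\rmD(F_nG_n)\to F\rmD_\mm G+G\rmD_\mm F$ in $L^2(\bmm;\R^d)$, whence Theorem \ref{prop:rel-diff}(c) delivers \eqref{eq:5ee}. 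For the chain rule, I first treat $\phi\in \rmC^1_b(\R)$: then $\phi\circ F_n\in\cyl1b\prbt$ with $\rmD(\phi\circ F_n)=\phi'(F_n)\rmD F_n$, and passing to the limit by dominated convergence gives \eqref{eq:8ee} via the closure. The case of general Lipschitz $\phi$ follows by regularising $\phi$ via convolution to obtain $\phi_k\in \rmC^1\cap\Lip(\R)$ with $\phi_k\to\phi$ locally uniformly and $\phi_k'\to\phi'$ a.e., and then using Theorem \ref{thm:omnibus}(6) (which kills $\rmD_\mm F$ on the null set where $\phi'$ fails to exist) together with dominated convergence; a preliminary truncation $T_k F$ handles possible non-integrability of $F$.

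Property (c) is an immediate corollary of (a) and Theorem \ref{thm:omnibus}(6): on $F^{-1}(N)$ one has $|\rmD F|_\star=0$ $\mm$-a.e., so $\|\rmD_\mm F[\mu]\|_\mu=0$ there, which is exactly \eqref{eq:7ee}. For (d), Theorem \ref{thm:omnibus}(8) already gives $F_\pm\in D^{1,2}(\W_2)$; to identify the $\mm$-Wasserstein differential, I apply the locality property (c), just proved, to the nonnegative function $H_j:=F_+-F_j\in D^{1,2}(\W_2)$ (using linearity of $\rmD_\mm$ and sub-linearity for relaxed gradients): since $H_j^{-1}(\{0\})=\{F_+=F_j\}$ and $\{0\}$ is $\Leb1$-negligible, $\rmD_\mm H_j=0$ on $\{F_+=F_j\}$, so $\rmD_\mm F_+=\rmD_\mm F_j$ there; the argument for $F_-$ is identical. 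The main technical obstacle I foresee is the careful book-keeping in step (b) and (e) to ensure that the various $L^2(\bmm)$ convergences survive the multiplication by (uniformly bounded but only $\mm$-a.e.\ convergent) factors $F_n,G_n$ or $\phi'(F_n)$; this is handled via Vitali/dominated convergence together with the uniform $L^\infty$ bounds coming from the truncation step in Theorem \ref{thm:omnibus}(2).
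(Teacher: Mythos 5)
Your treatment of parts (a), (c), and (d) tracks the paper's proof closely and is correct: for (a) you reproduce the Fubini-type extraction from the strong $L^2(\bmm)$ convergence; (c) combines (a) with the metric locality property $|\rmD F|_\star=0$ on $F^{-1}(N)$; and (d) applies (c) to the nonnegative differences $F_+-F_j$, which is the intended route.

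The trouble is with (b) and (e), both of which hinge on the same invalid inference. You convert pointwise identities $\rmD(F_nG_n)=F_n\rmD G_n+G_n\rmD F_n$ and $\rmD(\phi\circ F_n)=\phi'(F_n)\rmD F_n$ into strong $L^2(\bmm)$ limits, and then invoke the closure property of Theorem \ref{prop:rel-diff}(c) to identify the limit with $\rmD_\mm(FG)$, resp.\ $\rmD_\mm(\phi\circ F)$. But part (c) of Theorem \ref{prop:rel-diff} is a statement about the operator $\rmD_\mm$: it says $F_n\weakto F$ and $\rmD_\mm F_n\weakto\gG$ force $\gG=\rmD_\mm F$. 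You are instead feeding it the \emph{pointwise} differentials $\rmD(F_nG_n)$, $\rmD(\phi\circ F_n)$, and these need not coincide with $\rmD_\mm(F_nG_n)$, $\rmD_\mm(\phi\circ F_n)$ — their difference lives in the residual space $\Residual$, which is trivial only if $\pCE_2$ is closable, and this is not assumed. What the definition of $\rmD_\mm$ (first part of Theorem \ref{prop:rel-diff}) actually guarantees is that $\rmD H_n\to\rmD_\mm H$ \emph{provided the sequence satisfies the strong approximation property \eqref{eq:55}}, i.e.\ $\lip H_n\to|\rmD H|_\star$ in $L^2(\mm)$. Your argument never checks this for $H_n=F_nG_n$ or $H_n=\phi\circ F_n$, and for the Leibniz case it cannot be checked directly: the metric Leibniz rule \eqref{eq:5} is only an \emph{inequality}, so there is no obvious reason that $\lip(F_nG_n)\to|\rmD(FG)|_\star$.

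The paper closes this gap in two different ways. For the chain rule it proves $\lim_n\pCE_2(\phi\circ F_n)=\CE_2(\phi\circ F)$ by computing $\int|\gG|^2\,\d\bmm=\int|\phi'(F)|^2|\rmD F|_\star^2\,\d\mm$ and matching it with $\CE_2(\phi\circ F)$ via the \emph{equality case} of the metric chain rule (Theorem \ref{thm:omnibus}(7), which is an equality for $\rmC^1$ compositions); only then does the defining approximation property apply and give $\gG=\rmD_\mm(\phi\circ F)$. Since no analogous equality is available for products, the Leibniz rule is \emph{not} proved directly: instead (b) is deduced from (e) by the polarization identity $FG=\tfrac12\bigl[(F+G)^2-F^2-G^2\bigr]$, composing with a Lipschitz $\rmC^1$ function $\phi$ equal to $t\mapsto t^2$ on the relevant bounded range and then applying the already-proved chain rule. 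You should either adopt this indirect route for (b), or supply an independent argument showing $F\rmD_\mm G+G\rmD_\mm F\in\Tangent=\Tan(\prbt,\bmm)\cap\Residual^\perp$ and appeal to Theorem \ref{thm:decomposition} — which however belongs to the later Section \ref{subsec:resdiff}. For (e) your sketch is salvageable, but the phrase ``via the closure'' must be replaced by the explicit verification that $\phi\circ F_n$ satisfies \eqref{eq:55}, using the $\rmC^1$ equality in Theorem \ref{thm:omnibus}(7).
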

\begin{remark}
  Notice that the product in \eqref{eq:8ee} is well defined since there
  exists a $\Leb 1$-negligible Borel set $N\subset \R$ such that
  $\phi$ is differentiable in $\R\setminus N$  and
  $\rmD_\mm F$ vanishes $\mm$-a.e.~in $F^{-1}(N)$ thanks to
  the locality property \eqref{eq:7ee}.
\end{remark}
\begin{proof}
  Claim (a) is an immediate consequence of the fact that
  \eqref{eq:55}
  yields $\lip F_n\to |\rmD F|_\star$ strongly in $L^2(\prbt,\mm)$; up to
  extracting a suitable (not relabeled) subsequence we get
  $\int |\rmD F_n [\mu]|^2\,\d\mu\to |\rmD F|^2_\star(\mu)$
  for $\mm$-a.e. $\mu$. On the other hand  since by \eqref{eq:56} $\rmD F_n \to \rmD_\mm F$ in $L^2(\prbt\times
  \R^d,\bmm;\R^d)$, then $|\rmD F_n|^2 \to |\rmD_\mm F|^2$ in $L^1(\prbt\times
  \R^d,\bmm)$; indeed
  \begin{align*}
    \int\Big||\rmD F_n(\mu,x)|^2-|\rmD_\mm
    F(\mu,x)|^2 \Big| \,\d\bmm(\mu,x) &\le \int \left ( (|\rmD F_n| + |\rmD_\mm F|) |\rmD F_n -\rmD_\mm F| \right ) \, \d \bmm \\
    & \le \left ( \int (|\rmD F_n| + |\rmD_\mm F|)^2\, \d \bmm \right )^{1/2} \\
    &\quad  \quad \cdot \| \rmD F_n - \rmD_\mm F\|_{L^2(\prbt\times
  \R^d,\bmm;\R^d)}
  \end{align*}
  so that  
  \begin{equation}\label{eq:64}
     \int\Big||\rmD F_n(\mu,x)|^2-|\rmD_\mm
    F(\mu,x)|^2 \Big|  \,\d\bmm(\mu,x) \to 0 \quad \text{ as } n \to + \infty.
  \end{equation}
   Hence  Fubini's Theorem yields, up to extracting a suitable
  subsequence,
  \begin{equation}
    \label{eq:65}
    \int|\rmD F_n(\mu,x)|^2\,\d\mu\to \int |\rmD_\mm
    F(\mu,x)|^2\,\d\mu\quad\text{for $\mm$-a.e.$\mu\in \prbt$}.
  \end{equation}
  \eqref{eq:61} and \eqref{eq:62} then follows by the general
  properties of the minimal relaxed gradients
   (recall Remark \ref{rem:N}).

  Claim (c) follows by \eqref{eq:7} and \eqref{eq:63}.

  Claim (d) is just a consequence of the locality property
  \eqref{eq:7ee}.
  
  Claim (e) is true if $\phi\in \rmC^1_b(\R)$ just by
  passing to the limit in the corresponding formula
  for a cylinder function.
  In fact if $F_n\in \cyl1b\prbt$ is a sequence as in \eqref{eq:55} and
  \eqref{eq:56} we have
  \begin{equation}
   \rmD(\phi \circ F_n) = (\phi'\circ F_n )\rmD F_n\quad\text{in
   }\domG. \label{eq:125}
 \end{equation}
 Since $\phi'$ is bounded and continuous we get
 \begin{equation}
   \label{eq:126}
   \rmD(\phi \circ F_n)\to \gG=(\phi'\circ F)\rmD_\mm F\quad\text{strongly in
   }L^2(\prbt\times\R^d,\bmm;\R^d)\quad
   \text{as $n\to\infty$}.
 \end{equation}
 Integrating w.r.t.~$\bmm$ and recalling \eqref{eq:63} and Theorem \ref{thm:omnibus}(7) we get
 \begin{displaymath}
   \int |\gG|^2\,\d\bmm=
   \int |\phi'(F(\mu))|^2|\rmD_\mm F(\mu,x)|^2\,\d\bmm(\mu,x)=
   \int |\phi'(F(\mu))|\,|\rmD F|_\star^2\,\d\mm=
   \CE_2(\phi\circ F)
 \end{displaymath}
 so that
 \begin{displaymath}
   \lim_{n\to\infty}\pCE_2(\phi\circ F_n)=\CE_2(\phi\circ F).
 \end{displaymath}
 We conclude by Theorem \ref{prop:rel-diff} that $\gG=(\phi'\circ
 F)\rmD_m F$ coincides with $\rmD_\mm(\phi\circ F)$. 
  
  Let us now consider the case of a general Lipschitz function $\phi$; by truncation and Claim
  (d)  it is not restrictive to assume that $\phi$ is also bounded.
  We can find a sequence $\phi_n\in \rmC^1_b(\R)$ such
  that $\sup_\R |\phi_n|+|\phi_n'|\le L<\infty$,
  $\phi_n\to \phi$ uniformly, and $\phi_n'(x)\to\phi'(x)$
  for every $x\in \R\setminus N$ for a Borel set $N$ with $\Leb
  1(N)=0$.
  We have
  \begin{equation}
    \label{eq:66}
    \rmD_\mm (\phi_n\circ F)=\phi_n'(F)\rmD_\mm F \quad\text{$\mm$-a.e.~in }\prbt.
  \end{equation}
  Setting $\tilde N:=\{(\mu,x)\in \overline \domG:F(\mu)\in N\}$,
  Fubini's Theorem and the locality property \eqref{eq:7ee} yield
  $\rmD_\mm F(\mu,x)=0$ for $\bmm$-a.e.~$(\mu,x)\in \tilde N$.
  On the other hand $\phi_n'(F(\mu))\to \phi'(F(\mu))$ for every
  $(\mu,x)\in \domG\setminus \tilde N$; since $\phi_n'$ is uniformly
  bounded, we deduce that
  \begin{equation}
    \label{eq:67}
    \phi_n'(F)\rmD_\mm F\to \phi'( F)\rmD_\mm F\quad\text{strongly in }L^2(\prbt\times\R^d,\bmm;\R^d).
  \end{equation}
  We conclude by Theorem \ref{prop:rel-diff}(b)  that $\rmD_\mm (\phi\circ F)=\phi'(F)\rmD_\mm F$.
  
  Claim (b) follows by claim (e); indeed, since $F,G \in L^\infty(\prbt,\mm)$, we can find a constant $M>0$ such that 
  \[ |F|(\mu) \le M, \quad  |G|(\mu) \le M, \quad  |F+G|(\mu) \le M \quad\text{for $\mm$-a.e.~$\mu\in \prbt$}.\]
  Let $\phi \in \Lip(\R)$ be such that $\phi(x)=x^2$ for every $x \in [-M-1,M+1]$; then we have
  \begin{align*}
      \rmD_\mm FG &= \frac{1}{2} \rmD_\mm ((F+G)^2) - \frac{1}{2}\rmD_\mm (F^2) -\frac{1}{2} \rmD_{ \mm} (G^2) \\
      &= \frac{1}{2} \rmD_\mm (\phi \circ(F+G)) - \frac{1}{2}\rmD_\mm (\phi \circ F) -\frac{1}{2} \rmD_{ \mm} (\phi \circ G) \\
      &= \frac{1}{2}\phi'(F+G)\rmD_\mm(F+G) - \frac{1}{2}\phi'(F)\rmD_\mm F -\frac{1}{2} \phi'(G)\rmD_{ \mm} G \\
      &= (F+G)\rmD_\mm(F+G) - F\rmD_\mm F -G\rmD_{ \mm} G \\
      &= F\rmD_\mm G+G\rmD_\mm
      F
  \end{align*}
  for $\bmm$-a.e.~$(\mu,x) \in \prbt \times \R^d$.
 
\end{proof}
\begin{corollary}
  \label{cor:Dirichlet}
    $\CE_2$ is a local Dirichlet form in $L^2(\prbt,\mm)$
  \cite[3.1.1]{Bouleau-Hirsch91}
  enjoying $\Gamma$-calculus with Carr\'e du champs $\Gamma$ given by
  \begin{equation}
    \label{eq:81}
    \Gamma(F,G)[\mu]:=\int \rmD_\mm F(\mu,x)\cdot \rmD_\mm
    G(\mu,x)\,\d\mu(x)
      \quad\text{for $\mm$-a.e.~$\mu\in \prbt$}.
  \end{equation}
  In particular, for every
  $F,G\in H^{1,2}(\W_2)$ we have
  \begin{equation}
    \label{eq:57ee}
    \begin{aligned}
      \CE_2(F,G)&=\int_\prbt \Gamma(F,G)[\mu]\,\d\mm(\mu)=\int \rmD_\mm F(\mu,x)\cdot \rmD_\mm
      G(\mu,x)\,\d\bmm(\mu,x),\\
      \CE_2(F)&=\int_\prbt \Gamma(F,F)\,\d\mm(\mu)=\int |\rmD_\mm
      F(\mu,x)|^2\,\d\bmm(\mu,x).
    \end{aligned}
  \end{equation}
\end{corollary}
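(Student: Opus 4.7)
The plan is to verify the defining properties of a local Dirichlet form enjoying $\Gamma$-calculus one by one, with all the heavy lifting already done by Theorem \ref{prop:rel-diff} and Proposition \ref{prop:calculus}.

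First I would record that $\CE_2 \colon L^2(\prbt,\mm) \times L^2(\prbt,\mm) \to \R$ with domain $H^{1,2}(\W_2)$ is a densely defined, symmetric, nonnegative bilinear form: symmetry and nonnegativity are immediate from the integral representation \eqref{eq:57}, while density of the domain follows from Corollary \ref{cor:WHilbert} (which gives strong density of $\cyl1b\prbt \subset L^2(\prbt,\mm)$ in $H^{1,2}(\W_2)$, and $\cyl1b\prbt$ is $L^2$-dense in $L^2(\prbt,\mm)$ by Stone--Weierstrass and truncation). Closedness is encoded in Theorem \ref{prop:rel-diff}(b)-(c): the map $F \mapsto (F, \rmD_\mm F)$ is an isometric immersion of $H^{1,2}(\W_2)$ into $L^2(\prbt,\mm) \times L^2(\prbt \times \R^d, \bmm; \R^d)$ with weakly closed graph, so the form is automatically closed (this is equivalent to $H^{1,2}(\W_2)$ being a Hilbert space when endowed with the graph norm).

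Next I would verify the Markov property (unit contraction), namely that $F \in H^{1,2}(\W_2)$ implies $\tilde F := 0 \vee F \wedge 1 \in H^{1,2}(\W_2)$ with $\CE_2(\tilde F) \le \CE_2(F)$. This is an immediate application of the Chain Rule \eqref{eq:8ee} with $\phi(t) = 0 \vee t \wedge 1$, giving $\rmD_\mm \tilde F = \phi'(F)\,\rmD_\mm F$ and hence $|\rmD_\mm \tilde F| \le |\rmD_\mm F|$ $\bmm$-a.e., from which the claim follows by integration against $\bmm$ using \eqref{eq:57}.

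For locality in the sense of \cite{Bouleau-Hirsch91}, I would exploit Proposition \ref{prop:calculus}(c): if $F, G \in H^{1,2}(\W_2)$ and $\phi \in \rmC^1(\R)$ is constant in a neighborhood of $G(\prbt)$ (so that $\phi \circ G$ is constant on $\{G \in I\}$ for every open $I$), the Chain Rule yields $\rmD_\mm(\phi\circ G) = \phi'(G)\rmD_\mm G = 0$ $\bmm$-a.e., which together with the integral formula \eqref{eq:57} gives the local character $\CE_2(F, \phi\circ G - c) = 0$ whenever $\phi(G)$ is constant $\mm$-a.e. on $\supp F$ (and by the truncation property (d) one recovers the standard disjoint-support formulation).

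Finally, to exhibit the Carré du champs operator I would simply \emph{define} $\Gamma \colon H^{1,2}(\W_2) \times H^{1,2}(\W_2) \to L^1(\prbt,\mm)$ by \eqref{eq:81}, noting that it is well defined since $\rmD_\mm F[\mu], \rmD_\mm G[\mu] \in L^2(\R^d,\mu;\R^d)$ for $\mm$-a.e.~$\mu$ (Proposition \ref{prop:calculus}(a)); Fubini then gives \eqref{eq:57ee} directly from \eqref{eq:57}. To confirm that $\Gamma$ is indeed the Carré du champs, I would verify the characterizing identity
\begin{equation*}
  \int_\prbt H\,\Gamma(F,G)\,\d\mm \;=\; \tfrac12\bigl(\CE_2(F, GH) + \CE_2(G, FH) - \CE_2(FG, H)\bigr)
\end{equation*}
for bounded $F, G, H \in H^{1,2}(\W_2)$, which is a routine consequence of the Leibniz rule \eqref{eq:5ee} applied to $FH$, $GH$, $FG$ inside \eqref{eq:57}. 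The only mildly delicate point — and the main obstacle worth flagging — is the boundedness required by the Leibniz rule; this is handled in the standard way, by first truncating $F, G, H$ via Proposition \ref{prop:calculus}(e) with $\phi = T_k$, establishing the identity for truncations, and passing to the limit using the closedness in Theorem \ref{prop:rel-diff}(c) together with dominated convergence in $L^1(\prbt,\mm)$. This completes the verification that $\CE_2$ is a local Dirichlet form with Carré du champs $\Gamma$.
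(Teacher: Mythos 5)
Your proposal follows the same overall architecture as the paper's proof (chain rule for the Markov contraction, chain/locality rules for the local character, Leibniz rule for the Carré du champ), and the verifications of closedness, symmetry, density of the domain, the unit contraction, and the characterizing identity for $\Gamma$ are all correct. The paper compresses these steps but relies on exactly the same ingredients from Theorem \ref{prop:rel-diff} and Proposition \ref{prop:calculus}.

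The one genuinely weak step is the locality argument, which as written does not verify the property. You take $\phi\in\rmC^1(\R)$ ``constant in a neighborhood of $G(\prbt)$'': but if $\phi$ is constant near the whole range of $G$, then $\phi\circ G$ is identically constant, $\rmD_\mm(\phi\circ G)=0$ trivially, and the deduction $\CE_2(F,\phi\circ G-c)=0$ carries no content about locality. The hypothesis you actually need — some relation between where $F\ne 0$ and where $\phi'(G)$ vanishes, combined with the pointwise locality \eqref{eq:7ee} — is never made precise, so the ``standard disjoint-support formulation'' is not actually reached from what you wrote. The paper handles this much more cleanly: since $\CE_2(\unit)=0$, Corollary~5.1.4 of Bouleau--Hirsch characterizes locality by the single identity $\CE_2(|F|)=\CE_2(F)$ for every $F$ in the domain, and this follows instantly from the chain rule \eqref{eq:8ee} with $\phi(r)=|r|$ (since $|\phi'|\equiv 1$ where defined, and the locality rule handles $F^{-1}(\{0\})$). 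I would replace your paragraph on locality with that argument; alternatively, a correct direct argument is available via \eqref{eq:7ee}: if $\supp F\cap\supp G=\emptyset$ then $\rmD_\mm F\cdot\rmD_\mm G=0$ $\bmm$-a.e.\ because $\rmD_\mm F[\mu]=0$ $\mm$-a.e.\ on $\{F=0\}$ and $\rmD_\mm G[\mu]=0$ $\mm$-a.e.\ on $\{G=0\}$, and these two sets cover $\prbt$.
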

\begin{proof}
  The fact that $\CE_2$ is a Dirichlet form follows by the truncation
  property
  \eqref{eq:8ee} with $\phi(r):=r\land 1$.
  Since $\CE_2(1)=0$, the same property with $\phi(r)=|r|$ also shows
  that $\CE_2$ is local
  (see \cite[Corollary 5.1.4]{Bouleau-Hirsch91}).

  Using the Leibniz rule \eqref{eq:5ee} one can also easily show that
  the $\Gamma$-operator \eqref{eq:81} is the Carr\'e du champ
  associated to $\frac12 \CE_2$ \cite[Definition 4.1.2]{Bouleau-Hirsch91}.
\end{proof}

\subsection{Tangent bundle, residual differentials and relaxation}\label{subsec:resdiff}
In general we cannot guarantee that
$\CE_2(F)$ coincides with $\pCE_2(F)$ if $F\in \cyl1b \prbt$, or,
equivalently, that $\rmD_\mm F=\rmD F$: this 
property corresponds to the closability of $\pCE_2$.
We can however investigate the relations between
$\rmD F$ and $\rmD_\mm F$:
 two useful tools are represented by the
closure of the graph of $\rmD$
 and by the
collection of all the weak limits of Wasserstein differentials along vanishing  sequences.

\begin{definition}[Multivalued gradient]
  We denote by $\brmG\subset L^2(\prbt,\mm)\times
  L^2(\prbt\times\R^d,\bmm;\R^d)$
  the closure of the linear space $\big\{(F,\rmD F):F\in
  \cyl1b{\prbt}\big\}$.
  The multivalued gradient $\mgrad:H^{1,2}(\W_2)\rightrightarrows
   L^2(\prbt\times\R^d,\bmm;\R^d)$ is the operator whose graph is $\brmG$.
 \end{definition}
 It is clear that $\brmG$ is a closed vector  subspace  of
 $ L^2(\prbt,\mm)\times
  L^2(\prbt\times\R^d,\bmm;\R^d)$, which can also be obtained as the
  weak closure of $\big\{(F,\rmD F):F\in
  \cyl1b{\prbt}\big\}$. Thus
  $\vV\in \mgrad F$ if and only if there exists a sequence $F_n\in
  \cyl1b{\prbt}$ such that
  \begin{equation}
    \label{eq:128}
    F_n\to F\text{ in }L^2(\prbt,\mm),\quad
    \rmD F_n\weakto \vV\text{ in }L^2(\prbt\times \R^d,\bmm;\R^d).
  \end{equation}
\begin{definition}[Residual gradients]
  \label{def:kernel}
  The set of residual gradients $\Residual\subset L^2(\prbt\times \R^d,\bmm;\R^d)$
  is defined as
  \begin{equation}
    \label{eq:68}
    \begin{aligned}
      \Residual:=\Big\{\vV\in{} &L^2(\prbt\times \R^d,\bmm;\R^d):\text{
        there exists }(F_n)_{n\in \N}\subset
      \cyl1b \prbt: \\
      &F_n\to 0\text{ in }L^2(\prbt,\mm),\ \rmD F_n\weakto \vV
      \text{ in }L^2(\prbt\times \R^d,\bmm;\R^d)\Big\}.
    \end{aligned}
  \end{equation}
\end{definition}
 The notion of residual gradient is known in the literature, see e.g.~\cite[Section 1.2]{tesi}. 
Notice that $\pCE_2$ is closable if and only if $\Residual$ is
trivial  and that $\Residual$ contains all the vector fields that are limits of gradients of vanishing sequence of functions (see also Lemma \ref{le:unlemma}(1)). 
A third important space is the $L^2$ tangent bundle of $\prbt$.
 In the following, given a Borel map $\gG \in \mathcal L^2(\prbt\times \R^d, \bmm; \R^d)$, we denote, for every $\mu \in \prbt$, by $\gG[\mu]$ the map $x \mapsto \gG(\mu,x)$. 
\begin{definition}
  We denote by $\Tan(\prbt,\bmm)$ the subspace of
  $L^2(\prbt\times\R^d,\bmm;\R^d)$ of vector fields $\vV$ satisfying
  \begin{equation}
    \label{eq:130}
    \vV[\mu]\in \Tan_\mu\prbt\quad\text{for $\mm$-a.e.~$\mu\in \prbt$}.
  \end{equation}
\end{definition}
\begin{lemma}
  \label{le:allTan}
  $\Tan(\prbt,\bmm)$ is a closed subspace of
  $L^2(\prbt\times\R^d,\bmm;\R^d)$ which is a $L^\infty(\prbt,\mm)$
  module:
  \begin{equation}
    \label{eq:131}
    \text{for every }\vV\in \Tan(\prbt,\bmm),\ H\in
    L^\infty(\prbt,\mm):\quad
    H\vV\in \Tan(\prbt,\bmm).
  \end{equation}
  For every $ F  \in H^{1,2}(\W_2)$ (resp.~ $F\in \cyl1b{\prbt}$) $\rmD_\mm
  F\in \Tan(\prbt,\bmm)$ (resp.~$\rmD F\in \Tan(\prbt,\bmm)$).
  Finally, if $\mathscr C\subset \rmC^\infty_c(\R^d)$ is a countable
  set dense in $\rmC^\infty_c(\R^d)$ with respect to the 
  Lipschitz norm $\|\zeta\|_{\Lip}:=\sup_{\R^d}|\zeta|+|\nabla\zeta|$
  and $\mathscr L$ is a countable set dense in $L^2(\prbt,\mm)$ then
  the set
  \begin{equation}
    \mathscr T=\operatorname{span}\Big\{H \nabla\zeta:H\in \mathscr L,\ \zeta\in \mathscr
    C\Big\}\quad
    \text{is dense
  in $\Tan(\prbt,\bmm)$}.\label{eq:38}
\end{equation}
\end{lemma}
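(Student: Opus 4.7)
The plan is to prove the four assertions in order, using Fubini-type arguments to pass between $L^2(\bmm)$ norms and pointwise $L^2(\mu)$ norms, and to exploit the closedness of each fiber $\Tan_\mu\prbt\subset L^2(\R^d,\mu;\R^d)$ together with the approximation results already established.

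For closedness, suppose $\vV_n\to\vV$ in $L^2(\prbt\times\R^d,\bmm;\R^d)$ with $\vV_n\in\Tan(\prbt,\bmm)$. By Fubini, $\int_\prbt\|\vV_n[\mu]-\vV[\mu]\|_{L^2(\mu)}^2\,\d\mm(\mu)\to 0$, so up to a subsequence $\vV_n[\mu]\to\vV[\mu]$ strongly in $L^2(\R^d,\mu;\R^d)$ for $\mm$-a.e.\ $\mu$. Since each $\Tan_\mu\prbt$ is by definition closed in $L^2(\R^d,\mu;\R^d)$, $\vV[\mu]\in\Tan_\mu\prbt$ for $\mm$-a.e.\ $\mu$, which gives $\vV\in\Tan(\prbt,\bmm)$. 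The $L^\infty(\prbt,\mm)$-module property is immediate, as each fiber $\Tan_\mu\prbt$ is a linear subspace and scalar multiplication by $H(\mu)$ preserves it; integrability is preserved since $\|H\vV\|_{L^2(\bmm)}\le\|H\|_{L^\infty}\|\vV\|_{L^2(\bmm)}$.

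For the statement on cylinder functions, let $F=\psi\circ\lin\pphi\in\cyl1b\prbt$. Then for every $\mu\in\prbt$ one has $\rmD F[\mu]=\sum_{n=1}^N\partial_n\psi(\lin\pphi(\mu))\nabla\phi_n$, which is a finite linear combination (with scalar coefficients depending on $\mu$) of the vector fields $\nabla\phi_n$. It is enough to check that $\nabla\phi\in\Tan_\mu\prbt$ for every $\phi\in\rmC_b^1(\R^d)$ and every $\mu\in\prbt$. Fixing a cutoff $\chi\in\rmC^\infty_c(\R^d)$ with $\chi\equiv 1$ on $\rmB(0,1)$ and $\supp\chi\subset \rmB(0,2)$, set $\varphi_R(x):=\phi(x)\chi(x/R)$; then $\nabla\varphi_R-\nabla\phi=(1-\chi(\cdot/R))\nabla\phi-R^{-1}\phi(\nabla\chi)(\cdot/R)$, and both terms vanish in $L^2(\R^d,\mu;\R^d)$ as $R\to\infty$, using dominated convergence for the first and $\rsqm\mu<\infty$ together with $R^{-2}\mu(\rmB(0,2R)\setminus\rmB(0,R))\to 0$ for the second. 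A mollification of $\varphi_R$ then produces $\rmC^\infty_c$ approximations with gradients converging uniformly to $\nabla\varphi_R$ (and hence in $L^2(\mu)$). Thus $\nabla\phi_n\in\Tan_\mu\prbt$, and $\rmD F[\mu]\in\Tan_\mu\prbt$ for every $\mu\in\prbt$. For a general $F\in H^{1,2}(\W_2)$, the approximation property \eqref{eq:56} provides $F_n\in\cyl1b\prbt$ with $\rmD F_n\to\rmD_\mm F$ strongly in $L^2(\prbt\times\R^d,\bmm;\R^d)$; since each $\rmD F_n\in\Tan(\prbt,\bmm)$, closedness gives $\rmD_\mm F\in\Tan(\prbt,\bmm)$.

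For the density of $\mathscr T$, I will first show that the larger space $\mathscr T_\infty:=\operatorname{span}\{H\nabla\zeta:H\in L^\infty(\prbt,\mm),\,\zeta\in\rmC^\infty_c(\R^d)\}$ is dense in $\Tan(\prbt,\bmm)$, then approximate each element of $\mathscr T_\infty$ by elements of $\mathscr T$. For the first step, by the module property $\mathscr T_\infty\subset\Tan(\prbt,\bmm)$, and I argue by duality: let $\vV\in\Tan(\prbt,\bmm)$ be orthogonal to the $L^2(\bmm)$-closure of $\mathscr T_\infty$. Then for every $\zeta\in\rmC^\infty_c(\R^d)$ and every $H\in L^\infty(\prbt,\mm)$,
\begin{equation*}
  0=\int \vV\cdot H\nabla\zeta\,\d\bmm=\int_\prbt H(\mu)\Big(\int_{\R^d}\vV[\mu](x)\cdot\nabla\zeta(x)\,\d\mu(x)\Big)\,\d\mm(\mu),
\end{equation*}
so that $\mu\mapsto\int\vV[\mu]\cdot\nabla\zeta\,\d\mu$ vanishes $\mm$-a.e. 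Choosing a countable dense family $\{\zeta_k\}\subset\rmC^\infty_c(\R^d)$ (for the Lipschitz norm), we obtain, off a common $\mm$-null set, $\vV[\mu]\perp_{L^2(\mu)}\nabla\zeta_k$ for all $k$; by density this means $\vV[\mu]\perp_{L^2(\mu)}\Tan_\mu\prbt$, and combined with $\vV[\mu]\in\Tan_\mu\prbt$ yields $\vV[\mu]=0$ $\mm$-a.e., so $\vV=0$ in $L^2(\bmm)$. For the second step, given $H\in L^\infty(\prbt,\mm)$ and $\zeta\in\rmC^\infty_c(\R^d)$, pick $\zeta_j\in\mathscr C$ with $\|\zeta_j-\zeta\|_{\Lip}\to 0$ and $H_j\in\mathscr L$ with $H_j\to H$ in $L^2(\prbt,\mm)$; since $\bmm(\prbt\times\R^d)=\mm(\prbt)<\infty$ and $\nabla\zeta_j\to\nabla\zeta$ uniformly,
\begin{equation*}
  \|H_j\nabla\zeta_j-H\nabla\zeta\|_{L^2(\bmm)}\le\|H_j\|_{L^2(\mm)}\|\nabla\zeta_j-\nabla\zeta\|_\infty+\|H-H_j\|_{L^2(\mm)}\|\nabla\zeta\|_\infty\to 0,
\end{equation*}
which concludes the density. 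The main technical effort sits in the pointwise approximation $\nabla\phi\in\Tan_\mu\prbt$ for $\phi\in\rmC^1_b(\R^d)$, which is the only place where the definition \eqref{eq:129} (using $\rmC^\infty_c$) must be reconciled with the larger class of test functions appearing in cylinder functions; everything else is then a routine closure and duality argument.
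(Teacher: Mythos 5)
Your proof is correct and follows essentially the same route as the paper: Fubini for closedness, the obvious module property, reduction of the cylinder case to $\nabla\phi\in\Tan_\mu\prbt$, and a duality/Fubini argument for the density of $\mathscr T$. The one place you add genuine content is the truncation-and-mollification argument showing $\nabla\phi\in\Tan_\mu\prbt$ for $\phi\in\rmC^1_b(\R^d)$, a step the paper dismisses as ``immediate to check''; your two-step density argument (first $L^\infty$-coefficients and arbitrary $\rmC^\infty_c$ test functions, then approximate by the countable families) is a harmless rearrangement of the paper's direct argument.
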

\begin{proof}
  Let $(\vV_n)_{n\in \N}$ be a sequence in $\Tan(\prbt,\bmm)$ strongly
  converging to $\vV$ in $L^2$; it is not restrictive to assume that $\vV_n$
  are Borel maps satisfying $\vV_n[\mu]\in \Tan_\mu\prbt$ for every $\mu\in
  \prbt\setminus \cN$ for a $\mm$-negligible set of $\prbt$.
  Up to extracting a suitable subsequence, we can also assume that
  $\sum_{n=1}^\infty\|\vV_n-\vV\|_{L^2}^2<\infty$.
  Applying Fubini's Theorem it follows that
  \begin{displaymath}
    \int_{\prbt}\Big( \sum_{n=1}^\infty \int_{\R^d}|\vV_n[\mu](x)-\vV[\mu](x)|^2\,\d\mu(x)\Big)\,\d\mm<+\infty
  \end{displaymath}
  so that there exists a $\mm$-negligible set $\cN'\supset\cN$ such that
  \begin{displaymath}
     \sum_{n=1}^\infty
     \int_{\R^d}|\vV_n[\mu](x)-\vV[\mu](x)|^2\,\d\mu(x)<\infty
     \quad\text{for every }\mu\in \prbt\setminus\cN';
   \end{displaymath}
   and this implies that $\vV_n[\mu]\to \vV[\mu]$ strongly in
   $L^2(\prbt,\mu ; \R^d )$, so that
   $\vV[\mu]\in \Tan_\mu\prbt$ for every $\mu\in \prbt\setminus\cN'$.

   \eqref{eq:131} is obvious. Since for every $F=\lin\phi$, $\phi\in
   \rmC^1_b$ $\rmD F[\mu]=\nabla\phi\in \Tan_\mu\prbt$ for every
   $\mu\in \prbt$, it is immediate to check that $\rmD F\in  \Tan(\prbt,\bmm) $
   for every cylinder function.
   The closure property of $\Tan(\prbt,\bmm)$ then yields the
   analogous conclusion for the Wasserstein differential of $\rmD_\mm
   F$ of a Sobolev function $F\in H^{1,2}(\W_2)$.

   Let us eventually consider \eqref{eq:38}: it is sufficient to prove
   that any $\vV\in \mathscr T^\perp$ belongs to
   $\big(\Tan(\prbt,\bmm)\big)^\perp$, where $\perp$ denotes the orthogonal complement in the Hilbert space $L^2(\prbt\times\R^d,\bmm;\R^d)$. 
   If $\vV\in \mathscr T^\perp$ is a Borel vector field, then
   \begin{displaymath}
     \int_{\prbt}\Big(\int\langle \nabla \zeta,\vV(\mu,x)\rangle\,\d\mu(x)\Big)\,H(\mu)\,\d\mm(\mu)=0
   \end{displaymath}
   for every $\zeta\in \mathscr C,\ H\in \mathscr L$. Since $\mathscr
   L$ is dense in $L^2(\prbt,\mm)$ we have for every $\zeta\in
   \mathscr C$
   \begin{displaymath}
     \int\langle \nabla \zeta,\vV(\mu,x)\rangle\,\d\mu(x)=0
     \quad\text{for $\mm$-a.e.~$\mu\in \prbt$}
   \end{displaymath}
   Since $\mathscr C$ is countable, we can find a $\mm$-negligible set
   $\cN\subset \prbt$ such that
   \begin{displaymath}
     \int\langle \nabla \zeta,\vV(\mu,x)\rangle\,\d\mu(x)=0
     \quad\text{for every $\zeta\in \mathscr C$ and every $\mu\in \prbt\setminus\cN$}
   \end{displaymath}
   which shows that $\vV[\mu]\in \Big(\Tan_\mu\prbt\Big)^\perp$ for
   every $\mu\in \prbt\setminus\cN$, so that for every $\wW\in
   \Tan(\prbt,\bmm)$
   \begin{align*}
     \int \langle\vV(\mu,x),\wW(\mu,x)\rangle\,\d\bmm
     &=
       \int_{\prbt}\Big(\int_{\R^d}
       \langle\vV[\mu](x),\wW[\mu](x)\rangle\,\d\mu(x)\Big)\,\d\mm(\mu)=0.\qedhere
   \end{align*}
\end{proof}

Let us collect a few simple properties of $\Residual$. 
\begin{lemma}\label{le:unlemma}
  Let $\Residual$ be as in \eqref{eq:68}.
  \begin{enumerate}[\rm (1)]
      \item  $\Residual$ is a closed subspace of $ L^2(\prbt\times
    \R^d,\bmm;\R^d)$ and coincides with the set
    \begin{equation}
    \mgrad 0=\big\{\vV\in
    L^2(\prbt\times
    \R^d,\bmm;\R^d):(0,\vV)\in \rmG\big\}.\label{eq:127}
  \end{equation}
  \item For every $\vV\in\Residual$ there exists a sequence
    $F_n\in
    \cyl1b \prbt,\ n\in \N,\ \text{such that}$
    \begin{equation}
      \label{eq:69}
      F_n\to 0\text{ in }L^2(\prbt,\mm),\ \rmD F_n\to \vV
      \text{ strongly in }L^2(\prbt\times \R^d,\bmm;\R^d).
    \end{equation}
    Every element $\vV\in \Residual$ is therefore characterized by the
    property
    \begin{equation}
      \label{eq:71}
      \forall\,\eps>0\ \ \exists\,F\in \cyl1b \prbt:\quad
      \|F\|_{L^2(\prbt,\mm)}\le\eps,\quad
      \|\rmD F-\vV\|_{L^2(\prbt\times \R^d,\bmm;\R^d)}\le \eps.
    \end{equation}
  \item $\Residual$ satisfies the locality property
    \begin{equation}
      \label{eq:70}
      \text{for every }\vV\in \Residual,\ H\in L^\infty(\prbt,\mm):\quad
      H\vV\in \Residual.
    \end{equation}
\end{enumerate}
\end{lemma}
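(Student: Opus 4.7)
\textbf{Proof plan for Lemma \ref{le:unlemma}.}

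For parts (1) and (2), the key observation is that $\{(F,\rmD F):F\in \cyl1b{\prbt}\}$ is a linear subspace of the Hilbert space $L^2(\prbt,\mm)\times L^2(\prbt\times\R^d,\bmm;\R^d)$, so its norm closure $\brmG$ coincides with its weak closure by Mazur's theorem. The set $\mgrad 0 = \{\vV : (0,\vV)\in \brmG\}$ is then automatically a closed linear subspace. I will verify the identification $\Residual = \mgrad 0$ by two inclusions. If $\vV\in \Residual$, a defining sequence $(F_n)$ with $F_n\to 0$ strongly and $\rmD F_n\weakto \vV$ yields $(F_n,\rmD F_n)\weakto (0,\vV)$, placing $(0,\vV)$ in the weak closure and hence in $\brmG$. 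Conversely, if $(0,\vV)\in \brmG$, then because $\brmG$ is the \emph{norm} closure, we can pick $(F_n)\subset \cyl1b{\prbt}$ with $F_n\to 0$ and $\rmD F_n\to \vV$ both strongly in $L^2$. This simultaneously proves closedness of $\Residual$, the identification in (1), and the strengthening in (2). The characterization \eqref{eq:71} is then just the Cauchy-type reformulation of strong convergence.

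For part (3), given $\vV\in \Residual$ and $H\in L^\infty(\prbt,\mm)$, the strategy is a Leibniz-plus-diagonal argument. By part (2), I fix $(F_n)\subset \cyl1b{\prbt}$ with $F_n\to 0$ in $L^2(\mm)$ and $\rmD F_n\to \vV$ in $L^2(\bmm)$. Setting $M:=\|H\|_\infty$, by property \eqref{eq:isdense} applied to the unital separating algebra $\cyl1b{\prbt}$ I choose a sequence $(H_n)\subset \cyl1b{\prbt}$ with $|H_n|\le M$ and $H_n\to H$ in $L^2(\mm)$; after extracting a subsequence I may assume $H_n\to H$ $\mm$-a.e. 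For each fixed $n$, since $H_n=\psi_n\circ \lin{\pphi_n}$ is a cylinder function, the quantity $C_n:=\sup_{\overline\domG}|\rmD H_n|$ is finite. I then define diagonally
\begin{equation*}
G_n := H_n F_{m(n)} \in \cyl1b{\prbt},
\end{equation*}
where $m(n)$ is chosen so large that $C_n\|F_{m(n)}\|_{L^2(\mm)}\le 1/n$ and $\|\rmD F_{m(n)}-\vV\|_{L^2(\bmm)}\le 1/n$.

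To verify that $(G_n)$ witnesses $H\vV\in \Residual$, I will apply the cylinder-function Leibniz identity $\rmD G_n = H_n\,\rmD F_{m(n)} + F_{m(n)}\,\rmD H_n$ and estimate. The $L^2(\mm)$-norm bound $\|G_n\|_{L^2}\le M\,\|F_{m(n)}\|_{L^2}\to 0$ is immediate. For the gradient, the triangle inequality gives
\begin{equation*}
\|H_n\,\rmD F_{m(n)} - H\vV\|_{L^2(\bmm)} \le M\|\rmD F_{m(n)}-\vV\|_{L^2} + \|(H_n-H)\vV\|_{L^2(\bmm)},
\end{equation*}
and the remainder $\|F_{m(n)}\,\rmD H_n\|_{L^2(\bmm)}\le C_n\|F_{m(n)}\|_{L^2(\mm)}\le 1/n$. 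The first term in the above display is at most $M/n$ by the diagonal choice; the second is handled by dominated convergence, since $g(\mu):=\int_{\R^d}|\vV(\mu,x)|^2\,\d\mu(x)\in L^1(\mm)$ and $|H_n-H|^2\le 4M^2$ with $H_n\to H$ $\mm$-a.e. Assembling these, $\rmD G_n\to H\vV$ strongly in $L^2(\bmm)$, so $H\vV\in \Residual$ by \eqref{eq:71}.

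The only genuinely delicate point is part (3): one must avoid the temptation to approximate $H$ without bounding $|\rmD H_n|$, which is why the diagonal is threaded through $C_n$; everything else reduces to the convexity/Mazur principle and Leibniz.
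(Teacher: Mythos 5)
Your proof is correct and matches the paper's approach in all essentials: parts (1) and (2) rest, exactly as in the paper, on the fact that $\brmG$ is a convex (indeed linear) set so its weak and norm closures agree, making the two characterizations of $\Residual$ coincide; and part (3) uses the Leibniz rule on cylinder functions and $L^\infty$-bounded approximation of $H$. The only difference is organizational: the paper proves (3) in two clean steps (first for $H\in\cyl1b\prbt$ via Leibniz, then for general $H\in L^\infty$ by noting $H_n\vV\to H\vV$ in $L^2(\bmm)$ and invoking the closedness of $\Residual$ already established in (1)), whereas you fold both steps into a single diagonal extraction controlling $C_n\|F_{m(n)}\|_{L^2(\mm)}$. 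Your version works, but the two-step argument is slightly tidier because, once closedness is known, the passage from cylinder $H$ to general $H$ is immediate and the bound $C_n=\sup|\rmD H_n|$ never needs to enter the rate of the diagonal — a small economy worth noting, since it also makes clear that no quantitative relation between $H_n$ and $F_{m(n)}$ is really required.
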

\begin{proof} We have already observed that $\rmG$ is a closed vector space,
  coinciding with the weak closure of $\big\{(F,\rmD F):F\in
  \cyl1b{\prbt}\big\}$; in view of \eqref{eq:128}, \eqref{eq:68} precisely characterizes the
  elements $\vV$ for which $(0,\vV)\in \rmG$. Therefore the first two
  claims are obvious. 

  Let us eventually prove the last claim.
  We first consider the case when $H\in \cyl1b \prbt$.
  If $\vV\in \Residual$ we can find a sequence $F_n\in \cyl1b \prbt$ such
  that
  \eqref{eq:69} holds.
  Setting $G_n:= HF_n$, since $H$ is bounded we clearly have $G_n\to
  0$ strongly in $L^2(\prbt,\mm)$;
  moreover, by the Leibniz rule we get
  \begin{equation}
    \label{eq:73}
    \rmD G_n=H\rmD F_n+F_n\rmD H\to H\vV
  \end{equation}
  since $\rmD H\in L^\infty(\prbt\times  \R^d,\bmm;\R^d)$ and $F_n\to 0$
  strongly in $L^2(\prbt,\mm)$.
  We deduce that $H\vV\in \Residual$ as well.

  If now $H$ is a function in $L^\infty(\prbt,\mm)$ we can find by \eqref{eq:isdense} a uniformly bounded sequence $H_n\in \cyl1b \prbt$ converging to $H$
  $\mm$-a.e.~in $\prbt$, so that $H_n\vV\to H\vV$ in
  $L^2(\prbt\times\R^d,\bmm;\R^d)$.
  Being $\Residual$ a closed subspace and $H_n\vV\in \Residual$ by the
  previous step, we deduce that $H\vV\in \Residual$.
\end{proof}
We now define
\begin{equation}
  \label{eq:132}
  \Tangent:=\Tan(\prbt,\bmm)\cap \Residual^\perp=\Big\{\vV\in
  \Tan(\prbt,\bmm):\langle\vV,\wW\rangle_{L^2}=0\ \text{for every
  }\wW\in \Residual\Big\},
\end{equation}
 where $\perp$ denotes the orthogonal complement in the Hilbert space $L^2(\prbt\times\R^d,\bmm;\R^d)$. 
We can now obtain our main structure result.
\begin{theorem}
  \label{thm:decomposition}
  For every $F\in H^{1,2}(\W_2)$ we have $\rmD_\mm F\in \Tangent$ and for every
  $\vV\in\Residual$ we have
  the pointwise orthogonality property
  
  \begin{equation}
    \label{eq:75}
    \int_{\R^d} \rmD_\mm
    F(\mu,x)\cdot\vV(\mu,x)\,\d\mu(x)=0\quad\text{for
      $\mm$-a.e.~$\mu\in \prbt$}.
  \end{equation}
  If $\vV\in \mgrad F$ 
  then 
  $\vV-\rmD_\mm F\in \Residual$.
  In particular for every $F\in \cyl1b \prbt$ $\rmD F-\rmD_\mm F\in
  \Residual$ and for every $G\in H^{1,2}(\W_2)$
  \begin{equation}
    \label{eq:79}
    \int_{\R^d}\rmD_\mm F(\mu,x)\cdot \rmD_\mm G(\mu,x)\,\d\mu(x)=
    \int_{\R^d}\rmD F(\mu,x)\cdot \rmD_\mm G(\mu,x)\,\d\mu(x)
    \quad
    \text{for $\mm$-a.e.~$\mu\in \prbt$}.
  \end{equation}
   Finally,  for every $F \in H^{1,2}(\W_2)$,  $\rmD_\mm F$ is the element of minimal $L^2$-norm in $\mgrad F$.
\end{theorem}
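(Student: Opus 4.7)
The strategy is to exploit the quadratic structure of the pre-Cheeger energy on $\cyl1b\prbt$ together with the optimal strong-approximation property built into the definition of $\rmD_\mm F$ (Theorem \ref{prop:rel-diff}), and then use the locality property \eqref{eq:70} of $\Residual$ to pass from a global $L^2$-orthogonality to pointwise orthogonality.

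\emph{Step 1 (tangent bundle).} The fact that $\rmD_\mm F\in\Tan(\prbt,\bmm)$ is immediate from Lemma \ref{le:allTan}: cylinder differentials $\rmD F_n[\mu]=\sum_k\partial_k\psi(\lin{\boldsymbol\phi}(\mu))\nabla\phi_k$ take values in $\Tan_\mu\prbt$ pointwise, and this property is preserved under strong $L^2$-convergence, as established in that lemma.

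\emph{Step 2 (global orthogonality $\rmD_\mm F\perp\Residual$).} The plan is to fix approximating sequences $F_n\in\cyl1b\prbt$ with $F_n\to F$ in $L^2(\prbt,\mm)$ and $\rmD F_n\to\rmD_\mm F$ strongly in $L^2(\bmm)$, and, given $\vV\in\Residual$, pick $G_n\in\cyl1b\prbt$ with $G_n\to 0$ in $L^2$ and $\rmD G_n\to \vV$ \emph{strongly} in $L^2(\bmm)$ (this strong mode is precisely Lemma \ref{le:unlemma}(2)). Since $F_n\pm G_n\to F$ in $L^2(\prbt,\mm)$, the $L^0$-lower semicontinuity of $\CE_2$ (equivalently, its relaxation definition \eqref{eq:relpre}) yields
\begin{equation*}
  \CE_2(F)\le\liminf_{n\to\infty}\pCE_2(F_n\pm G_n)
  =\CE_2(F)\pm 2\!\int\!\rmD_\mm F\cdot\vV\,\d\bmm+\|\vV\|^2_{L^2(\bmm)},
\end{equation*}
using the polarization of $\pCE_2$ and the strong convergences in the cross term. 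This gives $\bigl|\!\int\rmD_\mm F\cdot\vV\,\d\bmm\bigr|\le\tfrac12\|\vV\|^2$. Applying the same inequality to $t\vV\in\Residual$ for arbitrary $t>0$ and sending $t\downarrow 0$ yields $\int\rmD_\mm F\cdot\vV\,\d\bmm=0$, i.e.\ $\rmD_\mm F\in\Residual^\perp$.

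\emph{Step 3 (pointwise orthogonality \eqref{eq:75}).} For any Borel set $B\subset\prbt$, the locality property \eqref{eq:70} guarantees $\mathbbm 1_B\vV\in\Residual$; Step 2 applied to $\mathbbm 1_B\vV$ and Fubini then give $\int_B\Gamma(\mu)\,\d\mm(\mu)=0$, where $\Gamma(\mu):=\int_{\R^d}\rmD_\mm F(\mu,x)\cdot\vV(\mu,x)\,\d\mu(x)$. Since $B$ is arbitrary, $\Gamma(\mu)=0$ for $\mm$-a.e.\ $\mu$, which is \eqref{eq:75}. Combined with Step 1 this proves $\rmD_\mm F\in\Tangent$.

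\emph{Step 4 (residual structure of $\mgrad F$ and consequences).} If $\vV\in\mgrad F$, pick $H_n\in\cyl1b\prbt$ with $H_n\to F$ in $L^2$ and $\rmD H_n\weakto\vV$, and let $F_n$ be the distinguished strong approximating sequence for $\rmD_\mm F$; then $H_n-F_n\in\cyl1b\prbt$, $H_n-F_n\to 0$ in $L^2$, and $\rmD(H_n-F_n)\weakto\vV-\rmD_\mm F$, so $\vV-\rmD_\mm F\in\Residual$ by the definition \eqref{eq:68}. Specialising $\vV=\rmD F$ for $F\in\cyl1b\prbt$ gives the second claim. For \eqref{eq:79}, note that $\rmD F-\rmD_\mm F\in\Residual$ and $\rmD_\mm G\in\Tangent$, and apply the pointwise orthogonality of Step 3. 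Finally, since $\rmD_\mm F\in\mgrad F$ itself (by Theorem \ref{prop:rel-diff}), every $\vV\in\mgrad F$ decomposes as $\vV=\rmD_\mm F+(\vV-\rmD_\mm F)$ with the two summands $L^2$-orthogonal by Step 2, yielding $\|\vV\|^2_{L^2(\bmm)}=\|\rmD_\mm F\|^2+\|\vV-\rmD_\mm F\|^2\ge\|\rmD_\mm F\|^2$, which is the asserted minimality.

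The main obstacle will be the quadratic scaling argument in Step 2: one must carefully justify that the cross term $\pCE_2(F_n,G_n)=\int\rmD F_n\cdot\rmD G_n\,\d\bmm$ converges to $\int\rmD_\mm F\cdot\vV\,\d\bmm$, which requires \emph{both} sequences to converge strongly in $L^2(\bmm;\R^d)$ — and it is exactly for this reason that the strong-approximation upgrade for $\Residual$ provided by Lemma \ref{le:unlemma}(2) is indispensable rather than merely the weak definition \eqref{eq:68}.
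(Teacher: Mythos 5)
Your proof is correct and follows essentially the same route as the paper's: both use the strong approximation mode of Lemma~\ref{le:unlemma}(2) together with the $L^2$-lower semicontinuity of $\CE_2$ to deduce the global orthogonality $\rmD_\mm F\perp\Residual$, then localize via \eqref{eq:70} (you with indicator functions, the paper with general $H\in L^\infty$, an immaterial difference), and finally treat $\mgrad F$ and minimality by the same subtraction and Pythagoras arguments. Your Step~2 makes the scaling/two-sign trick explicit where the paper leaves it implicit in the phrase "since $\vV$ is arbitrary", but the underlying computation is identical.
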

\begin{proof}
  Let us first observe that if $F_n\in \cyl1b \prbt$ satisfies \eqref{eq:69}
  and $\tilde F_n\in \cyl1b \prbt$ satisfies \eqref{eq:56}, we have
  $F_n+\tilde F_n\to F$ strongly in $L^2(\prbt,\mm)$, with
  $\rmD(F_n+\tilde F_n)\to \rmD_\mm F+\vV$, so that  the lower semicontinuity of the Cheeger energy with respect to $L^2$ convergence yields together with \eqref{eq:57} that  
  \begin{equation}
    \label{eq:76}
    \CE_2(F)=\int |\rmD_\mm F|^2\,\d\bmm\le
    \int |\rmD_\mm F+\vV|^2\,\d\bmm.
  \end{equation}
  Since $\vV$ is arbitrary in $\Residual$ we deduce that
  \begin{displaymath}
    \int \rmD_\mm F\cdot \vV\,\d\bmm=0\quad\text{for every }\vV\in \Residual.
  \end{displaymath}
  Replacing $\vV$ with $H\vV$, $H\in L^\infty(\prbt,\mm)$ we get
  \begin{equation}
    \label{eq:77}
    \int_\prbt \Big(\int_{\R^d}\rmD_\mm F\cdot \vV\,\d\mu(x)\Big)
    H(\mu)\,\d\mm(\mu)=0\quad\text{for every }\vV\in \Residual,\ H\in L^\infty(\prbt,\mm),
  \end{equation}
  which yields \eqref{eq:75}.

  If now $F_n\in \cyl1b \prbt$ converges strongly to $F$ with $\rmD F\weakto
   \gG$, selecting $\tilde F_n$ as above, we have
  $F_n-\tilde F_n\to 0$ strongly in $L^2(\prbt,\mm)$ and $\rmD(F_n-\tilde
  F_n)\weakto \gG-\rmD_\mm F$ weakly in $L^2(\prbt\times
  \R^d,\bmm;\R^d)$, so that $\gG-\rmD_\mm F\in \Residual$.
   By \eqref{eq:76} we conclude that $\rmD_\mm F$ is the element
  of minimal norm in $\mgrad F=\rmD_\mm F+\Residual$.
\end{proof}
We can give a ``pointwise'' interpretation of the orthogonality
properties of the previous Theorem.
Let us select an orthonormal set $\ocR:=\{\vV_n:n\in \N\}\subset \mathcal L^2(\prbt\times \R^d, \bmm; \R^d)$ dense in
$\Residual$ (we are thus assuming that $\vV_n$ are Borel vector fields
everywhere
defined).
Since
\begin{displaymath}
  \int_\prbt\Big(\int_{\R^d}|\vV_n(\mu,x)|^2\,\d\mu(x)\Big)\,\d\mm(\mu)=1
\end{displaymath}
we deduce that there exists a $\mm$-negligible set $\cN\subset \prbt$ such that
\begin{equation}
  \label{eq:78}
  \int_{\R^d}|\vV_n(\mu,x)|^2\,\d\mu(x)<\infty\quad\text{for every
  }n\in \N,\ \mu\in \prbt\setminus\cN.
\end{equation}
We thus define $\Residual[\mu]:=\overline{\mathrm{span}\{\vV_n[\mu]:n\in
  \N\}}\subset L^2(\R^d,\mu; \R^d)$ for
every $\mu\in \prbt\setminus\cN$ and
$\Tangent[\mu]:=\big(\Residual[\mu]\big)^\perp\cap \Tan_\mu\prbt$, where here $\perp$ denotes the orthogonal complement in the Hilbert space $L^2(\R^d,\mu;\R^d)$. 
\begin{theorem}
  \label{thm:pointwise}
  Let $F\in H^{1,2}(\W_2)$ and $\vV\in L^2(\prbt\times\R^d,\bmm;\R^d)$.
  \begin{enumerate}[\rm(1)]
  \item 
    $\vV$ belongs to
  $\Residual$ if and only if,  for $\mm$-a.e.~$\mu$,  $\vV[\mu]\in
  \Residual[\mu]$.
\item
  $\vV$ belongs to
   $\Tangent$ if and only if,  for $\mm$-a.e.~$\mu$,  $\vV[\mu]\in
  \Tangent[\mu]$.
  \item
  $\rmD_\mm F[\mu]\in \Tangent[\mu]$ 
  for $\mm$-a.e.~$\mu$. 
\item If $F\in \cyl1b \prbt$ then, 
    for $\mm$-a.e.~$\mu\in \prbt$, 
  $\rmD_\mm F[\mu]$ is the $L^2(\R^d,\mu)$-orthogonal projection of 
  $\rmD F[\mu]$ on $\Tangent[\mu]$.
  \end{enumerate}
\end{theorem}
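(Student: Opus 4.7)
The four assertions are tightly linked: once (1) is established, (2) follows by the same pattern, (3) is immediate from (2) combined with Theorem \ref{thm:decomposition}, and (4) follows from the orthogonal decomposition $\rmD F = \rmD_\mm F + (\rmD F - \rmD_\mm F)$ provided by Theorem \ref{thm:decomposition}. So the core work is in (1), which in turn hinges on the $L^\infty(\prbt,\mm)$-module structure of $\Residual$ proved in Lemma \ref{le:unlemma}(3).

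For the ``only if'' direction of (1), I would write an arbitrary $\vV \in \Residual$ as an $L^2$-convergent series $\vV = \sum_n c_n \vV_n$ with $(c_n) \in \ell^2$ (using the orthonormality and density of $\{\vV_n\}$). Passing to a suitable subsequence of partial sums, by Fubini the convergence also holds pointwise in $\mu$ in $L^2(\R^d,\mu;\R^d)$ off a $\mm$-null set; since each partial sum evaluated at $\mu$ lies in the closed subspace $\Residual[\mu]$, so does $\vV[\mu]$. For the converse, given $\vV \in L^2(\prbt\times\R^d,\bmm;\R^d)$ with $\vV[\mu]\in\Residual[\mu]$ $\mm$-a.e., I would let $P\vV$ denote the $L^2$-orthogonal projection onto $\Residual$ and set $\wW := \vV - P\vV$. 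The key trick is that for each $n\in\N$ and $H\in L^\infty(\prbt,\mm)$ the product $H\vV_n$ lies in $\Residual$ by Lemma \ref{le:unlemma}(3), hence
\begin{equation*}
\int_\prbt H(\mu) \Bigl( \int_{\R^d}\langle \wW[\mu](x),\vV_n[\mu](x)\rangle\,\d\mu(x)\Bigr)\,\d\mm(\mu)=0 \quad \forall\,H\in L^\infty(\prbt,\mm),\,n\in\N.
\end{equation*}
Varying $H$ and intersecting the countably many $\mm$-null sets yields $\wW[\mu]\perp \vV_n[\mu]$ in $L^2(\R^d,\mu;\R^d)$ for every $n$, i.e.~$\wW[\mu]\in\Residual[\mu]^\perp$. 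But the ``only if'' direction (applied to $P\vV\in\Residual$) shows $\wW[\mu]=\vV[\mu]-(P\vV)[\mu]\in \Residual[\mu]$ as well, forcing $\wW[\mu]=0$ $\mm$-a.e., hence $\vV=P\vV\in\Residual$.

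The proof of (2) then runs on the same template: the ``only if'' direction uses Lemma \ref{le:allTan} to get $\vV[\mu]\in\Tan_\mu\prbt$ $\mm$-a.e., plus the orthogonality trick above (now applied to $H\vV_n$) to get $\vV[\mu]\in\Residual[\mu]^\perp$, hence $\vV[\mu]\in\Tangent[\mu]$. For the converse, $\vV[\mu]\in\Tangent[\mu]$ a.e.~gives $\vV\in\Tan(\prbt,\bmm)$ by Lemma \ref{le:allTan} and, integrating the pointwise orthogonality against each $\vV_n$, gives $\vV\perp\vV_n$ in $L^2$ for every $n$; by density $\vV\in\Residual^\perp$, so $\vV\in\Tangent$. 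Assertion (3) is then just the application of (2) to $\rmD_\mm F$, which was shown to lie in $\Tangent$ in Theorem \ref{thm:decomposition}.

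For (4), given $F\in\cyl1b{\prbt}$, Lemma \ref{le:allTan} yields $\rmD F[\mu]\in\Tan_\mu\prbt$ everywhere, while Theorem \ref{thm:decomposition} gives $\rmD F-\rmD_\mm F\in\Residual$. By (1), $(\rmD F-\rmD_\mm F)[\mu]\in\Residual[\mu]$ for $\mm$-a.e.~$\mu$, and since $\Tangent[\mu]\subseteq \Residual[\mu]^\perp$ (as subsets of $L^2(\R^d,\mu;\R^d)$), we get $(\rmD F-\rmD_\mm F)[\mu]\perp\Tangent[\mu]$. Combined with (3), this is exactly the variational characterization of the $L^2(\R^d,\mu;\R^d)$-orthogonal projection of $\rmD F[\mu]$ onto $\Tangent[\mu]$. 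The main obstacle is ensuring the $\mm$-a.e.~reduction in the converse direction of (1) is clean: the crucial ingredient that makes it go through is the module property $H\vV_n\in\Residual$ for arbitrary $H\in L^\infty(\prbt,\mm)$, without which one would only get the integrated orthogonality and not the pointwise one.
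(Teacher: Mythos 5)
Your proof is correct and follows essentially the same strategy as the paper's: the forward direction of (1) via partial sums and Fubini, the $L^\infty(\prbt,\mm)$-module property of $\Residual$ (Lemma \ref{le:unlemma}(3)) as the engine for localizing the orthogonality, and (2)--(4) as corollaries via Lemma \ref{le:allTan} and Theorem \ref{thm:decomposition}. The only slight variant is your handling of the converse of (1): the paper proves $\langle\vV,\wW\rangle=0$ directly for every $\wW\in\Residual^\perp$ and invokes $(\Residual^\perp)^\perp=\Residual$, whereas you specialize to $\wW=\vV-P\vV$ and show $\wW[\mu]$ lies in both $\Residual[\mu]$ and $\Residual[\mu]^\perp$, which is an equally valid way to close the argument.
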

\begin{proof}
  If $\vV\in \Residual$ we can write $\vV=\lim_{N\to\infty}\vV^N$ in
  $L^2(\prbt\times\R^d,\bmm;\R^d)$
  where $\vV^N=\sum_{n=1}^N u_n\vV_n$ is the orthogonal projection of
  $\vV$ on the space generated by $\{\vV_1,\cdots,\vV_N\}$, with $u_n:= \la V, V_n \ra$. 
  Clearly $\vV^N[\mu]\in \Residual[\mu]$ for every $N\in \N$ and $\mu\in
  \prbt\setminus \cN$.
  Moreover we can find a subsequence, not relabeled, and a
  $\mm$-negligible set $\cN'\supset\cN$ such that 
  $\vV^N[\mu]\to \vV[\mu]$ in $L^2(\R^d,\mu; \R^d)$ for every $\mu\in
  \prbt\setminus \cN'$, so that $\vV[\mu]\in \Residual[\mu]$ for every $\mu\in
  \prbt\setminus \cN'$.

  Let now $\vV\in L^2(\prbt\times\R^d,\bmm;\R^d)$  be a  vector field such that
  $\vV[\mu]\in \Residual[\mu]$ for $\mm$-a.e.~$\mu\in \prbt$. Since $\Residual$
  is a closed subspace, in order to show that $\vV\in \Residual$ it is
  sufficient to prove that the scalar product with 
  every element $\wW\in \Residual^\perp$ vanishes.

  If $\wW\in \Residual^\perp$ then for every $H\in L^\infty(\prbt,\mm)$
  and every $n\in \N$ we get
  \begin{displaymath}
    \int_\prbt\Big(\int_{\R^d} \wW\cdot \vV_n\,\d\mu(x)\Big)H(\mu)\,\d\mm(\mu)=0,
  \end{displaymath}
  since $H\vV_n \in \Residual $ by \eqref{eq:70}. 
  Being $H$ arbitrary, we find that
  there exists a $\mm$-negligible set $\cN''\subset \prbt$ such that
  \begin{displaymath}
    \int_{\R^d} \wW[\mu] \cdot \vV_n[\mu]\,\d\mu=0\quad
    \text{for every }n\in \N,\ \mu\in \prbt\setminus\cN'',
  \end{displaymath}
  so that $\wW[\mu]\in \big(\Residual[\mu]\big)^\perp$ for 
  $\mm$-a.e.$\mu\in \prbt$.  
   We then deduce that
  \begin{displaymath}
    \int_{\R^d} \wW[\mu]\cdot \vV[\mu]\,\d\mu=0\quad
    \text{for $\mm$-a.e.~$\mu\in \prbt$},
  \end{displaymath}
  and therefore
  \begin{displaymath}
    \langle\wW,\vV\rangle_{L^2}=\int_\prbt\Big(\int_{\R^d} \wW\cdot \vV\,\d\mu(x)\Big)\,\d\mm(\mu)=0.
  \end{displaymath}
  The previous argument also shows that a vector field $\vV$ belongs
  to $\Residual^\perp$ if and only if $\vV[\mu]\in
  \big(\Residual[\mu])^\perp$
  for $\mm$-a.e.~$\mu\in \prbt$. This fact, together with the very
  definition of $\Tan(\prbt,\bmm)$ \eqref{eq:130}, yields claim (2).

  Claim (3) just follows by Theorem
  \ref{thm:decomposition}, since 
  \eqref{eq:75} shows that, for every $F\in H^{1,2}(\W_2)$,
  $\rmD_\mm F[\mu]\in \Tangent[\mu]$
  for $\mm$-a.e.~$\mu\in \prbt$.
  
  If $F\in \cyl1b \prbt$, combining claim 1 and Theorem \ref{thm:decomposition}, we see that  $\rmD F[\mu]-\rmD_\mm F[\mu]\in
  \Residual[\mu] \subset \big(\Tangent[\mu]\big)^\perp$ $\mm$-a.e., so that
  $\rmD_\mm F[\mu]$ is the $L^2(\R^d,\mu; \R^d)$-orthogonal projection of $\rmD
  F[\mu]$ on  $\Tangent[\mu]$, as stated in Claim (4).
\end{proof}
We can now interpret the above results in terms of the nonsmooth
tangent and cotangent structures introduced and developed by Gigli in
\cite{Gigli18}. Since we are in the Hilbertian case, we can identify
the cotangent module $L^2(T^*\prbt)$ and dual tangent module $L^2(T\prbt)$ with the Hilbert space $\Tangent$ defined by
\eqref{eq:132}. Let us report a useful characterization of the cotangent module $L^2(T^*X)$ \cite[Theorem 4.1.1]{GP20} for a general metric measure space $(X, \sfd, \mm)$.

\begin{theorem}\label{thm:module} Let $(X, \sfd, \mm)$ be a metric measure space. Then there exists a unique pair $((\mathcal{M}, \| \cdot \|_{\mathcal{M}}, \cdot_\mathcal{M}, |\cdot|_{\mathcal{M}}), \diff)$ such that $(\mathcal{M}, \| \cdot \|_{\mathcal{M}}, \cdot_\mathcal{M}, |\cdot|_{\mathcal{M}})$ is a $L^2(X,\mm)$-normed $L^\infty(X,\mm)$ module (cf.~\cite[Definition 3.1.1]{GP20}) and $\diff: D^{1,2}(X, \sfd, \mm) \to \mathcal{M}$ is a linear operator such that
\begin{itemize}
    \item [(i)] $|\diff(f)|_{\mathcal{M}} = \relgrad{f}$ $\mm$-a.e.~in $X$ for every $f \in D^{1,2}(X, \sfd, \mm)$.
    \item[(ii)] $\mathcal{M}$ is generated by $\left \{ \diff(f) : f \in D^{1,2}(X, \sfd, \mm) \right \}$.
\end{itemize}
Uniqueness is intended in the following sense: if $((\tilde{\mathcal{M}}, \| \cdot \|_{\tilde{\mathcal{M}}}, \cdot_{\tilde{\mathcal{M}}}, |\cdot|_{\tilde{\mathcal{M}}}), \tilde{\diff})$ is another pair with the above properties, then there exists a unique module isomorphism $\mathcal{J}: \mathcal{M} \to \tilde{\mathcal{M}}$ such that $\tilde{\diff}= \mathcal{J} \circ \diff$.
\end{theorem}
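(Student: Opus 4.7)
The plan is to establish existence by an explicit construction of a quotient/completion, and uniqueness by extending the natural assignment $\diff f \mapsto \tilde\diff f$ to a module isomorphism using the density condition (ii).

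For existence, I would first introduce the set of formal \emph{pre-cotangent} elements, namely finite sums $v = \sum_{i=1}^n \chi_{A_i} \diff f_i$ where $\{A_i\}_{i=1}^n$ is a Borel partition of $X$ and $f_i \in D^{1,2}(X,\sfd,\mm)$, and declare the pointwise norm to be
\begin{equation*}
|v|_* := \sum_{i=1}^n \chi_{A_i} \relgrad{f_i} \in L^2(X,\mm).
\end{equation*}
Using the sublinearity and locality of $\relgrad{\cdot}$ from Theorem \ref{thm:omnibus}(5)-(6), I would check that $|\cdot|_*$ is compatible with the natural equivalence generated by refinement of partitions and by changes of representatives that do not alter the above sum $\mm$-a.e. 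The quotient by the subspace $\{v : |v|_* = 0\ \mm\text{-a.e.}\}$, equipped with the $L^\infty(X,\mm)$-multiplication $h \cdot (\sum_i \chi_{A_i} \diff f_i) := \sum_i (h\chi_{A_i}) \diff f_i$ extended by a further refinement argument, becomes a pre-Hilbertian $L^\infty$-module. Its completion with respect to $\|v\|^2 := \int |v|_*^2 \d\mm$ yields the desired $\mathcal{M}$, and setting $\diff(f) := [\chi_X \diff f]$ immediately gives (i); condition (ii) holds by construction, since the dense pre-module is generated by such differentials.

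For uniqueness, given another pair $(\tilde{\mathcal{M}}, \tilde\diff)$ satisfying (i)-(ii), I would define $\mathcal{J}$ on the dense pre-module of $\mathcal{M}$ by
\begin{equation*}
\mathcal{J}\Bigl(\sum_{i=1}^n \chi_{A_i} \diff f_i\Bigr) := \sum_{i=1}^n \chi_{A_i} \tilde\diff f_i.
\end{equation*}
Well-definedness is the crucial step: if two formal representatives coincide in $\mathcal{M}$, their difference has vanishing pointwise norm $\mm$-a.e., and by property (i) in $\mathcal{M}$ this is a condition expressed purely in terms of $\relgrad{\cdot}$ on the intersections of the refining partitions; the same condition therefore guarantees, by property (i) in $\tilde{\mathcal{M}}$, that the corresponding difference vanishes in $\tilde{\mathcal{M}}$. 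The map $\mathcal{J}$ is $L^\infty$-linear on elementary combinations, and property (i) in both modules forces $|\mathcal{J}(v)|_{\tilde{\mathcal{M}}} = |v|_\mathcal{M}$ $\mm$-a.e., hence $\mathcal{J}$ is an isometry. By (ii) applied to $\tilde{\mathcal{M}}$, its image is dense in $\tilde{\mathcal{M}}$, so the unique continuous extension is a surjective module isomorphism, and uniqueness of $\mathcal{J}$ itself follows from the constraint $\tilde\diff = \mathcal{J} \circ \diff$ together with (ii).

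The main obstacle I anticipate is the well-definedness of the pointwise norm and of the module structure at the pre-module level: one must show that the defining formula for $|v|_*$ is insensitive to the choice of partition and of representatives, and that $L^\infty$-multiplication descends to the quotient. This reduces to the calculus rules for $\relgrad{\cdot}$—in particular the locality property \eqref{eq:7} and the sublinearity \eqref{eq:6}—which guarantee that replacing $f_i$ by $f_i'$ with $\relgrad{f_i - f_i'} = 0$ $\mm$-a.e.~on $A_i$ yields the same pointwise norm on $A_i$. The same rules are then invoked in the uniqueness argument to transfer the identification of elementary sums from $\mathcal{M}$ to $\tilde{\mathcal{M}}$.
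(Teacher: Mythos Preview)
The paper does not prove this statement: it is quoted as \cite[Theorem 4.1.1]{GP20} and used only as a black box to identify the abstract cotangent module with the concrete space $\Tangent$ in the theorem that follows. Your sketch---formal finite sums $\sum_i \chi_{A_i}\,\diff f_i$ over Borel partitions, pointwise norm $\sum_i \chi_{A_i}\relgrad{f_i}$, quotient by null elements, completion, and then the canonical isometry $\diff f\mapsto\tilde\diff f$ extended by $L^\infty$-linearity and density for uniqueness---is precisely the standard construction carried out in that reference, so there is nothing to contrast and your outline is correct.
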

 We thus have the following result.
\begin{theorem} There exists a unique module isomorphism $\mathcal{I}: \Tangent \to L^2(T^*\prob_2(\R^d)) \cong L^2(T\prbt) $ such that $\mathcal{I} \circ \rmD_\mm$ coincides with the abstract differential operator taking values in $L^2(T^*\prob_2(\R^d))$ as in \cite[ Definition  2.2.2]{Gigli18}.
\end{theorem}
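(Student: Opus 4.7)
The approach is to verify that the pair $(\Tangent, \rmD_\mm)$ satisfies the two characterizing axioms of the cotangent module stated in Theorem \ref{thm:module}, and then invoke the uniqueness part of that theorem to produce the desired isomorphism $\mathcal{I}$.

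First, I would check that $\Tangent$ is an $L^2(\prbt,\mm)$-normed $L^\infty(\prbt,\mm)$-module with pointwise norm
\begin{equation*}
  |\vV|_{\Tangent}(\mu) := \Big(\int_{\R^d} |\vV(\mu,x)|^2\,\d\mu(x)\Big)^{1/2},
\end{equation*}
which is Borel in $\mu$ and lies in $L^2(\prbt,\mm)$ with $L^2$-norm equal to $\|\vV\|_{L^2(\bmm)}$ by Fubini. Stability of $\Tangent = \Tan(\prbt,\bmm)\cap\Residual^\perp$ under multiplication by $L^\infty(\prbt,\mm)$ rests on two ingredients already in the paper: $\Tan(\prbt,\bmm)$ is $L^\infty$-stable by \eqref{eq:131} in Lemma \ref{le:allTan}; and, for $\vV\in\Residual^\perp$, $H\in L^\infty(\prbt,\mm)$, and $\wW\in\Residual$, the locality property \eqref{eq:70} of Lemma \ref{le:unlemma} gives $H\wW\in\Residual$, so that $\langle H\vV,\wW\rangle_{L^2(\bmm)}=\langle \vV,H\wW\rangle_{L^2(\bmm)}=0$, showing $H\vV\in\Residual^\perp$.

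Next, I would verify the two characterizing properties of Theorem \ref{thm:module}. Property (i), namely $|\rmD_\mm F|_{\Tangent}(\mu)=|\rmD F|_\star(\mu)$ for $\mm$-a.e.~$\mu$, is exactly the content of \eqref{eq:63}. Property (ii) — that $\Tangent$ is generated, as an $L^\infty$-module, by $\{\rmD_\mm F:F\in D^{1,2}(\W_2)\}$ — is the step I expect to be the main obstacle, and I would treat it by duality. Let $M\subset\Tangent$ denote the $L^\infty(\prbt,\mm)$-closed submodule generated by these differentials, and suppose $\vV\in\Tangent$ is $L^2(\bmm)$-orthogonal to $M$. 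Then for every $F\in D^{1,2}(\W_2)$ and every $H\in L^\infty(\prbt,\mm)$ we have $\int H\vV\cdot\rmD_\mm F\,\d\bmm=0$, whence
\begin{equation*}
  \int_{\R^d}\vV[\mu]\cdot\rmD_\mm F[\mu]\,\d\mu=0\quad\text{for $\mm$-a.e.~$\mu\in\prbt$.}
\end{equation*}
Specializing to $F=\lin\phi$ with $\phi\in\rmC^\infty_c(\R^d)$, Theorem \ref{thm:decomposition} together with Theorem \ref{thm:pointwise}(1) gives $\rmD F[\mu]-\rmD_\mm F[\mu]\in\Residual[\mu]$ $\mm$-a.e., while $\vV[\mu]\in\Tangent[\mu]\perp\Residual[\mu]$ by Theorem \ref{thm:pointwise}(2). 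Hence $\int\vV[\mu]\cdot\nabla\phi\,\d\mu=0$ $\mm$-a.e. Choosing a countable subset of $\rmC^\infty_c(\R^d)$ dense in the Lipschitz norm and extracting a common $\mm$-null exceptional set, we deduce $\vV[\mu]\perp\Tan_\mu\prob_2(\R^d)$ $\mm$-a.e.; combined with $\vV[\mu]\in\Tan_\mu\prob_2(\R^d)$, this forces $\vV=0$ in $L^2(\bmm)$, so $M=\Tangent$.

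With both axioms verified, the uniqueness statement in Theorem \ref{thm:module} yields a unique $L^\infty(\prbt,\mm)$-module isomorphism $\mathcal{I}\colon\Tangent\to L^2(T^*\prbt)$ such that $\mathcal{I}\circ\rmD_\mm$ coincides with the abstract differential of \cite[Definition 2.2.2]{Gigli18}. The further identification $L^2(T^*\prbt)\cong L^2(T\prbt)$ is the standard Hilbertian Riesz duality, available because $(\prbt,W_2,\mm)$ is infinitesimally Hilbertian by Corollary \ref{cor:WHilbert}.
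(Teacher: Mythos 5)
Your proof is correct, and it follows the paper's overall plan — verify the two axioms of Theorem~\ref{thm:module} and invoke its uniqueness clause — but the argument for the generation property~(ii) goes by a genuinely different route. The paper is constructive: given $\vV\in\Tangent$, it takes approximants $\vV_n\in\mathscr T$ from the dense set of Lemma~\ref{le:allTan}, \eqref{eq:38}, replaces each term $H\nabla\zeta$ by $H\,\rmD_\mm(\lin\zeta)$, and observes via Theorem~\ref{thm:pointwise}(4) that this is precisely the pointwise $L^2(\R^d,\mu;\R^d)$-projection onto $\Tangent[\mu]$, so the projected sequence still converges to $\vV$. You instead argue by duality: you let $M$ be the closed $L^\infty$-submodule generated by $\{\rmD_\mm F\}$, show that any $\vV\in\Tangent\cap M^\perp$ satisfies $\int\vV[\mu]\cdot\nabla\phi\,\d\mu=0$ $\mm$-a.e.~for every $\phi\in\rmC^\infty_c(\R^d)$ — using the pointwise decomposition $\rmD F[\mu]=\rmD_\mm F[\mu]+(\rmD F[\mu]-\rmD_\mm F[\mu])$ with the residual part lying in $\Residual[\mu]\subset(\Tangent[\mu])^\perp$ — and then conclude $\vV[\mu]\in\Tan_\mu\prbt\cap(\Tan_\mu\prbt)^\perp=\{0\}$. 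Both routes rest on the same countable-density trick to upgrade $L^2(\bmm)$-orthogonality to $\mm$-a.e.~pointwise orthogonality, and both use parts of Theorems~\ref{thm:decomposition} and~\ref{thm:pointwise}; your version does not invoke the explicit projection statement of Theorem~\ref{thm:pointwise}(4), trading that for the duality step, and it is perhaps slightly more robust in not needing the precise form of the projection. A useful small addition you make is to spell out why $\Tangent$ is stable under multiplication by $L^\infty(\prbt,\mm)$: the paper points to \eqref{eq:131} and \eqref{eq:70}, but your observation that $\langle H\vV,\wW\rangle_{L^2(\bmm)}=\langle\vV,H\wW\rangle_{L^2(\bmm)}=0$ for $\vV\in\Residual^\perp$, $H\in L^\infty$, $\wW\in\Residual$ makes the stability of $\Residual^\perp$ under $L^\infty$-multiplication fully explicit.
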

\begin{proof}
It is enough to show that $\Tangent$ (with an appropriate module structure) and the map $\rmD_\mm$ satisfy the properties listed in Theorem \ref{thm:module}. \\
If as $\|\cdot \|_\Tangent$ we take the $L^2(\prob_2(\R^d)\times \R^d, \bmm; \R^d)$ norm, it is clear that $(\Tangent, \|\cdot\|_\Tangent)$ is a Banach space, being closed by Lemma \ref{le:allTan}. The pointwise product $\cdot_\Tangent : L^\infty(\prob_2(\R^d), \mm) \times \Tangent \to \Tangent $ is well defined by \eqref{eq:131} and \eqref{eq:70}, bilinear and associative in $L^\infty(\prob_2(\R^d), \mm)$ by definition. Defining the pointwise norm $|\cdot|_\Tangent$ as the map sending $\vV \in \Tangent $ to $\|\vV[\mu]\|_\mu$, we immediately have that $\|\vV\|_\Tangent = \||\vV|_\Tangent\|_{L^2(\prob_2(\R^d), \mm)}$ and $|H \cdot_\Tangent \vV|_\Tangent = |H||\vV|_\Tangent$ $\mm$-a.e.~in $\prob_2(\R^d)$ for every $\vV \in \Tangent$ and every $H \in L^\infty(\prob_2(\R^d), \mm)$. This shows that $(\Tangent, \| \cdot \|_{\Tangent}, \cdot_\Tangent, |\cdot|_{\Tangent})$ is a $L^2(\prob_2(\R^d),\mm)$-normed $L^\infty(\prob_2(\R^d),\mm)$ module. Taking as $\diff$ the map $\rmD_\mm : D^{1,2}(\prob_2(\R^d), W_2, \mm) \to \Tangent$, we see that it is well defined and linear by Theorem \ref{prop:rel-diff} and Theorem \ref{thm:decomposition}. Property (i) of Theorem \ref{thm:module} follows by \eqref{eq:63}. Finally property (ii) of Theorem \ref{thm:module}, meaning that (\cite[Definition 3.1.13]{GP20}) $\Tangent$ coincides with the $\|\cdot\|_\Tangent$-closure of 
\[  \Tangent_0:=  \operatorname{span} \left \{ H \rmD_\mm F : H \in L^\infty(\prob_2(\R^d), \mm), \, F \in D^{1,2}(\prob_2(\R^d), W_2, \mm) \right \},\]
follows by \eqref{eq:38} and the definition of $\Tangent$.  Indeed, let $\mathscr{L} \subset L^\infty(\prob_2(\R^d), \mm)$ be a dense subset of $L^2(\prob_2(\R^d), \mm)$ and $\mathscr{C}$ be a dense subset of $\rmC_c^\infty(\R^d)$ with respect to the Lipschtiz norm as in Lemma \ref{le:allTan}. If $\vV \in \Tangent$, in particular $\vV \in \Tan(\prbt,\bmm)$ so that we can find by \eqref{eq:38} numbers $(N_n)_n \subset \N$, $(\{\alpha_n^i\}_{i=1}^{N_n})_n \subset \R$ and functions $(\{H_n^i\}_{i=1}^{N_n})_n \subset \mathscr{L}$, $(\{\zeta_n^i\}_{i=1}^{N_n})_n \subset \mathscr{C}$ such that the sequence
\[ \vV_n(\mu,x):= \sum_{i=1}^{N_n}\alpha_n^i H_n^i(\mu) \nabla \zeta_n^i(x), \quad (\mu, x) \in\prob_2(\R^d)\times \R^d\,  n \in \N\]
converges to $\vV$ in $L^2(\prob_2(\R^d)\times \R^d, \bmm;\R^d)$. Consider now the functions $F_n^i:= \lin {\zeta_n^i}$ and the sequence 
\[ \vV'_n(\mu,x):= \sum_{i=1}^{N_n}\alpha_n^i H_n^i(\mu) \rmD_\mm F_n^i, \quad (\mu, x) \in\prob_2(\R^d)\times \R^d\,  n \in \N.\]
It is clear that $(\vV'_n)_n \subset \Tangent_0$; by Theorem \ref{thm:pointwise}, $\vV'_n$ is the orthogonal projection of $\vV_n$ on $\Tangent$ for every $n \in \N$, so that $\vV'_n$ converges to $\vV$ in $L^2(\prob_2(\R^d)\times \R^d, \bmm;\R^d)$.

Theorem \ref{thm:module} gives thus the existence of a unique module isomorphism $\mathcal{I} : \Tangent \to L^2(T^*\prob_2(\R^d))$.\\
Finally, notice that $L^2(T^*\prob_2(\R^d)) \cong L^2(T\prbt)$ since $(\prob_2(\R^d), W_2, \mm)$ is infinitesimally Hilbertian by Corollary \ref{cor:WHilbert} (see also \cite[Theorem 4.3]{GP20}). 
\end{proof}

\subsection{Examples}
\label{subsec:examples}
\subsubsection*{Isometric embedding of Euclidean Sobolev spaces}
Let $\Omega$ be a Lipschitz bounded open set in $\R^d$.
For every $\omega\in \Omega$ let us consider the Dirac mass $\delta_\omega$
concentrated at $\omega$. The map $\iota:\omega\mapsto \delta_\omega$ is an isometry
between $\R^d$ and $\iota(\R^d)\subset \prob_2(\R^d)$.
Setting $\mm:=\iota_\sharp \Leb d\res\Omega$ we easily see that
$H^{1,2}(\prob_2(\R^d),W_2,\mm)$ is isomorphic to
$H^{1,2}(\Omega)$.

In this case only Dirac masses are involved and cylinder functions are of the form $F(\delta_\omega)=\psi( \pphi(\omega))$, so that
the Wasserstein gradient reduces to the usual gradient of
$\psi\circ \pphi$.

Another isometric embedding is also possible:
we fix a reference measure $\lambda \in \prob_2(\R^d)$ symmetric
w.r.t.~the origin
and we consider the map $\iota:\Omega\to \prob_2(\R^d)$ given by
\begin{equation}
  \label{eq:110}
  \iota(\omega):=\lambda(\cdot-\omega)=(\mathsf
  t_\omega)_\sharp\lambda,\quad
  \mathsf t_\omega(x):=x+\omega,\quad\omega\in \Omega.
\end{equation}
 To every function $F:\prob_2(\R^d)\to \R$ corresponds a map $\hat F: \Omega \to \R$ defined as 
\begin{equation}
  \label{eq:101}
  \hat F(\omega) :  =F((\mathsf
  t_\omega)_\sharp\lambda).
\end{equation}
In the case of a cylinder function as in \eqref{eq:cyl} we get
\begin{equation}
  \label{eq:106}
  \hat
  F(\omega)=\psi\Big(\int\phi_1(x+\omega)\,\d\lambda(x),\cdots,\int\phi_N(x+\omega)\,\d\lambda(x)\Big)=
  \psi\Big(\phi_1*\lambda(\omega),\cdots,\phi_N*\lambda(\omega)\Big).
\end{equation}
In this case (identifying $\iota(\omega)$ with $\omega$) we have
\begin{equation}
  \label{eq:109}
  \rmD
  F(\omega,x)=\sum_{j=1}^N\partial\psi_j(\phi_1*\lambda(\omega),\cdots,\phi_N*\lambda(\omega))
  \nabla\phi_j(x)
\end{equation}
and
\begin{equation}
  \label{eq:111}
  \|\rmD
  F[\omega]\|_{\omega}^2=
  \int_{\R^d}\Big|\sum_{j=1}^N\partial\psi_j(\phi_1*\lambda(\omega),\cdots,\phi_N*\lambda(\omega))
  \nabla\phi_j(x+\omega)\Big|^2\,\d\lambda(x).
\end{equation}
On the other hand, $\iota$ is an isometry of $\R^d$ into
$\prob_2(\R^d)$, so that
the space $H^{1,2}(\prob_2(\R^d),W_2,\mm)$ is still isomorphic to
$H^{1,2}(\Omega)$.
It follows that the $\mm$-Wasserstein gradient of $F$ is
\begin{equation}
  \label{eq:112}
  \rmD_\mm
  F(\omega,x)=\sum_{j=1}^N\partial\psi_j(\phi_1*\lambda(\omega),\cdots,\phi_N*\lambda(\omega))
  \nabla\phi_j*\lambda(\omega)
\end{equation}
independent of $x$ and the minimal relaxed gradient is
\begin{equation}
  \label{eq:113}
  |\rmD_\mm F|^2_\star(\omega)=
  \Big|\sum_{j=1}^N\partial\psi_j(\phi_1*\lambda(\omega),\cdots,\phi_N*\lambda(\omega))
  \nabla\phi_j*\lambda(\omega)\Big|^2
\end{equation}
\subsubsection*{Gaussian distributions}
Let now $\kappa=N(\omega,\Sigma):=(\det (2\pi \Sigma))^{-1/2}\mathrm
e^{-\frac12 \langle  \omega ,\Sigma^{-1} \omega \rangle}\Leb d$ be a Gaussian
measure with mean $\omega$ and covariance matrix $\Sigma\in
\mathrm{Sym}^+(d)$, the space of symmetric and positive definite
$d\times d$-matrices; we
consider the set
\begin{equation}
  \label{eq:114}
  \mathcal N^d:=\Big\{\mathcal N(\omega,\Sigma):
  \omega\in \R^d,\ \Sigma \in \mathrm{Sym}^+(d)\Big\},
\end{equation}
endowed with the Wasserstein distance and a finite positive Borel  measure
$\mm$ concentrated on $\mathcal N^d$.
Since
\begin{equation}
  \label{eq:115}
  W^2_2(N(\omega_1,\Sigma_1), N(\omega_2,\Sigma_2))=
  |\omega_1-\omega_2|^2+\mathrm{tr}\Sigma_1+\mathrm{tr}\Sigma_2-
  2\mathrm{tr}\Big(\Sigma_1^{1/2}\Sigma_2\Sigma_1^{1/2}\Big)^{1/2},
\end{equation}
 $H^{1,2}(\prob_2(\R^d),W_2,\mm)$ is isometric to 
$H^{1,2}( U  ,\sfd,\hat\mm)$
where $ U=\R^d\times \overline{\mathrm{Sym}^+(d)}\subset \R^d\times
\R^{d\times d}$
  endowed with the distance $\sfd$ induced by the formula
  \eqref{eq:115} and $\hat \mm$ is the measure induced by $\mm$.

\subsubsection*{The closable case} Following \cite{DelloSchiavo20} (here in the simpler setting of the Euclidean space, but see Section \ref{subsec:WSR} below), 
we assume that $\mm$ has no atoms and the following integration by
parts formula: for every $G \in \ccyl\infty c\prbt$
and $w \in \rmC^{\infty}_c(\R^d;\R^d)$ there exists $\rmD^*_w G \in
L^2(\prob_2(\R^d),\mm)$ such that for every $F \in \ccyl\infty c\prbt$ it holds
\[ \int_{\prob_2(\R^d)} \left (\int_{\R^d} \rmD F(\mu,x) \cdot w(x) \, \d \mu(x) \right ) G(\mu)\, \d \mm(\mu) = \int_{\prob_2(\R^d)}  \rmD^*_w G(\mu)  F(\mu) \, \d \mm(\mu). \]
This equality implies that $\Residual = \{ 0\}$ i.e.~that $\pCE_2$ is
closable. We notice that the measure $\mm$ induced by the immersion in
the space of delta measures considered at the beginning of this section
satisfies the integration by parts formula above (see also Example 5.4
in \cite{DelloSchiavo20}). In \cite{DelloSchiavo20}, in case the base
space is a compact Riemannian manifold, are reported important
examples of measures $\mm$ satisfying the (Riemannian analogue of the)
integration by parts formula: the normalized mixed Poisson measure
(Example 5.11 in \cite{DelloSchiavo20} and \cite{AKR98,RS99}), the
entropic measure over $\mathbb{S}^1$ (Example 5.15 in
\cite{DelloSchiavo20} and \cite{vRS09}, see also the multidimensional
case \cite{Sturm11}) and the Malliavin–Shavgulidze image measure (Example 5.18 in \cite{DelloSchiavo20} and \cite{MM91}).

\section{Extensions to Riemannian manifolds and Hilbert spaces}
\label{sec:extensions}
The aim of this Section is to extend the density result stated in
Theorem \ref{thm:main} from the finite dimensional and flat space $\R^d$  to Riemannian manifolds and (possibly infinite dimensional) Hilbert spaces.
Our first step deals with manifold embedded in some Euclidean space
$\R^d$ and in fact we will consider more general closed subsets of
$\R^d$.

\subsection{Intrinsic Wasserstein spaces on closed subsets of \texorpdfstring{$\R^d$}{R}}
\label{subsec:intrinsic}
 In this subsection we denote by $\varrho$ the Euclidean distance on $\R^d$. $\prob_2(\R^d)$ still denotes the subset of Borel probability measures on $\R^d$ with finite second $\varrho$-moment and $W_2$ is the Wasserstein distance on $\prob_2(\R^d)$ induced by $\varrho$.

We assume that $C \subset \R^d$ is a closed set and that $\sigma$ is a
distance on $C$ such that $(C,\sigma)$ is a complete and separable
metric space
and 
\begin{equation}\label{eq:disdist}
    \varrho(x_1,x_2) \le \sigma(x_1,x_2) \le \varrho_{C, \ell}(x_1,x_2) \quad \text{ for every } x_1,x_2 \in C,
\end{equation}
where $\varrho_{C, \ell} $ is defined as in \eqref{eq:40} with respect
to the distance $\sfd:=\varrho$.
 Since the topology induced by $\sigma$
is stronger than the Euclidean topology and they are both Polish
topologies,
the Borel sets of $(C,\sigma)$ coincide with the Borel sets of $C$ as
a subset of the Euclidean space $\R^d$. This means that every { Borel} 
probability measure on $\R^d$
with support contained in $C$ can be identified with a { Borel} 
probability measure in $(C, \sigma)$. Conversely any probability
measure on $(C,\sigma)$ extends to a probability measure on $\R^d$. We
can thus denote unambiguously by $\prob(C)$ the set of { Borel}
probability measures on $C$ and by $\prob_{2,\sigma}(C)$ the elements
of $\prob(C)$ with finite second $\sigma$-moment.

$\prob_{2,\sigma}(C)$
can be identified with the subset of $\prob_2(\R^d)$
\[ \left \{ \mu \in \prob_2(\R^d) : \supp(\mu) \subset C, \quad \int_C
    \sigma^2(x_0,x) \,\d \mu(x) < + \infty \text{ for some } x_0 \in C
  \right \}.\]
We will denote by $\iota:C\to\R^d$ the inclusion map; $\iiota:
\prob_{2,\sigma}(C)\to \prob_2(\R^d)$
is the corresponding continuous injection given by
$\iiota(\mu):=\iota_\sharp \mu$, which may be identified with the
inclusion map of $\prob_{2,\sigma}(C)$ into $\prob_{2,\sigma}(\R^d)$.

Since $(\prob_2(C),W_{2,\sigma})$ is a complete and separable
metric space and the topology induced by $W_{2,\sigma}$ is stronger
than the topology induced by $W_2$, we deduce that
$\prob_{2,\sigma}(C)$ is a Lusin (and therefore Borel) subset of
$\prob_2(\R^d)$.

If $\mm$ is a finite and positive Borel  measure on $\prob_{2,\sigma}(C)$,
$\iiota_\sharp \mm$ is
the Borel measure in $\prob_2(\R^d)$ which is
concentrated on $\prob_{2,\sigma}(C)$ and satisfies $ \iiota_\sharp \mm(Z)  =\mm(Z\cap
\prob_{2,\sigma}(C))$
for every Borel set $Z\subset \prob_2(\R^d).$

In a similar way, if $F:\prob_2(\R^d)\to \R$ is a Borel (or
$ \iiota_\sharp \mm$- measurable) map, we will
set $\iiota^* F:=F\circ\iiota:\prob_{2,\sigma}(C)\to \R$.
\begin{theorem}
  \label{thm:identification}
  We have $H^{1,2}(\prob_{2,\sigma}(C), W_{2, \sigma}, \mm)
   \cong H^{1,2}(\prob_2(\R^d), W_2, \iiota_\sharp\mm)
  $
  with equal minimal relaxed gradient, meaning that 
  \begin{equation}
\relgrad{(\iiota^* F)} = \iiota^* \left ( \relgrad{F} \right ) \text{
  for every } F \in H^{1,2}(\prob_2(\R^d), W_2,
\iiota_\sharp\mm).\label{eq:140}
\end{equation}
In particular
  $H^{1,2}(\prob_{2,\sigma}(C), W_{2, \sigma}, \mm)$ is a Hilbert space and the algebra of
  cylinder functions $\iiota^*\big(\cyl1b \prbt\big)$ is dense in
  $H^{1,2}(\prob_{2,\sigma}(C), W_{2, \sigma}, \mm)$ in the sense of \eqref{eq:55}.
\end{theorem}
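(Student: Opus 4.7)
The plan is to deduce Theorem \ref{thm:identification} from the abstract intrinsic-distance identification Theorem \ref{thm:identification2}, applied with the choices
\begin{equation*}
  X := \prob_2(\R^d),\quad \sfd := W_2,\quad Y := \prob_{2,\sigma}(C),\quad \dY := W_{2,\sigma},\quad \AA := \cyl1b{\prbt},
\end{equation*}
equipped with the measure $\iiota_\sharp \mm$ on $X$. The crucial hypothesis \eqref{eq:214-15} for $(X,\sfd,\mm,\AA)$ is exactly the content of our main Theorem \ref{thm:main}, which is already established, so once conditions (A) and (B) of Subsection \ref{subsec:intrinsic1} are verified, Theorem \ref{thm:identification2} immediately gives equality of the minimal relaxed gradients computed with respect to $(W_2,\cyl1b\prbt)$ and to $(W_{2,\sigma},\Lip_b(Y,W_{2,\sigma}))$, hence both \eqref{eq:140} and the Hilbertian/density statement (by Corollary \ref{cor:WHilbert} applied on $X$).

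For condition (A), that $Y$ is Borel in $X$ of full $\iiota_\sharp\mm$-measure was already recorded in the discussion preceding the statement. The remaining stability property \eqref{eq:139} along $W_2$-Lipschitz curves I would verify via Lisini's superposition principle: any $W_2$-Lipschitz curve $(\mu_t)\subset \prob_2(\R^d)$ is represented as $(e_t)_\sharp \eta$ for some $\eta$ concentrated on $\R^d$-valued absolutely continuous curves $\omega$ with $L^2$ metric speed. If $\mu_t\in \prob_{2,\sigma}(C)$ for $\Leb1$-a.e.~$t$, then $\omega(t)\in C$ for $\eta$-a.e.~$\omega$ and $\Leb1$-a.e.~$t$, and by continuity of $\omega$ together with closedness of $C$ this extends to every $t$. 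Hence $\supp(\mu_t)\subset C$ for every $t$; finiteness of the $\sigma$-moment of $\mu_t$ for every $t$ is then obtained from the bound $\sigma(\omega(s),\omega(t))\le \ell_\varrho(\omega|_{[s,t]})$ (which follows from $\sigma\le \varrho_{C,\ell}$) together with the $L^2$-integrability of the metric speeds, starting from a time $s^*$ where $\mu_{s^*}\in Y$.

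For condition (B), the lower bound $W_2(\iiota(\mu),\iiota(\nu))\le W_{2,\sigma}(\mu,\nu)$ is a direct consequence of $\varrho\le \sigma$ on $C$, and after truncating to a bounded distance (Remark \ref{rem:trunc}) matches $\sfd_1\le \dY$ on $Y$. For the upper bound $W_{2,\sigma}\le (W_2)_{Y,\ell}$, I would again apply Lisini: given any $W_2$-Lipschitz curve $(\mu_t)\subset Y$ and its Lisini plan $\eta$, each $\omega\in\supp\eta$ takes values in $C$ and is $\varrho$-AC, hence $\sigma$-AC with $|\dot\omega|_\sigma\le |\dot\omega|_\varrho$ a.e.~by $\sigma\le \varrho_{C,\ell}$. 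Pushing forward $(e_s,e_t)_\sharp\eta$ and using Cauchy–Schwarz yields
\begin{equation*}
  W_{2,\sigma}^2(\mu_s,\mu_t)\le (t-s)\int_s^t\!\!\int |\dot\omega|_\varrho^2\,\d\eta\,\d r,
\end{equation*}
so $(\mu_t)$ is $W_{2,\sigma}$-AC with $|\dot\mu|_{W_{2,\sigma}}\le |\dot\mu|_{W_2}$, giving $\ell_{W_{2,\sigma}}((\mu_t))\le \ell_{W_2}((\mu_t))$ and thus the required length-distance inequality via \eqref{eq:40}.

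The main technical obstacle is precisely this double application of Lisini's theorem, i.e.\ transferring both the constraint $\supp(\mu_t)\subset C$ and the absolute continuity estimate between the two Wasserstein distances using representation by curves in the base space; everything else is a direct invocation of the previously established results. Once (A) and (B) are in place, the proof of Theorem \ref{thm:identification} is a one-line citation of Theorem \ref{thm:identification2}.
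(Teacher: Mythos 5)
Your proposal is correct and takes essentially the same route as the paper's proof: both reduce the statement to Theorem~\ref{thm:identification2} with $X=\prob_2(\R^d)$, $\sfd=W_2$, $Y=\prob_{2,\sigma}(C)$, $\delta=W_{2,\sigma}$, and both verify conditions (A) and (B) by representing $W_2$-Lipschitz curves via the superposition principle, deriving $\gamma\subset C$ for a.e.~$\gamma$ and then bounding $W_{2,\sigma}(\mu_{t_0},\mu_{t_1})$ by integrating the Euclidean metric speed and using $\sigma\le\varrho_{C,\ell}$. The minor differences (you do the ``a.e.~in $C$ implies everywhere in $C$'' step at the level of curves $\omega$ rather than first at the level of the measures $\mu_t$, and you phrase the moment bound directly rather than through the $\zeta$-trick) are immaterial.
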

\begin{proof}
  We want to apply Theorem \ref{thm:identification2}
  where $X:=\prob_2(\R^d)$,
  $\sfd:=W_{2}$, \
  $Y:=\prob_{2,\sigma}(C)$, and $\delta:=W_{2,\sigma}$.
  The first assumption of Condition (A), $\iiota_\sharp
  \mm(\prob_2(\R^d)\setminus \prob_{2,\sigma}(C))=0$, is clearly
  satisfied by construction.

  In order to prove \eqref{eq:139} we consider a $W_2$-Lipschitz curve
  $\mu:[0,\ell]\to \prob_2(\R^d)$ parametrized by the $W_2$-arc-length
  such that $ \mu_s  \in \prob_{2,\sigma}(C)$ for $\Leb 1$-a.e.~$s\in
  [0,\ell]$.
  Since the map $\mu$ is continuous in $\prob_2(\R^d)$, $C$ is a
  closed set, and $\mu_s(\R^d\setminus C)=0$ for
   $\Leb 1$-a.e.~$s\in
  [0,\ell]$, we conclude that $\mu_s(\R^d\setminus C)=0$ for every
  $s\in [0,\ell]$.
  
  By \cite[Theorem 8.2.1, Theorem 8.3.1]{AGS08})
  there exists a measure  $\eeta\in \prob(\rmC([0,\ell];\R^d))$  concentrated on absolutely continuous curves such that $(\mathsf e_t)_\sharp (\eeta)= \mu_t  $ for every $t\in [0,\ell]$ and
  \begin{equation}
    \label{eq:88}
    \int |\gamma'(t)|^2\,\d\eeta(\gamma) = \int
    |\dot{\gamma}|^2_{\varrho}(t)\,\d\eeta(\gamma) = 1 \quad\text{for
      a.e.~}t\in [0,\ell].
  \end{equation}
  Let us also consider the function $\zeta(x):=\mathrm{dist}(x,C)\land
  1$, $x\in \R^d$,
  where $\mathrm{dist}(x,C):=\min_{z\in C}\varrho(x,z)$. $\zeta$ is a bounded
  Lipschitz function which vanishes precisely on $C$.
  Fubini's Theorem yields
    \begin{align*}
      \int\Big(\int_0^\ell \zeta(\gamma(t))\,\d
      t\Big)\,\d\eeta(\gamma)
    &=
     \int_0^\ell \int \zeta(\sfe_t(\gamma))\,
      \d\eeta(\gamma)\,\d t
      =\int_0^\ell \int_{ \R^d} \zeta\,\d\mu_t\,\d t=0
  \end{align*}
  since $\int \zeta(x)\,\d\mu_t=0$ for $\Leb 1$-a.e.~$t\in
  (0,\ell)$.

  It follows that $\int_0^\ell \zeta(\gamma(t))\,\d t=0$ for $\eeta$-a.e.~$\gamma$, so
  that the set of $t\in [0,\ell]$ for which
  $\gamma(t)\in C$ is dense in $[0,\ell]$. Being $C$ closed,
  we conclude that $\gamma$ takes values in $C$ for
  $\eeta$-a.e.~$\gamma$.
  
  We can now estimate the $W_{2,\sigma}$ distance between  the two measures
  $\mu_{t_0}$ and $\mu_{t_1}$, where $0 \le t_0 < t_1 \le \ell$  :
  \begin{align*}
    W^2_{2,\sigma}(\mu_{t_0},\mu_{t_1})
    &
      \le
      \int \sigma^2(\gamma(t_0),\gamma(t_1))\,\d\eeta(\gamma)\\
       &\le  
      \int \Big(\int_{t_0}^{t_1} |\dot\gamma|_\sigma(s)\,\d s\Big)^2\,\d\eeta(\gamma)\\
      & = \int \Big(\int_{t_0}^{t_1} |\dot\gamma|_\varrho(s)\,\d s\Big)^2\,\d\eeta(\gamma)\\
     &\le  (t_1-t_0) \int \int_{t_0}^{t_1} |\dot\gamma|_\varrho^2\,\d s\,\d\eeta(\gamma)\\
      &=(t_1-t_0)\int_{t_0}^{t_1} \int |\dot\gamma|_{\varrho}^2\,\d\eeta(\gamma)\,\d s\\
       &= (t_1-t_0)^2,
  \end{align*}
  where we have used  that $(\mathsf e_{t_0},\mathsf e_{t_1})_\sharp \eeta \in \Gamma(\mu_{t_0}, \mu_{t_1})$,  \eqref{eq:disdist} and Remark \ref{rem:equivcond} to say that $|\dot{\gamma}|_\varrho (s) = |\dot{\gamma}|_\sigma (s)$.

  Choosing $t_0\in [0,\ell]$ such that $\mu_{t_0}\in
  \prob_{2,\sigma}(C)$ we deduce that $\mu_{t_1}\in
  \prob_{2,\sigma}(C)$ as well for every $t_1\in [0,\ell]$. This
  concludes the proof of property (A).

  Condition (B) corresponds to
 
    \begin{equation}\label{eq:aimintr}
      W_2(\mu_0, \mu_1) \le W_{2,\sigma}(\mu_0, \mu_1) \le (W_2)_{Y,
        \ell}(\mu_0, \mu_1) \quad \text{ for every } \mu_0, \mu_1 \in
      Y=
      \prob_{2,\sigma}(C),
  \end{equation}
  where $(W_2)_{Y, \ell}(\mu_0, \mu_1)$ is defined as in \eqref{eq:40}
  with $W_2$ in place of $\sfd$. The first inequality immediately
  follows by \eqref{eq:disdist}; to prove the second one,
  we use
  \eqref{eq:40bis} and 
  the above estimate with $t_0=0$ and $t_1=\ell$
  for  a  $W_2$-Lipschitz curve $\mu:[0, \ell] \to Y$ such that
  $|\dot{\mu}|_{W_2}=1$ a.e.~in $[0, \ell]$ with $ \mu |_{t=0}=\mu_0$ and
  $ \mu|_{t=\ell}=\mu_1$.
  Taking the infimum w.r.t.~$\ell$ 
  we obtain \eqref{eq:aimintr}.
\end{proof} 
\subsection{Wasserstein Sobolev space on complete Riemannian
  manifolds}
\label{subsec:WSR}
In this subsection, we will briefly discuss the case of the Sobolev
space $H^{1,2}(\prob_2(\M),
 W_{2,\sfd_\M},
\mm)$ where
$(\M,\sfd_\M)$ is a smooth and complete Riemannian manifold endowed with
the canonical Riemannian distance $\sfd_\M$   (inducing the Wasserstein distance $W_{2,\sfd_\M}$) and   $\mm$ is a  finite and positive Borel measure  on $\prob_2(\M)$.
We will denote by $\AA$ the unital  algebra generated by $\big\{\lin{f}: f\in \rmC^1_c(\M)\big\}$.
\begin{theorem}
  \label{thm:main2}
  $H^{1,2}(\prob_2(\M),W_{2, \sfd_\M},\mm)$ is a Hilbert space and the algebra $\AA$ is
  (strongly) dense:
  for every $F\in H^{1,2}(\prob_2(\M),W_{2, \sfd_\M},\mm)$
  there exists a sequence $F_n\in \AA$, $n\in \N$ such that 
  \begin{equation}
    \label{eq:55ee}
    F_n\to F,\quad \lip(F_n)\to \relgrad{F} \quad \text{strongly in }L^2(\prob_2(\M),\mm). 
  \end{equation}
\end{theorem}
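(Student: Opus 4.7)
The plan is to reduce the statement to Theorem \ref{thm:main} via Nash's isometric embedding, extending the argument of Corollary \ref{cor:Riemann} to the Wasserstein level. By Nash's theorem there is a smooth isometric embedding $\jmath:\M\hookrightarrow\R^d$; since $\M$ is complete, $M:=\jmath(\M)$ is closed in $\R^d$, the induced metric $\sigma$ defined by $\sigma(\jmath(x),\jmath(y)):=\sfd_\M(x,y)$ coincides with the length distance $\varrho_{M,\ell}$ generated by the Euclidean distance $\varrho$, and the topology induced by $\sigma$ on $M$ agrees with the relative Euclidean topology. Consequently the pushforward $\tilde\jmath:\mu\mapsto\jmath_\sharp\mu$ is an isometric bijection from $(\prob_2(\M),W_{2,\sfd_\M})$ onto $(\prob_{2,\sigma}(M),W_{2,\sigma})$; setting $\tilde\mm:=\tilde\jmath_\sharp\mm$, the map $F\mapsto F\circ\tilde\jmath$ is an isometric isomorphism of metric measure spaces, and hence induces an isomorphism
\[
H^{1,2}(\prob_2(\M),W_{2,\sfd_\M},\mm)\;\cong\; H^{1,2}(\prob_{2,\sigma}(M),W_{2,\sigma},\tilde\mm)
\]
preserving minimal relaxed gradients. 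Theorem \ref{thm:identification} applied with $C=M$ then identifies the right-hand side with $H^{1,2}(\prbt,W_2,\iiota_\sharp\tilde\mm)$, which by Corollary \ref{cor:WHilbert} is a separable Hilbert space; this proves the Hilbertianity assertion.

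For the strong density of $\AA$, I would fix $\tilde F\in H^{1,2}(\prob_2(\M),W_{2,\sfd_\M},\mm)$ and let $F$ be its image in $H^{1,2}(\prbt,W_2,\iiota_\sharp\tilde\mm)$ under the above chain of isomorphisms. By Corollary \ref{cor:WHilbert} together with Proposition \ref{prop:density} applied with $\FF=\rmC^\infty_c(\R^d)$, there is a sequence $F_n=\psi_n\circ\lin{\uphi_n}\in\ccyl\infty c{\prbt}$ (with $\psi_n$ polynomial and $\uphi_n\in(\rmC^\infty_c(\R^d))^{N_n}$) such that $F_n\to F$ and $\lip_{W_2}F_n\to\relgrad{F}$ strongly in $L^2(\prbt,\iiota_\sharp\tilde\mm)$. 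Defining $\tilde F_n:=F_n\circ\tilde\jmath=\psi_n\circ\lin{\uphi_n\circ\jmath}$, the smoothness of $\jmath$ and the fact that $M$ is closed in $\R^d$ (so that $\jmath$ is proper) imply $\phi\circ\jmath\in\rmC^\infty_c(\M)\subset\rmC^1_c(\M)$ for every $\phi\in\rmC^\infty_c(\R^d)$; hence $\tilde F_n$ belongs to the polynomial algebra generated by $\{\lin{\phi\circ\jmath}:\phi\in\rmC^\infty_c(\R^d)\}\subset\AA$. The $L^2$-convergence $\tilde F_n\to\tilde F$ in $L^2(\prob_2(\M),\mm)$ is immediate since $\tilde\jmath_\sharp\mm=\tilde\mm$.

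The remaining step, which I expect to be the main obstacle, is to show that $\lip_{W_{2,\sfd_\M}}\tilde F_n\to|\rmD\tilde F|_{\star,W_{2,\sfd_\M}}$ strongly in $L^2(\prob_2(\M),\mm)$. Via the isometry $\tilde\jmath$ this is equivalent to $\lip_{W_{2,\sigma}}(F_n|_Y)\to|\rmD F|_{\star,W_{2,\sigma}}$ strongly in $L^2(Y,\tilde\mm)$, where $Y:=\prob_{2,\sigma}(M)$. The inequality $W_2\le W_{2,\sigma}$ on $Y$ (cf.~\eqref{eq:39}) gives the pointwise bound $\lip_{W_{2,\sigma}}(F_n|_Y)(\nu)\le\lip_{W_2}F_n(\nu)$ for $\nu\in Y$, so the left-hand side is uniformly bounded in $L^2(\tilde\mm)$. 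Any weak $L^2$-subsequential limit $H$ of it then satisfies
\[
|\rmD F|_{\star,W_{2,\sigma}}\le H\le|\rmD F|_{\star,W_2}\quad\text{$\tilde\mm$-a.e.~on }Y,
\]
where the upper bound follows from passing to the weak limit against the strongly $L^2$-convergent dominating sequence $\lip_{W_2}F_n$, and the lower bound follows from the $L^0$-lower semicontinuity of the pre-Cheeger energy recalled in \eqref{eq:relpre}. Since Theorem \ref{thm:identification} identifies the two extremes, one obtains $H=|\rmD F|_{\star,W_{2,\sigma}}$ $\tilde\mm$-a.e.; the $L^2$-norm squeeze
\[
\|\lip_{W_{2,\sigma}}(F_n|_Y)\|_{L^2(\tilde\mm)}\;\le\;\|\lip_{W_2}F_n\|_{L^2(\iiota_\sharp\tilde\mm)}\;\longrightarrow\;\||\rmD F|_{\star,W_{2,\sigma}}\|_{L^2(\tilde\mm)}
\]
then upgrades weak to strong convergence. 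This last squeeze is necessary precisely because restricting a cylinder function from $\prbt$ to $Y$ genuinely alters the asymptotic Lipschitz constant; the argument succeeds thanks to the gradient identification provided by Theorem \ref{thm:identification}.
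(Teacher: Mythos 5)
Your proof is correct and follows essentially the same route as the paper's: Nash embedding to reduce to a closed subset $M\subset\R^d$ carrying the length metric $\sfd_M=\varrho_{M,\ell}$, the chain of isometric isomorphisms landing in $H^{1,2}(\prbt,W_2,\kkappa_\sharp\mm)$ via Theorem~\ref{thm:identification}, Hilbertianity and cylinder density from Corollary~\ref{cor:WHilbert} and Proposition~\ref{prop:density}, and finally the one nontrivial point — that restriction of a cylinder function to the image metric structure does not increase the asymptotic Lipschitz constant ($\lip_{W_{2,\sfd_\M}}\kkappa^*\tilde F \le \kkappa^*(\lip_{W_2}\tilde F)$), so that a weak subsequential limit is squeezed between the two coinciding minimal relaxed gradients, and the $L^2$-norm bound upgrades weak to strong convergence. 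The only cosmetic difference is that the paper phrases the final squeeze purely on $L^2$-norms (using that $G$ is a relaxed gradient, hence $G\ge\relgrad F$ a.e.), while you first establish the pointwise two-sided bound $|\rmD F|_{\star,W_{2,\sigma}}\le H\le|\rmD F|_{\star,W_2}$ and then invoke the identification of the extremes from Theorem~\ref{thm:identification}; both are equivalent, and both rely on the same input. Your remark that $\phi\circ\jmath\in\rmC^\infty_c(\M)$ because $\jmath$ is a proper embedding (so $\AA'\subset\AA$) is exactly the inclusion used, perhaps only implicitly stated, in the paper.
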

\begin{proof}
  By Nash isometric embedding Theorem \cite{Nash54}
  we can find a dimension $d$,
  and an isometric embedding
  $\jmath:\M\to
  \jmath(\M)\subset \R^d$.
  On $M:=\jmath(\M)$ we can define
  the (Riemannian) metric $\sfd_M$ inherited by $\sfd_\M$:
  $\sfd_M(\jmath(x),\jmath(y))=\sfd_\M(x,y)$ so that $\jmath$ is an
  isometry and
  $(M, \sfd_M)$ is a complete and separable metric space.
  We denote by $\jjmath:=\jmath_\sharp$ the corresponding isometry between
  $(\prob_2(\M),W_{2,\sfd_\M})$ and $(\prob_2(M),W_{2,\sfd_M})$ and 
  we also set $\tilde\mm:=\jjmath_\sharp \mm $  which is a positive and finite Borel measure on $\prob_2(M)$.

  It is clear that the
  map $\jjmath^*:F\mapsto F\circ\jjmath$ induces a linear isometric isomorphism
  between 
  $H^{1,2}(\prob_2(M),W_{2,\sfd_M},\tilde\mm)$ and
  $H^{1,2}(\prob_2(\M),W_{2, \sfd_\M},\mm)$.
  
  Since $\M$ is complete and $\jmath$ is an embedding, $M$ is a closed
  subset of $\R^d$ and $\sfd_M$ induces on $M$ the relative topology
  of $\R^d$. Since $\jmath$ is isometric, we also have
  \begin{equation}
    \label{eq:89}
    \varrho(y_1,y_2) \le \sfd_M(y_1,y_2)=\varrho_{M, \ell}(y_1,y_2) \quad \text{ for every } y_1,y_2 \in M, 
  \end{equation}
 where $\varrho_{M,\ell}$ is as in \eqref{eq:40} and $\varrho$ denotes the Euclidean distance on $\R^d$.

 As in Section \ref{subsec:intrinsic},
 we can introduce the inclusion map $\iota:M\to \R^d$ and the
 corresponding
 $\iiota=\iota_\sharp:\prob_{2,\sfd_M}(M)\to \prob_2(\R^d)$.
 By Theorem \ref{thm:identification}
 we have that
 the map $\iiota^*:F\mapsto F\circ\iiota$ provides a linear isometric
 isomorphism
 between $H^{1,2}(\prob_2(\R^d),W_2,\iiota_\sharp \tilde\mm)$ and
 $H^{1,2}(\prob_{2,\sfd_M}(M),W_{2,\sfd_M},\tilde\mm)$ satisfying \eqref{eq:140};
 we conclude that the map
 $\kkappa^*:=\jjmath^*\circ \iiota^*=(\iiota\circ \jjmath)^*$
 is a linear isometric
 isomorphism
 between $H^{1,2}(\prob_2(\R^d),W_2,\kkappa_\sharp \mm)$
 (notice that $\kkappa_\sharp=\iiota_\sharp\circ \jjmath_\sharp$) and
 $H^{1,2}(\prob_{2,\sfd_\M}(M),W_{2,\sfd_\M},\mm)$
 satisfying
 \begin{equation}
   \relgrad{(\kkappa^* F)} = \kkappa^* \left ( \relgrad{F} \right ) \text{
     for every } F \in H^{1,2}(\prob_2(\R^d), W_2,
   \kkappa_\sharp\mm).\label{eq:140bis}
\end{equation}
This property in particular yields the  Hilbertianity of $H^{1,2}(\prob_2(\M),W_{2, \sfd_\M},\mm)$.

In order to prove that $\AA$ is dense in
$H^{1,2}(\prob_2(\M),W_{2, \sfd_\M},\mm)$ we consider
the algebra $\tilde\AA$ generated by $\big\{\lin{\tilde{f}}:
\tilde{f}\in \rmC^\infty_c(\R^d)\big\}$; Proposition
\ref{prop:density} shows that $\tilde\AA$ is strongly dense in
$H^{1,2}(\prob_2(\R^d),W_2,\tilde\mm)$, so that $\AA':=\kkappa^*(\tilde\AA)$
is strongly dense in $H^{1,2}(\prob_{2,\sfd_\M}(\M),W_{2,\sfd_\M},\mm)$.

$\AA'$ is generated by functions of the form $\kkappa^*\lin{\tilde{f}}$,
$\tilde{f}\in \rmC^\infty_c(\R^d)$. Since
\begin{displaymath}
  \kkappa^*\lin{\tilde{f}}(\mu)=\lin{\tilde{f}}(\kkappa(\mu))=
  \int_{\R^d}\tilde f(\kappa(x))\,\d\mu(x)\quad
  \text{for every $\mu\in \prob_{2,\sfd_\M}(\M)$},
\end{displaymath}
where $  \kappa=\iota\circ\jmath$,
we see that $\AA'$ is generated by functions of the form $\lin{\tilde f\circ\kappa}$, so
that $\AA'\subset \AA$ and a fortiori $\AA$ is strongly dense in
$H^{1,2}(\prob_{2,\sfd_\M}(\M),W_{2,\sfd_\M},\mm)$ as well.

To prove \eqref{eq:55ee} (involving the asymptotic Lipschitz constants
of functions in $\AA$ with respect to the Riemannian metric) we
observe that for every $\tilde F\in \tilde\AA$ 
\cite[Lemma 3.1.14]{Savare22}
\begin{equation}
  \kkappa^*\tilde F\in \AA'\subset \AA,\quad
\kkappa^*(\lip_{W_2} \tilde{F})  \ge \lip_{W_{2,
    \sfd_\M}}\kkappa^* \tilde F.\label{eq:141}
\end{equation}
Let now $F=\kkappa^* \tilde F\in H^{1,2}(\prob_2(\M),W_{2,
  \sfd_\M},\mm)$
with $\tilde F\in H^{1,2}(\prob_2(\R^d),W_2,\tilde\mm)$;
there exists a sequence $\tilde F_n\in \tilde{\AA}$ such that
  \begin{displaymath}
  \tilde{F}_n \to \tilde{F}, \quad \lip_{W_2}\tilde{F}_n \to
  \relgrad{\tilde{F}}
  \quad \text{ in } L^2(\prob_2(\R^d), \tilde{\mm}).
\end{displaymath}
Applying the linear isometric isomorphism $\kkappa^*$,
we deduce that the sequence $\kkappa^*
F_n\in \AA'$ satisfies
\begin{equation}
  \label{eq:48}
  \kkappa^*\tilde{F}_n \to F, \quad
  \kkappa^*\big(\lip_{W_2}\tilde{F}_n\big) \to
  \kkappa^* \big(  \relgrad{\tilde{F}}\big)=
  \relgrad{F}
  \quad \text{ in } L^2(\prob_{2,\sfd_\M}(\M), {\mm}).
\end{equation}
Up to extracting a suitable (not relabelled) subsequence and using \eqref{eq:141},
we can suppose that $\lip_{W_{2, \sfd_\M}}\kkappa^* \tilde F_n$
converges weakly in $L^2(\prob_2(\M), W_{2, \sfd_\M})$ to some $G \in
L^2(\prob_2(\M), W_{2, \sfd_\M})$ relaxed gradient of $F$.
\eqref{eq:141} and \eqref{eq:48} also yield
\begin{displaymath}
  \int G^2\,\d\mm\le
  \limsup_{n\to\infty} \int (\lip_{W_{2, \sfd_\M}}\kkappa^* F_n)^2 \, \d \mm
  \le \limsup_{n\to\infty} \int \Big(\kkappa^*(\lip_{W_2} \tilde{F}_n)\Big)^2 \,
  \d \mm=
  \int \relgrad{F}^2 \, \d \mm,
\end{displaymath}
showing that $G=\relgrad F$ and
$\lip_{W_{2, \sfd_\M}}\kkappa^* F_n\to \relgrad F$ strongly in $L^2(\prob_{2,\sfd_\M}(\M),\mm)$.
\end{proof}
\begin{remark}
  \label{rem:closability}
  Arguing as in Section \ref{subsec:cylindrical}
  it is immediate to see that the restriction of $\pCE_2$ to
  the algebra $\ccyl\infty c{\prob_2(\M)}$ is a quadratic form
  and coincides with the pre-Dirichlet forms considered in
  \cite{vRS09,Sturm11,DelloSchiavo20,DelloSchiavo22}.
  If $\big(\pCE_2, \ccyl\infty c{\prob_2(\M)})$ is closable then
  $\big(\CE_2,H^{1,2}(\prob_2(\M),W_{2,\sfd_\M},\mm)\big)$ is a
  Dirichlet form which coincides
  with the smallest closed extension of
  $\big(\pCE_2, \ccyl\infty c{\prob_2(\M)})$, it satisfies the
  so-called Rademacher property (see Proposition \ref{prop:calculus}
  and
  \cite{DelloSchiavo20}) and it is quasi-regular (see Remark
  \ref{rem:closable}).

  In particular it is possible to improve the 
  result \cite[Theorem 2.10]{DelloSchiavo20}. Referring to the
  notation and the formula enumeration of that paper, 
  one can immediately obtain that
  Lipschitz functions belong to $\mathcal F_0$ and the estimate 
  (2.16) holds just assuming that
  $\big(\pCE_2,\ccyl\infty c{\prob_2(\M)}\big)$ is closable.
\end{remark}
\subsection{Wasserstein Sobolev space on Hilbert spaces}
In this last section we will consider the case of a separable Hilbert
space
$(\H,|\cdot|)$; as usual, the space $\prob_2(\H)$ will be endowed with the
Wasserstein distance $\W_2$ induced by the Hilbertian norm of $\H$ and
we will assume that $\mm$ is a  finite and positive Borel  measure on
$\prob_2(\H)$.

We select a complete orthonormal system $E:=(e_n)_{n\in \N}$ and
the collection of maps 
$\pi_d:\H\to \R^d$, $d\in \N$, given by
\begin{equation}
  \label{eq:91}
  \pi^d(x):=\big(\langle x,e_1\rangle,\cdots,\langle x,e_d\rangle\big).
\end{equation}
The adjoint map $\pi^{d*}:\R^d\to\H$ is given by
\begin{equation}
  \label{eq:103}
  \pi^{d*}(y_1,\cdots,y_d):=\sum_{j=1}^d y_j\,e_j.
\end{equation}
The map $\hat\pi^d:=\pi^{d*}\circ\pi^d$ is the orthogonal projection
of $\H$ onto $\Span\{e_1,\cdots,e_d\}$.
We say that a function $\phi:\H\to\R$ belongs to $\rmC^1_b(\H,E)$ if it can be
written as
\begin{equation}
  \label{eq:95}
  \phi:=\varphi\circ \pi^d\quad\text{for some }d\in \N,\ \varphi\in \rmC^1_b(\R^d).
\end{equation}
 If $\phi \in \rmC^1_b(\H,E)$ then it belongs to $\rmC^1_b(\H)$  and its gradient $\nabla \phi$ can be written as
\begin{equation}
  \label{eq:96}
  \nabla\phi=\pi^{d*}\circ \nabla\varphi\circ\pi^d,\quad
  \nabla\phi(x)= 
  \sum_{j=1}^d \partial_j\varphi(\pi^d(x)) e_j.
\end{equation}
We then consider the algebra $\ccyl1b {\prob_2(\H)}$ generated by  $\left \{ \lin\phi : \phi \in \rmC^1_b(\H,E) \right \}$.  For every $F\in \ccyl1b {\prob_2(\H)}$ we can find $N\in \N$, a polynomial
$\psi:\R^N\to \R$ and functions $\phi_n  \in
\rmC^1_b(\H,E)$,  $n=1,\cdots,N$,   such that
\begin{equation}
  \label{eq:93}
  F(\mu) =(\psi \circ \lin \pphi)(\mu).
\end{equation}
As in \eqref{eq:Fdiff} we can set
\begin{equation}
  \label{eq:94}
   \rmD F(\mu,x):=\sum_{n=1}^N \partial_n\psi( \lin
  \pphi(\mu)
  )\nabla \phi_n(x).
\end{equation}
It is also easy to check that a function $F$ belongs to
$\ccyl1b{\prob_2(\H)}$
if and only if there exists $d\in \N$ and $\tilde F\in
\ccyl1b{\prob_2(\R^d)}$ such that
\begin{equation}
  \label{eq:102}
  F(\mu)=\tilde F(\pi^d_\sharp(\mu))\quad\text{for every }\mu\in \prob_2(\H),
\end{equation}
so that
\begin{equation}
  \label{eq:104}
  \rmD F(\mu,x)=\pi^{d*}\Big(\rmD\tilde
  F(\pi^d_\sharp\mu,\pi^d(x))\Big),\quad
   \|\rmD F[\mu]\|_\mu=\|\rmD \tilde F(\pi^d_\sharp\mu)\|_{\pi^d_\sharp\mu}.
\end{equation}
 By  Proposition \ref{prop:equality} and using \eqref{eq:104}
it is not difficult to check that
\begin{equation}
  \label{eq:97}
  \|\rmD F[\mu]\|_\mu=\lip F(\mu)\quad\text{for every }\mu\in \prob_2(\H).
\end{equation}
Adapting in an obvious way the definitions in \eqref{eq:53} and \eqref{eq:54} to
the Hilbertian framework, we have the following result.
\begin{theorem}
  \label{thm:main3}
  $H^{1,2}(\prob_2(\H),W_2,\mm)$ is a Hilbert space and the algebra $\ccyl1b {\prob_2(\H)}$ is
  (strongly) dense:
  for every $F\in H^{1,2}(\prob_2(\H),W_2,\mm)$
  there exists a sequence $F_n\in \ccyl1b {\prob_2(\H)}$, $n\in \N$ such that
  \begin{equation}
    \label{eq:55-ter}
     F_n\to F,\quad \lip(F_n)\to \relgrad{F} \quad \text{strongly in }L^2(\prob_2(\H),\mm).
  \end{equation}
\end{theorem}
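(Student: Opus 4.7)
The plan is to apply Theorem \ref{theo:startingpoint} to obtain the density in $2$-energy of $\AA := \ccyl1b{\prob_2(\H)}$, and then Theorem \ref{thm:hilb} to deduce Hilbertianity. The algebra $\AA$ is clearly unital; it separates points of $\prob_2(\H)$ because the cylindrical $\sigma$-algebra generates the Borel structure of $\H$, so $\mu\neq\nu$ forces $\pi^d_\sharp\mu\neq\pi^d_\sharp\nu$ for some $d$, and then one picks $\phi=\varphi\circ\pi^d\in \rmC^1_b(\H,E)$ with $\varphi\in \rmC^1_b(\R^d)$ chosen so that $\lin\phi(\mu)\neq \lin\phi(\nu)$. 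The main task will be to verify \eqref{eq:214-15}: for every $\nu\in \prob_2(\H)$ the function $W_\nu:\mu\mapsto W_2(\mu,\nu)$ admits a $(2,\AA)$-relaxed gradient bounded by $1$ $\mm$-a.e.

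I would reduce to the finite-dimensional case of Theorem \ref{thm:main} through the projections $P_d:=\pi^d_\sharp:\prob_2(\H)\to \prbt$. The map $P_d$ is $1$-Lipschitz with respect to $W_2$ (push an optimal plan forward), and for every $\mu\in \prob_2(\H)$ one has $W_2(\mu,P_d\mu)^2\le \int_{\H}|x-\hat\pi^d x|^2\,\d\mu(x)\to 0$ as $d\to\infty$ by dominated convergence. Setting $\mm^d:=(P_d)_\sharp \mm$ and $W_\nu^d(\mu):=W_2(P_d\mu,P_d\nu)$, the approximants $W_\nu^d$ then converge pointwise everywhere to $W_\nu$ on $\prob_2(\H)$.

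Fix $d$ and set $\nu^d:=P_d\nu$. By Corollary \ref{cor:bound} applied in $(\prbt,W_2,\mm^d)$, the function $\sigma\mapsto W_2(\sigma,\nu^d)$ has $(2,\cyl1b{\prbt})$-relaxed gradient bounded by $1$ $\mm^d$-a.e., and Proposition \ref{prop:density} ensures the same approximation property for the smaller algebra $\ccyl1b{\prbt}$. Combined with Theorem \ref{thm:omnibus}(2), this yields a sequence $\tilde F_n\in \ccyl1b{\prbt}$ with $\tilde F_n\to W_2(\cdot,\nu^d)$ $\mm^d$-a.e.\ and $\lip \tilde F_n$ converging strongly in $L^2(\prbt,\mm^d)$ to a function bounded by $1$. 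The compositions $F_n:=\tilde F_n\circ P_d$ belong to $\AA$ (they are $\rmC^1_b$-cylinder functions in the coordinates $\langle\cdot,e_1\rangle,\dots,\langle\cdot,e_d\rangle$) and satisfy $F_n\to W_\nu^d$ $\mm$-a.e.; the $1$-Lipschitz property of $P_d$ gives $\lip F_n(\mu)\le(\lip \tilde F_n)(P_d\mu)$, while the change-of-variables identity $\int(\lip \tilde F_n\circ P_d)^2\,\d\mm=\int(\lip \tilde F_n)^2\,\d\mm^d$ forces $\lip F_n$ to be bounded in $L^2(\prob_2(\H),\mm)$ by a sequence strongly converging to a function bounded by $1$, so any weak limit is bounded by $1$. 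Theorem \ref{thm:omnibus}(1),(3) then gives $|\rmD W_\nu^d|_{\star,\AA}\le 1$ $\mm$-a.e. A further application of Theorem \ref{thm:omnibus}(1),(3), using this uniform bound and the pointwise convergence $W_\nu^d\to W_\nu$, yields $|\rmD W_\nu|_{\star,\AA}\le 1$ $\mm$-a.e., so \eqref{eq:214-15} holds and Theorem \ref{theo:startingpoint} produces the strong density asserted in \eqref{eq:55-ter}.

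For Hilbertianity I would observe that every $F\in \AA$ has the form $F=\tilde F\circ P_d$ for some $d$ and some $\tilde F\in \ccyl1b{\prbt}$, and that \eqref{eq:97} gives $\lip F(\mu)=\|\rmD F[\mu]\|_\mu=\|\rmD \tilde F[P_d\mu]\|_{P_d\mu}$. Since $\tilde F\mapsto \rmD \tilde F$ is linear and $\|\cdot\|_{P_d\mu}$ satisfies the parallelogram law pointwise, integrating against $\mm$ produces the parallelogram identity \eqref{eq:31} on $\AA$; Theorem \ref{thm:hilb} then concludes. The main obstacle is the transfer step above, where one must reconcile asymptotic Lipschitz constants on $\prob_2(\H)$ with those on $\prbt$; this succeeds precisely because $P_d$ is $1$-Lipschitz in $W_2$, and the rest is routine closure and weak-compactness manipulations handled by Theorem \ref{thm:omnibus}.
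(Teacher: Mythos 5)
Your proof is correct and takes essentially the same route as the paper: reduce to the finite-dimensional case via the $1$-Lipschitz marginal projections $P_d=\pi^d_\sharp$, transfer the bound $|\rmD W_2(\cdot,\nu^d)|_{\star,\AA}\le 1$ through $P_d$ using Theorem~\ref{thm:omnibus}(1)--(3), and then pass to the limit $d\to\infty$. The only differences are cosmetic --- you invoke Corollary~\ref{cor:bound} together with Proposition~\ref{prop:density} where the paper cites Theorem~\ref{thm:main} directly, you use the inequality $\lip F_n\le(\lip\tilde F_n)\circ P_d$ where the paper records the equality coming from \eqref{eq:104} and \eqref{eq:97}, and you spell out the Hilbertianity and separating-algebra verifications that the paper leaves implicit.
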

\begin{proof}
  Let us set $\AA:=\ccyl1b {\prob_2(\H)}$; we use Theorem \ref{theo:startingpoint}
  and we want to prove that for every $\nu\in \prob_2(\H)$ the
  function
  \begin{equation}
    \label{eq:98}
    F(\mu):=W_2(\nu,\mu)\quad\text{satisfies}\quad
    |\rmD F|_{\star,\AA}\le 1\quad\text{$\mm$-a.e.}.
  \end{equation}
  We split the proof in two steps.

  Step 1: it is sufficient to prove
  that, for every $h \in \N$, the function $F_h:\prob_2(\H)\to\R$
  \begin{equation}
    \label{eq:99}
    F_h(\mu):=W_2(\hat\pi^h_\sharp\nu,\hat \pi^h_\sharp\mu)\quad
    \text{satisfies}\quad
    |\rmD F_h|_{\star,\AA}\le 1\quad\text{$\mm$-a.e.}
  \end{equation}
  In fact, using the continuity property of the Wasserstein distance, it is
  clear that for every $\mu\in \prob_2(\H)$
  \begin{equation}
    \label{eq:100} 
    \lim_{ h  \to\infty}F_h(\mu)=F(\mu), 
  \end{equation}
   so that it is enough to apply Theorem \ref{thm:omnibus}(1)-(3) to obtain \eqref{eq:98}. 
  
  Step 2: 
   Let $h \in \N$ be fixed and let us denote by $W_{2,h}$ the Wasserstein distance on $\prob_2(\R^h)$; it is easy to check that
  \[ W_{2,h}(\pi^h_\sharp \mu_0, \pi^h_\sharp \mu_1) = W_2(\hat \pi^h_\sharp \mu_0, \hat \pi^h_\sharp \mu_1) \quad \text{ for every } \mu_0, \mu_1 \in \prob_2(\H).\]
  Thus, if we define the function $\tilde F_h:\prob_2(\R^h)\to\R$
  as
  \[\tilde F_h(\mu):=W_{2,h}(\pi^h_\sharp\nu,\mu)\]
  we get that 
 \[ F_h(\mu)=\tilde F_h(\pi^h_\sharp \mu).\]  

  We also introduce the measure  $\mm_h$ on $\prob_2(\R^h)$ given by  the push-forward of $\mm$ through the ($1$-Lipschitz) map
  $P^h: \prob_2(\H)\to \prob_2(\R^h)$ defined as $P^h(\mu):=\pi^h_\sharp \mu$.
  By  Theorem \ref{thm:main} applied to $H^{1,2}(\prob_2(\R^h),W_{2,h},\mm_h)$, 
  we can find a sequence of cylinder functions $\tilde F_{h,n}\in
  \ccyl1b{\prob_2(\R^h)}$, $n\in \N$, 
  such that
  \begin{gather}
    \label{eq:105}
    \tilde F_{h,n}\to \tilde F_h\text{ in
      $\mm_h$-measure},\\
    \label{eq:107}
    \lip_{\prob_2(\R^h)} \tilde F_{h,n}\to g_h\quad\text{in
    }L^2(\prob_2(\R^h),\mm_h)\quad\text{with }g_h\le 1
    \text{ $\mm_h$-a.e.}    
  \end{gather}
  We thus consider the functions $F_{h,n}\in \ccyl1b{\prob_2(\H)}$
  defined as in \eqref{eq:102} by
  \begin{equation}
    \label{eq:108}
    F_{h,n}(\mu):=\tilde F_{h,n}(\pi^h_\sharp\mu)=\tilde
    F_{h,n}(P^h(\mu))\quad
    \text{for every }\mu\in \prob_2(\H).
  \end{equation}
  Since for every $\eps>0$
    \begin{align*}
    \mm\Big(\big\{\mu:|F_{h,n}(\mu)-F_h(\mu)|>\eps\big\}\Big)&=
    \mm\Big(\big\{\mu:|\tilde F_{h,n}(P^h(\mu))-\tilde
    F_h(P^h(\mu))|>\eps\big\}\Big)\\&=
    \mm_h\Big(\big\{\mu:|\tilde F_{h,n}(\mu)-\tilde F_h(\mu)|>\eps\big\}\Big),    
  \end{align*}
  \eqref{eq:105} yields that $F_{h,n}\to F_h$ in $\mm$-measure as
  $n\to\infty$.

  On the other hand, \eqref{eq:104} yields
  \begin{align*}
    \lip F_{h,n}(\mu)&=
    \lip_{\prob_2(\R^h)} \tilde F_{h,n}(P^h(\mu))
  \end{align*}
  so that
  \begin{displaymath}
    \lip F_{h,n}\to g_h\circ P^h\quad\text{in }L^2(\prob_2(\H),\mm)
  \end{displaymath}
  and $g_h\circ P^h\le 1$ $\mm$-a.e.~in $\prob_2(\H)$.
   By Theorem \ref{thm:omnibus}(1)-(3), we obtain \eqref{eq:99}, concluding the proof.
\end{proof}
\begin{remark} We observe that the results in Sections \ref{subsec:wsspace} and \ref{subsec:resdiff} can be extended to $\prob_2(\M)$ and $\prob_2(\H)$ in an analogous way.
\end{remark}

\bibliographystyle{plain}
\bibliography{biblio}
\end{document}